\documentclass[11pt, a4paper,leqno]{amsart}
\usepackage[OT1]{fontenc}

\usepackage{amsmath,amsthm,amscd,amssymb,amsfonts, amsbsy}
\usepackage{latexsym}
\usepackage{txfonts}
\usepackage{exscale}
\usepackage{enumerate}
\usepackage{enumitem}
\usepackage{moreenum}
 \usepackage{float}
\usepackage{mathrsfs}
\usepackage{currvita}
\usepackage[hidelinks,pagebackref,hypertexnames=false]{hyperref}

\usepackage[a4paper, total={6.5in, 10in}]{geometry}

\usepackage[active]{srcltx}

\calclayout
\allowdisplaybreaks

\theoremstyle{plain}
\newtheorem{theorem}[equation]{Theorem}
\newtheorem{lemma}[equation]{Lemma}
\newtheorem{corollary}[equation]{Corollary}
\newtheorem{proposition}[equation]{Proposition}

\newtheorem*{conjecture*}{Conjecture}

\theoremstyle{definition}
\newtheorem{definition}[equation]{Definition}

\theoremstyle{remark}
\newtheorem{remark}[equation]{Remark}

\numberwithin{equation}{section}

\newcommand{\RR}{\mathbb{R}}

\def\Xint#1{\mathchoice
{\XXint\displaystyle\textstyle{#1}}%
{\XXint\textstyle\scriptstyle{#1}}%
{\XXint\scriptstyle\scriptscriptstyle{#1}}%
{\XXint\scriptscriptstyle%
\scriptscriptstyle{#1}}%
\!\int}
\def\XXint#1#2#3{{\setbox0=\hbox{$#1{#2#3}{%
\int}$ }
\vcenter{\hbox{$#2#3$ }}\kern-.6\wd0}}
\def\barint{\,\Xint -} 
\def\bariint{\barint_{} \kern-.4em \barint}
\def\bariiint{\bariint_{} \kern-.4em \barint}
\renewcommand{\iint}{\int_{}\kern-.34em \int} 
\renewcommand{\iiint}{\iint_{}\kern-.34em \int} 

\newcommand{\la}{\lambda}

\newcommand{\dist}{\operatorname{dist}}

\newcommand{\I}{\mathcal{I}}
\newcommand{\J}{\mathcal{J}}

\renewcommand{\d}{\, \mathrm{d}}

\DeclareMathOperator{\supp}{supp}

\DeclareMathOperator{\diam}{diam}

\begin{document}
\allowdisplaybreaks

\title[Riesz Transform Bounds Imply Parabolic Uniform Rectifiability of Graphs]{Riesz Transform Bounds Imply Parabolic Uniform Rectifiability of Graphs}

\author[John Hoffman]{John Hoffman}
\address{Department of Mathematics, University of the Basque Country (EHU), 48940 Leioa, Spain}
\email{johnloche.hoffman@ehu.eus}



\subjclass[2020]{42B20, 28A75, 30L99, 43A85}
\date{October 23, 2026}

\begin{abstract}
We prove that if a $(k+2)$-Ahlfors--David regular measure $\mu$ in
parabolic space $\mathbb P^n$ has support equal to a parabolic Lipschitz graph with
sufficiently small Lipschitz constant, and the parabolic Riesz transform is bounded on $L^2(\mu)$, then $\mu$ is parabolic uniformly
rectifiable and the graph is regular parabolic Lipschitz. The result holds
for every integer $0\leq k\leq n-1$.
\end{abstract}

\maketitle
\tableofcontents

\section{Introduction}

Given a measure $\mu$ on $\mathbb{R}^n$, the relationship between the $L^2(\mu)$ boundedness of singular integral operators and the regularity properties of $\mu$ has been a longstanding subject of study. In 1991, David and Semmes (\cite{DS1}) showed that for measures $\mu$ satisfying the Ahlfors-David regularity condition (i.e., surface balls have measure that scales appropriately with their radii), $L^2(\mu)$ boundedness of a large class of singular integral operators implies uniform rectifiability.  Loosely speaking, an Ahlfors-David regular measure $\mu$ is uniformly rectifiable if it exhibits approximate Lipschitz regularity at every scale. In integer dimensions, Jaye, Nazarov, and Tolsa \cite{JNT} extended this characterization to general non-atomic measures. Their characterization replaces uniform rectifiability with an upper growth bound and a square-function estimate in which the squared $\beta$-numbers are weighted by the square of the density.

David and Semmes conjectured that, for every integer $1\leq k\leq n-1$, a $k$-ADR measure $\mu$ on $\mathbb R^n$ is uniformly rectifiable if the $k$-dimensional Riesz transform is bounded on $L^2(\mu)$ \cite{DS2}. For $1$-ADR measures in $\mathbb C$, this was answered affirmatively by Mattila, Melnikov, and Verdera through their characterization of uniform rectifiability in terms of the Cauchy transform \cite{MMV}. For $(n-1)$-ADR measures in $\mathbb R^n$, Nazarov, Tolsa, and Volberg proved the corresponding characterization using the Riesz transforms \cite{NTV}. A solution of the David--Semmes conjecture in the remaining higher-codimension cases has recently been claimed in \cite{OpenAI2026Riesz}. Its blow-up argument appears to be heavily influenced by the cylindrical blow-up procedure of Jaye, Nazarov, and Tolsa \cite[Section~11]{JNT}, although that work is not cited in \cite{OpenAI2026Riesz}.

For each integer $d\geq0$, we write
$\mathbb P^d=\mathbb R^d_{sp}\times\mathbb R$ for parabolic space, equipped with the metric
\[
d_p\big((x,t),(y,s)\big)=|x-y|+|t-s|^{1/2}.
\]
In particular, $\mathbb P^0$ is the time line with distance $|t-s|^{1/2}$.

  For a $(k+2)$-ADR measure $\mu$ on $\mathbb P^n$, $0\leq k\leq n-1$, set
\[
\beta_{2,\mu}((X,t),r)^2
:=\inf_L\frac{1}{r^{k+2}}\int_{C_r(X,t)}
\left(\frac{d_p((Y,s),L)}{r}\right)^2\,\d\mu(Y,s),
\]
where $C_r(X,t)=\{(Y,s):|Y-X|<r,\ |s-t|<r^2\}$ and the infimum is taken over vertical planes $L=P\times\mathbb R$, with $P\subset\mathbb R^n$ an affine $k$-plane. We say that $\mu$ is \textit{parabolic uniformly rectifiable} if there is a constant $C$ such that the $\beta_2$-number square function satisfies the Carleson measure estimate
\[
\int_0^R\int_{C_R(X,t)}\beta_{2,\mu}((Y,s),r)^2
\,\d\mu(Y,s)\,\frac{\d r}{r}
\leq CR^{k+2}
\]
for every $(X,t)\in\supp\mu$ and $0<R<\operatorname{diam}_p(\supp\mu)$.

In this paper we consider whether boundedness of the parabolic Riesz transform implies regularity of parabolic Lipschitz graphs. For a $(k+2)$-ADR measure, the transform is given formally by convolution against the vector-valued kernel
\[
K_k^+(X,t):=
\begin{cases}
\displaystyle \frac{X}{t^{(k+3)/2}}\exp\left(-\frac{|X|^2}{4t}\right),&t>0,\\[4pt]
0,&t\leq0,
\end{cases}
\qquad (X,t)\in\mathbb P^n\setminus\{(0,0)\}.
\]
When $k=n-1$, this is a constant multiple of the spatial gradient of the heat kernel. We call a graph \textit{parabolic Lipschitz} if its defining function is Lip$(1,1/2)$, that is, Lipschitz with respect to the parabolic metric.  Unlike Euclidean Lipschitz graphs, parabolic Lipschitz graphs need not be uniformly rectifiable; see \cite{KW80,LS88} and \cite[pp.~383--384]{HLN}. Thus, even for graphs, there is an additional regularity condition to recover from singular integral bounds.

Hofmann characterized regular parabolic Lipschitz functions in terms of the $L^2$ boundedness of the parabolic commutator \cite{H2}. For parabolic Lipschitz graphs, this regularity is equivalent to parabolic uniform rectifiability by the work of Hofmann, Lewis, and Nystr\"om \cite{HLN2}. More generally, the present author and Jaye proved that the $L^2(\mu)$ boundedness of all parabolic singular integral operators, together with the higher spatial dimension condition (HSDC), implies parabolic uniform rectifiability \cite{HJ}. In the converse direction, Bortz, the present author, Hofmann, Luna-Garc\'ia, and Nystr\"om showed that in codimension one, parabolic uniform rectifiability implies the $L^2(\mu)$ boundedness of all sufficiently nice parabolic singular integral operators \cite{BHHLN3}.  
In \cite{HJ} the authors provide examples of $(k+2)$-ADR measures for which all admissible parabolic singular integral operators are bounded on $L^2$, but which fail to be parabolic uniformly rectifiable. The HSDC is a sharp extra condition one can impose in addition to boundedness of singular integrals on a $(k+2)$-ADR measure to obtain parabolic uniform rectifiability.

Roughly speaking, measures that fail the HSDC have too many scales at which they are too concentrated in time.  In codimension one, the prototypical HSDC-bad measures are $\mathcal L^n\times\nu$, where $\nu$ has unbounded support and is $1/2$-ADR in the usual Euclidean metric on the time line \cite[Theorem~7.1(A)]{HJ}. The HSDC requires a Carleson packing bound on the locations and scales at which the measure is close to such product measures; see \cite[Section~3.1]{HJ}.  Thus, in \cite{HJ}, the authors conjectured the following:

\begin{conjecture*}[{\cite[Conjecture~1.3]{HJ}}]
Let $\mu$ be an $(n+1)$-ADR measure on $\mathbb P^n$ satisfying the $\varepsilon$-HSDC for some $\varepsilon>0$. If the parabolic Riesz transform is bounded on $L^2(\mu)$, then $\mu$ is parabolic uniformly rectifiable.
\end{conjecture*}

In this paper, we make a first step toward resolving this conjecture.  Here we assume bounds for the parabolic Riesz transforms on a graph with small Lipschitz constant, in which case the HSDC is automatically satisfied. We prove the following theorem.

\begin{theorem}\label{mainthm}
Let $0\leq k\leq n-1$, and suppose $A:\mathbb P^k\to\mathbb R^{n-k}$ is a Lip-$(1,1/2)$ function with $\|A\|_{\mathrm{lip}}\ll1$. Set $\Gamma=\{(x,A(x,t),t):(x,t)\in\mathbb P^k\} \subseteq \mathbb P^n$, and let $\mu$ be a $(k+2)$-ADR measure such that $\operatorname{supp}\mu=\Gamma$. If the parabolic Riesz transforms in each of the $n-k$ transversal directions are bounded from $L^2(\mu)$ to $L^2(\mu)$, then $\mu$ is a $(k+2)$-parabolic uniformly rectifiable measure and $A$ is a regular Lip-$(1,1/2)$ function.
\end{theorem}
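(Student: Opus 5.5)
Our plan is to reduce the statement to Hofmann's characterization of regular Lip-$(1,1/2)$ functions via the parabolic commutator \cite{H2}, or equivalently to a half-time-derivative bound. Parabolic uniform rectifiability then follows from the equivalence of Hofmann, Lewis and Nystr\"om \cite{HLN2} between regularity of $A$ and parabolic uniform rectifiability of the graph, extended componentwise to higher codimension. Concretely, we aim to show that $D_t^{1/2}A\in \mathrm{BMO}(\mathbb P^k)$ for each component $A_j$, $j=1,\dots,n-k$, with a bound controlled by the Riesz transform norm. This is the regularity condition. The first step is to transfer everything to the parameter space: push $\mu$ forward to $\mathbb P^k$ via $(x,t)\mapsto(x,A(x,t),t)$. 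By ADR, the pushforward has the form $w\,\d x\,\d t$ with $w\approx1$. Boundedness of the transversal Riesz transforms $T_j$ (kernel component $X_j$ for the $j$-th normal direction) then becomes $L^2(w)$ boundedness of the operator with kernel
\[
\frac{A_j(x,t)-A_j(y,s)}{(t-s)^{(k+3)/2}}\exp\Big(-\frac{|x-y|^2+|A(x,t)-A(y,s)|^2}{4(t-s)}\Big)\mathbf 1_{t>s}.
\]

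The second step is a linearization in the small Lipschitz constant. We expand the Gaussian factor $\exp(-|A(x,t)-A(y,s)|^2/4(t-s))=1+O(\|A\|_{\mathrm{lip}}^2)$; parabolic Lipschitz makes $|A(x,t)-A(y,s)|^2/(t-s)$ bounded by $\|A\|^2_{\mathrm{lip}}(|x-y|^2+|t-s|)/|t-s|$, so the error must be handled with the Gaussian decay in $|x-y|^2/(t-s)$. The leading term is the linear commutator-type operator
\[
\mathcal C_j f(x,t)=\int \frac{A_j(x,t)-A_j(y,s)}{(t-s)^{(k+3)/2}}e^{-|x-y|^2/4(t-s)}\mathbf 1_{t>s}\,f(y,s)w(y,s)\,\d y\,\d s.
\]
Testing this on $f=\mathbf 1_Q$ for parabolic cubes $Q$ (a $T1$-type argument) gives, after subtracting the local mean, a local bound
\[
\int_Q |\mathcal C_j\mathbf 1_{2Q}|^2\lesssim |Q|.
\]
The point is that $\int e^{-|x-y|^2/4(t-s)}(t-s)^{-(k+3)/2}\,\d y\approx (t-s)^{-3/2}$, so the $A_j(y,s)$ part is a one-sided fractional integral in time of order $-1/2$ applied to $A_j$, i.e.\ essentially $D_t^{1/2}$ of the caloric extension of $A_j$. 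Thus $\mathcal C_j\mathbf 1\approx c\,(\partial_t)^{1/2}_{+}A_j$ modulo bounded errors, and the testing condition yields $\|D_t^{1/2}A_j\|_{\mathrm{BMO}}\lesssim \|T_j\|+1$, i.e.\ regularity.

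The third step, which we expect to be the main obstacle, is controlling the weight $w$ and the nonlinear error terms so that they do not swamp the leading term. The weight $w$ is merely bounded above and below. We plan a small-constant absorption argument: show each error operator is bounded with norm $\lesssim\|A\|_{\mathrm{lip}}(1+\|D_t^{1/2}A\|_{\mathrm{BMO}})$. We first prove this a priori for truncated or mollified $A$, where the BMO norm is finite, using Hofmann-type commutator estimates \cite{H2} for the quadratic terms. Then for $\|A\|_{\mathrm{lip}}\ll1$ the error is absorbed into the left side, and a limiting argument removes the mollification. The weight should be handled by testing with $f=w^{-1}\mathbf 1_Q$ rather than $\mathbf 1_Q$, so that $w$ cancels inside the integral. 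Finally, the regularity of $A$ combined with \cite{HLN2} gives the Carleson estimate for $\beta_{2,\mu}$, i.e.\ parabolic uniform rectifiability of $\mu$.
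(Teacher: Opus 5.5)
Your reduction to parameter space, the handling of $w$ by testing with $w^{-1}\mathbf 1_Q$, and your identification of $\mathcal C_j1$ as a one-sided parabolic half-derivative of $A_j$ (its symbol has modulus comparable to $\|(\xi,\tau)\|$) are fine. The gap is in your third step, which carries the whole difficulty. To absorb, you need an estimate $\|T_j-\mathcal C_j\|_{L^2\to L^2}\le \delta(\|A\|_{lip})\,(1+\|D_t^{1/2}A\|_{BMO})$ with $\delta(s)\to0$ as $s\to0$. There must be no other dependence on $\|D_t^{1/2}A\|_{BMO}$, i.e.\ the bound must be linear in the regularity constant with a small coefficient. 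Nothing you invoke gives this. The error kernel
\[
\frac{A_j(x,t)-A_j(y,s)}{(t-s)^{(k+3)/2}}\,e^{-\frac{|x-y|^2}{4(t-s)}}\Bigl(e^{-\frac{|A(x,t)-A(y,s)|^2}{4(t-s)}}-1\Bigr)\mathbf 1_{t>s}
\]
is cubic and higher order in $A(x,t)-A(y,s)$, so it is a nonlinear, higher-order parabolic commutator. Hofmann's theorem concerns only the first-order commutator. The general results for regular graphs (\cite{BHHLN3}, or Theorem~\ref{highcodimgeneralSIO}) bound such operators by constants that depend in an unspecified, generally nonlinear, way on the regular Lip-$(1,1/2)$ constants. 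The pointwise smallness of the kernel (it is $O(\epsilon^3)$ times a standard kernel) cannot help either. The first-order commutator $\mathcal C_j$ obeys the same kind of pointwise bound, yet it is $L^2$-bounded exactly when $A$ is regular. So the linear dependence you need is the heart of the matter, not a technicality, and you give no mechanism for it.

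The a priori step has a second gap. Mollification alone does not put $D_t^{1/2}(A*\eta_\lambda)$ in $BMO$, because it leaves large scales untouched, so you must also localize. Once you pass to the graph of a truncated, mollified function, your hypothesis no longer applies. You have not said why the Riesz transforms on the new graph are bounded by $1+\|\mathcal R^+\|$, uniformly in the parameters. In the paper this is a substantial part of the proof (Propositions~\ref{Blabounded} and~\ref{localsios}, in Subsections~\ref{smoothing} and~\ref{localization}). It combines mixed kernels such as $K^{B,A}$ and $K^{A,B}$ with localized pieces that are regular by construction, to which Theorem~\ref{highcodimBHHLN} applies.

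The paper also never linearizes in operator norm. For the localized, compactly supported graph it expands $\|\mathcal R_\ell1\|_2^2=\sum_{i,j}\langle\mathcal R_{(i)}1,\mathcal R_{(j)}1\rangle$ and compares each scale with the commutator only when acting on $1$. Lemma~\ref{commapprox}, via cancellation on vertical planes, gives $\int_{Q^\Gamma}|\mathcal R_{(j)}1-\mathcal C_{(j)}1|^2\,d\mu\lesssim\epsilon^2[\alpha(Q)^2+\beta_2(5Q)^2]|Q|$. Positivity of the time-symmetric commutator multipliers gives the near-diagonal lower bound $\gtrsim\|D_{par}A\|_2^2$. Section~\ref{last} bounds the far off-diagonal terms by $2^{-cN_0}\sum_Q\alpha(Q)^2|Q|$. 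Lemma~\ref{vectorDorronsoro} then turns every error into a small multiple of $\|D_{par}A\|_2^2$, which is finite by compact support and smoothing. That is exactly the linear, absorbable structure your third step lacks. Regularity then follows from the local $\beta_2$ packing and Theorem~\ref{regularbetaequiv}.
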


A more precise statement is given in Theorem~\ref{mainthm2}. Notice that only the $n-k$ transversal components of the transform are needed. The theorem holds in every integer codimension, including the case $k=0$, where the graph has the form $\{(A(t),t):t\in\mathbb R\}$.

The connection between Riesz transforms and rectifiability has also played an important role in free boundary problems for harmonic measure. Estimates for the Green function can be used to obtain bounds for the Riesz transform, allowing one to apply the theorem of Nazarov, Tolsa, and Volberg and its refinements to deduce rectifiability. This approach has been used to prove rectifiability from absolute continuity assumptions on harmonic measure, including in two-phase problems involving the harmonic measures of two disjoint domains; see Azzam, Hofmann, Martell, Mayboroda, Mourgoglou, Tolsa, and Volberg \cite{AHMMMTV} and Azzam, Mourgoglou, and Tolsa \cite{AMT20}.

We first prove the following estimate for graphs of compactly supported functions. This theorem is a parabolic analogue of \cite[Theorem~1.3]{To2}.

\begin{theorem}\label{localdpara-intro}
Let $0\leq k\leq n-1$, and suppose $A:\mathbb P^k\to\mathbb R^{n-k}$ is Lip-$(1,1/2)$ with $\|A\|_{\mathrm{lip}}\ll1$ and is supported in a cylinder $C'_R$ of radius $R$. Let $\Gamma$ be the graph of $A$, and let $\mu$ be a $(k+2)$-ADR measure such that $\operatorname{supp}\mu=\Gamma$. If the parabolic Riesz transforms in each of the $n-k$ transversal directions are bounded from $L^2(\mu)$ to $L^2(\mu)$, then
\[
\|D_{par}A\|_{L^2(\mathbb P^k)}^2\lesssim R^{k+2},
\]
where $D_{par}A$ is defined component-wise by
\[
D_{par}A_m(x,t)
:=
\big(\widehat A_m(\xi,\tau)\|(\xi,\tau)\|\big)^{\check{}}(x,t),
\qquad m=1,\dots,n-k.
\]
\end{theorem}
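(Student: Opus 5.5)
We follow Tolsa's argument for \cite[Theorem~1.3]{To2}, adapted to the parabolic setting. The idea is to test the transversal Riesz transforms against a suitable function on $\Gamma$, linearize the kernel in $A$, and identify the linear term on the Fourier side as a multiple of $\|D_{par}A\|_2^2$. The nonlinear remainder is then absorbed using $\|A\|_{\mathrm{lip}}\ll1$.

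\textbf{Step 1: reduction to the parametrization.} Write $\mu$ as the pushforward of a measure $\sigma$ on $\mathbb P^k$ under $\Phi(x,t)=(x,A(x,t),t)$. Since $\mu$ is $(k+2)$-ADR on a small-constant graph, $\sigma$ is comparable to Lebesgue measure on $\mathbb P^k$ on cylinders. We would like $\sigma=h\,\d x\,\d t$ with $h\approx1$; if only ADR is known, we first replace $\mu$ by $\Phi_\#(\d x\,\d t)$. Boundedness of the Riesz transform should transfer to this measure, because on a Lipschitz graph two ADR measures with the same support are mutually absolutely continuous with bounded densities, and $L^2$ boundedness of the maximal truncations passes between them.

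For $m=1,\dots,n-k$, the transversal component evaluated at $(x,A(x,t),t)$ is
\[
T_mf(x,t)=\int \frac{A_m(x,t)-A_m(y,s)}{(t-s)^{(k+3)/2}}\exp\Big(-\frac{|x-y|^2+|A(x,t)-A(y,s)|^2}{4(t-s)}\Big)\mathbf 1_{t>s}\,f(y,s)\d y\d s .
\]

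\textbf{Step 2: linearization.} Expand $\exp(-|A(x,t)-A(y,s)|^2/4(t-s))=1+O(\|A\|_{\mathrm{lip}}^2)$ and split $T_m1=L_mA_m+E_m$. Here
\[
L_mA_m(x,t)=\mathrm{p.v.}\int (A_m(x,t)-A_m(y,s))K_{k-1}^{+,\mathrm{sc}}(x-y,t-s)\d y\d s,
\]
where $K^{\mathrm{sc}}(z,u)=u^{-(k+3)/2}e^{-|z|^2/4u}\mathbf 1_{u>0}$ is a kernel of parabolic homogeneity $-(k+3)$. Since $A_m$ is compactly supported, $T_m1$ makes sense and $\|T_m1\|_{L^2(\mathbb P^k\cap C'_{2R})}^2\lesssim R^{k+2}$ follows from $L^2$ boundedness tested on $\chi_{C'_{CR}}$. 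Off $C'_{2R}$, one needs $\|T_m(\chi_{C_{CR}}) \|_{L^2((C'_{2R})^c)}^2\lesssim R^{k+2}$; this follows from kernel decay and the support of $A$. Indeed, away from the support of $A$ the difference $A_m(x,t)-A_m(y,s)$ involves only $A_m(y,s)$, and the kernel is integrable at that range.

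\textbf{Step 3: Fourier identity.} $L_m$ is a Fourier multiplier, $\widehat{L_mA_m}=m(\xi,\tau)\widehat A_m$, with
\[
m(\xi,\tau)=c\,(\widehat{K^{\mathrm{sc}}}(0)-\widehat{K^{\mathrm{sc}}}(\xi,\tau))
\]
understood via regularization. A computation with the Gaussian gives $\widehat{K^{\mathrm{sc}}}(\xi,\tau)\propto(|\xi|^2+i\tau)^{1/2}$ up to constants, since integrating $e^{-u(|\xi|^2+i\tau)}u^{-3/2}$ in $u$ produces that power. Hence $|m(\xi,\tau)|\simeq\|(\xi,\tau)\|$, and therefore $\|L_mA_m\|_2\simeq\|D_{par}A_m\|_2$ by Plancherel.

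\textbf{Step 4: the nonlinear error (main obstacle).} We must show $\|E_m\|_2\lesssim\|A\|_{\mathrm{lip}}^2\,\|D_{par}A\|_2+\text{(terms bounded by } R^{(k+2)/2})$. Then, for $\|A\|_{\mathrm{lip}}$ small, the error is absorbed and $\|D_{par}A\|_2^2\lesssim R^{k+2}$ follows. The error is a sum of Calderón-commutator-type operators with kernels
\[
(A(x)-A(y))\Big(\frac{|A(x)-A(y)|^2}{t-s}\Big)^jK^{\mathrm{sc}},
\]
which are only Lip$(1,1/2)$-controlled in time. Boundedness of the parabolic commutator is known only for regular Lipschitz $A$, which is what we are trying to prove.

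To get around this, we would use an a priori/bootstrap argument. First assume $A$ is smooth, so that $\|D_{par}A\|_2<\infty$. Then we would try to establish the bound
\[
\|E_m\|_2\lesssim\|A\|_{\mathrm{lip}}^2\|D_{par}A\|_2,
\]
via a Hofmann-type commutator estimate \cite{H2} in which the BMO norm of the half time derivative is replaced by an $L^2$ quantity, using a $T1$/Coifman–Meyer expansion. Finally we would pass to general $A$ by mollification, noting that the constants are stable. This is the step where the argument is least certain.
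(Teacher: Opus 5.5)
Steps~1--3 are sound. In particular, your forward-time multiplier is $c\,(4\pi^2|\xi|^2+2\pi i\tau)^{1/2}$, whose modulus is $\approx\|(\xi,\tau)\|$. The genuine gap is Step~4, and it is the entire content of the theorem rather than a technicality. You want a \emph{global} bound $\|E_m\|_2\lesssim\|A\|_{\mathrm{lip}}^2\|D_{par}A\|_2$ for the nonlinear remainder. The pieces of $E_m$ are parabolic Calder\'on-commutator-type operators with Lip$(1,1/2)$ coefficients. By Hofmann's characterization \cite{H2}, their $L^2$ theory is tied to $D_t^{1/2}A\in BMO$, which is essentially the conclusion of Theorem~\ref{mainthm2}. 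A $T1$ argument would need BMO/Carleson information of exactly that kind. For smooth compactly supported $A$ that norm is finite, but nothing in your hypotheses makes it small enough to absorb. So ``replacing the BMO norm by an $L^2$ quantity'' is not an available estimate.

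What can actually be proved is a \emph{per-scale} bound, for instance $\|\mathcal R_{(j)}1-\mathcal C_{(j)}1\|_2\lesssim\epsilon(\beta_{2,j}(\Gamma)+\alpha_j(\Gamma))$ (Lemma~\ref{commapprox} summed over $Q\in\mathbb D_j$). Turning this into your global estimate would require almost-orthogonality of the nonlinear pieces across widely separated scales, including mixed pairings $\langle\mathcal R_{(i)}1,\mathcal C_{(j)}1\rangle$. Your proposal supplies nothing for this.

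The paper never bounds the remainder globally. It expands $\|\mathcal R_\ell1\|_2^2=\sum_{i,j\le\ell}\langle\mathcal R_{(i)}1,\mathcal R_{(j)}1\rangle$ and proceeds in four moves:
\begin{itemize}
\item For $|i-j|\le N_0$ it replaces $\mathcal R$ by $\mathcal C$ using only the per-scale bound, at cost $N_0\epsilon\sum_Q\beta_2(5Q)^2|Q|$ (Proposition~\ref{errorest}).
\item It discards the off-diagonal commutator terms, since $\langle\mathcal C_{(i)}1,\mathcal C_{(j)}1\rangle\ge0$, while $\sum_j\|\mathcal C_{(j)}1\|_2^2\gtrsim\|D_{par}A\|_2^2$ (Proposition~\ref{fourierest}).
\item It bounds $\sum_{|i-j|>N_0}|\langle\mathcal R_{(i)}1,\mathcal R_{(j)}1\rangle|$ by $2^{-cN_0}\sum_Q\alpha(Q)^2\mu(Q)$ through $\alpha$-number quasi-orthogonality (Proposition~\ref{offdiagprop}).
\item It closes with $\sum\alpha^2\lesssim\sum\beta_2^2\lesssim\|D_{par}A\|_2^2$ (Proposition~\ref{alphabetagraph}, Lemma~\ref{vectorDorronsoro}), taking $N_0$ large and then $\epsilon$ small.
\end{itemize}
Both the positivity and the quasi-orthogonality use a kernel even in time: the scale-wise multiplier must be real and nonnegative, and the skew-symmetry \eqref{skewkernel} is needed. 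This is why the one-sided hypothesis is first converted to the time-symmetric one (Subsection~\ref{symreduction}). With your forward-time kernel the scale-wise multipliers are complex and the positivity is lost.

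Finally, your closing remark that one can ``pass to general $A$ by mollification'' because the constants are stable hides a second real step. The hypothesis concerns $\Gamma_A$, and Riesz bounds on $\Gamma_{A*\eta_\lambda}$ uniform in $\lambda$ do not follow automatically. The paper proves them in Proposition~\ref{Blabounded}, using a vertical shift, comparison kernels, localization at scale $\lambda$ to regular Lip$(1,1/2)$ pieces, and Theorem~\ref{highcodimBHHLN}. This step cannot be skipped: for merely Lip$(1,1/2)$ $A$ the quantity $\|D_{par}A\|_2$ need not be finite a priori, so absorption cannot be run on $A$ itself.
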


The proof of the theorem above uses ideas from \cite{To1} and \cite{To2}. In \cite{To2}, the author shows that compactly supported Lipschitz functions with small Lipschitz constant have the property that the $L^2$ norm of their gradient is controlled by the Riesz transform bounds of the graph. This result of \cite{To2}, in turn, relies on tools (relating to the so-called ``$\alpha$-numbers'') developed in \cite{To1}. The novel step in the proof of Theorem~\ref{localdpara-intro} lies in showing that the smoothed graphs satisfy Riesz transform bounds that are uniform in the smoothing parameter; see Subsection~\ref{smoothing}. For Theorem~\ref{mainthm}, the corresponding novel step is to show that the localized graphs retain Riesz transform bounds; see Subsection~\ref{localization}.

To deduce Theorem~\ref{mainthm}, we fix a surface ball of $\Gamma$ and construct a graph that agrees with $\Gamma$ on this ball and whose defining function, after subtracting a constant, is supported in a fixed larger cylinder in $\mathbb P^k$. Proposition~\ref{localsios} gives bounds for the time-symmetric Riesz transforms in the transversal directions on the graph of the localized function. After the reduction in Subsection~\ref{symreduction}, the proof of Theorem~\ref{localdpara-intro} uses only these bounds. Applying this proof and Lemma~\ref{vectorDorronsoro}, we obtain the Carleson estimate for the $\beta_2$ square function over the chosen surface ball.  The two graphs agree on the original surface ball, so the same estimate holds there for $\Gamma$. Since the ball was arbitrary, $\Gamma$ is parabolic uniformly rectifiable.

The proof of Theorem~\ref{localdpara-intro} also requires an estimate for interactions between different scales of the transform. We prove this estimate in Proposition~\ref{offdiagprop}, using the $\alpha$-numbers. These numbers measure, at a given position and scale, how well a measure is approximated by a constant multiple of flat measure on a vertical plane. For $1\leq k\leq n-1$, we prove the estimate for general $(k+2)$-ADR measures. We note that we only obtain the results of Section~\ref{last} for kernels that are even in time.  This is fine for our purposes because, in Section~\ref{graphregsection}, we show that boundedness of the parabolic Riesz transform implies boundedness of a time-symmetrized Riesz transform with kernel even in time, and we apply the estimates of Section~\ref{last} to this time-symmetrized transform. Proposition~\ref{offdiagprop} also gives the following bound for the smooth truncations of $T\mu$.

\begin{theorem}\label{alphatosio}
Suppose that $\mu$ is a $(k+2)$-ADR measure on $\mathbb P^n$, $k \in \{1, \dots, n-1\}$, and let $T$ be a $(k+2)$-dimensional parabolic Calder\'on-Zygmund operator whose kernel $K$ is smooth away from the origin, is odd in the spatial variables and even in time, and satisfies
$$
|\nabla^j\partial_t^q K(X,t)|\leq\frac{C_{j,q}}{\|(X,t)\|^{k+2+j+2q}}
$$
for all integers $j,q\geq0$ and all $(X,t)\in\mathbb P^n\setminus\{0\}$. For the finite smooth truncations
$T_{a,m}\mu:=\sum_{j=a}^{m}T_{(j)}\mu$ defined in
Subsection~\ref{dyadicgriddecomp}, we have, uniformly in integers $a\leq m$,
\begin{align}\label{smoothtruncbound}
\|T_{a,m}\mu\|^2_{L^2(\mu)}
\lesssim
\sum_{Q\in\mathbb D}\alpha(Q)^2\mu(Q).
\end{align}
If the right-hand side is finite, then for each $m\in\mathbb Z$ the limit
\[
T_m\mu:=\lim_{a\to-\infty}T_{a,m}\mu
\]
exists in $L^2(\mu)$, and \eqref{smoothtruncbound} also holds with
$T_{a,m}\mu$ replaced by $T_m\mu$.
\end{theorem}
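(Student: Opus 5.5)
The plan is to follow Tolsa's $\alpha$-number approach from \cite{To1}, applied to the dyadic pieces $T_{(j)}\mu$ of the smooth truncation, and to deduce the bound from Proposition~\ref{offdiagprop}. Expanding the square gives
\[
\|T_{a,m}\mu\|_{L^2(\mu)}^2=\sum_{a\le i,j\le m}\langle T_{(i)}\mu,T_{(j)}\mu\rangle_{L^2(\mu)} .
\]
The first key point is a local estimate at a single scale. Each $T_{(j)}$ has a smooth kernel $K\varphi_j$ supported at parabolic scale $2^{-j}$. Because $K$ is odd in the spatial variables, $T_{(j)}$ annihilates $c\,\mathcal H^{k+2}|_L$ for every vertical plane $L$ near the point. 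The reason is that the integral of $K\varphi_j$ against flat measure on a vertical plane through the point vanishes by spatial oddness (this also covers the case where the point lies off the plane, after an even/odd time consideration). For $x\in Q$ with $\ell(Q)\approx 2^{-j}$, we then write $T_{(j)}\mu(x)=\int K\varphi_j(x-y)\,\d(\mu-c_QL_Q)(y)$. Testing against the Lipschitz function $K\varphi_j(x-\cdot)$, whose Lipschitz norm is about $\ell(Q)^{-(k+3)}$, on a dilate of $Q$ gives $|T_{(j)}\mu(x)|\lesssim\alpha(\lambda Q)$ by the definition of $\alpha$. If $L_Q$ does not pass through $x$, the offset is controlled by $\alpha$ as well, since $\mu$ is ADR and close to flat. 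This yields the diagonal bound
\[
\sum_j\|T_{(j)}\mu\|_{L^2(\mu)}^2\lesssim\sum_Q\alpha(\lambda Q)^2\mu(Q)\lesssim\sum_Q\alpha(Q)^2\mu(Q).
\]
The last step uses the standard comparability of $\alpha$-numbers of nearby cubes, which is a bounded-overlap argument.

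The second key point is the off-diagonal terms with $i<j$, meaning coarse scale $i$ and fine scale $j$. Here I invoke Proposition~\ref{offdiagprop}, which I expect to give a decay estimate of the form
\[
|\langle T_{(i)}\mu,T_{(j)}\mu\rangle|\lesssim 2^{-\epsilon|i-j|}\Big(\sum_{Q\in\mathbb D_i}\alpha(Q)^2\mu(Q)\Big)^{1/2}\Big(\sum_{Q\in\mathbb D_j}\alpha(Q)^2\mu(Q)\Big)^{1/2}.
\]
It might instead give a variant involving $\alpha$ over intermediate scales, which can be summed in the same way. The mechanism behind such a bound is that $T_{(i)}\mu$ is smooth at scale $2^{-i}$. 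Its oscillation on a cube of side $2^{-j}$ is then $2^{i-j}$ times its size, while the pairing $\int T_{(j)}\mu\cdot g\,\d\mu$ with $g$ nearly constant on $2^{-j}$ cubes gains from the cancellation of $T_{(j)}$ against flat measure. The evenness in time is used precisely here, so that the adjoint of $T_{(j)}$ has the same antisymmetric structure. Summing the geometric factor with Schur's test gives \eqref{smoothtruncbound} uniformly in $a\le m$.

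The third step is the limit. For $a'<a$, the same argument applied to $\sum_{j=a'}^{a-1}T_{(j)}\mu$ bounds $\|T_{a',m}\mu-T_{a,m}\mu\|_{L^2(\mu)}^2$ by the corresponding sum over cubes of side $\gtrsim 2^{-a}$ and the Schur tails. This tends to $0$ as $a\to-\infty$ when the right-hand side is finite, so the sequence is Cauchy in $L^2(\mu)$ and the bound passes to the limit. I expect the main obstacle to be the off-diagonal decay, specifically controlling the coarse factor by $\alpha$-numbers at the coarse scale rather than by a crude $L^\infty$ bound. The restriction $k\ge1$ likely enters there, because the flat measure needs a spatial direction for oddness to give cancellation.
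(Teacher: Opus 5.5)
Your overall plan is the paper's: expand the square, bound the near-diagonal terms by a one-scale estimate, and take the far off-diagonal terms from Proposition~\ref{offdiagprop}. However, the diagonal step fails as written.

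Spatial oddness gives $T_{(j)}\sigma_L(x)=0$ only for $x\in L$; this is Lemma~\ref{TQintbd}(a). If $x$ is at distance $d\ll\ell(Q)$ from $L$, then $T_{(j)}\sigma_L(x)\approx d\int_L\partial_\nu K_{(j)}(x_0-y)\,\d\sigma_L(y)$, where $x_0$ is the projection of $x$. Since $\partial_\nu K$ is spatially even, nothing cancels; for the time-symmetrized Riesz kernel $J^m$ this integral is positive and of order $\ell(Q)^{-1}$. Time parity plays no role here.

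So the true pointwise bound is $|T_{(j)}\mu(x)|\lesssim\alpha(Q)+\dist(x,L_Q)/\ell(Q)$, and the offset is \emph{not} controlled by $\alpha(Q)$. The number $\alpha(Q)$ bounds $\dist(\cdot,L_Q)$ only in $L^1(\mu)$ (Lemma~\ref{betalphacomp}). ADR then yields merely $\dist(x,L_Q)\lesssim\alpha(Q)^{1/(k+3)}\ell(Q)$ pointwise. The best one-scale $L^2$ bound is therefore $\int_Q|T_{(j)}\mu|^2\,\d\mu\lesssim\alpha(Q)\mu(Q)$, which is linear in $\alpha$ and not summable.

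Your inequality $\int_Q|T_{(j)}\mu|^2\,\d\mu\lesssim\alpha(\lambda Q)^2\mu(Q)$ is in fact false. Put a Lipschitz bump of width $w\ll2^{-\omega}\ell(Q)$ and height $h\lesssim w$ on a flat plane. Comparing with the plane gives $\alpha(\lambda Q)\lesssim(h/\ell(Q))(w/\ell(Q))^{k+2}$. For $x$ on top of the bump, the bump lies in the hole of the annulus supporting $K_{(j)}(x-\cdot)$, so $|T_{(j)}\mu(x)|\approx h/\ell(Q)$ on a set of measure $\approx w^{k+2}$. The two sides then differ by a factor $(\ell(Q)/w)^{k+2}$. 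No comparability of $\alpha$-numbers of nearby cubes repairs this, because the coarse piece genuinely sees fine-scale geometry.

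The missing idea is the multiscale quantity $\mathcal A(Q)$ of \eqref{mathcalA}:
\begin{itemize}
\item Telescoping best planes along the dyadic chain (Lemma~\ref{toalphalemma}) gives $\dist(x,L_Q)\lesssim\sum_{P\ni x,\,P\subseteq Q}\alpha(P)\ell(P)$ (Lemma~\ref{distLQalphabd}).
\item Cauchy--Schwarz with weights $\ell(P)/\ell(Q)$ gives $\int|T_Q\mu|^2\,\d\mu\lesssim\mathcal A(Q)^2\mu(Q)$ (Lemma~\ref{TQintbd}(c)).
\item Only after exchanging the sums over $Q\supseteq P$ (Proposition~\ref{calAQ}) do you obtain $\sum_{j}\|T_{(j)}\mu\|_{L^2(\mu)}^2\lesssim\sum_Q\alpha(Q)^2\mu(Q)$.
\end{itemize}

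The same one-scale heuristic underlies two other claims of yours. Your guessed pairwise form of Proposition~\ref{offdiagprop} reduces, at $i=j$, to the false bound above. Your claim that the Cauchy tail is controlled by $\alpha$'s of cubes of side $\gtrsim2^{-a}$ has the same defect. Neither is needed:
\begin{itemize}
\item Proposition~\ref{offdiagprop} is already the summed estimate $\sum_{|i-j|>N_0}|\langle T_{(i)}\mu,T_{(j)}\mu\rangle|\lesssim2^{-cN_0}\sum_Q\alpha(Q)^2\mu(Q)$.
\item The band $|i-j|\le N_0$ follows from the corrected diagonal bound by Cauchy--Schwarz (Corollary~\ref{finitebanddiag}).
\item Hence $\sum_{i,j\in\mathcal J}|\langle T_{(i)}\mu,T_{(j)}\mu\rangle|$ converges absolutely, and $\|T^{\mathcal J}_{a,m}\mu-T^{\mathcal J}_{b,m}\mu\|_{L^2(\mu)}^2\le\sum_{i,j<b}|\langle T_{(i)}\mu,T_{(j)}\mu\rangle|\to0$ as $b\to-\infty$, with no Schur test.
\end{itemize}

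Finally, you do not treat the case $\diam(E)<\infty$. Then $\mathbb D$ has no generations below $j_0$, yet up to $\omega$ pieces $T_{(j)}\mu$ with $j\notin\mathcal J$ can be nonzero on $E$. The paper bounds these by $\mu(E)$ and shows $\mu(E)\lesssim\alpha(Q_0)^2\mu(Q_0)$ for the top cube $Q_0=E$; see \eqref{topalphamass} and \eqref{outsideJbound}.
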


The result of Section~\ref{last} is used as an input in the proof of the main result of Section~\ref{graphregsection}. We defer its proof, which is long and technical, to better highlight the geometric and harmonic analytic intuition behind the rest of the proof of Theorem~\ref{mainthm}.

It is not difficult to extend the codimension-one result of \cite[Theorem~5]{BHHLN3} to regular Lip-$(1,1/2)$ graphs in arbitrary codimension, including graphs over $\mathbb P^0$. We give the details in Appendix~\ref{bhhlnappendix}. This result is needed for the smoothing and localization arguments in Subsections~\ref{smoothing} and~\ref{localization}.

\begin{theorem}\label{highcodimgeneralSIO}
Let $0\leq k\leq n-1$. There is an integer $N=N(n,k)$ such that the
following holds. Suppose that $F:\mathbb P^k\to\mathbb R^{n-k}$ is
regular Lip-$(1,1/2)$ and that
$K\in C^N(\mathbb P^n\setminus\{0\};\mathbb R)$ satisfies
\[
K(-X,t)=-K(X,t),
\qquad
|\nabla_X^j\partial_t^qK(X,t)|
\leq C_{j,q}\|(X,t)\|^{-k-2-j-2q}
\]
for all integers $j,q\geq0$ with $j+q\leq N$ and all $(X,t)\neq0$.
Define
\[
T_\ell^F f(x,t):=
\int_{\mathbb P^k}
K(x-y,F(x,t)-F(y,s),t-s)\Psi_\ell(x-y,t-s)f(y,s)\,\d y\,\d s,
\]
where $\Psi_\ell$ is the smooth cutoff used in \eqref{smoothriesz}.
Then
\[
\sup_{\ell\in\mathbb Z}\|T_\ell^F f\|_{L^2(\mathbb P^k)}
\leq C\|f\|_{L^2(\mathbb P^k)},
\]
where $C$ depends only on $n$, $k$, the regular Lip-$(1,1/2)$ constants
of $F$, and the constants $C_{j,q}$. The kernel need not be homogeneous
or even in time.
\end{theorem}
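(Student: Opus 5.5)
The plan is to follow the strategy of \cite[Theorem~5]{BHHLN3}, which treats the codimension-one case, and check that each step uses only scalar estimates applied componentwise to $F=(F_1,\dots,F_{n-k})$. The case $k=0$, graphs over the time line, is handled the same way with $x$-variables suppressed.

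\textbf{Step 1: reduce to a smooth truncated kernel of good form.} Write the kernel as
\[
K_\ell(x-y,t-s;\Delta):=K(x-y,\Delta,t-s)\Psi_\ell(x-y,t-s),
\qquad \Delta=F(x,t)-F(y,s).
\]
Since $|\Delta|\lesssim\|(x-y,t-s)\|$, the size and smoothness bounds on $K$ make $T_\ell^F$ a standard parabolic Calder\'on--Zygmund kernel on $\mathbb P^k$. The bounds are uniform in $\ell$, using $\|F\|_{\mathrm{lip}}<\infty$ and the fact that $\Psi_\ell$ is a smooth cutoff adapted to scale $2^{-\ell}$. By the parabolic $T1$ theorem, uniform $L^2$ bounds then follow from two facts: the weak boundedness property, and $T_\ell^F1,(T_\ell^F)^*1\in BMO_p$ uniformly in $\ell$.

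\textbf{Step 2: verify the $T1$ conditions.} Weak boundedness is immediate from the size bounds, since $K$ is integrable against normalized bumps after truncation. For $T_\ell^F1$ I will expand in $\Delta$. Oddness in $X$ gives
\[
K(x-y,\Delta,t-s)=K(x-y,0,t-s)+\int_0^1 \nabla_Y K(x-y,\theta\Delta,t-s)\cdot\Delta\,\d\theta .
\]
The first term is a fixed odd parabolic CZ kernel on $\mathbb P^k$, which is bounded and annihilates constants. For the second term I will run a further Taylor/Calder\'on-type expansion in $\Delta$. The term linear in $\Delta$ is a parabolic commutator $[\,\cdot\,,F_m]$-type operator with a smooth kernel. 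Hofmann \cite{H2} shows these are bounded exactly when $F$ is regular Lip-$(1,1/2)$, i.e.\ when $D_{1/2}^tF_m\in BMO_p$. Higher-order terms are multilinear commutators in the components $F_m$. I plan to control them by the standard Coifman--McIntosh--Meyer-type polynomial growth argument, as done in \cite{BHHLN3}. The number of derivatives needed to bound the remainder determines $N(n,k)$.

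\textbf{Step 3: the vector-valued issue and the main obstacle.} The only genuinely new point beyond \cite{BHHLN3} is that $\Delta$ is vector-valued. Multilinear expansions therefore produce products $\Delta_{m_1}\cdots\Delta_{m_j}$ of different components, contracted against $\nabla_Y^jK$. I will handle these by polarization or by treating each mixed commutator separately. Hofmann's $T1$-based argument for one regular function $F_m$ works with the pointwise tuple, since the BMO estimate for $D_{1/2}^tF$ holds componentwise.

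I expect the main obstacle to be the lack of homogeneity and of time-evenness in $K$. The argument of \cite{BHHLN3} uses a Fourier/spherical-harmonic expansion. I will replace it by a direct $T1$ verification, estimating $T_\ell^F1$ in $BMO_p$ via a local/far splitting. Near a cylinder, the local part is controlled by the size bound. The far part uses the $C^N$ smoothness and the estimate $|F(x,t)-F(y,s)-\text{(affine approx.)}|$, which is controlled in $L^2$-average by the regular Lip-$(1,1/2)$ Carleson condition, namely parabolic Dorronsoro-type estimates. Summing over scales with the Carleson bound gives the $BMO_p$ control uniformly in $\ell$.
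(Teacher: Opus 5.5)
Your Step 2 is where the argument breaks, and it breaks for several reasons.

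\textbf{The Taylor expansion gains nothing.} The kernel is only $C^N$, and the theorem does not assume $\|F\|_{lip}$ small. After $J$ terms of the expansion in $\Delta$, the remainder is $\int_0^1\frac{(1-\theta)^{J-1}}{(J-1)!}\nabla_Y^JK(x-y,\theta\Delta,t-s)[\Delta^{\otimes J}]\,\mathrm d\theta$. This is again a kernel depending nonlinearly on the graph, of the same size $\|F\|_{lip}^J\|(x-y,t-s)\|^{-k-2}$ as the original, so bounding it is the original problem. A Coifman--McIntosh--Meyer scheme needs an analytic kernel and a convergent series of multilinear commutators with controlled growth; with a $C^N$ kernel and large Lipschitz constant you have neither.

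\textbf{The $T1$ inputs are not immediate.} Because $K$ is not even in time, $K(x-y,F(x,t)-F(y,s),t-s)$ is not antisymmetric. Also $\|(x,t)\|^{-k-2}$ is not locally integrable on $\mathbb P^k$. So neither the weak boundedness property nor the local part of $T_\ell^F1$ on a cylinder is controlled by size bounds uniformly in $\ell$; size alone gives only $r^{k+2}\log(r2^\ell)$. Controlling them needs $L^2$ bounds for the local piece, which is circular.

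\textbf{The case $k=0$ is not ``the same with $x$ suppressed.''} There is no horizontal variable to supply cancellation, and the codimension-one input is only available for $k\ge1$.

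The missing idea is that you do not need to reprove the scalar theory. \cite[Theorem~5]{BHHLN3}, quoted as Theorem~\ref{BHHLNcalderon}, already treats arbitrary spatially odd $C^{N_*}$ kernels, with no homogeneity or time-evenness, on scalar regular Lip-$(1,1/2)$ graphs, with bound $C(G)\mathfrak C_{N_*}(K)$. The obstacle you singled out is therefore not one. The only new issue is codimension, and the paper removes it by a Fourier transform in the transversal variable.

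Since $|F(x,t)-F(y,s)|\le L\rho(x-y,t-s)$ with $L=1+2\|F\|_{lip}$, one may insert $\psi(z/(L\rho))$ and write
\[
\psi(z/(Lr))K(x,z,t)=\int_{\mathbb R^{n-k}}\bigl[\cos(2\pi\zeta\cdot z/r)\operatorname{Re}H_\zeta(x,t)-\sin(2\pi\zeta\cdot z/r)\operatorname{Im}H_\zeta(x,t)\bigr]\,\mathrm d\zeta .
\]
Here $M$ integrations by parts give $|\partial_x^\alpha\partial_t^qH_\zeta|\lesssim(1+|\zeta|)^{-M}\rho^{-k-2-|\alpha|-2q}$.

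On the graph, each piece sees $z$ only through $\zeta\cdot(F(x,t)-F(y,s))=|\zeta|\,(F_\zeta(x,t)-F_\zeta(y,s))$. The function $F_\zeta=(\zeta/|\zeta|)\cdot F$ is scalar, and its regular Lip constants are uniform in $\zeta$. The odd codimension-one kernels $K_\zeta=\chi(z/(Lr))\operatorname{Re}[e^{2\pi i|\zeta|z/r}H_\zeta(x,t)]$ satisfy $\mathfrak C_{N_*}(K_\zeta)\lesssim(1+|\zeta|)^{-(n-k)-1}$. The scalar theorem plus Minkowski's inequality in $\zeta$ then finishes, and this is where $N=2N_*+n-k+1$ comes from.

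Your polarization remark points toward directional functions $\omega\cdot F$. However, it only makes sense inside a multilinear expansion that is not available here. The Fourier superposition is what makes the directional reduction work for a general $C^N$ kernel.

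For $k=0$, the paper lifts to $\mathbb P^1$ with $F(x,t)=A(t)$ and $\widetilde K(x,z,t)=\rho(z,t)^{-1}\eta(x/\rho(z,t))K(z,t)$, whose $x$-integral is $K$. It compares the two cutoffs by Schur's test. It then recovers $T_\ell^A$ by testing against $\chi_R(x)f(t)$ and letting $R\to\infty$.
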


\noindent{\fontencoding{OT1}\fontfamily{txr}\bfseries Acknowledgments.} The author thanks Simon Bortz and Steve Hofmann for helpful conversations.

\noindent{\fontencoding{OT1}\fontfamily{txr}\bfseries AI assistance.} All mathematical ideas in this paper are due to the author. AI tools were used under the author's direction to assist with drafting and revising the manuscript, improving the exposition, and checking notation and references. The author takes full responsibility for the mathematical content and the final version of the paper.

\section{\texorpdfstring{$L^2$ Boundedness of The Riesz Transforms Implies Regularity of Lip$(1,1/2)$ Graphs}{L2 Boundedness of The Riesz Transforms Implies Regularity of Lip(1,1/2) Graphs}}\label{graphregsection}

\subsection{Notation, Setting}\label{graphnotation}
Our ambient space is $\mathbb P^n = \mathbb{R}^n_{sp} \times \mathbb{R}$.  In the proofs preceding Subsection~\ref{timeonlycase}, we fix an integer $k$, $1 \leq k \leq n-1$, and identify
$\mathbb P^k$ with the plane $\mathbb{R}^{k}_{sp} \times \{0_{n-k}\} \times \mathbb{R}$.  At times we will denote points of $\mathbb P^n$ as ordered pairs $(X,t)$,
where $X$ is spatial and $t$ is the time variable. Throughout, $\nabla$ denotes the gradient with respect to the full spatial variable $X$ only; subscripts such as $\nabla_x$ are used when differentiation is restricted to the horizontal spatial variables.  At other times, we will make the transversal variable visible and write $(x,z,t)$ instead of $(X,t)$, where $x\in \mathbb R^k$ is the horizontal spatial variable and $z\in \mathbb R^{n-k}$ is the transversal variable, so that $X=(x,z)\in \mathbb R^k\times \mathbb R^{n-k}$.
When convenient, we also write $z=(z_1,\dots,z_{n-k})$ and $X=(x_1,\dots,x_k,z_1,\dots,z_{n-k})$.
We use $\mathbb R^{k+1}$ to denote the Fourier frequency space, with variables $(\xi,\tau)$.
We will let $C_r(X,t)$ denote the cylinder
$$C_r(X,t) = \{(Y,s) \in \mathbb P^n: |X-Y|<r, \, |t-s|<r^2 \}.$$
Here $|X|$ denotes the standard Euclidean length.  We will also consider parabolic cylinders defined on the horizontal plane $\mathbb P^k$:
$C'_r(x,t) = \{(y,s)=(y,0_{n-k},s) \in \mathbb P^k=\mathbb R^k\times \{0_{n-k}\}\times \mathbb R: |x-y|<r,\ |t-s|<r^2\}.$ 
We also write
\[
Q'_r(x,t)
:=
\{(y,s)\in\mathbb P^k:
|x_j-y_j|<r \text{ for }1\leq j\leq k,\ |t-s|<r^2\}
\]
for the centered parabolic cube of spatial radius $r$ and temporal radius $r^2$.
For $\lambda>0$, we define its parabolic dilate by
\[
\lambda Q'_r(x,t):=Q'_{\lambda r}(x,t).
\]

Finally, given $(X,t),(Y,s) \in \mathbb P^n$, we use the parabolic norm and parabolic distance
$$\|(X,t)\| = |X| + |t|^{1/2},$$
$$d_p\big((X,t),(Y,s)\big) =\dist\big((X,t),(Y,s)\big)= \|(X-Y,t-s)\|.$$
Throughout Section \ref{graphregsection}, we work with a fixed function $A=(A_1,\dots,A_{n-k}): \mathbb P^k \rightarrow \mathbb R^{n-k}$, and let its graph be $\Gamma = \Gamma(A) = \{(x,A(x,t),t)\}_{(x,t) \in \mathbb P^k}\subseteq \mathbb P^n$.  We will always
assume that $A$ is Lip-$(1,1/2)$, by which
we mean that there is a constant $C$ such that for every pair $(x,t),(y,s) \in \mathbb P^k$ there holds
$$|A(x,t)-A(y,s)| \leq C\|(x,t)-(y,s)\|.$$
The smallest constant $C$ such that the above inequality holds uniformly over all pairs will be denoted by $\|A\|_{lip}$, and is called
``the Lip-$(1,1/2)$ constant of $A$."  We will also refer to this as the ``Lip-$(1,1/2)$ constant for $\Gamma$."  We will also apply these definitions to functions other than $A$.  We fix a (small) parameter $\epsilon>0$, and always assume that $\|A\|_{lip} \leq \epsilon \ll1$.\\

We work with the standard parabolic dyadic grid on
$\mathbb P^k=\mathbb R^k\times\mathbb R$. For $j\in\mathbb Z$,
$m\in\mathbb Z^k$, and $q\in\mathbb Z$, set
\[
Q_{j,m,q}
:=
2^{-j}\bigl(m+[0,1)^k\bigr)
\times
2^{-2j}\bigl(q+[0,1)\bigr),
\]
and define
\[
\mathbb D_j
:=
\{Q_{j,m,q}:m\in\mathbb Z^k,\ q\in\mathbb Z\},
\qquad
\mathbb D
:=
\bigcup_{j\in\mathbb Z}\mathbb D_j.
\]
Thus $\mathbb D_j$ is a partition of $\mathbb P^k$. For
$Q=Q_{j,m,q}\in\mathbb D_j$, set $\ell(Q):=2^{-j}$. The spatial side
length of $Q$ is $\ell(Q)$ and its temporal side length is $\ell(Q)^2$.
Every cube has a unique parent in $\mathbb D_{j-1}$ and
$2^{k+2}$ children in $\mathbb D_{j+1}$.

The center of $Q=Q_{j,m,q}$ is
\[
(z_Q,\tau_Q)
=
\left(
2^{-j}\left(m+\frac12(1,\ldots,1)\right),
2^{-2j}\left(q+\frac12\right)
\right).
\]
Given $\lambda\geq1$, we define
\begin{align}\label{dilates}
\lambda Q
:=
\left\{
(x,t)\in\mathbb P^k:
|x-z_Q|_\infty\leq\frac{\lambda\ell(Q)}2,
\quad
|t-\tau_Q|\leq\frac{\lambda^2\ell(Q)^2}{2}
\right\}.
\end{align}
Given $\Omega\subseteq\mathbb P^k$, we also write
\[
\mathbb D(\Omega):=\{Q\in\mathbb D:Q\subseteq\Omega\}.
\]
\begingroup
For $Q\in\mathbb D$, we write $|Q|$ for the Lebesgue measure of $Q$ in
$\mathbb P^k$. Thus
\[
|Q|=\ell(Q)^{k+2}.
\]
\endgroup

Recall that
$d_p((x,t),(y,s))=|x-y|+|t-s|^{1/2}$, where the spatial norm is Euclidean.
Consequently,
\[
\sup_{(x,t)\in Q}
d_p\bigl((x,t),(z_Q,\tau_Q)\bigr)
=
\left(\frac{\sqrt{k}}2+\frac1{\sqrt2}\right)\ell(Q),
\qquad
\diam_p(Q)
=
(\sqrt{k}+1)\ell(Q).
\]
Throughout this section, $\diam(Q)$ denotes this parabolic diameter.

Given $Q\in\mathbb D$, we define the ambient parabolic cube
\[
C_Q
:=
\left\{
(X,t)\in\mathbb P^n:
\left|X-\bigl(z_Q,A(z_Q,\tau_Q)\bigr)\right|_\infty
\leq 2\ell(Q),\quad
|t-\tau_Q|\leq 8\ell(Q)^2
\right\}.
\]
Thus $C_Q$ has spatial side length $4\ell(Q)$ and temporal side
length $16\ell(Q)^2$. For $\lambda>0$, we write
\[
\lambda C_Q
:=
\left\{
(X,t)\in\mathbb P^n:
\left|X-\bigl(z_Q,A(z_Q,\tau_Q)\bigr)\right|_\infty
\leq 2\lambda\ell(Q),\quad
|t-\tau_Q|\leq 8\lambda^2\ell(Q)^2
\right\}.
\]
We use the letter $C$ for the ambient cubes, reserving $Q$ for the cubes of $\mathbb D$.

Let $\pi:\mathbb P^n\to\mathbb P^k$ denote the projection $\pi(x,z,t):=(x,t)$.
We set $C_Q^\Gamma:=C_Q\cap\Gamma$,
$(\lambda C_Q)^\Gamma:=(\lambda C_Q)\cap\Gamma$, and
$Q^\Gamma:=\pi^{-1}(Q)\cap\Gamma$. Since
$\|A\|_{lip}\leq\epsilon\ll1$, we have
\[
Q^\Gamma\subseteq(\tfrac12 C_Q)^\Gamma\subseteq C_Q^\Gamma,
\qquad
\pi(C_Q)=4Q\subseteq5Q.
\]

We let $\d x\,\d t$ denote the standard Lebesgue measure on $\mathbb P^k$. Given a Borel set \(\Sigma\subset\mathbb P^n\), we define its \textit{parabolic slice measure} \(\sigma_\Sigma\) by
\[
\sigma_\Sigma(E)
:=
\int_{\mathbb R}
\mathcal H^k\big(\{X\in\mathbb R^n:(X,t)\in E\cap\Sigma\}\big)
\,\d\mathcal H^1(t)
\]
for every Borel set \(E\subset\mathbb P^n\). This measure has parabolic homogeneous dimension \(k+2\). We also write \(\mathcal H_p^\eta\) for the \(\eta\)-dimensional Hausdorff measure obtained from the usual Hausdorff construction using the parabolic metric \(d_p\). See \cite[Remark~2.8 and Appendix~B]{BHHLN5} for a discussion of the relationship between \(\sigma_\Sigma\) and the standard parabolic Hausdorff measure \(\mathcal H_p^{k+2}|_\Sigma\).

Let
\[
\Phi(x,t):=(x,A(x,t),t),
\qquad
\mu_0:=\Phi_\#(dx\,dt).
\]
Since \(\Phi\) is parabolic bi-Lipschitz, \(\mu_0\) is a \((k+2)\)-ADR measure with support \(\Gamma\). \\

For the proofs of Theorems~\ref{mainthm2} and~\ref{localdpara}, it is enough to take \(\mu=\mu_0\). Indeed, if \(\mu\) is any \((k+2)\)-ADR measure with \(\supp(\mu)=\Gamma\), then
\[
\d\mu=w\,\d\mu_0,
\qquad
0<c\leq w\leq C<\infty
\quad \mu_0\text{-a.e.}
\]
Writing \(\mathcal R_{\nu,\varepsilon}\) for a truncation of the Riesz transform formed using the measure \(\nu\), we have
\[
\mathcal R_{\mu_0,\varepsilon}f
=
\mathcal R_{\mu,\varepsilon}\left(\frac{f}{w}\right),
\qquad
\mathcal R_{\mu,\varepsilon}f
=
\mathcal R_{\mu_0,\varepsilon}(fw).
\]
Consequently,
\[
\|\mathcal R_{\mu_0,\varepsilon}f\|_{L^2(\mu_0)}
\lesssim
\left\|\mathcal R_{\mu,\varepsilon}\left(\frac{f}{w}\right)\right\|_{L^2(\mu)}
\lesssim
\left\|\frac{f}{w}\right\|_{L^2(\mu)}
\lesssim
\|f\|_{L^2(\mu_0)}.
\]
The same calculation with the roles of \(\mu\) and \(\mu_0\) reversed gives the converse implication. Since the two identities hold for every truncation parameter, the same argument also applies to the corresponding maximal truncations.

Accordingly, throughout the proofs of Theorems~\ref{mainthm2} and~\ref{localdpara}, we write \(\mu=\mu_0\). Thus, for every integrable function \(F\) on \(\Gamma\),
\[
\int_\Gamma F(X,t)\,\d\mu(X,t)
=
\int_{\mathbb P^k}
F(x,A(x,t),t)\,\d x\,\d t.
\]
\begingroup
In particular, for every $Q\in\mathbb D(\mathbb P^k)$,
\[
\mu(Q^\Gamma)=|Q|.
\]
\endgroup
The parabolic slice measure satisfies
\[
\d\sigma_\Gamma=j_A\,\d\mu_0,
\qquad
j_A(x,t)
:=
\sqrt{\det\!\left(I_k+(\nabla_xA(x,t))^T\nabla_xA(x,t)\right)}.
\]
Only the spatial gradient of \(A\) enters \(j_A\), since the slice measure is defined by taking \(k\)-dimensional Hausdorff measure on each fixed-time slice.

\begin{definition}\label{verticalplanes}
Let $\tilde P \subset \mathbb R^n$ be a plane of dimension $k$.  We call $P = \tilde P \times \mathbb R \subset \mathbb P^n$ a \textit{$k$-dimensional vertical plane}.
\end{definition}

For a vertical affine plane $L=(a+V)\times\mathbb R$, where
$V\subset\mathbb R^n$ is a linear $k$-plane, we write
$L^\perp:=V^\perp\times\{0\}$ and let $\pi_{L^\perp}$ denote the linear
Euclidean orthogonal projection onto this normal space.

\begin{definition}\label{alphadefs}

For locally finite Borel measures $\mu$ and $\nu$ on $\mathbb P^n$,
we define their ``parabolic distance" with respect to
$Q\in\mathbb D(\mathbb P^k)$ by
\begin{align}\label{measdist}
\dist_{C_Q}(\mu,\nu)
:=
\sup \left\{
\left|\int f\,\d\mu-\int f\,\d\nu\right|:
\|f\|_{lip}\leq1,\ 
\supp(f)\subseteq C_Q
\right\}.
\end{align}

We then set
\begin{align}\label{alphadef}
\alpha(Q) = \frac{1}{\ell(Q)^{k+3}}\inf_{\substack{c \geq 0 \\ L}} \dist_{C_Q}(\mu, c\sigma_L),
\end{align}
where $L$ ranges over all vertical planes.  Given $Q\in\mathbb D(\mathbb P^k)$, choose $c_Q\geq0$ and a vertical plane $L_Q$ realizing the infimum above. We adopt the notation
$\mathcal L_Q:=c_Q\sigma_{L_Q}$. 
\end{definition}

\begin{definition}\label{betadef}
Given a cube $Q \in \mathbb D$, we set
\begin{align*}
\beta_1^\Gamma(Q)=\beta_1(Q) &:= \inf_{L} \frac{1}{\mu(Q^\Gamma)}\int_{Q^\Gamma} \frac{d_p\big((X,t),L\big)}{\ell(Q)}  d\mu(X,t)\\
\beta_2^\Gamma(Q)=\beta_2(Q) &:= \inf_{L} \left(\frac{1}{\mu(Q^\Gamma)}\int_{Q^\Gamma} \left(\frac{d_p\big((X,t),L\big)}{\ell(Q)} \right)^2 d\mu(X,t)\right)^{1/2}\\
\beta^\Gamma_{\infty}(Q)=\beta_\infty(Q) &:= \inf_{L} \Big(\sup_{(X,t) \in \pi^{-1}(Q) \cap L} \frac{d_p\big((X,t),\Gamma\big)}{\ell(Q)} 
+ \sup_{(X,t) \in Q^\Gamma} \frac{d_p\big((X,t),L\big)}{\ell(Q)}\Big) 
\end{align*}
In each definition, we take the infimum over $k$-dimensional vertical planes $L$.
The quantities $\beta_j^\Gamma(C')$, for $j=1,2,\infty$, are defined analogously for parabolic cylinders $C'\subseteq\mathbb P^k$.  For example, we have, for $C'=C'_r$ a cylinder of radius $r$
$$\beta^\Gamma_1(C'):= \inf_{L} \frac{1}{\mu\big(\pi^{-1}(C')\cap
\Gamma\big)}\int_{\pi^{-1}(C')\cap
\Gamma} \frac{d_p\big((X,t),L\big)}{r} d\mu(X,t).$$
The same definitions apply to dilates of cubes.  \begingroup
For any function $F$ on $\mathbb P^k$ with values in a Euclidean
space, we write
\[
\Phi_F(x,t):=(x,F(x,t),t),
\qquad
\Gamma_F:=\Phi_F(\mathbb P^k),
\]
and set
\[
\mu_{\Gamma_F}:=(\Phi_F)_\#(\d x\,\d t).
\]
Whenever $\beta_j^{\Gamma_F}$ appears, it denotes the corresponding
$\beta_j$-number computed with respect to $\mu_{\Gamma_F}$. In
particular, $\mu_{\Gamma_F}(Q^{\Gamma_F})=|Q|$. For $F=A$, this
agrees with the normalization $\mu=\mu_0$ above. If $F$ is spatially
Lipschitz, then $\sigma_{\Gamma_F}$ is comparable to
$\mu_{\Gamma_F}$.
\endgroup

\end{definition}

For each transversal component $m\in\{1,\dots,n-k\}$ and for
$t\neq0$, we define
\begin{align}\label{commkernel}
J_{comm}^m(X,t) := \frac{z_m}{|t|^{(k+1)/2+1}} \exp\left(-\frac{|x|^2}{4|t|}\right),
\end{align}
and
\begin{align}\label{rieszkernel}
J^m(X,t) := \frac{z_m}{|t|^{(k+1)/2+1}}\exp\left(-\frac{|x|^2+|z|^2}{4|t|}\right),\\
J^{+,m}(X,t) := \frac{z_m}{|t|^{(k+1)/2+1}}\exp\left(-\frac{|x|^2+|z|^2}{4|t|}\right)\chi_{\{t>0\}} ,\label{rieszkernel+} \\
J^{-,m}(X,t) := \frac{z_m}{|t|^{(k+1)/2+1}}\exp\left(-\frac{|x|^2+|z|^2}{4|t|}\right)\chi_{\{t<0\}} .\label{rieszkernel-}
\end{align}
We extend $J^m$ and $J^{\pm,m}$ to $t=0$ away from the origin by setting
\[
J^m(X,0)=J^{\pm,m}(X,0)=0
\qquad\text{whenever }X\neq0.
\]
The singular set of $J_{comm}^m$ is instead the set
$(x,t)=(0,0)$, independently of $z$. We therefore regard $J_{comm}^m$ as a
kernel on
\[
\{(X,t)\in\mathbb P^n:(x,t)\neq(0,0)\}
\]
and set $J_{comm}^m(X,0)=0$ when $x\neq0$. Finally, all
truncated kernels introduced below are set equal to zero when $(x,t)=(0,0)$.

Given $\Gamma = \{(x,A(x,t),t)\}$, these kernels also have the representations
\begin{align}\label{commkernelgraph}
K_{comm}^m(x,t,y,s) := \frac{A_m(x,t) - A_m(y,s)}{|t-s|^{(k+1)/2+1}} \exp\left(-\frac{|x-y|^2}{4|t-s|}\right),
\end{align}
\begin{align}\label{rieszkernelgraph}
K^m(x,t,y,s):= \frac{A_m(x,t) - A_m(y,s)}{|t-s|^{(k+1)/2+1}}\exp\left(-\frac{|x-y|^2+|A(x,t) - A(y,s)|^2}{4|t-s|}\right),\\
K^{+,m}(x,t,y,s):= \frac{A_m(x,t) - A_m(y,s)}{|t-s|^{(k+1)/2+1}}\exp\left(-\frac{|x-y|^2+|A(x,t) - A(y,s)|^2}{4|t-s|}\right)\chi_{\{t>s\}}, \label{rieszkernelgraph+}\\
K^{-,m}(x,t,y,s):= \frac{A_m(x,t) - A_m(y,s)}{|t-s|^{(k+1)/2+1}}\exp\left(-\frac{|x-y|^2+|A(x,t) - A(y,s)|^2}{4|t-s|}\right)\chi_{\{s>t\}}. \label{rieszkernelgraph-}
\end{align}

A priori, we do not know whether the principal value operators associated with these kernels exist, so we need to define appropriate smooth truncations.  Let
\[
\rho(x,t):=\left(|x|^4+t^2\right)^{1/4}.
\]
Then $\rho$ is smooth away from the origin, parabolically homogeneous, and comparable to the parabolic norm $\|(x,t)\|$. Let $\psi_0:[0,\infty)\to[0,1]$ be a smooth non-increasing function such that
\begin{equation}\label{psi0}
\chi_{[0,1/2]}\leq \psi_0\leq \chi_{[0,2]}.
\end{equation}
For $j\in\mathbb Z$, set
\[
\psi_j(x,t):=\psi_0(2^j\rho(x,t)),
\]
and further define $\widetilde\varphi_j(x,t):=\psi_{j+3}(x,t)-\psi_{j+4}(x,t)$.  Since $\psi_0$ is non-increasing, $\widetilde\varphi_j\geq 0$. Moreover, if $\widetilde\varphi_j(x,t)\neq0$, then
\[
2^{j+3}\rho(x,t)\leq 2
\qquad \text{and} \qquad
2^{j+4}\rho(x,t)>1/2.
\]
Indeed, if $2^{j+3}\rho(x,t)>2$, then both terms in the difference vanish, while if $2^{j+4}\rho(x,t)\leq1/2$, then both terms are equal to $1$. Therefore
\[
2^{-j-5}<\rho(x,t)\leq 2^{-j-2}.
\]
Using $\rho(x,t)\leq\|(x,t)\|$ and $2^{-3/4}\|(x,t)\|\leq\rho(x,t)$, we obtain
\[
2^{-j-5}<\|(x,t)\|\leq 2^{-j-1}.
\]
Thus $\widetilde\varphi_j$ is supported in $A(0,2^{-j-5},2^{-j-1})$.  Here, 
\[
A((x,t),M_1,M_2):=\{(y,s):M_1\leq\|(x,t)-(y,s)\|\leq M_2\}.
\]
Moreover,
\[
\sum_{j\in\mathbb Z}\widetilde\varphi_j(x,t)>c
\qquad \text{for all }(x,t)\in\mathbb P^k\setminus\{0\}.
\]
Additionally,
\[
|\widetilde\varphi_j|
\lesssim \chi_{A(0,2^{-j-5},2^{-j-1})},\qquad
|\nabla_x\widetilde\varphi_j|
\lesssim 2^j\chi_{A(0,2^{-j-5},2^{-j-1})},\qquad
|\partial_t\widetilde\varphi_j|
\lesssim 2^{2j}\chi_{A(0,2^{-j-5},2^{-j-1})}.
\]
We set
\[
\varphi_j(x,t)=\frac{\widetilde\varphi_j(x,t)}{\sum_\ell \widetilde\varphi_\ell(x,t)}.
\]
Clearly, we have
\[
\sum_{j\in\mathbb Z}\varphi_j(x,t)=1
\qquad \text{for all }(x,t)\in\mathbb P^k\setminus\{0\}.
\]
Also, we still have the inequalities
\[
|\varphi_j|
\lesssim \chi_{A(0,2^{-j-5},2^{-j-1})},\qquad
|\nabla_x\varphi_j|
\lesssim 2^j\chi_{A(0,2^{-j-5},2^{-j-1})},\qquad
|\partial_t\varphi_j|
\lesssim 2^{2j}\chi_{A(0,2^{-j-5},2^{-j-1})}.
\] 

For each transversal component $m\in\{1,\dots,n-k\}$ we define the following smooth annular truncations:
\begin{align}\label{smoothannularriesz}
\mathcal{R}^{m}_{(\ell)}f(x,t):=\int_{\mathbb P^k} K^m(x,t,y,s)\varphi_\ell\big((x,t)-(y,s)\big)f(y,s)\d y\d s ,\\
\mathcal{R}^{+,m}_{(\ell)}f(x,t):=\int_{\mathbb P^k} K^{+,m}(x,t,y,s)\varphi_\ell\big((x,t)-(y,s)\big)f(y,s)\d y\d s,\\
\mathcal{R}^{-,m}_{(\ell)}f(x,t):=\int_{\mathbb P^k} K^{-,m}(x,t,y,s)\varphi_\ell\big((x,t)-(y,s)\big)f(y,s)\d y\d s, \\
\label{smoothannularcomm}
\mathcal{C}^{m}_{(\ell)}f(x,t):= \int_{\mathbb P^k} K_{comm}^m(x,t,y,s)\varphi_\ell\big((x,t)-(y,s)\big)f(y,s)\d y\d s.
\end{align}
We also define the truncations away from the singularity as follows. Set

\[
\Psi_\ell(x,t)=\sum_{j\leq\ell}\varphi_j(x,t).
\]

Then define
\begin{align}
\mathcal{R}^{m}_{\ell}f(x,t):=\int_{\mathbb P^k} K^{m}(x,t,y,s)\Psi_\ell\big((x,t)-(y,s)\big)f(y,s)\d y\d s,\label{smoothriesz}\\
\mathcal{R}^{+,m}_{\ell}f(x,t):=\int_{\mathbb P^k} K^{+,m}(x,t,y,s)\Psi_\ell\big((x,t)-(y,s)\big)f(y,s)\d y\d s, \label{smoothriesz+}\\
\mathcal{R}^{-,m}_{\ell}f(x,t):=\int_{\mathbb P^k} K^{-,m}(x,t,y,s)\Psi_\ell\big((x,t)-(y,s)\big)f(y,s)\d y\d s, \label{smoothriesz-}\\
\mathcal{C}^{m}_{\ell}f(x,t):= \int_{\mathbb P^k} K_{comm}^m(x,t,y,s)\Psi_\ell\big((x,t)-(y,s)\big)f(y,s)\d y \d s.\label{smoothcomm}
\end{align}
We write $K^m_{(j)}$, $K^{\pm,m}_{(j)}$, and $K^m_{comm,(j)}$ for the kernels obtained by multiplying $K^m$, $K^{\pm,m}$, and $K^m_{comm}$, respectively, by $\varphi_j((x,t)-(y,s))$. Replacing $\varphi_j$ by $\Psi_j$ gives $K^m_j$, $K^{\pm,m}_j$, and $K^m_{comm,j}$.  
The parameterized operators above are exactly the pullbacks of the corresponding operators on \(\Gamma\) formed using \(\mu=\mu_0\). Indeed, let \(F\) be a function on \(\Gamma\), set \(f=F\circ\Phi\), and write \((X,t)=\Phi(x,t)\) and \((Y,s)=\Phi(y,s)\). Then
\[
\mathcal R^m_{(\ell)}f(x,t)
=
\int_\Gamma
J^m(X-Y,t-s)
\varphi_\ell\big((x,t)-(y,s)\big)
F(Y,s)\,\d\mu(Y,s).
\]
The same identity holds for \(\mathcal R^m_\ell\), \(\mathcal C^m_{(\ell)}\), and \(\mathcal C^m_\ell\).

We define
\[
J^m_{(\ell)}(X-Y,t-s)
:=
J^m(X-Y,t-s)
\varphi_\ell\big((x,t)-(y,s)\big).
\]
We also define \[J^m_{comm,(\ell)}(X-Y,t-s):=J^m_{comm}(X-Y,t-s)\varphi_\ell\big((x,t)-(y,s)\big).\] Because \(\|A\|_{lip}\) is small, \(\|(x,t)-(y,s)\|\) is comparable to the parabolic distance between \((X,t)\) and \((Y,s)\) on \(\Gamma\). Thus, this is equivalent to a truncation involving all spatial variables. We define
\[
J^m_\ell(X-Y,t-s)
:=
J^m(X-Y,t-s)
\Psi_\ell\big((x,t)-(y,s)\big)
\]
and similarly define \[J^m_{comm,\ell}(X-Y,t-s):=J^m_{comm}(X-Y,t-s)\Psi_\ell\big((x,t)-(y,s)\big).\]

Unless a component $m$ has already been fixed, omitting the superscript $m$
from a kernel or singular integral denotes the corresponding vector-valued
tuple of all $n-k$ components.

Throughout this section, unless otherwise specified, 
\[
\|f\|_2:=\|f\|_{L^2(\mathbb P^k)},
\qquad
\langle f,g\rangle:=\int_{\mathbb P^k} f(x,t)g(x,t)\,\d x\,\d t.
\]
For an operator $T$ bounded on $L^2(\mathbb P^k)$, we write
\[
\|T\|_{2\to2}:=\|T\|_{L^2(\mathbb P^k)\to L^2(\mathbb P^k)}.
\]
When $\mathcal R^m$, $\mathcal R^{+,m}$, or $\mathcal R^{-,m}$ appears without a truncation index in an operator norm, we use the convention
\[
\|\mathcal R^{\pm,m}\|
:=
\sup_{\ell\in\mathbb Z}\|\mathcal R^{\pm,m}_\ell\|_{2\to2},
\]
and similarly for $\mathcal R^m$. By the standard Cotlar inequality for these smooth truncations, this convention is quantitatively equivalent to using the $L^2$ operator norm of the corresponding maximal truncation
\[
f\mapsto \sup_{\ell\in\mathbb Z}|\mathcal R^{\pm,m}_\ell f|
\qquad\text{or}\qquad
f\mapsto \sup_{\ell\in\mathbb Z}|\mathcal R^m_\ell f|.
\]

In the sequel, when the superscript $m$ is omitted in an operator norm, we use the conventions
\[
\|\mathcal R\|
:=
\sum_{m=1}^{n-k}\|\mathcal R^m\|,
\qquad
\|\mathcal R^\pm\|
:=
\sum_{m=1}^{n-k}\|\mathcal R^{\pm,m}\|.
\]

\begingroup
More generally, if $F:\mathbb P^k\to\mathbb R^{n-k}$, set
\[
K^{F,m}(x,t,y,s)
:=
J^m\bigl(x-y,F(x,t)-F(y,s),t-s\bigr)
\]
and
\[
\mathcal R^{F,m}_\ell f(x,t)
:=
\int_{\mathbb P^k}
K^{F,m}(x,t,y,s)
\Psi_\ell\bigl((x,t)-(y,s)\bigr)
f(y,s)\,\d y\,\d s.
\]
The kernels $K^{\pm,F,m}$ and $K_{comm}^{F,m}$, and their corresponding
operators, are defined analogously using $J^{\pm,m}$ and $J_{comm}^m$.
We use the conventions
\[
\|\mathcal R^{F,m}\|
:=
\sup_{\ell\in\mathbb Z}
\|\mathcal R^{F,m}_\ell\|_{2\to2},
\qquad
\|\mathcal R^F\|
:=
\sum_{m=1}^{n-k}\|\mathcal R^{F,m}\|.
\]
When $F=A$, we omit the superscript $F$, in agreement with the notation above.
\endgroup

\begin{definition}\label{halftime}

For a compactly supported function
$f:\mathbb P^k\to\mathbb R^{n-k}$, we first define the half-time
derivative componentwise by the Fourier multiplier
\[
D_t^{1/2}f(x,t)
:=
\bigl(|\tau|^{1/2}\widehat f(x,\tau)\bigr)^{\check{}}(x,t),
\]
where the Fourier transform and inverse Fourier transform are taken in the
time variable.

We also have the principal-value representation
\[
D_t^{1/2}f(x,t)
:=
c\,\operatorname{p.v.}
\int_{\mathbb R}
\frac{f(x,s)-f(x,t)}{|s-t|^{3/2}}
\,\d s,
\]
where $c$ is chosen so that this singular-integral definition agrees with the
Fourier multiplier above whenever both are defined. For a noncompactly
supported Lip-$(1,1/2)$ function, the principal-value formula is understood
distributionally on mean-zero test functions. More precisely, if
$\varphi\in C_c^\infty(\mathbb P^k)$ and
\[
\int_{\mathbb P^k}\varphi(x,t)\,\d x\,\d t=0,
\]
then

\[
\left\langle D_t^{1/2}f,\varphi\right\rangle
:=
\int_{\mathbb R}
\left(
\int_{\mathbb R^k}
f(x,t)D_t^{1/2}\varphi(x,t)\,\d x
\right)\d t,
\]
where the spatial integral is taken first. To see that this is well defined,
write $f(x,t)=[f(x,t)-f(0,t)]+f(0,t)$ and set
$h(t):=\int_{\mathbb R^k}\varphi(x,t)\,\d x$.
The first contribution is jointly absolutely integrable: spatial
Lipschitz continuity bounds $f(x,t)-f(0,t)$ uniformly on the compact
spatial support of $\varphi$, and
$D_t^{1/2}\varphi(x,t)=O(|t|^{-3/2})$ uniformly in $x$ as $|t|\to\infty$.
For the second contribution, $\int_{\mathbb R}h(t)\,\d t=0$ gives
$D_t^{1/2}h(t)=O(|t|^{-5/2})$ as $|t|\to\infty$.
Since $f(0,t)=O(1+|t|^{1/2})$, its time integral is also absolutely
convergent. Thus the displayed iterated integral converges absolutely
in $t$ after the spatial integration, componentwise for vector-valued $f$.

Thus $D_t^{1/2}f$ is defined modulo constants.

\end{definition}

\begin{definition}\label{reglip}
A Lip-$(1,1/2)$ function $f:\mathbb P^k\to\mathbb R^{n-k}$ is \textit{regular} Lip-$(1,1/2)$ if each component of $D_t^{1/2}f$ belongs to $BMO(\mathbb P^k)$, where $BMO(\mathbb P^k)$ is defined with respect to parabolic cubes.
\end{definition}

The implication from regularity to the $\beta_2$-Carleson estimate is due
to Hofmann \cite[p.~249, estimate~(35)]{H}; we use the formulation in
\cite[Theorem~3.8]{Orp21}. The reverse implication is due to Hofmann,
Lewis, and Nystr{\"o}m \cite[Section~2]{HLN2}; see also
\cite[Lemma~5.40]{BHHLN5}. The cited results are scalar-valued.

Let $F=(F_1,\dots,F_{n-k}):\mathbb P^k\to\mathbb R^{n-k}$ be a Lip-$(1,1/2)$ function.
The scalar results imply the vector-valued results. Indeed, for every
parabolic cylinder $C'$,
\[
\beta_2^{\Gamma_{F_m}}(C')
\leq
\beta_2^{\Gamma_F}(C'),
\qquad m=1,\dots,n-k,
\]
while
\[
\beta_2^{\Gamma_F}(C')^2
\lesssim
\sum_{m=1}^{n-k}
\beta_2^{\Gamma_{F_m}}(C')^2.
\]
Here the implicit constant depends only on $k$ and $\|F\|_{lip}$.
The first inequality follows by projecting an approximating plane onto the
$(x,z_m,t)$-coordinates and, if necessary, enlarging its image to a
$k$-dimensional vertical plane.
For the second inequality, write $C'=C'_r$ and define 
\[
b_F(C')^2:=\inf_a\fint_{C'}\frac{|F(x,t)-a(x)|^2}{r^2}\,\d x\,\d t,
\]
where $a$ ranges over affine maps independent of $t$. Then
\[
\beta_2^{\Gamma_F}(C')^2\leq b_F(C')^2
=\sum_{m=1}^{n-k}b_{F_m}(C')^2.
\]

For a scalar function $f$ and $k\geq1$, fix $\delta>0$ sufficiently small,
depending only on $k$, and let $L$ be a best approximating vertical plane
for $\Gamma_f$ over $C'$. If $\beta_2^{\Gamma_f}(C')<\delta$, then $L$
is the graph of an affine function $a$ whose slope is bounded in terms
of $k$ and $\|f\|_{lip}$. Hence $|f(x,t)-a(x)|$ is controlled by the
distance from $(x,f(x,t),t)$ to $L$, and integration gives
\[
b_f(C')^2\lesssim_{k,\|f\|_{lip}}\beta_2^{\Gamma_f}(C')^2.
\]
If $\beta_2^{\Gamma_f}(C')\geq\delta$, let $(x_0,t_0)$ be the center of
$C'$ and take $a=f(x_0,t_0)$. The Lip-$(1,1/2)$ condition gives
$|f(x,t)-a|\lesssim\|f\|_{lip}r$ on $C'$, so
\[
b_f(C')^2\lesssim\|f\|_{lip}^2
\leq\delta^{-2}\|f\|_{lip}^2\beta_2^{\Gamma_f}(C')^2.
\]
Since $\delta$ depends only on $k$, this gives the same estimate.

For $k=0$, the spatially affine approximants are constants, so these estimates are even easier to prove in this case.
We apply this scalar comparison to each component to obtain the second inequality. 
Applying Hofmann's estimate \cite[p.~249, estimate~(35)]{H}, in the formulation
of \cite[Theorem~3.8]{Orp21}, to each component of $F$ shows that regular
Lip-$(1,1/2)$ functions satisfy the $\beta_2$-Carleson estimate in the following
theorem. Conversely, this $\beta_2$-Carleson estimate implies that each $F_m$
is regular Lip-$(1,1/2)$ by Hofmann--Lewis--Nystr{\"o}m
\cite[Section~2]{HLN2}; see also \cite[Lemma~5.40]{BHHLN5}.

\begin{theorem}[\cite{H,Orp21,HLN2,BHHLN5}]\label{regularbetaequiv}
Let $F:\mathbb P^k\to\mathbb R^{n-k}$ be Lip-$(1,1/2)$. Then $F$
is regular Lip-$(1,1/2)$ if and only if there is a constant $C$ such that,
for every $(z,\tau)\in\mathbb P^k$ and $R>0$,
\[
\int_0^R\int_{C'_R(z,\tau)}
\beta_2^{\Gamma_F}(C'_r(x,t))^2
\frac{\d x\,\d t\,\d r}{r}
\leq
CR^{k+2}.
\]

More precisely, set
\[
M_F:=
\left(
\sum_{m=1}^{n-k}
\|D_t^{1/2}F_m\|_{BMO(\mathbb P^k)}^2
\right)^{1/2}.
\]
If $M_F<\infty$, then the Carleson estimate above holds with
\[
C\leq C_1(n,k,\|F\|_{lip})\bigl(1+M_F^2\bigr).
\]
Conversely, if the Carleson estimate holds with constant $C$, then
\[
M_F\leq C_2(n,k,\|F\|_{lip})\,C^{1/2}.
\]

\end{theorem}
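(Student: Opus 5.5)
The plan is to reduce the vector-valued statement in Theorem~\ref{regularbetaequiv} to the scalar results of Hofmann, Orponen, and Hofmann--Lewis--Nystr\"om, applied componentwise. The bridge is the pair of comparisons established just before the statement:
\[
\beta_2^{\Gamma_{F_m}}(C')\leq\beta_2^{\Gamma_F}(C'),
\qquad
\beta_2^{\Gamma_F}(C')^2\lesssim\sum_{m=1}^{n-k}\beta_2^{\Gamma_{F_m}}(C')^2 .
\]
These hold for every parabolic cylinder $C'$, with constants depending only on $k$ and $\|F\|_{lip}$. Each $F_m$ is a scalar Lip-$(1,1/2)$ function with $\|F_m\|_{lip}\leq\|F\|_{lip}$. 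So each scalar theorem may be invoked with constants depending only on $n$, $k$, and $\|F\|_{lip}$.

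For the forward direction, assume $M_F<\infty$. Then every $\|D_t^{1/2}F_m\|_{BMO}\leq M_F$. Hofmann's estimate \cite[p.~249, (35)]{H}, in the quantitative form \cite[Theorem~3.8]{Orp21}, gives for each $m$ the scalar Carleson bound
\[
\int_0^R\int_{C'_R(z,\tau)}\beta_2^{\Gamma_{F_m}}(C'_r(x,t))^2\,\frac{\d x\,\d t\,\d r}{r}
\leq C_1'(k,\|F\|_{lip})\bigl(1+\|D_t^{1/2}F_m\|_{BMO}^2\bigr)R^{k+2}.
\]
The one point to verify is that \cite{Orp21} states its constant as affine in the square of the BMO norm. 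If it is instead stated as a general function of that norm, I would rescale $F_m\mapsto F_m/\lambda$ and use that $\beta_2$ of a graph with small slope scales linearly. Summing over $m$ and using the second comparison gives the vector Carleson bound with $C\lesssim 1+M_F^2$.

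For the converse, assume the vector Carleson estimate holds with constant $C$. The first comparison shows that each $\Gamma_{F_m}$ satisfies the same estimate with the same constant. The scalar converse \cite[Section~2]{HLN2}, or \cite[Lemma~5.40]{BHHLN5}, then yields
\[
\|D_t^{1/2}F_m\|_{BMO}\leq C_2'(k,\|F\|_{lip})\,C^{1/2}.
\]
Summing the squares over the $n-k$ components gives $M_F\leq C_2C^{1/2}$.

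The main obstacle is bookkeeping rather than mathematics. The scalar theorems are stated in codimension one in $\mathbb P^{k+1}$, with their own normalizations of $\beta_2$: cylinders versus cubes, $\mu$ versus $\sigma$, and planes versus vertical planes. These must be matched with the definitions here. Since $\sigma_{\Gamma_F}$ is comparable to $\mu_{\Gamma_F}$ and cylinders and cubes are mutually comparable after dilation, the Carleson quantities change only by constants depending on $k$ and $\|F\|_{lip}$. The case $k=0$ needs separate treatment, since the approximants there are constants. In that case I would check that the cited scalar statements cover $\mathbb P^0$ graphs, or else observe that Hofmann's argument applies verbatim in one variable.
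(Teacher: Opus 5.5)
Your proposal is correct and follows the paper's argument. The paper likewise deduces the vector statement by applying the scalar theorems of Hofmann/Orponen and Hofmann--Lewis--Nystr\"om to each component $F_m$, bridged by the comparisons $\beta_2^{\Gamma_{F_m}}(C')\leq\beta_2^{\Gamma_F}(C')$ and $\beta_2^{\Gamma_F}(C')^2\lesssim\sum_m\beta_2^{\Gamma_{F_m}}(C')^2$ (the latter proved via the affine-approximation quantity $b_F$ and a small/large $\beta_2$ dichotomy), and it dispatches $k=0$ by noting that the approximants are then constants.
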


\begin{remark}

The proof in \cite[Section~2]{HLN2} is written under the assumption that
$\|F\|_{lip}$ is small, but very minor modifications establish the result for any Lipschitz constant.

\end{remark}

\subsection{Main Results}

We now give precise statements of Theorems \ref{localdpara-intro} and \ref{mainthm}. We begin with Theorem \ref{mainthm}.

\begin{theorem}\label{mainthm2}
For each integer $0\leq k\leq n-1$, there is a parameter $0<\epsilon\ll 1$ such that the following holds. Suppose $A:\mathbb P^k\to\mathbb R^{n-k}$ is a Lip-$(1,1/2)$ function with $\|A\|_{lip}\leq \epsilon$. If
\[
\big\|\sup_j |\mathcal R_j^{+,m} f|\big\|_2
\lesssim
\|f\|_2,
\qquad m=1,\dots,n-k,
\]
then
\[
\|D_t^{1/2} A\|_{BMO(\mathbb P^k)}
\lesssim
1+
\|\mathcal R^+\|,
\]
where
\[
\|D_t^{1/2}A\|_{BMO(\mathbb P^k)}
:=
\sum_{m=1}^{n-k}
\|D_t^{1/2}A_m\|_{BMO(\mathbb P^k)}.
\]
\end{theorem}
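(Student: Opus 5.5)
By Theorem~\ref{regularbetaequiv}, it suffices to prove a Carleson estimate for the $\beta_2$ square function of $\Gamma$, with constant $\lesssim 1+\|\mathcal R^+\|^2$. The converse direction of that theorem then bounds $M_F$, hence $\|D_t^{1/2}A\|_{BMO}$, by the square root of this constant. The plan follows the outline in the introduction: first reduce from $\mathcal R^+$ to time-symmetric transforms, then localize, then apply the compactly supported estimate of Theorem~\ref{localdpara-intro} together with a Dorronsoro-type inequality.

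\textbf{Step 1 (symmetrization).} I would show that boundedness of the maximal truncations of $\mathcal R^{+,m}$ implies boundedness of those of $\mathcal R^{-,m}$, and hence of $\mathcal R^m=\mathcal R^{+,m}+\mathcal R^{-,m}$, whose kernel is even in time. Up to sign and the reflection $(x,t)\mapsto(x,-t)$, $\mathcal R^{-,m}_\ell$ is the adjoint of $\mathcal R^{+,m}_\ell$: the kernel $K^{+,m}(y,s,x,t)$ equals $-K^{-,m}(x,t,y,s)$ because $A_m(y,s)-A_m(x,t)$ changes sign and $\chi_{\{s>t\}}$ is swapped. Therefore $\|\mathcal R^{-,m}_\ell\|_{2\to2}=\|\mathcal R^{+,m}_\ell\|_{2\to2}$ uniformly in $\ell$. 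Cotlar's inequality then gives the maximal truncation bound, so $\|\mathcal R\|\lesssim\|\mathcal R^+\|$.

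\textbf{Step 2 (localization).} Fix a cylinder $C'_R(z,\tau)$. I would construct $\tilde A$ agreeing with $A$ on $C'_{2R}(z,\tau)$ such that $\tilde A-A(z,\tau)$ is supported in $C'_{MR}(z,\tau)$ and $\|\tilde A\|_{lip}\lesssim\epsilon$. The natural choice is $\tilde A=A(z,\tau)+\eta\,(A-A(z,\tau))$ with a parabolic cutoff $\eta$. The key claim, which is Proposition~\ref{localsios}, is
\[
\|\mathcal R^{\tilde A}\|\lesssim 1+\|\mathcal R\|.
\]
To prove it, split $\mathcal R^{\tilde A}_\ell f$ according to whether $f$ and the evaluation point lie near or far from $C'_{MR}$.

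\begin{itemize}
\item Near the support, where $\tilde A=A$ up to a constant, compare directly with $\mathcal R^A$ using the smoothness of $J^m$ on the region where the graphs agree.
\item Far away, $\tilde A$ is constant and the graph is flat, so the kernel is odd and the operator is trivially bounded.
\item In the transition region, write $\tilde A$ as a perturbation of a regular Lip-$(1,1/2)$ function on large scales. Theorem~\ref{highcodimgeneralSIO} controls the error operators, together with standard Schur and tail estimates coming from the size and decay of the kernel.
\end{itemize}

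\textbf{Step 3 (local estimate and Carleson bound).} By translation, Theorem~\ref{localdpara-intro} applied to $\tilde A-A(z,\tau)$, using only the time-symmetric bounds from Step 1, gives
\[
\|D_{par}\tilde A\|_2^2\lesssim(1+\|\mathcal R\|)^2R^{k+2}.
\]
The parabolic Dorronsoro inequality, Lemma~\ref{vectorDorronsoro}, then bounds
\[
\int_0^{\infty}\int\beta_2^{\Gamma_{\tilde A}}(C'_r(x,t))^2\,\frac{\d x\,\d t\,\d r}{r}\lesssim\|D_{par}\tilde A\|_2^2 .
\]
For $r<R$ and $(x,t)\in C'_R(z,\tau)$ we have $C'_r(x,t)\subset C'_{2R}(z,\tau)$, where $\tilde A=A$. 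Hence the $\beta_2$ numbers of $\Gamma$ and $\Gamma_{\tilde A}$ coincide there, which yields the Carleson estimate with constant $\lesssim(1+\|\mathcal R^+\|)^2$. Taking square roots in Theorem~\ref{regularbetaequiv} finishes the proof.

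\textbf{The case $k=0$.} Here Theorem~\ref{alphatosio} is not available, so the off-diagonal input inside Theorem~\ref{localdpara-intro} must be replaced by a direct one-dimensional argument (the graphs are curves $\{(A(t),t):t\in\mathbb R\}$); the reduction steps above are otherwise unchanged.

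\textbf{Main obstacle.} I expect the localization in Step 2 to be the hardest part, since the cutoff destroys the structure of the graph in the transition region. There, the difference between the kernels for $\tilde A$ and $A$ is not small pointwise at small scales. What I would try first is to write this difference as $\eta$ times a commutator-type term. That term has a Lipschitz factor $A-A(z,\tau)$ of size $\lesssim\epsilon R$ near scale $R$. At small scales it becomes a $\mathcal C$-type operator over a smooth function, for which Theorem~\ref{highcodimgeneralSIO} applies.
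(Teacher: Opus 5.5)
Steps 1 and 3 agree with the paper. In fact $(\mathcal R^{+,m}_\ell)^*=-\mathcal R^{-,m}_\ell$ exactly, with no time reflection, and you rightly use only the time-symmetric bounds afterwards. The gap is Step 2. Proposition~\ref{localsios} only asserts that \emph{some} localized function exists, so invoking it as stated would close this step. But the function it provides is not your raw cutoff $\tilde A=A(z,\tau)+\eta\,(A-A(z,\tau))$, and your sketch fails for the raw cutoff in the transition annulus at scales $\rho\ll R$. There
\[
\tilde A(x,t)-\tilde A(y,s)=\eta(x,t)\bigl(A(x,t)-A(y,s)\bigr)+\bigl(\eta(x,t)-\eta(y,s)\bigr)\bigl(A(y,s)-A(z,\tau)\bigr),
\]
so locally $\Gamma_{\tilde A}$ is the graph of $\eta(x,t)A$ plus a smooth term, a vertically compressed copy of the rough graph $\Gamma_A$. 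Because $J^m$ is not linear in the transversal variable, the small-scale part of $\mathcal R^{\tilde A}$ differs from $\mathcal R^A$ by operators like $(1-\eta)\,\mathcal C^A$ and the cubic terms of Lemma~\ref{czestim}. Their symbol is $A$ itself, not a smooth function, and their boundedness is tied to the regularity of $A$ (Hofmann's commutator characterization), which is the conclusion you want. Theorem~\ref{highcodimgeneralSIO} is also unavailable, since where $\eta\gtrsim1$, $\tilde A$ is locally regular only if $A$ is.

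The missing idea is regularization at the Whitney scale. The paper starts from the same cutoff $\widetilde A=A\eta$ (after normalizing $A(x_0,t_0)=0$), but off $\overline{C'_{4R}}$ replaces it by $\overline A=\widetilde A*\theta_{H(x,t)}$, with $H\approx D=\dist(\cdot,\overline{C'_{4R}})$ a regularized distance. This gives $|\overline A-A|\lesssim\epsilon D$ and $|\nabla_x^2\overline A|+|\partial_t\overline A|\lesssim\epsilon/D$. The Whitney pieces $\psi_i(\overline A-\overline A(x_i,t_i))$ are then uniformly regular, so Theorem~\ref{highcodimBHHLN} controls scales $\rho\lesssim D(x,t)$. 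For $\rho>D(x,t)/2$, moving only the evaluation point costs $D(x,t)/\rho^{k+3}$, which is dominated by $\mathcal M f$. To move one argument at a time and treat the other by duality, the paper also introduces the displaced graph $B=\overline A+H\mathbf{1}$. Its separation $|B-A|\approx D$ gives $|K^{B,A}|\lesssim D^{-k-2}$ for $\rho\le D/2$. The chain $K\to K^{B,A}\to K^{A,B}\to K^B\to K^{\overline A,B}\to K^{B,\overline A}\to K^{\overline A}$ then yields $\|\mathcal R^{\overline A}\|\lesssim1+\|\mathcal R\|$.

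Your $k=0$ paragraph is not an argument either. Section~\ref{last} requires $k\geq1$, and you do not say what replaces it. The paper avoids the issue by lifting to $F(x,t)=A(t)$ on $\mathbb P^1$, whose kernel is $(t-s)^{-1/2}e^{-|x-y|^2/4(t-s)}K^{+,m}(t,s)$. After a Fourier transform in $x$, the Gaussian becomes $2\sqrt\pi\,e^{-4\pi^2\xi^2(t-s)}$. This is an average, against a probability density, of modulations $e^{2\pi i\vartheta(t-s)}$, each of which preserves $\|\mathcal R^{+,m}\|$. Together with a Schur estimate comparing the two truncations, this gives $\|\mathcal R^{+,F}\|\lesssim\|\mathcal R^+\|+\epsilon$. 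The case $k=1$ then applies to $F$, and $D_t^{1/2}F(x,t)=D_t^{1/2}A(t)$ transfers the BMO bound back to $A$.
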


We next restate Theorem \ref{localdpara-intro}. For $k\geq1$, we will prove it first and use it to prove Theorem \ref{mainthm2}.

\begin{theorem}\label{localdpara}
For each integer $0\leq k\leq n-1$, there is a parameter $0<\epsilon\ll 1$ such that the following holds. Suppose that $A:\mathbb P^k\to\mathbb R^{n-k}$ is Lip-$(1,1/2)$ with $\|A\|_{lip}\leq \epsilon$, and supported in a cylinder $C'_R$ of radius $R$. If
\[
\big\|\sup_j |\mathcal R_j^{+,m} f|\big\|_2
\lesssim
\|f\|_2,
\qquad m=1,\dots,n-k,
\]
then
\[
\|D_{par} A\|_2^2 =
\sum_{m=1}^{n-k}\|D_{par}A_m\|_2^2
\lesssim
\left(
1+
\|\mathcal R^+\|^2
\right)R^{k+2},
\]
where $D_{par}A$ is defined componentwise by
\[
D_{par}A_m(x,t)
:=
\big(\widehat A_m(\xi,\tau)\|(\xi,\tau)\|\big)^{\check{}}(x,t).
\]

We use the convention
\[
\|D_{par}A\|_2^2
:=
\sum_{m=1}^{n-k}\|D_{par}A_m\|_2^2.
\]

\end{theorem}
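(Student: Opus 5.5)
The plan follows Tolsa's argument in \cite{To2}, adapted to parabolic space, in four steps.

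\emph{Step 1: symmetrize in time.} Boundedness of the maximal truncations of $\mathcal R^{+,m}$ on $L^2(\mathbb P^k)$ gives the same bound for $\mathcal R^{-,m}$. Indeed, the kernel $K^{-,m}(x,t,y,s)$ equals $-K^{+,m}(y,s,x,t)$, so $\mathcal R^{-,m}_\ell=-(\mathcal R^{+,m}_\ell)^*$ and it has the same operator norm. Adding, the time-even transform $\mathcal R^m=\mathcal R^{+,m}+\mathcal R^{-,m}$ satisfies $\|\mathcal R\|\lesssim\|\mathcal R^+\|$. Its kernel $J^m$ is odd in $X$ and even in $t$, so Theorem~\ref{alphatosio} and the estimates of Section~\ref{last} apply to it.

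\emph{Step 2: smooth the graph.} Replace $A$ by the mollification $A_\delta=A*\eta_\delta$, taken with a parabolic approximate identity. Then $A_\delta$ is smooth and compactly supported (in $2C'_R$ once $\delta\ll R$), and it keeps Lip-$(1,1/2)$ constant at most $\epsilon$. The novel point, Subsection~\ref{smoothing}, is to show that $\|\mathcal R^{A_\delta}\|\lesssim 1+\|\mathcal R^A\|$ uniformly in $\delta$. I would split the kernel into scales. For scales $\gg\delta$, the kernels $K^{A_\delta}_{(j)}$ and averages of $K^{A}_{(j)}$ differ by terms controlled by $|A-A_\delta|\lesssim\epsilon\delta$ relative to the scale. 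Summing these geometric errors needs only standard Calder\'on--Zygmund size estimates. For scales $\lesssim\delta$, the graph $A_\delta$ is a regular Lip-$(1,1/2)$ smooth graph. There Theorem~\ref{highcodimgeneralSIO} controls the truncated operators, and its constants enter only through local (scale $\lesssim\delta$) regularity, which is uniform after rescaling. A limiting argument at the end (Fatou, since $D_{par}A_\delta\to D_{par}A$ in distributions) returns the bound to $A$.

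\emph{Step 3: linearize and obtain an $L^2$ lower bound.} For smooth compactly supported $A$ with small constant, write
\[
\mathcal R^m 1=\mathcal C^m1+\mathcal E^m,\qquad \mathcal C^m1=c\,(\text{a parabolic Riesz-type multiplier})(A_m)+O(\epsilon)\text{-errors}.
\]
Here $\mathcal C^m$ is the commutator operator obtained by dropping $|A(x,t)-A(y,s)|^2$ in the exponent, and $\mathcal E^m$ is quadratic in the increments of $A$. The linear part of $\mathcal C^m1$ is convolution of $A_m$ with $J^m_{comm}$ restricted to $\mathbb P^k$, integrated in $(y,s)$. Its Fourier multiplier $m(\xi,\tau)$ is homogeneous of parabolic degree one. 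Because $J^m$ is even in $t$ and odd in $z$, it is real and satisfies $|m(\xi,\tau)|\gtrsim\|(\xi,\tau)\|$. This is where the time symmetrization pays off: the one-sided kernel would give a multiplier with an imaginary part vanishing in bad directions. By Plancherel, $\|D_{par}A_m\|_2\lesssim\|\mathrm{lin}(\mathcal C^m1)\|_2$. The nonlinear remainder is bounded by $\epsilon\|D_{par}A\|_2$ plus an off-diagonal term. Following \cite{To1,To2}, I would control the latter through Proposition~\ref{offdiagprop} and $\alpha$-numbers: $\sum_Q\alpha(Q)^2\mu(Q)\lesssim\|D_{par}A\|_2^2\epsilon^2+R^{k+2}$, since on a graph the $\alpha$-numbers are dominated by $\beta$-type quantities. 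These in turn are controlled by a parabolic Dorronsoro estimate (Lemma~\ref{vectorDorronsoro}). For small $\epsilon$ these terms are absorbed.

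\emph{Step 4: conclude.} It remains to bound $\|\mathcal R1\|_{L^2}$ in a localized form. Test $\mathcal R_\ell$ against $\chi_{C'_{CR}}$ to get $\|\mathcal R_\ell\chi_{C'_{CR}}\|_2^2\lesssim\|\mathcal R\|^2R^{k+2}$. Outside $C'_{CR}$ the graph is flat, so the tail $\mathcal R_\ell\chi_{(C'_{CR})^c}$, together with the contribution of $\mathcal R 1$ away from $C'_{2R}$, decays like a dipole. This gives $\lesssim R^{k+2}$. Combining yields $\|D_{par}A\|_2^2\lesssim(1+\|\mathcal R^+\|^2)R^{k+2}+\epsilon^2\|D_{par}A\|_2^2$. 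The last term is absorbed because $\|D_{par}A\|_2<\infty$ for the smoothed $A$. The case $k=0$ follows the same scheme, using the one-dimensional multiplier directly instead of Section~\ref{last}.

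I expect the main obstacle to be Step 2: showing the uniformity in $\delta$ of the Riesz bounds for smoothed graphs. The fine scales must be handled without any a priori regularity of $A$, and I would try first to compare $\mathcal R^{A_\delta}_{(j)}$ with averages of $\mathcal R^A_{(j)}$ over translates.
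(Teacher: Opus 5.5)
Your outline has the paper's architecture: time symmetrization, mollification, comparison with the commutator $\mathcal C$, scale interactions through $\alpha$-numbers and Lemma~\ref{vectorDorronsoro}, and testing $\mathcal R_\ell$ on $\chi_{2C'_R}$. The absorption in Step~3, however, does not work as you set it up. Proposition~\ref{offdiagprop}, Proposition~\ref{alphabetagraph} and Lemma~\ref{vectorDorronsoro} give $\sum_{|i-j|>N_0}|\langle\mathcal R_{(i)}1,\mathcal R_{(j)}1\rangle|\lesssim 2^{-cN_0}\sum_Q\alpha(Q)^2\mu(Q)\lesssim 2^{-cN_0}\|D_{par}A\|_2^2$, with no factor $\epsilon^2$. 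None can be gained, because the $\alpha$-numbers and $\|D_{par}A\|_2$ are of the same order in the amplitude of the oscillations of $A$. So your bound $\sum_Q\alpha(Q)^2\mu(Q)\lesssim\epsilon^2\|D_{par}A\|_2^2+R^{k+2}$ fails in general. The smallness has to come from the separation parameter $N_0$, as follows:
\begin{itemize}
\item expand $\|\mathcal R_\ell1\|_2^2=\sum_{i,j\le\ell}\langle\mathcal R_{(i)}1,\mathcal R_{(j)}1\rangle$ and bound the terms with $|i-j|>N_0$ as above;
\item replace the terms with $|i-j|\le N_0$ by $\langle\mathcal C_{(i)}1,\mathcal C_{(j)}1\rangle$, at total cost $\lesssim N_0\epsilon\|D_{par}A\|_2^2$ (Proposition~\ref{errorest});
\item use that every $\langle\mathcal C_{(i)}1,\mathcal C_{(j)}1\rangle$ is nonnegative (Proposition~\ref{fourierest}; the time-evenness of the commutator kernel makes $g$ real and $\geq0$), so the truncated $\mathcal C$-sum is at least $\sum_j\|\mathcal C_{(j)}1\|_2^2\gtrsim\|D_{par}A\|_2^2$;
\item fix $N_0$ large first, and only then take $\epsilon$ small depending on $N_0$.
\end{itemize}
An $L^2$ bound for your global remainder $\mathcal E=\mathcal R1-\mathcal C1$ is also not available. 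It would need almost orthogonality across scales of the pieces $\mathcal R_{(j)}1-\mathcal C_{(j)}1$, including mixed terms $\langle\mathcal R_{(i)}1,\mathcal C_{(j)}1\rangle$, for which no estimate exists.

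In Step~2, your treatment of scales $\gtrsim\delta$ is fine, and simpler than the paper's. The mean value theorem in the transversal variable gives $|K^{A_\delta}-K^A|\lesssim\epsilon\delta\,\rho^{-k-3}$ directly, so the mixed kernels $K^{B_\lambda,A},K^{A,B_\lambda}$ and the shift $2C_A\lambda\mathbf 1$ are unnecessary there. The fine scales are not handled, though. The constant in Theorem~\ref{highcodimgeneralSIO} depends on the global quantity $\|D_t^{1/2}A_\delta\|_{BMO}$. This is not a priori bounded uniformly in $\delta$ (a uniform bound would already mean $A$ is regular), and it is invariant under parabolic rescaling, so ``uniform after rescaling'' gives nothing. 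What is missing is a localization:
\begin{itemize}
\item on each cube $I_i$ of side $\approx\delta$, replace $A_\delta$ by $\psi_i(A_\delta-A_\delta(x_i,t_i))$;
\item the bounds $\|\nabla_x^2A_\delta\|_\infty+\|\partial_tA_\delta\|_\infty\lesssim\epsilon/\delta$ give $\beta$-numbers $\lesssim\epsilon r/\delta$ for $r\lesssim\delta$ and $\lesssim\epsilon\delta/r$ for $r\gtrsim\delta$, hence uniform regularity by Theorem~\ref{regularbetaequiv};
\item then Theorem~\ref{highcodimgeneralSIO}, Cotlar's inequality, comparison of the scale-$\delta$ cutoff with smooth truncations, and bounded overlap give the uniform bound.
\end{itemize}

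Finally, the case $k=0$ does not follow ``by the same scheme''. Section~\ref{last}, and hence Proposition~\ref{offdiagprop}, is proved only for $1\le k\le n-1$. The one-dimensional multiplier treats only the linear commutator, not the scale interactions of the nonlinear transform. The paper instead argues as follows:
\begin{itemize}
\item lift to $F(x,t)=A(t)$ on $\mathbb P^1$;
\item transfer the Riesz bounds by Fourier transforming the Gaussian in $x$;
\item apply the $k=1$ case of Theorem~\ref{mainthm2} to get $D_t^{1/2}A\in BMO$;
\item recover the $L^2$ bound from the tail estimate $|D_t^{1/2}A(t)|\lesssim\epsilon R^3|t|^{-3/2}$ for $|t|>2R^2$, the vanishing integral of $D_t^{1/2}A$, and John--Nirenberg.
\end{itemize}
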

\subsection{Reduction to the ``Time-Symmetric'' Riesz Transform}\label{symreduction}

We first show that one-sided singular integral bounds with respect to time imply time-symmetric singular integral bounds. Fix a transversal component $m\in\{1,\dots,n-k\}$ and assume the bound
\[
\big\|\sup_{\ell\in\mathbb Z} |\mathcal R_\ell^{+,m} f|\big\|_2
\lesssim
\|f\|_2.
\]

Since $\Psi_\ell$ is even,
\[
(\mathcal R_\ell^{+,m})^*
=
-\mathcal R_\ell^{-,m}.
\]
Duality therefore gives
\[
\|\mathcal R^{-,m}\|
=
\|\mathcal R^{+,m}\|.
\]
Applying Cotlar's inequality for the smooth truncations, we obtain
\[
\big\|\sup_{\ell\in\mathbb Z}
|\mathcal R_\ell^{-,m}f|\big\|_2
\lesssim
\|f\|_2.
\]
Since
\[
\mathcal R_\ell^m
=
\mathcal R_\ell^{+,m}
+
\mathcal R_\ell^{-,m},
\]
it follows that
\[
\big\|\sup_{\ell\in\mathbb Z}
|\mathcal R_\ell^m f|\big\|_2
\lesssim
\|f\|_2.
\]

 \subsection{Smoothing}\label{smoothing}

To prove Theorem~\ref{localdpara}, we mollify $A$, prove the estimate for the mollified functions, and pass to the limit. We need to show that the Riesz transform bounds for the smoothed graphs depend only on those of the original graph and are independent of the smoothing parameter.

We use the following consequence of Theorem~\ref{highcodimgeneralSIO}.
\begin{theorem}\label{highcodimBHHLN}
Let $0\leq k\leq n-1$. Suppose that $F:\mathbb P^k\to\mathbb R^{n-k}$ is regular
Lip-$(1,1/2)$, and let
$\Gamma_F:=\{(x,F(x,t),t):(x,t)\in\mathbb P^k\}$.
Then $\|\mathcal R^F\|<\infty$. The bound depends
only on $n$, $k$, and the regular Lip-$(1,1/2)$ constants of $F$.
\end{theorem}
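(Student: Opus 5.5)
The plan is to apply Theorem~\ref{highcodimgeneralSIO} directly, once for each transversal component $m\in\{1,\dots,n-k\}$, with the kernel $K=J^m$ from \eqref{rieszkernel}. Since $\mathcal R^{F,m}_\ell$ is by definition the operator $T^F_\ell$ of Theorem~\ref{highcodimgeneralSIO} with $K=J^m$, the conclusion $\sup_\ell\|T^F_\ell\|_{2\to2}\leq C$ is exactly $\|\mathcal R^{F,m}\|<\infty$. Summing over $m$ then gives $\|\mathcal R^F\|<\infty$, with a constant depending only on $n$, $k$, the regular Lip-$(1,1/2)$ constants of $F$, and the kernel constants $C_{j,q}$. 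The $C_{j,q}$ depend only on $n$ and $k$, since the kernel $J^m$ is fixed. So the whole task reduces to checking the hypotheses on $J^m$.

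The first two hypotheses are immediate. Oddness holds because $J^m(-X,t)=-J^m(X,t)$, as the factor $z_m$ changes sign while $|X|^2$ does not. Homogeneity holds because $J^m(\lambda X,\lambda^2t)=\lambda^{-(k+2)}J^m(X,t)$: the factor $z_m/|t|^{(k+3)/2}$ scales like $\lambda^{1-(k+3)}$ and the exponential is invariant.

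The main step is showing $J^m\in C^\infty(\mathbb P^n\setminus\{0\})$. Here the convention $J^m(X,0)=0$ for $X\neq0$ matters.
\begin{itemize}
\item Away from $t=0$ the kernel is clearly smooth.
\item Near a point $(X_0,0)$ with $X_0\neq0$, write $J^m=z_m|t|^{-(k+3)/2}e^{-|X|^2/4|t|}$. Every derivative $\nabla_X^j\partial_t^q$ is a finite sum of terms (polynomial in $X$)$\cdot|t|^{-a}\cdot(\operatorname{sgn} t)^b\cdot e^{-|X|^2/4|t|}$.
\item Since $|X|\geq|X_0|/2$ nearby, each such term is bounded by $C|t|^{-a}e^{-c/|t|}\to0$ as $t\to0$, from either side.
\item Hence all derivatives extend continuously by $0$ across $t=0$, and $J^m$ is $C^\infty$ there, which covers any required $C^N$ regularity.
\end{itemize}

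For the derivative bounds, use homogeneity. The function $\nabla_X^j\partial_t^qJ^m$ is homogeneous of degree $-(k+2)-j-2q$ under parabolic dilations. It is also continuous on the compact set $\{\|(X,t)\|=1\}$, so it is bounded there by some $C_{j,q}$. Rescaling then gives $|\nabla_X^j\partial_t^qJ^m(X,t)|\leq C_{j,q}\|(X,t)\|^{-k-2-j-2q}$ everywhere off the origin.

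With these hypotheses verified, Theorem~\ref{highcodimgeneralSIO} applies with $K=J^m$ and yields the claim. The same verification also handles $J^{\pm,m}$, since the cutoff $\chi_{\{\pm t>0\}}$ is smooth away from the origin by the same flatness at $t=0$. The statement only needs $J^m$, though. The only genuinely delicate point is the smoothness across $\{t=0\}$, and it is settled by the Gaussian's exponential flatness.
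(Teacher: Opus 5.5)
Your proposal is correct and takes the same route as the paper. Apply Theorem~\ref{highcodimgeneralSIO} to each kernel $J^m$, check spatial oddness, smoothness across $\{t=0\}$ via the Gaussian's flatness, and the derivative bounds via parabolic homogeneity and compactness of $\{\|(X,t)\|=1\}$, then sum the bounds over $m=1,\dots,n-k$.
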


The deduction, including the case $k=0$, is given in Appendix~\ref{bhhlnappendix}.

Let $\eta\in C_c^\infty(\mathbb P^k)$ be a non-negative function satisfying
\[
\supp \eta \subset C'_1(0,0),
\qquad
\int_{\mathbb P^k}\eta=1.
\]
For $\la>0$, define
\[
    \eta_\la(x,t)
    =
    \frac{1}{\la^{k+2}}
    \eta\left(\frac{x}{\la},\frac{t}{\la^2}\right).
\]
Then
\[
    \eta_\la\in C_c^\infty(C'_\lambda(0,0)),
    \qquad
    \eta_\la\geq 0,
    \qquad
    \int_{\mathbb P^k}\eta_\la(x,t)\,\d x\,\d t=1.
\]
Define $B_\la:\mathbb P^k\to \RR^{n-k}$ componentwise by
\[
    B_\la(x,t)=A*\eta_\la(x,t)+2C_A\la {\bf 1},
    \qquad {\bf 1}=(1,\ldots,1)\in\RR^{n-k},
\]
where $C_A := 1 + \|A\|_{lip}$. In particular, for each $m=1,\ldots,n-k$, using that $\|A\|_{lip} \leq \epsilon \ll 1$,
\[
    C_A\la
    \leq
    B_{\la,m}(x,t)-A_m(x,t)
    \leq
    3C_A\la.
\]
Consequently,
\[
    |B_\la(x,t)-A(x,t)|\approx \la,
\]
with implicit constants depending only on $n-k$ and $C_A$.

We use the following kernels:
\[
K(x,t,y,s)
=
\frac{A(x,t)-A(y,s)}{|t-s|^{(k+1)/2+1}}
\exp\left(
    -\frac{|x-y|^2+|A(x,t)-A(y,s)|^2}{4|t-s|}
\right),
\]
\[
K^{B_\la,A}(x,t,y,s)
=
\frac{B_\la(x,t)-A(y,s)}{|t-s|^{(k+1)/2+1}}
\exp\left(
    -\frac{|x-y|^2+|B_\la(x,t)-A(y,s)|^2}{4|t-s|}
\right),
\]
\[
K^{A,B_\la}(x,t,y,s)
=
\frac{A(x,t)-B_\la(y,s)}{|t-s|^{(k+1)/2+1}}
\exp\left(
    -\frac{|x-y|^2+|A(x,t)-B_\la(y,s)|^2}{4|t-s|}
\right),
\]
\[
K^{B_\la}(x,t,y,s)
=
\frac{B_\la(x,t)-B_\la(y,s)}{|t-s|^{(k+1)/2+1}}
\exp\left(
    -\frac{|x-y|^2+|B_\la(x,t)-B_\la(y,s)|^2}{4|t-s|}
\right).
\]
\begingroup
We first verify the size and smoothness estimates for these kernels.

\begin{definition}\label{standardparabolicCZ}
A kernel
\[
U:\bigl(\mathbb P^k\times\mathbb P^k\bigr)
\setminus\{(u,u):u\in\mathbb P^k\}
\longrightarrow\mathbb R^{n-k}
\]
is a standard parabolic Calder\'on--Zygmund kernel if
\[
|U(x,t,y,s)|
\leq
\frac{C}{\|(x,t)-(y,s)\|^{k+2}},
\]
and if
\[
|U(x,t,y,s)-U(x',t',y,s)|
\leq
C\frac{\|(x,t)-(x',t')\|}
{\|(x,t)-(y,s)\|^{k+3}}
\]
whenever
\[
\|(x,t)-(x',t')\|
\leq
\frac12\|(x,t)-(y,s)\|.
\]
We require the analogous estimate in the second pair of variables:
\[
|U(x,t,y,s)-U(x,t,y',s')|
\leq
C\frac{\|(y,s)-(y',s')\|}
{\|(x,t)-(y,s)\|^{k+3}}
\]
whenever
\[
\|(y,s)-(y',s')\|
\leq
\frac12\|(x,t)-(y,s)\|.
\]
The least admissible constant $C$ will be called the standard parabolic
Calder\'on--Zygmund constant of $U$.
\end{definition}

\begin{lemma}\label{smoothingCZkernels}
Let $F,G:\mathbb P^k\to\mathbb R^{n-k}$ be
Lip-$(1,1/2)$ functions, and define
\[
K^{F,G,m}(x,t,y,s)
:=
J^m\bigl(x-y,F(x,t)-G(y,s),t-s\bigr).
\]
Then $K^{F,G}=(K^{F,G,m})_{m=1}^{n-k}$ is a standard parabolic
Calder\'on--Zygmund kernel. Its Calder\'on--Zygmund constant depends only
on $n$, $k$, $\|F\|_{lip}$, and $\|G\|_{lip}$.
\end{lemma}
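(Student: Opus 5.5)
We will derive all three estimates of Definition~\ref{standardparabolicCZ} from pointwise bounds on the ambient kernel $J^m$ together with the Lipschitz control of $F$ and $G$. Fix $m$ and write $u=(x,t)$, $v=(y,s)$, $w=F(x,t)-G(y,s)\in\mathbb R^{n-k}$. The kernel is then
\[
K^{F,G,m}(u,v)=J^m(x-y,w,t-s).
\]

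\textbf{Step 1: ambient estimates for $J^m$.} For $t\neq0$, $J^m$ is a polynomial-times-Gaussian in $X/|t|^{1/2}$, multiplied by $|t|^{-(k+2)/2}$. Writing $J^m(X,t)=|t|^{-(k+2)/2}g(X/|t|^{1/2})$ with $g(Y)=Y_{z_m}e^{-|Y|^2/4}$ a Schwartz function, we get
\[
|J^m(X,t)|\lesssim_N |t|^{-(k+2)/2}\bigl(1+|X|/|t|^{1/2}\bigr)^{-N}\lesssim \|(X,t)\|^{-(k+2)}.
\]
The same scaling gives $|\nabla_X J^m|\lesssim\|(X,t)\|^{-(k+3)}$ and $|\partial_tJ^m|\lesssim\|(X,t)\|^{-(k+4)}$ for $t\neq0$. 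The decay in $|X|/|t|^{1/2}$ also shows that $J^m$ and these derivatives extend continuously by $0$ across $\{t=0,X\neq0\}$. Hence $J^m$ is $C^1$ on $\mathbb P^n\setminus\{0\}$, and these derivative bounds hold there.

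\textbf{Step 2: the size estimate.} We have $\|(x-y,w,t-s)\|\geq\|u-v\|$, so the size bound $|K^{F,G,m}(u,v)|\lesssim\|u-v\|^{-(k+2)}$ follows immediately from Step 1. Note that the comparability $\|(x-y,w,t-s)\|\approx\|u-v\|$ fails when $F\neq G$, since $|w|$ may be large, but only this lower bound is needed.

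\textbf{Step 3: smoothness in the first variable.} Take $u'=(x',t')$ with $\|u-u'\|\leq\frac12\|u-v\|$, and set $P=(x-y,w,t-s)$ and $P'=(x'-y,w',t'-s)$, where $w'=F(x',t')-G(y,s)$. Then
\[
|P-P'|_{sp}\leq|x-x'|+\|F\|_{lip}\|u-u'\|,\qquad |t-t'|\leq\|u-u'\|^2 .
\]
We pass from $P$ to $P'$ in two moves: first vary time at fixed spatial point, then vary the spatial point linearly at fixed time. Along these paths the points $Q$ satisfy $\|Q\|\gtrsim\|u-v\|$. Indeed, the time part of $Q$ stays comparable to $|t-s|$ up to $\frac14\|u-v\|^2$. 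The spatial part has $x$-component within $|x-x'|\leq\frac12\|u-v\|$ of $x-y$, so $|Q_x|+|Q_t|^{1/2}\gtrsim\|u-v\|$.

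The mean value theorem then gives
\[
|J^m(P)-J^m(P')|\lesssim\frac{(1+\|F\|_{lip})\|u-u'\|}{\|u-v\|^{k+3}}+\frac{\|u-u'\|^2}{\|u-v\|^{k+4}},
\]
and the second term is dominated by the first because $\|u-u'\|\leq\|u-v\|$.

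\textbf{Step 4: smoothness in the second variable, and the main obstacle.} The estimate in the second variable is identical, with $G$ in place of $F$. The only delicate point in the whole argument is the lower bound $\|Q\|\gtrsim\|u-v\|$ along the segment in Step 3, which controls the derivative bounds. It must be obtained using only the horizontal coordinates $(x,t)$ and not the transversal coordinate, because $w$ is not controlled by $\|u-v\|$. The first-move path also needs care when it crosses $t=s$; this is harmless once we have the $C^1$ extension from Step 1. All constants depend only on $n$, $k$, $\|F\|_{lip}$ and $\|G\|_{lip}$.
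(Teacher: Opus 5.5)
Your argument is correct, but it is organized differently from the paper's. The paper first proves the ambient estimate $|J^m(P)-J^m(P')|\lesssim\|P-P'\|/\|P\|^{k+3}$ for $\|P-P'\|\le\frac12\|P\|$. It then sets $D=\|\Phi_F(u)-\Phi_G(v)\|\ge\rho$ and splits according to whether the lifted increment $\|\Phi_F(u)-\Phi_F(u')\|$ is at most $\frac12D$. If it is, the ambient estimate applies, with $D^{k+3}\ge\rho^{k+3}$ in the denominator. If not, then $\delta\gtrsim_F D\ge\rho$, and the size bounds at $(u,v)$ and $(u',v)$ already give $\rho^{-(k+2)}\lesssim\delta/\rho^{k+3}$.

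You instead apply the mean value theorem along an explicit time-then-space path in the lifted variables. You get the lower bound on $\|Q\|$ from the horizontal coordinates alone, and that is what removes the case split and the comparison of $\delta$ with $D$. The paper's route uses $J^m$ only as a black-box ambient Calder\'on--Zygmund kernel, and the same small-versus-large increment dichotomy is reused in Lemmas~\ref{czestimates} and~\ref{czestimatesbarA}. Your route is more hands-on but equally general, since it uses only the pointwise bounds on $\nabla_XJ^m$ and $\partial_tJ^m$. It also yields the constant $(1+\|F\|_{lip})$ directly.

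One point in your Step~3 needs tightening. Bounding the time perturbation by $\frac14\|u-v\|^2$ and the $x$-perturbation by $\frac12\|u-v\|$ separately does not give $|Q_x|+|Q_t|^{1/2}\gtrsim\|u-v\|$. Take $|x-y|=|t-s|^{1/2}=\frac12\|u-v\|$, $x'=y$ and $t'=s$: both separate bounds hold, but the endpoint of your path has horizontal part $0$. What you need is the shared budget $|x-x'|+|t-t'|^{1/2}=\|u-u'\|\le\frac12\|u-v\|$. With it, the triangle inequality for $\|\cdot\|$ on $\mathbb P^k$ gives $|Q_x|+|Q_t|^{1/2}\ge\|u-v\|-\|u-u'\|\ge\frac12\|u-v\|$ at every point of both legs of the path.
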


\begin{proof}
For $P=(X,\tau)\neq0$, differentiation of the formula for $J^m$
gives
\[
|J^m(P)|\lesssim\|P\|^{-(k+2)},\qquad
|\nabla_XJ^m(P)|\lesssim\|P\|^{-(k+3)},\qquad
|\partial_\tau J^m(P)|\lesssim\|P\|^{-(k+4)}.
\]
It follows, by applying
the mean value theorem first in $X$ and then in $\tau$, that
\begin{equation}\label{ambientJsmoothness}
|J^m(P)-J^m(P')|
\lesssim
\frac{\|P-P'\|}{\|P\|^{k+3}}
\end{equation}
whenever $\|P-P'\|\leq\frac12\|P\|$.

Write $u=(x,t)$, $v=(y,s)$, and
\[
\rho:=\|u-v\|,\qquad
D:=\|\Phi_F(u)-\Phi_G(v)\|.
\]
Since $D\geq\rho$, the size estimate for $J^m$ gives
\[
|K^{F,G,m}(u,v)|\lesssim\rho^{-(k+2)}.
\]
Now let $u'=(x',t')$ and
$\delta:=\|u-u'\|\leq\rho/2$. If
\begin{equation}\label{liftclose}
\|\Phi_F(u)-\Phi_F(u')\|\leq\frac12D,
\end{equation}
then \eqref{ambientJsmoothness} and
\[
\|\Phi_F(u)-\Phi_F(u')\|
\leq
(1+\|F\|_{lip})\delta
\]
give
\[
|K^{F,G,m}(u,v)-K^{F,G,m}(u',v)|
\lesssim
\frac{(1+\|F\|_{lip})\delta}{\rho^{k+3}}.
\]
If \eqref{liftclose} fails, then
$\delta\gtrsim_F D\geq\rho$, while
$\|u'-v\|\geq\rho/2$. The size estimate applied at $(u,v)$ and
$(u',v)$ therefore gives the same bound. This proves the smoothness
estimate in the first variable. The proof in the second variable is
identical, with $G$ in place of $F$.
\end{proof}

Because $\|B_\la\|_{lip}\lesssim\|A\|_{lip}$ uniformly in $\la$, the lemma shows that
\[
K=K^{A,A},\qquad K^{B_\la,A},\qquad K^{A,B_\la},
\qquad\text{and}\qquad K^{B_\la}=K^{B_\la,B_\la}
\]
have standard parabolic Calder\'on--Zygmund constants bounded uniformly
in $\la$. We will also apply the lemma to the kernels $K^{B_{\la,i}}$ introduced
below, after establishing uniform Lip-$(1,1/2)$ bounds for $B_{\la,i}$.
\endgroup

For $\ell\in\mathbb Z$, we define the corresponding truncations away from the singularity by
\[
\begin{aligned}
K_\ell^{B_\la,A}(x,t,y,s)
&:=K^{B_\la,A}(x,t,y,s)
\Psi_\ell\bigl((x,t)-(y,s)\bigr),\\
K_\ell^{A,B_\la}(x,t,y,s)
&:=K^{A,B_\la}(x,t,y,s)
\Psi_\ell\bigl((x,t)-(y,s)\bigr),\\
K_\ell^{B_\la}(x,t,y,s)
&:=K^{B_\la}(x,t,y,s)
\Psi_\ell\bigl((x,t)-(y,s)\bigr).
\end{aligned}
\]

The following proposition is the main result of this subsection.
\begin{proposition}\label{Blabounded}
The following inequality holds uniformly over $\ell\in\mathbb{Z}$ and $\la>0$:
\begin{equation*}
    \left\|
    \int_{\mathbb P^k}
        K^{B_\la}_\ell(\cdot,\cdot,y,s)\,f(y,s)\,\d y\,\d s
    \right\|_2^2
    \lesssim
    \bigl(1+\|\mathcal{R}\|^2\bigr)\|f\|_2^2.
\end{equation*}
\end{proposition}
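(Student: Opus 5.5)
We plan to compare $K^{B_\la}$ with the original kernel $K$ in two steps, passing through the mixed kernel $K^{B_\la,A}$ (or $K^{A,B_\la}$). We split the truncated kernel into a local part, at parabolic scales $\lesssim\la$, and a far part, at scales $\gtrsim\la$. Write $\Psi_\ell=\Psi_{\ell}\Psi_{j_\la}+\Psi_\ell(1-\Psi_{j_\la})$ with $2^{-j_\la}\approx\la$. This is up to the usual adjustment when $2^{-\ell}\lesssim\la$, in which case only the near part appears.

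\emph{Near part.} For $\|(x,t)-(y,s)\|\lesssim\la$ we will not use cancellation at all. Since $B_\la$ is smooth at scale $\la$, $B_\la(x,t)-B_\la(y,s)=\nabla_xB_\la(x,t)\cdot(x-y)+O(\la^{-1}\|(x,t)-(y,s)\|^2)$ in the parabolic sense. The time increment costs $|t-s|\,\|\partial_tB_\la\|_\infty\lesssim |t-s|\epsilon/\la$. The linear spatial term gives a kernel that is odd under $(x-y,t-s)\mapsto-(x-y,t-s)$ up to an error, after freezing the gradient at $(x,t)$. The remaining part has size $\lesssim \la^{-1}\|(x,t)-(y,s)\|^{-(k+1)}$ on the region $\|\cdot\|\lesssim\la$, which is integrable with bound $O(1)$. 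The frozen-gradient odd term integrates to a uniformly bounded operator, by the standard argument for odd kernels on smooth graphs: it is a Calderón–Zygmund kernel with mean zero on parabolic annuli, and Lemma~\ref{smoothingCZkernels} supplies the uniform CZ constants. This part contributes $O(1)\|f\|_2$.

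\emph{Far part, scales $\gtrsim\la$.} Here we replace $B_\la$ by $A$ one slot at a time. Since $|B_\la-A|\approx\la$ and $\|(x,t)-(y,s)\|\gtrsim\la$, the mean value estimate \eqref{ambientJsmoothness} for $J$ gives
\[
|K^{B_\la}-K^{A,B_\la}|+|K^{A,B_\la}-K^{A,A}|\lesssim \frac{\la}{\|(x,t)-(y,s)\|^{k+3}}.
\]
This difference kernel is dominated by $\sum_{j:2^{-j}\gtrsim\la}(\la 2^{j})\,2^{j(k+2)}\chi_{\|\cdot\|\le 2^{-j}}$. It is therefore bounded by the parabolic Hardy–Littlewood maximal function times a geometric series, and is bounded on $L^2$ uniformly. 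Thus the far part of $K^{B_\la}_\ell$ equals the far part of $K_\ell$ plus an $O(1)$ operator.

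\emph{Conclusion.} The far part of $K_\ell$ is $\mathcal R_\ell-\mathcal R_{\ell}^{\rm near}$, with the smooth truncation at scale $\la$. The operator $\mathcal R_\ell$ is bounded by $\|\mathcal R\|$ through the reduction in Subsection~\ref{symreduction} and the maximal truncation bound. The product of two smooth truncations is handled as $\mathcal R_{\min(\ell,j_\la)}$ up to an annulus at a single scale, which is bounded by the maximal function. Combining the pieces gives the claimed bound $(1+\|\mathcal R\|^2)\|f\|_2^2$.

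The main obstacle is the near part. There the odd-symmetry cancellation has to be extracted on a graph that is only smooth at scale $\la$, with constants independent of $\la$, and the time derivative of $B_\la$ only gives the weak bound $\epsilon/\la$. If freezing the gradient is too lossy, the first alternative to try is to compare instead with the kernel over the tangent vertical plane $(x,\nabla_xB_\la(x,t)(y-x)+B_\la(x,t),s)$. On that plane the Riesz kernel integrates to zero over symmetric annuli, by oddness in the spatial variable and the evenness of $\Psi$.
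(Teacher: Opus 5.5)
Your proposal is correct, but it takes a genuinely different route from the paper on both pieces.

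On the far region you pass $K^{B_\la}\to K^{A,B_\la}\to K$ directly, using the transversal mean value bound (essentially \eqref{farBlaABestimate} and \eqref{farABestimate}). You then control $K\Psi_\ell\Psi_{j_\la}$ by $\|\mathcal R\|$ plus a single annulus. The paper instead writes the far part of $K^{A,B_\la}$ as the full operator minus its near part, and bounds the full $K^{A,B_\la}$ operator by duality with $K^{B_\la,A}$ (Lemmas~\ref{BlaAbounded} and~\ref{ABlabounded}). That detour is exactly where the shift $2C_A\la\mathbf 1$ and the near bounds \eqref{nearBAestimate}, \eqref{nearABestimate} enter; your route needs neither.

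On the near region the paper localizes $B_\la$ to dyadic cubes of side $\approx\la$. It shows through \eqref{bicarleson} and Theorem~\ref{regularbetaequiv} that the pieces $B_{\la,i}$ are uniformly regular Lip-$(1,1/2)$, and then imports Theorem~\ref{highcodimBHHLN} (the bound of \cite{BHHLN3}) together with Cotlar's inequality. Your Taylor expansion at scale $\la$ replaces all of this by a tangent-plane kernel plus an error of size $\la^{-1}\|(x,t)-(y,s)\|^{-(k+1)}$ on $\{\|(x,t)-(y,s)\|\lesssim\la\}$. That is far more elementary. The paper's choice has the advantage of reusing machinery it needs anyway in Subsection~\ref{localization}.

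The one step you should justify differently is the boundedness of the frozen operator. Set $T_Mf:=(J(u,Mu,\tau)\,\chi(u,\tau))*f$, where $\chi$ is your near cutoff, and set $T_{\mathrm{fr}}f(x,t):=T_{\nabla_xB_\la(x,t)}f(x,t)$. Vanishing integrals in $(y,s)$ for each fixed $(x,t)$ only give $T_{\mathrm{fr}}1=0$. Together with CZ bounds, this does not imply $L^2$ boundedness in general. Also, Lemma~\ref{smoothingCZkernels} does not apply to this kernel, since its transversal argument $\nabla_xB_\la(x,t)(x-y)$ is not of the form $F(x,t)-G(y,s)$. Either of two standard arguments closes the gap.

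First, you can apply the $T1$ theorem on $\mathbb P^k$ to the near part of $K^{B_\la}_\ell$ itself. This kernel is exactly antisymmetric, because $J$ is odd in $X$ and even in $t$ and $\Psi_\ell$ is even. Lemma~\ref{smoothingCZkernels} does give it uniform CZ constants. Your Taylor comparison shows that its $T1$ lies in $L^\infty$ uniformly in $\ell$ and $\la$. Hence $T^*1=-T1$, and antisymmetry gives the weak boundedness property.

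Second, you can use Sobolev embedding in the matrix parameter. For $s>k(n-k)/2$,
\[
|T_{\mathrm{fr}}f(x,t)|^2\le\sup_{|M|\lesssim\epsilon}|T_Mf(x,t)|^2\lesssim\sum_{|\gamma|\le s}\int_{|M|\le C\epsilon}|\partial_M^\gamma T_Mf(x,t)|^2\,\d M .
\]
Each $\partial_M^\gamma[J(u,Mu,\tau)]\chi(u,\tau)$ is again a spatially odd parabolic CZ convolution kernel, hence a uniformly bounded Fourier multiplier. Integrating in $(x,t)$ then gives the uniform $L^2$ bound.
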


We will prove Proposition~\ref{Blabounded} after establishing a few lemmas.

\begin{lemma}\label{vectorvaluedsmoothingestimate}
We have, for each $\ell \in \mathbb{Z}$,
\begin{align}
    |K_\ell^{B_\la,A}(x,t,y,s)-K_\ell(x,t,y,s)| 
        &\lesssim \frac{\la}{\|(x,t)-(y,s)\|^{k+3}}, 
        \label{farBAestimate}\\
    |K_\ell^{B_\la,A}(x,t,y,s)| 
        &\lesssim \frac{1}{\la^{k+2}}, \qquad 
        \text{if }\, \|(x,t)-(y,s)\|<\la, 
        \label{nearBAestimate}\\
    |K_\ell^{A,B_\la}(x,t,y,s) - K_\ell(x,t,y,s)| 
        &\lesssim \frac{\la}{\|(x,t)-(y,s)\|^{k+3}}, 
        \label{farABestimate}\\
    |K_\ell^{A,B_\la}(x,t,y,s)| 
        &\lesssim \frac{1}{\la^{k+2}}, \qquad 
        \text{if }\, \|(x,t)-(y,s)\|<\la,
        \label{nearABestimate}\\
    |K_\ell^{B_\la}(x,t,y,s) - K_\ell^{A,B_\la}(x,t,y,s)| 
        &\lesssim \frac{\la}{\|(x,t)-(y,s)\|^{k+3}}.
        \label{farBlaABestimate}
\end{align}
The implicit constants are independent of $\la$, but depend on $\|A\|_{lip}$.
\end{lemma}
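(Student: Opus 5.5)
The plan is to view all five estimates as consequences of a single ambient perturbation bound for the kernel $J^m$. We then split according to whether the parabolic distance $\rho:=\|(x,t)-(y,s)\|$ is large or small compared to $\la$. The key facts are that $|B_\la-A|\approx\la$ pointwise, that $\|B_\la\|_{lip}\lesssim\|A\|_{lip}\le\epsilon$, and that, because $A$ is Lip-$(1,1/2)$,
\[
|B_\la(x,t)-B_\la(y,s)-(A(x,t)-A(y,s))|\le|B_\la(x,t)-A(x,t)|+|B_\la(y,s)-A(y,s)|\lesssim\la .
\]
Each of the kernels involved is $J^m$ evaluated at an ambient point $P=(x-y,Z,t-s)$, where $Z$ is one of $A(x,t)-A(y,s)$, $B_\la(x,t)-A(y,s)$, $A(x,t)-B_\la(y,s)$, or $B_\la(x,t)-B_\la(y,s)$. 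The cutoff $\Psi_\ell((x,t)-(y,s))$ is the same for all of them and is bounded by $1$, so it factors out of every difference. Hence it suffices to prove the estimates for the untruncated kernels.

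\emph{Near estimates \eqref{nearBAestimate}, \eqref{nearABestimate}.} Suppose $\rho<\la$. Then $|Z|\ge|B_\la(x,t)-A(x,t)|-|A(x,t)-A(y,s)|\ge C_A\la-\epsilon\rho\gtrsim\la$. The ambient norm is therefore $\|P\|\ge|Z|\gtrsim\la$. The size bound $|J^m(P)|\lesssim\|P\|^{-(k+2)}$ recorded in the proof of Lemma~\ref{smoothingCZkernels} then gives $\lesssim\la^{-(k+2)}$. This is the step where the additive constant $2C_A\la\mathbf 1$ in the definition of $B_\la$ is used.

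\emph{Far estimates \eqref{farBAestimate}, \eqref{farABestimate}, \eqref{farBlaABestimate}.} In each case we compare $J^m(P)$ with $J^m(P')$, where $P$ and $P'$ differ only in the transversal coordinate and $\|P-P'\|=|Z-Z'|\lesssim\la$. For instance, for \eqref{farBlaABestimate} the difference is $|B_\la(x,t)-A(x,t)|$. Also $\|P\|,\|P'\|\ge\rho$. We split into two cases.
\begin{itemize}
\item If $c\la\le\frac12\rho$ for a suitable small constant $c$, then $\|P-P'\|\le\frac12\|P\|$. Estimate \eqref{ambientJsmoothness} gives $|J^m(P)-J^m(P')|\lesssim\la\,\|P\|^{-(k+3)}\le\la\rho^{-(k+3)}$.
\item If $\rho\lesssim\la$, the size bound applies to each term separately and gives $\lesssim\rho^{-(k+2)}\le\la\rho^{-(k+3)}$, up to constants.
\end{itemize}
No near-diagonal restriction is needed in the far estimates, since $\la/\rho^{k+3}\gtrsim\rho^{-(k+2)}$ whenever $\rho\lesssim\la$.

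The main point requiring care is that \eqref{ambientJsmoothness} is applied with the full ambient norm $\|P\|$, which is at least $\rho$. The bounds are therefore expressed in $\rho$ rather than in the (possibly larger) ambient distance, and this only helps. The remaining dependence on $\|A\|_{lip}$ enters through $C_A$ and through the Lipschitz bound for $B_\la$, which is uniform in $\la$ because convolution with the probability density $\eta_\la$ preserves the Lip-$(1,1/2)$ constant. No other obstacle is expected; these are routine kernel estimates.
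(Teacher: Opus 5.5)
Your proposal is correct, but for the far estimates it takes a different route from the paper. The paper fixes $h=|x-y|$ and $\tau=t-s$ and regards the kernel as a function $F(z)$ of the transversal argument alone. It then observes that
\[
|\nabla F(z)|\lesssim|\tau|^{-(k+3)/2}\Bigl(1+\frac{|z|^2}{|\tau|}\Bigr)e^{-(h^2+|z|^2)/(4|\tau|)}\lesssim|\tau|^{-(k+3)/2}e^{-h^2/(4|\tau|)}\lesssim\rho^{-(k+3)}
\]
uniformly in $z$. A single application of the mean value theorem then gives \eqref{farBAestimate} and \eqref{farBlaABestimate}, with no case distinction and no requirement that $|B_\la-A|$ be small relative to $\rho$. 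The $(A,B_\la)$ estimates are read off from the $(B_\la,A)$ ones via the antisymmetry $K^{A,B_\la}(x,t,y,s)=-K^{B_\la,A}(y,s,x,t)$. The near estimate is done by an explicit Gaussian computation after showing $|B_\la(x,t)-A(y,s)|\approx\la$.

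You instead use only the abstract size bound and \eqref{ambientJsmoothness}, with a split at $\rho\approx\la$. This is the same device the paper uses later for \eqref{czsmoothpush1}. The paper's argument exploits the Gaussian structure: the transversal gradient of $J^m$ is controlled by the horizontal separation alone. Yours is kernel-agnostic and would apply verbatim to any kernel with standard parabolic size and smoothness bounds. Your near estimate, via $\|P\|\ge|Z|\gtrsim\la$, is also slightly leaner, since it needs only a lower bound on $|Z|$ rather than two-sided comparability.

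One small correction. The threshold in your case split should use the (not small) constant $C$ with $|Z-Z'|\le C\la$, or you can simply split on whether $\|P-P'\|\le\frac12\|P\|$. With a small $c$, the condition $c\la\le\frac12\rho$ does not imply $\|P-P'\|\le\frac12\|P\|$. With the correct threshold, the complementary case still gives $\rho\lesssim\la$, and your size-bound argument there goes through unchanged.
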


\begin{proof}
Notice that $|\Psi_\ell|\leq 1$, and hence, for each $\ell$,
\[
    |K_\ell^{B_\la,A}(x,t,y,s)-K_\ell(x,t,y,s)| 
    \leq |K^{B_\la,A}(x,t,y,s)-K(x,t,y,s)|,
\]
and
\[
    |K_\ell^{A,B_\la}(x,t,y,s)-K_\ell(x,t,y,s)| 
    \leq |K^{A,B_\la}(x,t,y,s)-K(x,t,y,s)|.
\]
Thus it suffices to prove the five estimates for the
untruncated kernels. We prove \eqref{farBAestimate} and 
\eqref{nearBAestimate}. Since
$K_\ell^{A,B_\la}(x,t,y,s)=-K_\ell^{B_\la,A}(y,s,x,t)$ and
$K_\ell(x,t,y,s)=-K_\ell(y,s,x,t)$, estimates
\eqref{farABestimate} and \eqref{nearABestimate} follow directly from
\eqref{farBAestimate} and \eqref{nearBAestimate}, respectively, by interchanging
$(x,t)$ and $(y,s)$.

Set
\[
    h=|x-y|,
    \qquad
    \tau=t-s,
    \qquad
    \rho=\|(x,t)-(y,s)\|=h+|\tau|^{1/2}.
\]
If $\tau=0$, then $x\neq y$ off the diagonal, and all the kernels under consideration vanish by our definition of $J^m(X,0)$. Thus we may assume that $\tau\neq0$.

We first prove \eqref{farBAestimate}. Write
\[
    \alpha=B_\la(x,t)-A(y,s),
    \qquad
    \beta=A(x,t)-A(y,s).
\]
Then
\[
|\alpha-\beta|=|B_\la(x,t)-A(x,t)|\lesssim C_A\la.
\]
For fixed $h$ and $\tau$, define
\[
    F(z)
    =
    \frac{z}{|\tau|^{(k+1)/2+1}}
    \exp\left(-\frac{h^2+|z|^2}{4|\tau|}\right),
    \qquad z\in\RR^{n-k}.
\]
Then
\[
K^{B_\la,A}(x,t,y,s)-K(x,t,y,s)=F(\alpha)-F(\beta).
\]
The gradient satisfies
\[
    |\nabla F(z)|
    \lesssim
    \frac{1}{|\tau|^{(k+1)/2+1}}
    \left(1+\frac{|z|^2}{|\tau|}\right)
    \exp\left(-\frac{h^2+|z|^2}{4|\tau|}\right).
\]
Using
\[
\left(1+\frac{|z|^2}{|\tau|}\right)
e^{-|z|^2/(4|\tau|)}
\lesssim 1,
\]
we get
\[
    |\nabla F(z)|
    \lesssim
    \frac{1}{|\tau|^{(k+1)/2+1}}
    \exp\left(-\frac{h^2}{4|\tau|}\right),
\]
\begingroup
Since
\[
    \frac{1}{|\tau|^{(k+1)/2+1}}
    \exp\left(-\frac{h^2}{4|\tau|}\right)
    \lesssim
    \frac{1}{\rho^{k+3}},
\]
the mean value theorem yields
\[
    |K^{B_\la,A}(x,t,y,s)-K(x,t,y,s)|
    \leq
    |\alpha-\beta|
    \sup_{0\leq \theta\leq 1}
    |\nabla F(\beta+\theta(\alpha-\beta))|
    \lesssim
    \frac{C_A\la}{\rho^{k+3}}.
\]
\endgroup

We now prove \eqref{nearBAestimate}. Assume $\rho<\la$. For each component $m=1,\dots,n-k$, we have
\[
B_{\la,m}(x,t)-A_m(y,s)
=
2C_A\la
+
\big(A_m*\eta_\la(x,t)-A_m(x,t)\big)
+
\big(A_m(x,t)-A_m(y,s)\big).
\]

Hence, 
\begin{align*}
B_{\la,m}(x,t)-A_m(y,s)
&=
B_{\la,m}(x,t)-A_m(x,t)
+
A_m(x,t)-A_m(y,s)\\
&\geq
C_A\la-\|A\|_{lip}\rho
\geq
\la,
\end{align*}
since $\rho<\la$ and $C_A=1+\|A\|_{lip}$. Similarly,
\[
|B_{\la,m}(x,t)-A_m(y,s)|
\leq
3C_A\la+\|A\|_{lip}\rho
\lesssim
C_A\la.
\]

Consequently, $|B_\la(x,t)-A(y,s)|\approx\la$, and therefore
\[
    |K^{B_\la,A}(x,t,y,s)|
    \lesssim
    \frac{\la}{|\tau|^{(k+1)/2+1}}
    \exp\!\left(-c\frac{\la^2}{|\tau|}\right).
\]
Since
\[
\sup_{u>0}
u^{-(k+1)/2-1}e^{-c\la^2/u}
\lesssim
\la^{-(k+3)},
\]
we obtain
\[
    |K^{B_\la,A}(x,t,y,s)|
    \lesssim
    \la^{-k-2}.
\]

To prove \eqref{farBlaABestimate}, we again use the mean value theorem. For fixed $h=|x-y|$ and $\tau=t-s$, define
\[
    G(z)
    =
    \frac{z-B_\la(y,s)}{|\tau|^{(k+1)/2+1}}
    \exp\left(
        -\frac{h^2+|z-B_\la(y,s)|^2}{4|\tau|}
    \right),
    \qquad z\in\RR^{n-k}.
\]
Then
\[
K^{B_\la}(x,t,y,s)-K^{A,B_\la}(x,t,y,s)
=
G(B_\la(x,t))-G(A(x,t)).
\]
Differentiating $G$ gives
\[
|\nabla G(z)|
\lesssim
\frac{1}{|\tau|^{(k+1)/2+1}}
\exp\left(-\frac{h^2}{4|\tau|}\right)
\lesssim
\frac{1}{\rho^{k+3}}.
\]
Since
\[
|B_\la(x,t)-A(x,t)|\lesssim C_A\la,
\]
the mean value theorem gives
\[
|K^{B_\la}(x,t,y,s)-K^{A,B_\la}(x,t,y,s)|
\lesssim
\frac{C_A\la}{\rho^{k+3}}.
\]
Multiplying by the truncation factor $\Psi_\ell$, with $|\Psi_\ell|\leq 1$, gives \eqref{farBlaABestimate}.
\end{proof}
\begin{lemma}\label{BlaAbounded}
The following inequality holds uniformly over $\ell\in\mathbb{Z}$ 
and $\la>0$:
\begin{equation*}
    \left\|
    \int_{\mathbb P^k}
        K^{B_\la,A}_\ell(\cdot,\cdot,y,s)\,f(y,s)\,\d y\,\d s
    \right\|_2^2
    \lesssim
    \bigl(1+\|\mathcal{R}\|^2\bigr)\|f\|_2^2.
\end{equation*}
\end{lemma}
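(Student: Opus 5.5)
We compare the operator with kernel $K^{B_\la,A}_\ell$ to the truncated Riesz transform $\mathcal R_\ell$ with kernel $K_\ell$, whose norm is at most $\|\mathcal R\|$ uniformly in $\ell$ (the reduction of Subsection~\ref{symreduction} gives this from the hypothesis on $\mathcal R^{+}$). Write $T^{B_\la,A}_\ell$ for the operator with kernel $K^{B_\la,A}_\ell$. Split the kernel difference with the cutoff at scale $\la$: choose $j_\la\in\mathbb Z$ with $2^{-j_\la}\approx\la$, and write
\[
K^{B_\la,A}_\ell
=
K^{B_\la,A}_\ell\bigl(1-\Psi_{j_\la}\bigr)
+
\bigl(K^{B_\la,A}_\ell-K_\ell\bigr)\Psi_{j_\la}
+
K_\ell\Psi_{j_\la}.
\]
Here $\Psi_{j_\la}$ is evaluated at $(x,t)-(y,s)$. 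It is supported where $\|(x,t)-(y,s)\|\gtrsim\la$, while $1-\Psi_{j_\la}$ is supported where $\|(x,t)-(y,s)\|\lesssim\la$.

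\textbf{Near part.} On the support of $1-\Psi_{j_\la}$ we have $\|(x,t)-(y,s)\|\lesssim\la$. Estimate \eqref{nearBAestimate} applies directly when $\|(x,t)-(y,s)\|<\la$. On the remaining range, comparable to $\la$, the CZ size bound from Lemma~\ref{smoothingCZkernels} gives $\lesssim\la^{-k-2}$. Hence this kernel is bounded by $\la^{-k-2}\chi_{\{\|(x,t)-(y,s)\|\lesssim\la\}}$. This is an approximate identity at scale $\la$, so by Schur's test it is bounded on $L^2(\mathbb P^k)$ with constant $O(1)$.

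\textbf{Far difference.} By \eqref{farBAestimate}, the second kernel is bounded by $\la\,\|(x,t)-(y,s)\|^{-k-3}\chi_{\{\|(x,t)-(y,s)\|\gtrsim\la\}}$. Summing over dyadic annuli, its integral in either variable is $\lesssim\sum_{2^{-j}\gtrsim\la}\la 2^{j}\lesssim1$. Schur's test again gives an $O(1)$ bound, uniform in $\ell$ and $\la$.

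\textbf{Main term.} The third kernel is $K\Psi_\ell\Psi_{j_\la}$. This is the main obstacle: we must show this doubly truncated Riesz transform is bounded by $\|\mathcal R\|+1$. Since $\Psi_j$ is built from the partition $\{\varphi_i\}$, the product $\Psi_\ell\Psi_{j_\la}$ equals $\Psi_{\min(\ell,j_\la)}$ except on at most a couple of annuli near the smaller index. Indeed, $\Psi_\ell\Psi_{j}-\Psi_{\min(\ell,j)}=\Psi_{\min}(\Psi_{\max}-1)$, and $\Psi_{\min}(1-\Psi_{\max})$ is supported on boundedly many annuli (those $\varphi_i$ with $i>\max$ overlapping $\operatorname{supp}\Psi_{\min}$). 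On each such annulus, $|K|\lesssim\|(x,t)-(y,s)\|^{-k-2}$ restricted to an annulus, which is Schur-bounded by $O(1)$. Thus
\[
\bigl\|\,T_{K\Psi_\ell\Psi_{j_\la}}\bigr\|_{2\to2}
\leq
\|\mathcal R_{\min(\ell,j_\la)}\|_{2\to2}+C
\leq
\|\mathcal R\|+C .
\]

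Summing the three pieces gives $\|T^{B_\la,A}_\ell\|_{2\to2}\lesssim1+\|\mathcal R\|$, uniformly in $\ell$ and $\la$. Squaring yields the stated bound.
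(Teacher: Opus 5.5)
Your proof is correct and follows the paper's argument: a split at scale $\la$, Schur's test for the far difference via \eqref{farBAestimate}, the bound \eqref{nearBAestimate} (plus the size estimate) for the near part, and the Riesz transform bound for the remaining far term. The one variation is that you cut with the smooth $\Psi_{j_\la}$ instead of the sharp indicator of $\{\|(x,t)-(y,s)\|\geq\la\}$. The main term is then exactly $\mathcal R_{\min(\ell,j_\la)}$ plus a bounded-annulus error, so you need only $\sup_\ell\|\mathcal R_\ell\|_{2\to2}$. The paper instead compares sharp and smooth truncations and invokes the maximal truncation (via Cotlar) together with $\mathcal M$, so your version is slightly more economical.
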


\begin{proof}
We split the expression according to whether $\|(x,t)-(y,s)\|\geq\la$ or 
$\|(x,t)-(y,s)\|<\la$:
\begin{align*}
    &\int_{\mathbb P^k}
    \left|\int_{\mathbb P^k}
        K^{B_\la,A}_\ell(x,t,y,s)\,f(y,s)\,\d y\,\d s
    \right|^2\d x\,\d t \\
    &\quad\lesssim
    \int_{\mathbb P^k}
    \left|\int_{\|(x,t)-(y,s)\|\geq\la}
        \bigl(K^{B_\la,A}_\ell(x,t,y,s)-K_\ell(x,t,y,s)\bigr)f(y,s)\,\d y\,\d s
    \right|^2\d x\,\d t \\
    &\qquad+
    \int_{\mathbb P^k}
    \left|\int_{\|(x,t)-(y,s)\|\geq\la}
        K_\ell(x,t,y,s)\,f(y,s)\,\d y\,\d s
    \right|^2\d x\,\d t \\
    &\qquad+
    \int_{\mathbb P^k}
    \left|\int_{\|(x,t)-(y,s)\|<\la}
        K^{B_\la,A}_\ell(x,t,y,s)\,f(y,s)\,\d y\,\d s
    \right|^2\d x\,\d t
    =: I_1+I_2+I_3.
\end{align*}
We bound term $I_1$ by $C\|f\|_2^2$ using
\eqref{farBAestimate} and Schur's test, since
\[
\sup_{(x,t)}
\int_{\|(x,t)-(y,s)\|\geq\lambda}
\frac{\lambda}{\|(x,t)-(y,s)\|^{k+3}}\,\d y\,\d s
\lesssim 1,
\]
and the analogous estimate holds with the roles of $(x,t)$ and $(y,s)$ reversed.

For $I_2$, the standard comparison between sharp and smooth truncations gives
\[
\left|
\int_{\|(x,t)-(y,s)\|\geq\la}
K_\ell(x,t,y,s)f(y,s)\,\d y\,\d s
\right|
\lesssim
\sup_{j\in\mathbb Z}|\mathcal R_j f(x,t)|
+
\mathcal M f(x,t).
\]
Consequently,
\[
I_2
\lesssim
\bigl(1+\|\mathcal R\|^2\bigr)\|f\|_2^2.
\]

We bound term $I_3$ by a constant times $\|f\|_2^2$ using
\eqref{nearBAestimate} and the Hardy--Littlewood maximal
inequality.
\end{proof}

\begin{lemma}\label{ABlabounded}
The following inequality holds uniformly over $\ell\in\mathbb{Z}$ 
and $\la>0$:
\begin{equation*}
    \left\|
    \int_{\mathbb P^k}
        K^{A,B_\la}_\ell(\cdot,\cdot,y,s)\,f(y,s)\,\d y\,\d s
    \right\|_2^2
    \lesssim
    \bigl(1+\|\mathcal{R}\|^2\bigr)\|f\|_2^2.
\end{equation*}
\end{lemma}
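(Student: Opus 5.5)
The plan is to repeat the proof of Lemma~\ref{BlaAbounded} with the roles of the two variables interchanged, using \eqref{farABestimate} and \eqref{nearABestimate} instead of \eqref{farBAestimate} and \eqref{nearBAestimate}. A quicker route is duality. Write $T^{A,B_\la}_\ell$ for the operator with kernel $K^{A,B_\la}_\ell$, and $T^{B_\la,A}_\ell$ for the one with kernel $K^{B_\la,A}_\ell$. Since $\Psi_\ell$ is even, we have $K^{A,B_\la}_\ell(x,t,y,s)=-K^{B_\la,A}_\ell(y,s,x,t)$. Hence $T^{A,B_\la}_\ell=-(T^{B_\la,A}_\ell)^*$. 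Lemma~\ref{BlaAbounded} then gives the bound immediately, uniformly in $\ell$ and $\la$, because an operator and its adjoint have the same $L^2$ norm. The kernels are locally integrable once truncated ($\Psi_\ell$ vanishes near the diagonal) and decay like $\|(x,t)-(y,s)\|^{-(k+2)}$. So the adjoint identity holds as bounded operators, first on a dense class such as compactly supported bounded functions, and then by density.

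If a direct proof is wanted instead, I would split the integral at $\|(x,t)-(y,s)\|=\la$, exactly as before, into $I_1+I_2+I_3$:
\begin{itemize}
\item $I_1$ is the far part with kernel $K^{A,B_\la}_\ell-K_\ell$. It is bounded by Schur's test via \eqref{farABestimate}; the required integral bound is symmetric in the two variables.
\item $I_2$ is the far part with kernel $K_\ell$. It is controlled by $\sup_j|\mathcal R_jf|+\mathcal Mf$, through the standard comparison of sharp and smooth truncations. This uses the hypothesis on $\mathcal R$, which by Subsection~\ref{symreduction} covers the time-symmetric kernel $K$.
\item $I_3$ is the near part. It is bounded using \eqref{nearABestimate} and the Hardy--Littlewood maximal inequality.
\end{itemize}

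I expect no serious obstacle in either route. The only point needing care is the justification of the adjoint identity, in particular the evenness of $\Psi_\ell$, which is what makes the truncation symmetric. For the direct route the corresponding point is the sharp-versus-smooth truncation comparison in $I_2$, and it is identical to the one in Lemma~\ref{BlaAbounded}.
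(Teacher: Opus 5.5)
Your duality argument is the paper's proof: it uses the same identity $K^{A,B_\la}_\ell(x,t,y,s)=-K^{B_\la,A}_\ell(y,s,x,t)$, which relies on the evenness of $\Psi_\ell$, together with Lemma~\ref{BlaAbounded}. The only refinement needed is to take adjoints component by component, because the vector-valued operator sends scalar functions to $\mathbb R^{n-k}$-valued ones; the identity should therefore read $T^{A,B_\la,m}_\ell=-(T^{B_\la,A,m}_\ell)^*$ for each $m$, and you then sum the resulting bounds over $m=1,\dots,n-k$, as the paper does.
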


\begin{proof}
\begingroup
For $m\in\{1,\ldots,n-k\}$, let
$T_\ell^{A,B_\la,m}$ and $T_\ell^{B_\la,A,m}$ denote the scalar
operators associated with the $m$-th components of $K_\ell^{A,B_\la}$ and
$K_\ell^{B_\la,A}$, respectively. Since
\[
K_\ell^{A,B_\la,m}(x,t,y,s)
=
-K_\ell^{B_\la,A,m}(y,s,x,t),
\]
where
\[
\Psi_\ell((x,t)-(y,s))
=
\Psi_\ell((y,s)-(x,t)),
\]
we have
\[
(T_\ell^{A,B_\la,m})^*
=
-T_\ell^{B_\la,A,m}.
\]
Lemma~\ref{BlaAbounded} bounds the vector-valued operator associated with
$K_\ell^{B_\la,A}$, and hence bounds each $T_\ell^{B_\la,A,m}$ on
$L^2$. Duality therefore gives
\[
\|T_\ell^{A,B_\la,m}f\|_2^2
\lesssim
(1+\|\mathcal R\|^2)\|f\|_2^2
\]
uniformly in $m$, $\ell$, and $\la$. Summing over
$m=1,\ldots,n-k$ proves the stated vector-valued estimate.
\endgroup
\end{proof}

Fix $\la>0$, and let $q\in\mathbb{Z}$ be such that 
\[
2^{-q-1}\leq\la<2^{-q}.
\]
We localize $B_\la$ to the cubes of $\mathbb{D}_q(\mathbb P^k)$. Fix 
$I_i\in\mathbb{D}_q(\mathbb P^k)$.

Let $\psi$ be a smooth bump equal to $1$ on $Q'_{10}(0,0)$, 
supported in $Q'_{20}(0,0)$, with $0\leq\psi\leq 1$ and 
\[
\|\nabla_x^j\psi\|_\infty+\|\partial_t^j\psi\|_\infty\leq C_j
\qquad\text{for all }j\in\mathbb{N}.
\]
With $(x_i,t_i)$ the center of $I_i$, set
\[
    \psi_i(x,t)
    :=
    \psi\left(\frac{x-x_i}{\ell(I_i)},\frac{t-t_i}{\ell(I_i)^2}\right).
\]
Then $\psi_i\equiv 1$ on $10I_i$, $\supp\psi_i\subset 40I_i$, and for $j \in \mathbb N$
\[
\|\nabla_x^j\psi_i\|_\infty\leq C_j\ell(I_i)^{-j},
\qquad
\|\partial_t^j\psi_i\|_\infty\leq C_j\ell(I_i)^{-2j}.
\]
Here $C_j$ depends on $j$ and the fixed function $\psi$, but not on $i$ or $\lambda$.
Define the localization
\[
    B_{\la,i}(x,t)
    :=
    \psi_i(x,t)\bigl(B_\la(x,t)-B_\la(x_i,t_i)\bigr).
\]

Let $\beta_{\la,i}$ denote the $L^2$-$\beta$ numbers 
adapted to the graph of $B_{\la,i}$:
\[
    \beta_{\la,i}((x,t),r)
    :=
    \inf_{a}
    \left(
        \barint_{C'_r(x,t)}
        \left(\frac{|B_{\la,i}(y,s)-a(y)|}{r}\right)^2
        \d y\,\d s
    \right)^{1/2},
\]
where the infimum is over affine functions 
$a:\RR^k\to\RR^{n-k}$, viewed as functions independent of time. We have the estimates
\begin{align}
    \beta_{\la,i}((x,t),r)^2
    &\lesssim \|A\|_{lip}^2\frac{r^2}{\ell(I_i)^2},
    \qquad r\lesssim\ell(I_i),
    \label{beta1smooth}\\
    \beta_{\la,i}((x,t),r)^2
    &\lesssim \|A\|_{lip}^2\frac{\ell(I_i)^2}{r^2},
    \qquad r\gtrsim\ell(I_i).
    \label{beta2smooth}
\end{align}

For \eqref{beta2smooth}, take $a\equiv 0$. Since $0\leq\psi_i\leq1$,
\[
|B_{\la,i}(x,t)|
\leq
|B_\la(x,t)-B_\la(x_i,t_i)|.
\]
We have
\[
B_\la(x,t)-B_\la(x_i,t_i)
=
A*\eta_\la(x,t)-A*\eta_\la(x_i,t_i).
\]
Moreover, $A*\eta_\la$ has the same Lip-$(1,1/2)$ bound as $A$. Hence, on $\supp\psi_i\subset 40I_i$,
\[
|B_{\la,i}(x,t)|
\lesssim
\|A\|_{lip}\,\|(x,t)-(x_i,t_i)\|
\lesssim
\|A\|_{lip}\,\ell(I_i).
\]
Since $B_{\la,i}$ is supported in $40I_i$, this gives
\[
\beta_{\la,i}((x,t),r)
\lesssim
\|A\|_{lip}\frac{\ell(I_i)}{r},
\qquad r\gtrsim\ell(I_i),
\]
which proves \eqref{beta2smooth}.

For \eqref{beta1smooth}, fix $(x,t)\in\mathbb P^k$ and take
\[
a(y)=B_{\la,i}(x,t)+\nabla_x B_{\la,i}(x,t)\cdot(y-x).
\]
Then, for $(y,s)\in C'_r(x,t)$, write
\[
B_{\la,i}(y,s)-a(y)
=
\bigl[B_{\la,i}(y,s)-B_{\la,i}(y,t)\bigr]
+
\bigl[
B_{\la,i}(y,t)-B_{\la,i}(x,t)
-\nabla_x B_{\la,i}(x,t)\cdot(y-x)
\bigr].
\]
The first term is controlled by the time derivative:
\[
\bigl|B_{\la,i}(y,s)-B_{\la,i}(y,t)\bigr|
\leq
\|\partial_t B_{\la,i}\|_\infty |s-t|.
\]
The second term is controlled by Taylor's theorem in the spatial variables:
\[
\bigl|
B_{\la,i}(y,t)-B_{\la,i}(x,t)
-\nabla_x B_{\la,i}(x,t)\cdot(y-x)
\bigr|
\lesssim
\|\nabla_x^2 B_{\la,i}\|_\infty |y-x|^2.
\]
Therefore,
\[
|B_{\la,i}(y,s)-a(y)|
\lesssim
\|\nabla_x^2 B_{\la,i}\|_\infty |y-x|^2
+
\|\partial_t B_{\la,i}\|_\infty |s-t|.
\]
Since $\ell(I_i)\approx \lambda$,
\[
\|\nabla_x^2 B_{\la,i}\|_\infty
\lesssim
\frac{\|A\|_{lip}}{\ell(I_i)},
\qquad
\|\partial_t B_{\la,i}\|_\infty
\lesssim
\frac{\|A\|_{lip}}{\ell(I_i)}.
\]
If $(y,s)\in C'_r(x,t)$ and $r\lesssim \ell(I_i)$, then
\[
|y-x|\lesssim r,
\qquad
|s-t|\lesssim r^2.
\]
Hence
\[
|B_{\la,i}(y,s)-a(y)|
\lesssim
\|A\|_{lip}
\left(
\frac{r^2}{\ell(I_i)}
+
\frac{r^2}{\ell(I_i)}
\right)
\lesssim
\|A\|_{lip}\frac{r^2}{\ell(I_i)}.
\]
Dividing by $r$ and averaging over $C'_r(x,t)$ gives
\[
\beta_{\la,i}((x,t),r)
\lesssim
\|A\|_{lip}\frac{r}{\ell(I_i)}.
\]
This proves \eqref{beta1smooth}.

We integrate in $r$ from $0$ to $\min\{R,\ell(I_i)\}$ using
\eqref{beta1smooth}. If $R>\ell(I_i)$, we use \eqref{beta2smooth} on the
remaining interval from $\ell(I_i)$ to $R$. These estimates give
\begin{align}\label{bicarleson}
    \int_0^R\int_{C'_R(z,\tau)}
    \beta_{\la,i}((x,t),r)^2
    \frac{\d x\,\d t\,\d r}{r}
    \lesssim
    \|A\|_{lip}^2 R^{k+2}
    \lesssim R^{k+2}.
\end{align}

Using the estimates of the derivatives of $\psi_i$ and the Lipschitz bound for the mollified function, we have
\[
\|B_{\la,i}\|_{lip}
\lesssim
\|A\|_{lip}.
\]
Thus, after decreasing $\epsilon$ if necessary, the Lip-$(1,1/2)$ constants of the $B_{\lambda,i}$
are uniformly small. For $m=1,\dots,n-k$, let $\beta_{\la,i,m}$ be defined as
$\beta_{\la,i}$, with $B_{\la,i}$ replaced by its $m$-th component and
with scalar affine functions in the infimum. If
$\Gamma_{\la,i,m}$ denotes the graph of $(B_{\la,i})_m$, then
\[
\beta_2^{\Gamma_{\la,i,m}}(C'_r(x,t))
\leq
\beta_{\la,i,m}((x,t),r)
\leq
\beta_{\la,i}((x,t),r).
\]
The first inequality follows by taking the graph of a scalar affine function as
the approximating vertical plane. The second follows by taking the $m$-th component of an
affine function $a$ in the definition of $\beta_{\la,i}$. Hence \eqref{bicarleson}  holds for each $\beta_{\lambda,i,m}$, uniformly in $m$, $i$, and $\la$.
Applying Theorem~\ref{regularbetaequiv} to each component, we conclude that
$D_t^{1/2}(B_{\la,i})_m\in BMO(\mathbb P^k)$ for every $m$, with
uniform bounds, and that $B_{\la,i}$ is regular Lip-$(1,1/2)$.

\begin{proof}[Proof of Proposition \ref{Blabounded}]
We split the expression as follows:
\begin{align*}
    &\int_{\mathbb P^k}
    \left|\int_{\mathbb P^k}
        K^{B_\la}_\ell(x,t,y,s)\,f(y,s)\,\d y\,\d s
    \right|^2\d x\,\d t \\
    &\lesssim
    \underbrace{
    \int_{\mathbb P^k}
    \left|\int_{\|(x,t)-(y,s)\|\geq\la}
        K^{B_\la}_\ell(x,t,y,s)\,f(y,s)\,\d y\,\d s
    \right|^2\d x\,\d t}_{I}
    \\
    &\quad+
    \underbrace{
    \int_{\mathbb P^k}
    \left|\int_{\|(x,t)-(y,s)\|<\la}
        K^{B_\la}_\ell(x,t,y,s)\,f(y,s)\,\d y\,\d s
    \right|^2\d x\,\d t}_{II}.
\end{align*}

We first estimate $II$.
Suppose $(x,t) \in I_i$.  Since $\|(x,t)-(y,s)\| < \lambda < \ell(I_i)$ we have $(y,s)\in 10I_i$. Since
\[
K^{B_\la}_\ell(x,t,y,s)=K^{B_{\la,i}}_\ell(x,t,y,s)
\]
for such pairs, we have
\begin{align*}
    II
    &=
    \sum_{I_i\in\mathbb{D}_q}
    \int_{I_i}
    \left|
        \int_{\|(x,t)-(y,s)\|<\la}
        K^{B_{\la,i}}_\ell(x,t,y,s)\,
        \mathbf{1}_{10I_i}(y,s)f(y,s)
        \,\d y\,\d s
    \right|^2\d x\,\d t.
\end{align*}

Since each $B_{\la,i}$ is regular Lip-$(1,1/2)$, with regularity constants
bounded uniformly in $i$ and $\la$, Theorem~\ref{highcodimBHHLN} gives a
uniform $L^2(\mathbb P^k)$ bound for the smooth truncations associated
with $K^{B_{\la,i}}$. Lemma~\ref{smoothingCZkernels} gives uniform
standard parabolic Calder\'on--Zygmund constants for these kernels.
 The standard Cotlar inequality therefore
gives
\[
\left\|\sup_{j\in\mathbb Z}|\mathcal R_j^i g|\right\|_2
\lesssim
\|g\|_2,
\]
where $\mathcal R_j^i$ denotes the smooth truncation associated with
$K^{B_{\la,i}}$. Moreover, comparison with a smooth truncation at scale
$\la$ gives
\[
\left|
\int_{\|(x,t)-(y,s)\|<\la}
K_\ell^{B_{\la,i}}(x,t,y,s)g(y,s)\,\d y\,\d s
\right|
\lesssim
\sup_{j\in\mathbb Z}|\mathcal R_j^i g(x,t)|
+
\mathcal M g(x,t).
\]
Taking
$g=\mathbf 1_{10I_i}f$, integrating over $I_i$, and using the bounded overlap
of the cubes $\{10I_i\}$, we obtain
\[
II
\lesssim
\sum_{I_i\in\mathbb D_q}
\int_{10I_i}|f(y,s)|^2\,\d y\,\d s
\lesssim
\|f\|_2^2.
\]

To estimate $I$, we write
\begin{align*}
    I
    &\lesssim
    \int_{\mathbb P^k}
    \left|
        \int_{\|(x,t)-(y,s)\|\geq\la}
        \bigl(K^{B_\la}_\ell(x,t,y,s)-K^{A,B_\la}_\ell(x,t,y,s)\bigr)
        f(y,s)\,\d y\,\d s
    \right|^2\d x\,\d t \\
    &\qquad+
    \int_{\mathbb P^k}
    \left|
        \int_{\|(x,t)-(y,s)\|\geq\la}
        K^{A,B_\la}_\ell(x,t,y,s)\,f(y,s)\,\d y\,\d s
    \right|^2\d x\,\d t \\
    &=: I_1 + I_2.
\end{align*}

By Lemma~\ref{ABlabounded}, estimate \eqref{nearABestimate}, and the
Hardy--Littlewood maximal inequality,
\begin{align*}
I_2
&=
\left\|
\int_{\|(x,t)-(y,s)\|\geq\la}
K_\ell^{A,B_\la}(x,t,y,s)f(y,s)\,\d y\,\d s
\right\|_2^2\\
&=
\left\|
\int_{\mathbb P^k}
K_\ell^{A,B_\la}(x,t,y,s)f(y,s)\,\d y\,\d s
-
\int_{\|(x,t)-(y,s)\|<\la}
K_\ell^{A,B_\la}(x,t,y,s)f(y,s)\,\d y\,\d s
\right\|_2^2\\
&\lesssim
\left\|
\int_{\mathbb P^k}
K_\ell^{A,B_\la}(x,t,y,s)f(y,s)\,\d y\,\d s
\right\|_2^2
+
\left\|
\int_{\|(x,t)-(y,s)\|<\la}
K_\ell^{A,B_\la}(x,t,y,s)f(y,s)\,\d y\,\d s
\right\|_2^2\\
&\lesssim
\bigl(1+\|\mathcal R\|^2\bigr)\|f\|_2^2
+
\|\mathcal M f\|_2^2\\
&\lesssim
\bigl(1+\|\mathcal R\|^2\bigr)\|f\|_2^2.
\end{align*}

For $I_1$, on the region $\|(x,t)-(y,s)\|\geq\la$, we use
\eqref{farBlaABestimate}. Schur's test gives
\[
I_1\lesssim\|f\|_2^2.
\]
This proves Proposition \ref{Blabounded}.
\end{proof}

\subsection{The Proof of Theorem \ref{localdpara}}

In this subsection we prove Theorem~\ref{localdpara}.

\begingroup
\begin{lemma}
\label{vectorDorronsoro}
Let $F:\mathbb P^k\to\mathbb R^{n-k}$ be a 
Lip-$(1,1/2)$ function such that $D_{par}F\in L^2(\mathbb P^k)$.
Then
\begin{equation}\label{globaldorronsoro}
\sum_{Q\in\mathbb D}
\beta_2^{\Gamma_F}(5Q)^2|Q|
\lesssim
\|D_{par}F\|_2^2.
\end{equation}
\end{lemma}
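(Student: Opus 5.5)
This is a parabolic Dorronsoro-type estimate, and we plan to reduce it to the scalar case and then prove it by a Littlewood--Paley argument.

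\textbf{Reduction to scalar components.} By the comparison established before Theorem~\ref{regularbetaequiv},
\[
\beta_2^{\Gamma_F}(C')^2\leq b_F(C')^2=\sum_{m=1}^{n-k}b_{F_m}(C')^2 .
\]
This passes to dilated cubes $5Q$, with $b$ defined over $5Q$ and normalized by $\ell(Q)$. Moreover $\|D_{par}F\|_2^2=\sum_m\|D_{par}F_m\|_2^2$. So it suffices to show, for a scalar Lip-$(1,1/2)$ function $f$ with $D_{par}f\in L^2$,
\[
\sum_{Q\in\mathbb D}b_f(5Q)^2|Q|\lesssim\|D_{par}f\|_2^2,
\]
where $b_f(5Q)^2=\inf_a \fint_{5Q}|f-a(x)|^2\ell(Q)^{-2}$ and $a$ ranges over affine functions independent of $t$.

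\textbf{Littlewood--Paley decomposition.} Take a parabolic Littlewood--Paley decomposition $f=\sum_{j\in\mathbb Z}\Delta_j f$ (modulo affine functions of $x$, which do not affect $b_f$). The Fourier symbol of $\Delta_j$ is supported where $\|(\xi,\tau)\|\approx 2^j$. Plancherel then gives
\[
\sum_j 2^{2j}\|\Delta_jf\|_2^2\approx\|D_{par}f\|_2^2 .
\]
Fix $Q$ with $\ell(Q)=2^{-i}$. Split $f=f_{\leq i}+f_{>i}$, and choose $a$ to be the first-order spatial Taylor polynomial of $f_{\leq i}$ at the center of $Q$.

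For the high frequencies, use the trivial bound
\[
\ell(Q)^{-2}\fint_{5Q}|f_{>i}|^2\lesssim \ell(Q)^{-2}\fint_{5Q}\Big|\sum_{j>i}\Delta_jf\Big|^2 .
\]
After Cauchy--Schwarz with geometric weights $2^{(i-j)\varepsilon}$, this is bounded by
\[
\sum_{j>i}2^{2i}2^{(j-i)\varepsilon}\fint_{5Q}|\Delta_jf|^2 .
\]
Multiply by $|Q|$, sum over $Q\in\mathbb D_i$ (bounded overlap of the $5Q$), and then sum over $i$. This yields
\[
\sum_j\|\Delta_jf\|_2^2\sum_{i<j}2^{2i}2^{(j-i)\varepsilon}\lesssim\sum_j2^{2j}\|\Delta_jf\|_2^2
\]
for $\varepsilon<2$.

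For the low frequencies, the Taylor remainder on $5Q$ is controlled by
\[
\ell(Q)^2\|\nabla_x^2 f_{\leq i}\|_{L^\infty(5Q)}+\ell(Q)^2\|\partial_t f_{\leq i}\|_{L^\infty(5Q)} .
\]
Bernstein-type bounds on $\Delta_jf$ give $|\nabla_x^2\Delta_jf|+|\partial_t\Delta_jf|\lesssim 2^{2j}\widetilde M(\Delta_jf)$, where $\widetilde M$ is a local (Peetre-type, or $L^2$ averaged over a fixed dilate) maximal function. Summing over $j\leq i$ with weights $2^{2j}\ell(Q)=2^{2j-i}$, dividing by $\ell(Q)$, and applying Cauchy--Schwarz, we obtain
\[
\sum_{j\leq i}2^{2(j-i)}2^{(i-j)\varepsilon}\,2^{2j}\fint_{CQ}|\Delta_jf|^2 .
\]
Summing over $Q$ and $i$ gives $\sum_j2^{2j}\|\Delta_jf\|_2^2$ again.

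\textbf{Main obstacle.} The delicate point is making the low-frequency step rigorous. Since $\Delta_j f$ is not compactly supported, we will replace pointwise suprema on $5Q$ by the Schwartz-tail estimate
\[
\sup_{5Q}|\nabla^2_x\Delta_jf|\lesssim 2^{2j}\Big(\fint |\Delta_jf|^2 w_{Q,j}\Big)^{1/2},
\]
where $w_{Q,j}$ is a weight with rapid decay at scale $2^{-j}\geq\ell(Q)$. The summed weights $\sum_{Q\in\mathbb D_i}w_{Q,j}|Q|$ remain uniformly bounded, which closes the argument.

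A second technical issue is that $f$ is only Lip-$(1,1/2)$ and need not be in $L^2$. Justifying $f=\sum_j\Delta_jf$ modulo spatially affine functions uses the Lipschitz growth bound together with $D_{par}f\in L^2$. We would handle this by first proving the estimate for $f*\eta_\lambda$ truncated smoothly, then passing to the limit, or alternatively by working directly with differences $f(y,s)-f(x,t)$, in the spirit of Definition~\ref{halftime}.
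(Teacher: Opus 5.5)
Your proof is correct. Your reduction to scalar components coincides with the paper's: the paper takes componentwise minimizers $a_{Q,m}$ for the function-based numbers $\beta_2(F_m,5Q)$, uses the graph of $a_Q=(a_{Q,1},\dots,a_{Q,n-k})$ as the competing vertical plane, and obtains $\beta_2^{\Gamma_F}(5Q)^2\le\sum_m\beta_2(F_m,5Q)^2$. This is exactly your inequality $\beta_2^{\Gamma_F}\le b_F$ with $b_F^2=\sum_m b_{F_m}^2$.

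The two routes differ only in the scalar estimate $\sum_Q b_f(5Q)^2|Q|\lesssim\|D_{par}f\|_2^2$. The paper does not prove it; it quotes Hofmann's parabolic Dorronsoro-type bound \cite[p.~249, (35)]{H}. You derive it directly:
\begin{itemize}
\item you split $f$ at frequency $\ell(Q)^{-1}$;
\item you bound the high-frequency part trivially in $L^2$;
\item you bound the low-frequency part by its spatial Taylor polynomial at $(z_Q,\tau_Q)$;
\item you use Schwartz-tail weights $w_{Q,j}$ to turn $\sup_{5Q}$ bounds into $L^2$ averages.
\end{itemize}
The geometric factors $2^{-(j-i)(2-\varepsilon)}$ for $j>i$ and $2^{-(i-j)(2-\varepsilon)}$ for $j\le i$ sum correctly, so the argument closes. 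The paper's route is a two-line appeal to the literature. Yours is self-contained and shows that only Plancherel, frequency localization, and Taylor's theorem are needed.

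The technical issue you flag is milder than you suggest. In this paper the lemma is applied only to compactly supported functions: the smooth, compactly supported $A$ in the proof of Theorem~\ref{localdpara}, and $\overline A-A(x_0,t_0)$ in the proof of Theorem~\ref{mainthm2}. There $f\in L^2$ and $\sum_j\Delta_jf$ converges in $L^2$, so no renormalization is needed. In general, set $f_{\le i}:=\Phi_i*f$ for a Schwartz low-pass $\Phi_i$. Then $\nabla_x^2f_{\le i}=(\nabla_x\Phi_i)*(\nabla_xf)$ is bounded, and so is $\partial_tf_{\le i}$, by the cancellation $\int\partial_t\Phi_i=0$ and the $|t-s|^{1/2}$ modulus of $f$. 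Each differs from the uniformly convergent series $\sum_{j\le i}\nabla_x^2\Delta_jf$, respectively $\sum_{j\le i}\partial_t\Delta_jf$, by at most a constant. A nonzero constant would contradict the boundedness of $\nabla_xf_{\le i}$, respectively the $|t|^{1/2}$ growth of $f_{\le i}$ in time.
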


\begin{proof}
For a scalar function $f$, set
\[
\beta_2(f,Q)
:=
\inf_a
\left(
\frac1{|Q|}
\int_Q
\left(
\frac{|f(x,t)-a(x)|}{\ell(Q)}
\right)^2
\,\d x\,\d t
\right)^{1/2},
\]
where the infimum is taken over affine functions
$a:\mathbb R^k\to\mathbb R$. In \cite[p.~249, (35)]{H}, it is shown that
\begin{equation}\label{scalardorronsoro}
\sum_{Q\in\mathbb D}\beta_2(f,5Q)^2|Q|
\lesssim
\|D_{par}f\|_2^2.
\end{equation}

Write $F=(F_1,\dots,F_{n-k})$. For each $Q\in\mathbb D$ and
$m\in\{1,\dots,n-k\}$, let $a_{Q,m}$ be an affine minimizer for
$\beta_2(F_m,5Q)$, and set
\[
a_Q=(a_{Q,1},\dots,a_{Q,n-k}).
\]
The set
\[
P_Q
:=
\{(x,a_Q(x),t):x\in\mathbb R^k,\ t\in\mathbb R\}
\]
is a vertical plane. For $(x,t)\in5Q$,
\[
d_p\bigl((x,F(x,t),t),P_Q\bigr)
\leq
|F(x,t)-a_Q(x)|.
\]
Since the measure on $\Gamma_F$ is the pushforward of
$\d x\,\d t$, the definition of the graph $\beta_2$-number gives
\[
\beta_2^{\Gamma_F}(5Q)^2
\leq
\sum_{m=1}^{n-k}\beta_2(F_m,5Q)^2.
\]
Summing over $Q$ and applying \eqref{scalardorronsoro} to each component,
we obtain
\[
\sum_{Q\in\mathbb D}
\beta_2^{\Gamma_F}(5Q)^2|Q|
\lesssim
\sum_{m=1}^{n-k}\|D_{par}F_m\|_2^2
=
\|D_{par}F\|_2^2.
\]
\end{proof}
\endgroup

\begin{lemma}\label{czestim}
Set $\delta=2^{-j}$. For $(X,t)=(x,z,t)\in \mathbb R^k\times\mathbb R^{n-k}\times\mathbb R$, we have
\begin{align}
|J_{(j)}(X,t)-J_{comm,(j)}(X,t)|\label{czestim:size}
&\lesssim
\frac{|z|^3}{\delta^{k+5}}\chi_{A(0,\delta/32,\delta/2)}(x,t),\\
|\partial_{x_i}(J_{(j)}(X,t)-J_{comm,(j)}(X,t))|\label{czestim:space}
&\lesssim
\frac{|z|^3}{\delta^{k+6}}\chi_{A(0,\delta/32,\delta/2)}(x,t),
\qquad 1\leq i\leq k,\\
|\partial_{z_r}(J_{(j)}(X,t)-J_{comm,(j)}(X,t))|\label{czestim:transverse}
&\lesssim
\frac{|z|^2}{\delta^{k+5}}\chi_{A(0,\delta/32,\delta/2)}(x,t),
\qquad 1\leq r\leq n-k,\\
|\partial_t(J_{(j)}(X,t)-J_{comm,(j)}(X,t))|\label{czestim:time}
&\lesssim
\frac{|z|}{\delta^{k+5}}\chi_{A(0,\delta/32,\delta/2)}(x,t).
\end{align}
Here $A(0,M_1,M_2)$ is the annulus
\[
A(0,M_1,M_2):=\{(x,t):M_1\leq \|(x,t)\|\leq M_2\}.
\]
\end{lemma}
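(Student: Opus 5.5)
The plan is to write the difference as an explicit product and then differentiate. Both truncated kernels carry the same cutoff $\varphi_j(x,t)$, so
\[
J_{(j)}(X,t)-J_{comm,(j)}(X,t)
=
\varphi_j(x,t)\,\frac{z}{|t|^{(k+1)/2+1}}\exp\Bigl(-\frac{|x|^2}{4|t|}\Bigr)\Bigl(e^{-|z|^2/(4|t|)}-1\Bigr).
\]
Since $\varphi_j$ is supported in $A(0,2^{-j-5},2^{-j-1})=A(0,\delta/32,\delta/2)$, this produces the indicator factor in all four estimates. On this annulus $\|(x,t)\|\approx\delta$, so $|t|\lesssim\delta^2$, although $|t|$ may be much smaller than $\delta^2$ when $|x|\approx\delta$.

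\emph{Size estimate.} I would use $|e^{-u}-1|\le u$ with $u=|z|^2/(4|t|)$. This bounds the modulus by
\[
\frac{|z|^3}{|t|^{(k+1)/2+2}}\,e^{-|x|^2/(4|t|)}.
\]
The key auxiliary fact is that on the annulus, for every $p\ge0$,
\[
|t|^{-p}e^{-|x|^2/(4|t|)}\lesssim_p\delta^{-2p}.
\]
This holds by cases. If $|t|\gtrsim\delta^2$ it is immediate. Otherwise $|x|\approx\delta$, and $\sup_{u>0}u^{-p}e^{-c\delta^2/u}\lesssim\delta^{-2p}$. With $p=(k+1)/2+2$ this gives $|z|^3\delta^{-(k+5)}$, which is \eqref{czestim:size}. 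Since $\varphi_j$ vanishes near $t=0$ only at the origin, I will treat $t=0$, $x\ne0$ by continuity. There the factor $e^{-|x|^2/(4|t|)}$ kills everything, consistent with the conventions $J(X,0)=J_{comm}(X,0)=0$.

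\emph{Derivative estimates.} I would apply the product rule, placing each derivative on one of $\varphi_j$, $z$, the Gaussian in $x$, the power of $|t|$, or the factor $E:=e^{-|z|^2/(4|t|)}-1$. The relevant bounds are:
\begin{itemize}
\item $|\partial_{x_i}\varphi_j|\lesssim\delta^{-1}$ and $|\partial_t\varphi_j|\lesssim\delta^{-2}$.
\item $\partial_{x_i}e^{-|x|^2/(4|t|)}$ gives a factor $|x|/|t|$, which can be absorbed into the Gaussian at the cost of $|t|^{-1/2}$.
\item $\partial_t$ of $|t|^{-(k+3)/2}e^{-|x|^2/(4|t|)}$ gives factors $|t|^{-1}$ and $|x|^2/|t|^2$, absorbed similarly at the cost of $|t|^{-1}$.
\item $|\partial_{z_r}E|\lesssim|z|/|t|$.
\item $|\partial_tE|=\frac{|z|^2}{4t^2}e^{-|z|^2/(4|t|)}\le|z|^2/t^2$.
\end{itemize}

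For \eqref{czestim:space}, every term is $|z|^3$ times $|t|^{-p}e^{-c|x|^2/|t|}$ with total parabolic homogeneity $-(k+6)$. The auxiliary fact, with a slightly smaller constant $c$ in the Gaussian, then gives $|z|^3\delta^{-(k+6)}$.

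For \eqref{czestim:transverse}, differentiating the leading factor $z$ gives $|z|^2/|t|$ times the Gaussian factor, homogeneity $-(k+5)$. Differentiating $E$ gives $|z|\cdot|z|/|t|$, again of homogeneity $-(k+5)$. Both yield $|z|^2\delta^{-(k+5)}$.

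\emph{Time estimate, the main obstacle.} For \eqref{czestim:time}, a naive count would give $|z|^3/\delta^{k+7}$, whereas the claim is only $|z|/\delta^{k+5}$. This is not sharper for small $|z|$, so I would avoid using the cancellation $|E|\le|z|^2/|t|$ here. Instead I would use the cruder bounds $|E|\le1$ and $|\partial_tE|\le\frac{|z|^2}{|t|^2}e^{-|z|^2/(4|t|)}\lesssim|t|^{-1}$. Every term is then $|z|$ times $|t|^{-p}e^{-c|x|^2/|t|}$ with $p=(k+1)/2+2$, or $\delta^{-2}$ times a factor with $p=(k+3)/2$. The auxiliary fact bounds each by $|z|\delta^{-(k+5)}$.

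The only delicate point throughout is the uniformity when $|t|\ll\delta^2$. I expect it to be handled entirely by the auxiliary decay fact, applied with a reduced Gaussian constant after each polynomial factor of $|x|^2/|t|$ has been absorbed.
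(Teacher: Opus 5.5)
Your proposal is correct and follows essentially the same route as the paper. The paper likewise factors out $e^{-|x|^2/(4|t|)}\bigl(1-e^{-|z|^2/(4|t|)}\bigr)$, uses $1-e^{-u}\lesssim u$, absorbs polynomial factors of $|x|^2/|t|$ into a Gaussian with a smaller constant, and concludes with the heat-kernel bound $|t|^{-(k+2+l)/2}e^{-c|x|^2/|t|}\lesssim\|(x,t)\|^{-(k+2+l)}$ on the annulus. For the time derivative the paper also gives up the cancellation, bounding $\partial_tJ^m$, $\partial_tJ^m_{comm}$, $J^m$ and $J^m_{comm}$ separately, which is equivalent to your use of $|E|\le1$ and $|\partial_tE|\lesssim|t|^{-1}$.
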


\begin{proof}
We will repeatedly use the bounds
\begin{align}\label{MV}
\left|e^{-\frac{|x|^2}{4|t|}}-e^{-\frac{|x|^2+|z|^2}{4|t|}}\right|
&=
e^{-\frac{|x|^2}{4|t|}}
\left|1-e^{-\frac{|z|^2}{4|t|}}\right| \notag\\
&\lesssim
\frac{|z|^2}{|t|}
e^{-\frac{|x|^2}{4|t|}},
\end{align}

and, for every $l>0$ and $c>0$,
\begin{align}\label{heatkerbd}
|t|^{-\frac{k+2+l}{2}}e^{-c\frac{|x|^2}{|t|}}
\lesssim_{l,c}
\|(x,t)\|^{-(k+2+l)}.
\end{align}

We will also use the estimate (which holds for every $\alpha\geq0$)
\begin{align}\label{gaussianabsorption}
u^\alpha e^{-u/4}\lesssim_\alpha e^{-u/8},
\qquad u\geq0.
\end{align}

Fix $m\in\{1,\dots,n-k\}$.  We will prove each estimate for the $m$\textsuperscript{th} component.  For \eqref{czestim:size}, \eqref{MV} gives
\begin{align*}
|J^m_{(j)}(X,t)-J^m_{comm,(j)}(X,t)|
&\leq
\frac{|z_m|}{|t|^{(k+1)/2+1}}
\left|
e^{-\frac{|x|^2+|z|^2}{4|t|}}
-
e^{-\frac{|x|^2}{4|t|}}
\right|
|\varphi_j(x,t)|\\
&\lesssim
\frac{|z|^3}{|t|^{(k+1)/2+2}}
e^{-\frac{|x|^2}{4|t|}}
\chi_{A(0,\delta/32,\delta/2)}(x,t)\\
&\lesssim
\frac{|z|^3}{\delta^{k+5}}
\chi_{A(0,\delta/32,\delta/2)}(x,t),
\end{align*}
where the last step uses \eqref{heatkerbd} with $l=3$.

To prove \eqref{czestim:space}, fix $i\in\{1,\dots,k\}$. Differentiating in $x_i$ yields one term from the exponential factor and one from $\varphi_j$:
\begin{align*}
|\partial_{x_i}(J^m_{(j)}-J^m_{comm,(j)})(X,t)|
&\lesssim
\frac{|x_i||z_m|}{|t|^{(k+1)/2+2}}
\left|
e^{-\frac{|x|^2+|z|^2}{4|t|}}
-
e^{-\frac{|x|^2}{4|t|}}
\right|
|\varphi_j(x,t)|\\
&\qquad
+
\frac{|z_m|}{|t|^{(k+1)/2+1}}
\left|
e^{-\frac{|x|^2+|z|^2}{4|t|}}
-
e^{-\frac{|x|^2}{4|t|}}
\right|
|\nabla_x\varphi_j(x,t)|.
\end{align*}

By \eqref{MV}, the term in which the derivative falls on the exponential
is bounded by
\[
\frac{|x_i||z|^3}{|t|^{(k+7)/2}}
e^{-\frac{|x|^2}{4|t|}}
\chi_{A(0,\delta/32,\delta/2)}(x,t),
\]
whereas the other term is bounded by
\[
\frac{|z|^3}{\delta|t|^{(k+5)/2}}
e^{-\frac{|x|^2}{4|t|}}
\chi_{A(0,\delta/32,\delta/2)}(x,t).
\]

For the first expression, \eqref{gaussianabsorption} with
\(u=|x|^2/|t|\) and \(\alpha=1/2\), followed by
\eqref{heatkerbd} with \(l=4\) and Gaussian constant \(1/8\), gives
\[
\frac{|x_i||z|^3}{|t|^{(k+7)/2}}
e^{-\frac{|x|^2}{4|t|}}
\lesssim
\frac{|z|^3}{|t|^{(k+6)/2}}
e^{-\frac{|x|^2}{8|t|}}
\lesssim
\frac{|z|^3}{\|(x,t)\|^{k+6}}.
\]

Applying \eqref{heatkerbd} with
$l=3$ to the second expression gives

\[
|\partial_{x_i}(J^m_{(j)}-J^m_{comm,(j)})(X,t)|
\lesssim
\frac{|z|^3}{\delta^{k+6}}
\chi_{A(0,\delta/32,\delta/2)}(x,t).
\]

For $r\in\{1,\dots,n-k\}$, a direct computation gives
\[
\begin{aligned}
\partial_{z_r}(J^m-J^m_{comm})
&=
\mathbf 1_{\{m=r\}}
|t|^{-(k+1)/2-1}
e^{-\frac{|x|^2}{4|t|}}
\left(e^{-\frac{|z|^2}{4|t|}}-1\right)\\
&\quad-
\frac{z_m z_r}{2|t|^{(k+1)/2+2}}
e^{-\frac{|x|^2+|z|^2}{4|t|}}.
\end{aligned}
\]
Hence, after multiplying by $\varphi_j$,
\begin{align*}
|\partial_{z_r}(J^m_{(j)}-J^m_{comm,(j)})(X,t)|
&\lesssim
\frac{|z|^2}{|t|^{(k+1)/2+2}}
e^{-\frac{|x|^2}{4|t|}}
\chi_{A(0,\delta/32,\delta/2)}(x,t)\\
&\lesssim
\frac{|z|^2}{\delta^{k+5}}
\chi_{A(0,\delta/32,\delta/2)}(x,t),
\end{align*}
where we used \eqref{heatkerbd} with $l=3$. This proves \eqref{czestim:transverse}.

Finally, we prove \eqref{czestim:time}. Since
\[
J^m_{(j)}=J^m\varphi_j \qquad\text{and}\qquad J^m_{comm,(j)}=J^m_{comm}\varphi_j,
\]
the product rule gives
\[
\partial_t\bigl(J^m_{(j)}-J^m_{comm,(j)}\bigr)=\bigl(\partial_tJ^m-\partial_tJ^m_{comm}\bigr)\varphi_j+\bigl(J^m-J^m_{comm}\bigr)\partial_t\varphi_j.
\]

We first estimate the term in which the derivative falls on the kernels.
For $t\neq0$, set
\[
u=\frac{|x|^2}{|t|} \qquad\text{and}\qquad v=\frac{|x|^2+|z|^2}{|t|}.
\]
We have
\[
\begin{aligned}
|\partial_tJ^m_{comm}(X,t)|&\lesssim \frac{|z|}{|t|^{(k+5)/2}}(1+u)e^{-u/4}\lesssim \frac{|z|}{\|(x,t)\|^{k+5}},\\
|\partial_tJ^m(X,t)|&\lesssim \frac{|z|}{|t|^{(k+5)/2}}(1+v)e^{-v/4}\lesssim \frac{|z|}{\|(x,t)\|^{k+5}}.
\end{aligned}
\]

The estimate on $\partial_t J^m_{comm}$ follows by using \eqref{gaussianabsorption} with $\alpha=0,1$ to obtain
$(1+u)e^{-u/4}\lesssim e^{-u/8}$, and then applying \eqref{heatkerbd} with
$l=3$ and $c=1/8$.  Similar reasoning yields the bound on $\partial_t J^m$. Hence, using
$|\varphi_j|\lesssim\chi_{A(0,\delta/32,\delta/2)}$ and
$\|(x,t)\|\geq\delta/32$ on $\supp\varphi_j$,
\[
\left|\bigl(\partial_tJ^m-\partial_tJ^m_{comm}\bigr)(X,t)\varphi_j(x,t)\right|
\lesssim \frac{|z|}{\delta^{k+5}}\chi_{A(0,\delta/32,\delta/2)}(x,t).
\]

It remains to estimate the term in which the derivative falls on $\varphi_j$.
By \eqref{heatkerbd} with $l=1$ and $c=1/4$,
\[
|J^m(X,t)|+|J^m_{comm}(X,t)|\lesssim \frac{|z|}{\|(x,t)\|^{k+3}}.
\]
Therefore, using
$|\partial_t\varphi_j|\lesssim
\delta^{-2}\chi_{A(0,\delta/32,\delta/2)}$ and again the lower bound
$\|(x,t)\|\geq\delta/32$ on the support of $\varphi_j$,
\[
\left|\bigl(J^m-J^m_{comm}\bigr)(X,t)\partial_t\varphi_j(x,t)\right|
\lesssim \frac{|z|}{\delta^{k+3}}\,\delta^{-2}\chi_{A(0,\delta/32,\delta/2)}(x,t)
=\frac{|z|}{\delta^{k+5}}\chi_{A(0,\delta/32,\delta/2)}(x,t).
\]
This proves all four estimates.
\end{proof}

\begin{lemma}\label{aqsmall}
$\alpha(Q)\lesssim \epsilon$ for every $Q\in\mathbb D(\mathbb P^k)$.
\end{lemma}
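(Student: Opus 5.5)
We will prove $\alpha(Q)\lesssim\epsilon$ by exhibiting one admissible competitor in \eqref{alphadef}. Fix $Q\in\mathbb D(\mathbb P^k)$ with center $(z_Q,\tau_Q)$. We take the horizontal vertical plane through the point of $\Gamma$ above the center,
\[
L:=\{(x,A(z_Q,\tau_Q),t):x\in\mathbb R^k,\ t\in\mathbb R\},
\]
and the constant $c=1$. Since $L$ is a translate of $\mathbb P^k$, the slice measure $\sigma_L$ is exactly the pushforward of $\d x\,\d t$ under $\Phi_{a}(x,t)=(x,a,t)$, where $a:=A(z_Q,\tau_Q)$ is constant. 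It then suffices to show
\[
\left|\int f\,\d\mu-\int f\,\d\sigma_L\right|\lesssim\epsilon\,\ell(Q)^{k+3}
\]
for every $f$ with $\|f\|_{lip}\leq1$ and $\supp f\subseteq C_Q$.

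Using $\mu=\mu_0=\Phi_\#(\d x\,\d t)$, both integrals become integrals over $\mathbb P^k$. The difference is therefore
\[
\int_{\mathbb P^k}\bigl[f(x,A(x,t),t)-f(x,a,t)\bigr]\,\d x\,\d t .
\]
The integrand is bounded by $|A(x,t)-A(z_Q,\tau_Q)|$, since $f$ is $1$-Lipschitz for $d_p$ and the two points differ only in the transversal spatial variable.

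Next we localize the domain of integration. The integrand vanishes unless one of the two points lies in $C_Q$, and in either case $(x,t)\in\pi(C_Q)=4Q$. On $4Q$ the Lip-$(1,1/2)$ bound gives
\[
|A(x,t)-A(z_Q,\tau_Q)|\leq\epsilon\,\|(x,t)-(z_Q,\tau_Q)\|\lesssim\epsilon\,\ell(Q).
\]
Since $|4Q|\approx\ell(Q)^{k+2}$, the difference of the integrals is $\lesssim\epsilon\,\ell(Q)^{k+3}$. Dividing by $\ell(Q)^{k+3}$ gives $\alpha(Q)\lesssim\epsilon$, with implicit constant depending only on $k$.

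There is no real obstacle here. The only points to check are that $\sigma_L$ agrees exactly with Lebesgue measure on the parameter plane for a horizontal $L$ (it does, since the Jacobian $j$ equals $1$), and that the support of the integrand projects into $4Q$.
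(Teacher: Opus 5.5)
Your proposal is correct and follows essentially the same argument as the paper. You use the same competitor (the horizontal vertical plane through $(z_Q,A(z_Q,\tau_Q),\tau_Q)$ with $c=1$), pull both measures back to $\mathbb P^k$, restrict to $4Q=\pi(C_Q)$, and conclude with the Lip-$(1,1/2)$ bound $|A(y,s)-A(z_Q,\tau_Q)|\lesssim\epsilon\,\ell(Q)$ together with $|4Q|\lesssim\ell(Q)^{k+2}$.
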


\begin{proof}

Let \(P\) be the translate of \(\mathbb P^k\) passing through
\(\big(z_Q,A(z_Q,\tau_Q),\tau_Q\big)\). We estimate \(\alpha(Q)\) using $\sigma_P$. If \(\|f\|_{lip}\leq1\) and \(\supp(f)\subseteq C_Q\), then the identity \(\mu=\Phi_\#(dx\,dt)\) gives
\begin{align*}
\ell(Q)^{-k-3}
\left|\int f\,\d\mu-\int f\,\d\sigma_P\right|
&=
\ell(Q)^{-k-3}
\left|
\int
\left[
f(y,A(y,s),s)
-
f(y,A(z_Q,\tau_Q),s)
\right]
\,\d y\,\d s
\right|\\
&\lesssim
\ell(Q)^{-k-3}
\int_{4Q}
|A(y,s)-A(z_Q,\tau_Q)|
\,\d y\,\d s.
\end{align*}

We restrict to $4Q=\pi(C_Q)$ because, if
$(y,s)\notin\pi(C_Q)$, both $(y,A(y,s),s)$ and
$(y,A(z_Q,\tau_Q),s)$ lie outside $C_Q$, and thus $f$ vanishes for such $(y,s)$.
For $(y,s)\in4Q$,
\[
|A(y,s)-A(z_Q,\tau_Q)|
\lesssim
\epsilon\ell(Q).
\]
Since $|4Q|\lesssim\ell(Q)^{k+2}$, taking the supremum over all such $f$ gives
$\ell(Q)^{-k-3}\dist_{C_Q}(\mu,\sigma_P)\lesssim\epsilon$.
The definition of $\alpha(Q)$ then gives $\alpha(Q)\lesssim\epsilon$.

\end{proof}

\begin{lemma}\label{lqmeetscq}
We can take $\epsilon$ sufficiently small so that, for each
$Q\in\mathbb D(\mathbb P^k)$,
\[
L_Q\cap\tfrac12 C_Q\neq\emptyset.
\]
\end{lemma}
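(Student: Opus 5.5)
We argue by contradiction, testing the near-optimality of $\mathcal L_Q=c_Q\sigma_{L_Q}$ against a single bump function supported well inside $C_Q$. By Lemma~\ref{aqsmall}, $\alpha(Q)\lesssim\epsilon$, so
\[
\dist_{C_Q}(\mu,\mathcal L_Q)\lesssim\epsilon\,\ell(Q)^{k+3}.
\]
Choose a Lipschitz function $f$ with $\|f\|_{lip}\leq1$, $\supp f\subseteq\frac12 C_Q$, $f\geq0$, and $f\geq c\,\ell(Q)$ on $\frac14 C_Q$. For instance, take $f$ to be $\ell(Q)$ times a fixed parabolic tent, namely the parabolic distance to the complement of $\frac12 C_Q$, which is $1$-Lipschitz with respect to $d_p$.

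The first step is a lower bound for $\int f\,\d\mu$. Since $\|A\|_{lip}\leq\epsilon$, the graph $\Gamma$ passes through the center $(z_Q,A(z_Q,\tau_Q),\tau_Q)$ of $C_Q$. It stays inside $\frac14 C_Q$ over a cube in $\mathbb P^k$ of side comparable to $\ell(Q)$. Using $\mu=\Phi_\#(\d x\,\d t)$, this gives
\[
\int f\,\d\mu\gtrsim\ell(Q)\cdot\ell(Q)^{k+2}=\ell(Q)^{k+3},
\]
with a constant independent of $\epsilon$.

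The second step derives the contradiction. Suppose $L_Q\cap\frac12 C_Q=\emptyset$. Since $\supp f\subseteq\frac12 C_Q$, we get $\int f\,\d\mathcal L_Q=0$. Hence
\[
\ell(Q)^{k+3}\lesssim\int f\,\d\mu=\left|\int f\,\d\mu-\int f\,\d\mathcal L_Q\right|\leq\dist_{C_Q}(\mu,\mathcal L_Q)\lesssim\epsilon\,\ell(Q)^{k+3}.
\]
This is impossible once $\epsilon$ is smaller than an absolute constant depending only on $n$ and $k$. Therefore $L_Q\cap\frac12 C_Q\neq\emptyset$.

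The main point requiring care is that the lower bound on $\int f\,\d\mu$ must be uniform in $\epsilon$. This holds because the portion of $\Gamma$ over a fixed fraction of $Q$ lies in $\frac14 C_Q$ once $\epsilon\ll1$. We should also check two technical points:
\begin{itemize}
\item The chosen $f$ is admissible in \eqref{measdist}: it is Lipschitz with respect to $d_p$, and its support is contained in $C_Q$.
\item A minimizer $(c_Q,L_Q)$ exists. If it did not, we would instead use an almost-minimizer, with $\dist_{C_Q}(\mu,\mathcal L_Q)\leq 2\alpha(Q)\ell(Q)^{k+3}$, and the argument above goes through unchanged.
\end{itemize}
If $c_Q=0$, the same estimate shows this is impossible, so the conclusion concerns a genuine plane.
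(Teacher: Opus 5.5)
Your argument is correct, but it takes a different route from the paper's.

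The paper first arranges $L_Q\cap C_Q\neq\emptyset$. It does this by replacing a minimizer that misses $C_Q$ with $c_Q=0$ and an arbitrary plane meeting $C_Q$. It then tests the definition of $\alpha(Q)$ against $f=\varphi\,\dist(\cdot,L_Q)$. This function vanishes on $L_Q$, and it has bounded Lipschitz norm only because $L_Q$ already meets $C_Q$. Combined with $\mu(\Gamma\cap\tfrac14 C_Q)\approx\ell(Q)^{k+2}$, this produces a point of $\Gamma\cap\tfrac14 C_Q$ within $C\alpha(Q)\ell(Q)\lesssim\epsilon\ell(Q)$ of $L_Q$, and hence a point of $L_Q$ in $\tfrac12 C_Q$.

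You instead test against a fixed nonnegative tent supported in $\tfrac12 C_Q$. This tent integrates to zero against $\mathcal L_Q$ whenever $L_Q$ misses $\tfrac12 C_Q$ or $c_Q=0$. You compare that with the $\epsilon$-independent lower bound $\int f\,d\mu\gtrsim\ell(Q)^{k+3}$, which comes from lifting a cube of side comparable to $\ell(Q)$ into $\tfrac14 C_Q$.

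What each route buys:
\begin{itemize}
\item Your route is more elementary and needs no preliminary reselection of the minimizer. It also shows that every minimizing or near-minimizing pair automatically has $c_Q>0$ and $L_Q$ meeting $\tfrac12 C_Q$.
\item The paper's route is quantitative, giving that $L_Q$ passes within $\lesssim\alpha(Q)\ell(Q)$ of $\Gamma\cap\tfrac14 C_Q$. It also uses the same distance-times-cutoff test-function mechanism that the paper reuses in Lemmas~\ref{b1abd} and~\ref{pqlq}.
\end{itemize}
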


\begin{proof}
We first choose the minimizing pair so that $L_Q\cap C_Q\neq\emptyset$.
Indeed, if a minimizing plane $L_Q$ is disjoint from $C_Q$, then
\[
\dist_{C_Q}(\mu,c_Q\sigma_{L_Q})=\dist_{C_Q}(\mu,0).
\]
Since $c=0$ is allowed in the definition of $\alpha(Q)$, we may instead
take $c_Q=0$ and choose $L_Q$ to be any vertical plane meeting $C_Q$.

Choose a smooth function $\varphi$ that equals $1$ on
$\tfrac14 C_Q$, vanishes outside $\tfrac12 C_Q$, and satisfies
\[
\|\nabla\varphi\|_\infty\lesssim\frac1{\ell(Q)},
\qquad
\|\partial_t\varphi\|_\infty\lesssim\frac1{\ell(Q)^2}.
\]
Since $L_Q\cap C_Q\neq\emptyset$ and
$\supp\varphi\subset\tfrac12 C_Q$,
\[
\dist\big((X,t),L_Q\big)\lesssim\ell(Q)
\qquad\text{on }\supp\varphi.
\]
The function
\[
f(X,t)=\varphi(X,t)\dist\big((X,t),L_Q\big)
\]
has $\|f\|_{lip}\lesssim1$, is supported in $C_Q$, and vanishes on
$L_Q$. Hence
\[
\int f\,\d\mu
\lesssim
\alpha(Q)\ell(Q)^{k+3}.
\]
Since
$\mu(\Gamma\cap\tfrac14 C_Q)\approx\ell(Q)^{k+2}$, there is a
point $(X,t)\in\Gamma\cap\tfrac14 C_Q$ such that
\[
\dist\big((X,t),L_Q\big)
\lesssim
\alpha(Q)\ell(Q)
\lesssim
\epsilon\ell(Q),
\]
where the last inequality follows from Lemma \ref{aqsmall}. If
$\epsilon$ is sufficiently small, this gives a point of $L_Q$ in
$\tfrac12 C_Q$.
\end{proof}

\begin{lemma}\label{b1abd}
For each $Q\in\mathbb D(\mathbb P^k)$, we have
\[
\int_{Q^\Gamma}
\frac{\dist\big((X,t),L_Q\big)}{\ell(Q)}
\,\d\mu(X,t)
\lesssim
\alpha(Q)|Q|.
\]
\end{lemma}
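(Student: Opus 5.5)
We test the definition of $\alpha(Q)$ against a Lipschitz function that vanishes on $L_Q$, exactly as in the proof of Lemma~\ref{lqmeetscq}. Choose a smooth cutoff $\varphi$ with $0\le\varphi\le1$, $\varphi\equiv1$ on $\tfrac12 C_Q$, $\supp\varphi\subseteq C_Q$, and
\[
\|\nabla\varphi\|_\infty\lesssim\ell(Q)^{-1},\qquad \|\partial_t\varphi\|_\infty\lesssim\ell(Q)^{-2}.
\]
Set
\[
f(X,t):=\varphi(X,t)\,\dist\big((X,t),L_Q\big).
\]
Since $Q^\Gamma\subseteq(\tfrac12C_Q)^\Gamma$ and $f\ge0$, we will have
\[
\int_{Q^\Gamma}\dist\big((X,t),L_Q\big)\,\d\mu\le\int f\,\d\mu .
\]
It then suffices to show $\|f\|_{lip}\lesssim1$ and $\int f\,\d\mathcal L_Q=0$.

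The vanishing is immediate: $\mathcal L_Q=c_Q\sigma_{L_Q}$ is supported on $L_Q$, where $f=0$. This gives
\[
\int f\,\d\mu=\int f\,\d\mu-\int f\,\d\mathcal L_Q\le\|f\|_{lip}\,\dist_{C_Q}(\mu,\mathcal L_Q)=\|f\|_{lip}\,\alpha(Q)\,\ell(Q)^{k+3}.
\]
Dividing by $\ell(Q)$ and using $\ell(Q)^{k+2}=|Q|$ yields the claim.

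The Lipschitz bound is the only point needing care. By Lemma~\ref{lqmeetscq}, $L_Q$ meets $\tfrac12C_Q$, so $\dist((X,t),L_Q)\lesssim\ell(Q)$ on $C_Q$. Moreover $d\mapsto\dist(\cdot,L_Q)$ is $1$-Lipschitz in $d_p$. For the product we write
\[
f(P)-f(P')=\varphi(P)\big[d(P)-d(P')\big]+d(P')\big[\varphi(P)-\varphi(P')\big].
\]
The first term is at most $d_p(P,P')$. For the second we use that $\varphi$ is Lipschitz with constant $\lesssim\ell(Q)^{-1}$ in $d_p$: the spatial part follows from the gradient bound, and the time part from $|\partial_t\varphi|\,|t-t'|\lesssim\ell(Q)^{-2}|t-t'|$. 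If $|t-t'|^{1/2}\le\ell(Q)$ this is at most $\ell(Q)^{-1}|t-t'|^{1/2}$; otherwise we use $|\varphi|\le1$ directly.

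When both points lie in $C_Q$ this gives $\lesssim d_p(P,P')$. If one point lies outside $C_Q$, say $P'$, then $f(P')=0$ and we bound $f(P)\le\varphi(P)\,d(P)$ by comparing with a point where $\varphi$ vanishes; equivalently, one may replace $d$ by $\min(d,C\ell(Q))$, which changes nothing on $C_Q$. Thus $\|f\|_{lip}\lesssim1$, and after renormalising $f$ by this constant it is admissible in \eqref{measdist}.

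I expect no real obstacle. The only subtle point is ensuring the distance factor is $O(\ell(Q))$ on the support, which is exactly what Lemma~\ref{lqmeetscq} (via Lemma~\ref{aqsmall}) provides.
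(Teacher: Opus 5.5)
Your proposal is correct and follows the paper's proof essentially verbatim. Like the paper, you test the definition of $\alpha(Q)$ with $f=\varphi\,\dist(\cdot,L_Q)$, where $\varphi$ equals $1$ on $\tfrac12 C_Q$ and is supported in $C_Q$, use Lemma~\ref{lqmeetscq} to get $\|f\|_{lip}\lesssim 1$, and observe that $\int f\,\d\mathcal L_Q=0$ because $f$ vanishes on $L_Q$. Your extra detail on the Lipschitz bound (truncating the distance at $C\ell(Q)$ and treating time increments separately) just fills in a step the paper states without proof.
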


\begin{proof}
By Lemma \ref{lqmeetscq}, $L_Q\cap\tfrac12 C_Q\neq\emptyset$.
Consequently,
\[
\dist\big((Y,s),L_Q\big)\lesssim\ell(Q)
\qquad\text{for every }(Y,s)\in C_Q.
\]
Let $\zeta:\mathbb P^n\to\mathbb R$ be smooth, equal to $1$ on
$\tfrac12 C_Q$, equal to $0$ outside of $C_Q$, and such that
\[
\|\nabla \zeta\|_\infty\lesssim \frac{1}{\ell(Q)},
\qquad
\|\partial_t\zeta\|_\infty\lesssim \frac{1}{\ell(Q)^2}.
\]
Set
\[
f(Y,s)=\dist\big((Y,s),L_Q\big)\zeta(Y,s).
\]
Then, $\|f\|_{lip}\lesssim1$. Since $f$ vanishes on $L_Q$, we have
\begin{multline*}
\int_{Q^\Gamma}\dist\big((Y,s),L_Q\big)\,\d\mu(Y,s)
\leq
\int f(Y,s)\,\d\mu(Y,s)\\
=
\left|
\int f(Y,s)\,\d\mu(Y,s)
-
\int f(Y,s)\,\d\mathcal L_Q(Y,s)
\right|
\lesssim
\alpha(Q)\ell(Q)^{k+3}.
\end{multline*}
Dividing by $\ell(Q)$ and using
$|Q|\approx\ell(Q)^{k+2}$ proves the lemma.
\end{proof}

\begin{lemma}\label{cqbd}
For each $Q\in\mathbb D(\mathbb P^k)$, $c_Q\approx 1$ if $\epsilon\ll 1$.
\end{lemma}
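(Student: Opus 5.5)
To prove $c_Q\approx 1$, I will test the near-optimality of $\mathcal L_Q=c_Q\sigma_{L_Q}$ against one suitable Lipschitz bump and compare it with the flat competitor from Lemma~\ref{aqsmall}. Fix a smooth $\varphi$ with $0\le\varphi\le1$, equal to $1$ on $\tfrac14 C_Q$, supported in $\tfrac12 C_Q$, and with $\|\nabla\varphi\|_\infty\lesssim\ell(Q)^{-1}$ and $\|\partial_t\varphi\|_\infty\lesssim\ell(Q)^{-2}$. Then $f:=\ell(Q)\varphi$ satisfies $\|f\|_{lip}\lesssim1$ and $\supp f\subseteq C_Q$. By the definition of $\alpha(Q)$ and Lemma~\ref{aqsmall},
\[
\Bigl|\int f\,\d\mu-c_Q\int f\,\d\sigma_{L_Q}\Bigr|\lesssim\alpha(Q)\ell(Q)^{k+3}\lesssim\epsilon\,\ell(Q)^{k+3}.
\]

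Next I bound $\int f\,\d\mu$. Since $\mu=\Phi_\#(\d x\,\d t)$ and $\|A\|_{lip}\le\epsilon$, the lift of $\tfrac18 Q$ (suitably centered) lies in $\tfrac14 C_Q$. Also $\mu(C_Q\cap\Gamma)\le|5Q|$. Hence
\[
\ell(Q)^{k+3}\lesssim\int f\,\d\mu\lesssim\ell(Q)^{k+3}.
\]

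The remaining step, which I expect to be the main point, is to show that
\[
\int\varphi\,\d\sigma_{L_Q}\approx\ell(Q)^{k+2},
\]
with constants independent of $Q$ and of the orientation of $L_Q$.
\begin{itemize}
\item \emph{Upper bound.} On each time slice, $L_Q$ is an affine $k$-plane in $\mathbb R^n$ and the slice of $\tfrac12 C_Q$ is a cube of side $\approx\ell(Q)$, so the $\mathcal H^k$-measure of the intersection is $\lesssim\ell(Q)^k$. Integrating over the time interval of length $\approx\ell(Q)^2$ gives $\lesssim\ell(Q)^{k+2}$.
\item \emph{Lower bound.} This needs $L_Q$ to pass near the center of $C_Q$, which Lemma~\ref{lqmeetscq} alone does not give. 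The proof of that lemma, however, produces a point of $\Gamma\cap\tfrac14 C_Q$ within $\lesssim\epsilon\ell(Q)$ of $L_Q$. Redoing it with a bump around $\tfrac1{16}C_Q$ instead yields a point of $L_Q$ in $\tfrac18 C_Q$ once $\epsilon$ is small. Since $L_Q$ is vertical, it then meets every time slice of $\tfrac14 C_Q$ in a $k$-disc of radius $\gtrsim\ell(Q)$. This gives $\sigma_{L_Q}(\tfrac14 C_Q)\gtrsim\ell(Q)^{k+2}$, and therefore $\int\varphi\,\d\sigma_{L_Q}\gtrsim\ell(Q)^{k+2}$.
\end{itemize}

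Combining the three estimates gives
\[
c_Q\,\ell(Q)^{k+3}\approx c_Q\int f\,\d\sigma_{L_Q}=\int f\,\d\mu+O\bigl(\epsilon\,\ell(Q)^{k+3}\bigr).
\]
The right-hand side lies between $(c-C\epsilon)\ell(Q)^{k+3}$ and $(C+C\epsilon)\ell(Q)^{k+3}$. Choosing $\epsilon$ small enough, we get $c_Q\approx1$.

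If one wanted the sharper statement $c_Q=1+O(\epsilon)$, one could compare instead with the plane $P$ of Lemma~\ref{aqsmall}, where $\int f\,\d\mu=\int f\,\d\sigma_P+O(\epsilon\ell(Q)^{k+3})$. That sharper form is not needed here.
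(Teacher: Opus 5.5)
Your proposal is correct and follows the paper's route. You test the near-optimality of $c_Q\sigma_{L_Q}$ against a single bump, use that its integrals against $\mu$ and against $\sigma_{L_Q}$ are both $\approx\ell(Q)^{k+2}$, and absorb the $\alpha(Q)\lesssim\epsilon$ error. The only difference is that the paper takes its bump equal to $1$ on $\tfrac34 C_Q$ and supported in $C_Q$. With that choice, Lemma~\ref{lqmeetscq} ($L_Q\cap\tfrac12 C_Q\neq\emptyset$) already gives $\int\psi\,\d\sigma_{L_Q}\gtrsim|Q|$, so your re-derivation that $L_Q$ meets $\tfrac18 C_Q$ is valid but unnecessary.
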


\begin{proof}

Let $\psi$ be a smooth function such that
\[
0\leq\psi\leq1,
\qquad
\psi\equiv1\text{ on }\tfrac34 C_Q,
\qquad
\supp\psi\subset C_Q,
\]
and
\[
\|\psi\|_{lip}\lesssim\frac1{\ell(Q)}.
\]

Then
\[
\left|
\int \psi\,\d\mu
-
c_Q\int \psi\,\d\sigma_{L_Q}
\right|
\lesssim
\alpha(Q)|Q|.
\]
Hence
\[
\int \psi\,\d\mu
-
C\alpha(Q)|Q|
\leq
c_Q\int \psi\,\d\sigma_{L_Q}
\leq
\int \psi\,\d\mu
+
C\alpha(Q)|Q|.
\]
By Lemma \ref{lqmeetscq},
\[
\int \psi\,\d\sigma_{L_Q}\approx |Q|.
\]
We also have $\int \psi\,\d\mu\approx \ell(Q)^{k+2}$, and by Lemma \ref{aqsmall},
$\alpha(Q)\lesssim \epsilon$. Thus the lemma holds for $\epsilon\ll 1$.
\end{proof}

For the remainder of this section we take $\epsilon$ small enough that $c_Q\approx1$ for every $Q$, as in Lemma~\ref{cqbd}.

\begin{lemma}\label{pqlq}
Fix $Q\in\mathbb D(\mathbb P^k)$. Let $P_Q$ be the
$k$-dimensional vertical plane minimizing $\beta_2(5Q)$. There holds
\[
\dist_H(P_Q\cap C_Q,L_Q\cap C_Q)
\lesssim
\big(\alpha(Q)+\beta_2(5Q)\big)\ell(Q).
\]
\end{lemma}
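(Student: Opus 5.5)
Both planes are quantitatively close to $\Gamma$ on a large portion of $Q^\Gamma$, so they must be close to each other. Since both are $k$-dimensional vertical planes and both are "graph-like" near $C_Q$, closeness at $k+1$ well-spread points forces Hausdorff closeness on $C_Q$.

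First I record the two integral estimates. By Lemma~\ref{b1abd},
\[
\int_{Q^\Gamma}\dist\big((X,t),L_Q\big)\,\d\mu\lesssim\alpha(Q)\ell(Q)|Q|.
\]
Since $Q^\Gamma\subseteq(5Q)^\Gamma$ and $\mu((5Q)^\Gamma)\approx|Q|$, Cauchy--Schwarz and the definition of $\beta_2(5Q)$ give
\[
\int_{Q^\Gamma}\dist\big((X,t),P_Q\big)\,\d\mu\lesssim\beta_2(5Q)\ell(Q)|Q|.
\]
Set $\theta:=\alpha(Q)+\beta_2(5Q)$. If $\theta\geq c_0$ for a small fixed $c_0$, the conclusion is trivial. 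Indeed, $L_Q$ meets $\tfrac12C_Q$ by Lemma~\ref{lqmeetscq}. By Chebyshev, $P_Q$ passes within $\lesssim\ell(Q)$ of some point of $Q^\Gamma$. Hence both sets $P_Q\cap C_Q$ and $L_Q\cap C_Q$ lie in an $O(\ell(Q))$-neighborhood of $C_Q$, so their Hausdorff distance is $\lesssim\ell(Q)\lesssim\theta\ell(Q)$. (If $P_Q\cap C_Q$ happened to be empty, I would first replace $P_Q$ by a near-minimizer meeting $C_Q$, as in Lemma~\ref{lqmeetscq}.) So assume $\theta\ll1$.

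In the small case, I apply Chebyshev to both integrals on small sub-cylinders. Fix $k+1$ affinely independent points $p_0,\dots,p_k\in\mathbb R^k$ near the spatial center of $Q$, for instance the center plus $\tfrac{1}{4}\ell(Q)e_i$, together with a fixed time $\tau_Q$. Around each $(p_i,\tau_Q)$ take a sub-cube $Q_i\subset Q$ of side $c\ell(Q)$, so that $\mu(Q_i^\Gamma)\approx|Q|$. On each $Q_i^\Gamma$, the set where $\dist(\cdot,L_Q)+\dist(\cdot,P_Q)>C\theta\ell(Q)$ has measure less than $\tfrac12\mu(Q_i^\Gamma)$ for $C$ large. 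This yields points $w_i\in Q_i^\Gamma$ within $C\theta\ell(Q)$ of both planes. Because $\|A\|_{lip}\leq\epsilon$, the projections $\pi(w_i)$ are within $c\ell(Q)$ of $(p_i,\tau_Q)$. Their spatial parts therefore span a nondegenerate $k$-simplex of size $\approx\ell(Q)$, with all angles bounded below.

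The key geometric step, which I expect to be the main obstacle, is the following. Two $k$-dimensional vertical planes $V_1\times\mathbb R$ and $V_2\times\mathbb R$ whose spatial parts $V_1,V_2\subset\mathbb R^n$ both pass within $\eta=C\theta\ell(Q)$ of the spatial parts of $w_0,\dots,w_k$ must satisfy $\dist_H(V_1\cap B,V_2\cap B)\lesssim\eta$ on a ball $B$ of radius $\approx\ell(Q)$. Since the planes are vertical, time plays no role, and this reduces to a Euclidean statement. I would pick $v_i\in V_1$ and $v_i'\in V_2$ within $\eta$ of the spatial part of $w_i$, so that $|v_i-v_i'|\leq2\eta$. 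Nondegeneracy of the simplex, with constants stable under $O(\eta)$ perturbation since $\theta\ll1$, gives $V_1=\mathrm{aff}(v_0,\dots,v_k)$ and $V_2=\mathrm{aff}(v_0',\dots,v_k')$. Any point of $V_1\cap B$ is an affine combination $\sum\lambda_iv_i$ with $|\lambda_i|\lesssim1$, by the lower bound on the simplex angles and the radius of $B$. The point $\sum\lambda_iv_i'\in V_2$ then lies within $\lesssim\eta$ of it, and symmetrically.

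Finally, I pass from $B$ to $C_Q$. Here one must handle points of $V_j\times\mathbb R$ in $C_Q$ whose nearest counterpart on the other plane lies slightly outside $C_Q$. Both planes are close to the $\epsilon$-flat graph, so both are nearly horizontal graphs over $\mathbb R^k$. For a point of $P_Q\cap C_Q$, I take the point of $L_Q$ with the same horizontal coordinate $x$ and the same $t$. This point lies within $\lesssim\theta\ell(Q)$, which is within $C_Q$ up to a boundary layer of width $\lesssim\theta\ell(Q)$. Points in that layer are matched to nearby interior points at the same cost. This gives $\dist_H\lesssim\theta\ell(Q)$, as claimed.
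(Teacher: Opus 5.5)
Your argument is correct, but it takes a different route from the paper's. The paper tests the $\alpha$-approximation directly against $f=\dist(\cdot,P_Q)\zeta$, where $\zeta$ is a cutoff equal to $1$ on $\tfrac34C_Q$. Since $C_Q^\Gamma\subset(5Q)^\Gamma$, one has $\int f\,\d\mu\lesssim\beta_2(5Q)\ell(Q)^{k+3}$, while $|\int f\,\d(\mathcal L_Q-\mu)|\lesssim\alpha(Q)\ell(Q)^{k+3}$. With $c_Q\approx1$ (Lemma~\ref{cqbd}), the distance to $P_Q$ is therefore small in $L^1(\sigma_{L_Q})$ on $L_Q\cap\tfrac34C_Q$, and the affine structure of $L_Q$ upgrades this to a Hausdorff bound.

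You instead use the plane side of the $\alpha$-approximation only through Lemma~\ref{b1abd}: both planes are compared with $\Gamma$ in $L^1(\mu)$. The paper gets non-degeneracy for free because $\sigma_{L_Q}$ is spread out along $L_Q$; you get it from the graph structure, via Chebyshev on the $k+1$ well-separated sub-cubes $Q_i$. What your route buys is that it avoids $c_Q\approx1$, and it replaces the step the paper only asserts (a subset of $L_Q\cap\tfrac34C_Q$ of measure $\approx\ell(Q)^{k+2}$ on which $f\approx\dist_H$) with an explicit barycentric-coordinate estimate. The paper's route is shorter, and it is the one that survives without a graph, as in Lemma~\ref{toalphalemma2}.

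Two small corrections. First, your case $\theta\geq c_0$ is vacuous here. Indeed, $\alpha(Q)\lesssim\epsilon$ by Lemma~\ref{aqsmall}, and $\beta_2(5Q)\lesssim\epsilon$ by testing with the horizontal plane through $(z_Q,A(z_Q,\tau_Q),\tau_Q)$. Your Chebyshev argument then shows that the fixed minimizer $P_Q$ already meets $\tfrac12C_Q$. So you should not replace $P_Q$ by a near-minimizer; the statement concerns $P_Q$ itself, and no replacement is needed.

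Second, your last step needs no boundary layer. Write $P_Q$ and $L_Q$ as graphs $z=a_P(x)$ and $z=a_L(x)$. For $(x,a_P(x),t)\in P_Q\cap C_Q$, the point $(x,a_L(x),t)\in L_Q$ automatically satisfies the horizontal and time constraints of $C_Q$. Its transversal coordinate is within $O((\epsilon+\theta)\ell(Q))$ of $A(z_Q,\tau_Q)$, well inside the allowed $2\ell(Q)$. The symmetric matching works the same way.
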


\begin{proof}
Let $\zeta:\mathbb P^n\to\mathbb R$ be smooth, equal to $1$ on
$\tfrac34 C_Q$, equal to $0$ outside of $C_Q$, and such that
\[
\|\nabla \zeta\|_\infty\lesssim \frac{1}{\ell(Q)},
\qquad
\|\partial_t\zeta\|_\infty\lesssim \frac{1}{\ell(Q)^2}.
\]
Set
\[
f(Y,s)=\dist((Y,s),P_Q)\zeta(Y,s)
\]
and notice that $\|f\|_{lip}\lesssim 1$. So, we have
\begin{multline*}
\left|
\int f(Y,s)\,\d\mathcal L_Q(Y,s)
\right|
\leq
\left|
\int f(Y,s)\big(\d\mathcal L_Q(Y,s)-\d\mu(Y,s)\big)
\right|
+
\left|
\int f(Y,s)\,\d\mu(Y,s)
\right|\\
\lesssim
\big(\alpha(Q)+\beta_2(5Q)\big)\ell(Q)^{k+3}.
\end{multline*}
The desired estimate follows because $L_Q$ and $P_Q$ both intersect
$\tfrac12 C_Q$ and $c_Q\approx1$. Thus there is a subset of $\tfrac34 C_Q\cap L_Q$ with $\sigma_{L_Q}$-measure comparable to $\ell(Q)^{k+2}$ on which
\[
f(\cdot)\approx \dist_H(P_Q\cap C_Q,L_Q\cap C_Q).
\]
\end{proof}

\begin{lemma}\label{commapprox}
Fix $Q\in\mathbb D(\mathbb P^k)$ with $\ell(Q)=2^{-j}$.  If $\epsilon$ is taken sufficiently small, then
\[
\int_{Q^\Gamma} |\mathcal R_{(j)}1(X,t)-\mathcal C_{(j)}1(X,t)|^2\,\d\mu(X,t)
\lesssim
\epsilon^2\big[\alpha(Q)^2+\beta_2(5Q)^2\big]|Q|.
\]
\end{lemma}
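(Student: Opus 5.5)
Fix $(x,t)\in Q$ and write $\mathcal R_{(j)}1(x,t)-\mathcal C_{(j)}1(x,t)=\int_{\mathbb P^k}\bigl(J_{(j)}-J_{comm,(j)}\bigr)\bigl(\Phi(x,t)-\Phi(y,s)\bigr)\,\d y\,\d s$. The integrand is the kernel difference $D_j:=J_{(j)}-J_{comm,(j)}$ evaluated at $(x-y,A(x,t)-A(y,s),t-s)$. It is supported where $\|(x,t)-(y,s)\|\le\delta/2$ with $\delta=2^{-j}=\ell(Q)$, so $(y,s)$ stays inside $4Q$ and $\Phi(y,s)\in C_Q$.

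By \eqref{czestim:size}, $|D_j|\lesssim |z|^3\delta^{-k-5}$. With $|z|=|A(x,t)-A(y,s)|\lesssim\epsilon\delta$, a crude bound would only give $\epsilon^3$. We therefore need to exploit cancellation against the flat plane. The plan is to compare $\mu$ with the approximating measure $\mathcal L_Q=c_Q\sigma_{L_Q}$. Write
\[
\int D_j(X-Y,t-s)\,\d\mu(Y,s)=\int D_j(X-Y,t-s)\,\d(\mu-\mathcal L_Q)(Y,s)+\int D_j(X-Y,t-s)\,\d\mathcal L_Q(Y,s).
\]
\emph{First term.} Fix $(X,t)=\Phi(x,t)$ and view $(Y,s)\mapsto D_j(X-Y,t-s)$ as a test function. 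Multiply it by a cutoff $\zeta$ adapted to $C_Q$; this does not change the integral against $\mu$ near the support, and the support is inside $\tfrac12 C_Q$ after a harmless rescaling of the annulus constants. The derivative bounds \eqref{czestim:space}--\eqref{czestim:time}, evaluated with $|z|\lesssim\epsilon\delta$ on the relevant region, give a Lipschitz constant of order $\epsilon\delta^{-k-3}$. Here the transversal bound gives $\epsilon^2\delta^{-k-3}$, the spatial bound gives $\epsilon^3\delta^{-k-3}$, and the time bound gives $\epsilon\delta^{-k-4}$ per unit time. Since time differences are measured with exponent $1/2$, over distance $\delta$ this is again $\epsilon\delta^{-k-3}$ times parabolic distance.

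There is one subtlety. On $\mathcal L_Q$ the transversal variable $z$ is $\operatorname{dist}$ to $L_Q$ rather than $\epsilon\delta$-small a priori. However, Lemmas \ref{lqmeetscq} and \ref{pqlq}, together with $\alpha,\beta_2\lesssim\epsilon$, show that $L_Q$ is within $\epsilon\delta$ of $\Gamma$ on $C_Q$. So $|z|\lesssim\epsilon\delta$ there too, after truncating the test function in $z$ using a smooth cutoff. By the definition of $\alpha(Q)$, the first term is then $\lesssim\epsilon\,\alpha(Q)$ pointwise in $(x,t)\in Q$.

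\emph{Second term.} If $X$ lay on $L_Q$ and $L_Q$ were horizontal, i.e.\ parallel to $\mathbb P^k$, then $z=0$ identically on $L_Q$ relative to $X$, and the integrand would vanish. In general, $X$ is at distance $d(X):=\operatorname{dist}(X,L_Q)$ from $L_Q$, and $L_Q$ has a tilt of size $\theta\lesssim\epsilon$. Relative to $X$, the transversal coordinate on $L_Q$ is then bounded by $d(X)+\theta\delta$. I would prefer oddness to kill this: $J$ and $J_{comm}$ are odd in the spatial variables and even in time, and $\varphi_j$ is even. The integral over a plane through $X$ then vanishes by symmetry $Y-X\mapsto-(Y-X)$. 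The problem is that the cutoff is in the projected variables $(x-y,t-s)$, which is still symmetric under reflection through $X$ on a plane through $X$. So the second term is controlled by the displacement $d(X)$, via the Lipschitz bound above: it is $\lesssim\epsilon\,d(X)/\delta$.

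Squaring and integrating over $Q^\Gamma$, we get
\[
\int_{Q^\Gamma}|\cdot|^2\,\d\mu\lesssim\epsilon^2\alpha(Q)^2|Q|+\epsilon^2\int_{Q^\Gamma}\frac{d(X)^2}{\delta^2}\,\d\mu .
\]
To bound the last integral, use $d(X)\le\operatorname{dist}(X,P_Q)+\operatorname{dist}_H(P_Q\cap C_Q,L_Q\cap C_Q)$ and apply Lemma \ref{pqlq}. The $\beta_2(5Q)$ definition controls $\int\operatorname{dist}(X,P_Q)^2$ over $Q^\Gamma\subset(5Q)^\Gamma$. This yields $\epsilon^2[\alpha(Q)^2+\beta_2(5Q)^2]|Q|$.

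The main obstacle I expect is the bookkeeping in the Lipschitz bound for the test function. In particular, one must make sure the time-derivative bound \eqref{czestim:time} really gives a factor $\epsilon$ in parabolic Lipschitz norm. One must also handle the region of $L_Q$ where $z$ is not small by a cutoff that does not spoil the $\alpha$ estimate.
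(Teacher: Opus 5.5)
Your proposal is correct. It uses the same ingredients as the paper: spatial oddness of $D_{(j)}$ on a vertical plane, the $\alpha(Q)$ test against a transversally truncated test function, the gradient bounds of Lemma~\ref{czestim}, Lemma~\ref{pqlq}, and the $\beta_2(5Q)$ control of $\int_{Q^\Gamma}\dist(\cdot,P_Q)^2\,\d\mu$. You arrange them differently.

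\emph{The paper's order.} It first projects $(X,t)$ onto the $\beta_2$-plane $P_Q$ and uses oddness against $c_Q\sigma_{P_Q}$ at $(X',t)$. It then makes two comparisons: $\int\widetilde f\,\d\mu$ with $c_Q\int\widetilde f\,\d\sigma_{L_Q}$ (the $\alpha$ step), and $\int\widetilde f\,\d\sigma_{L_Q}$ with $\int\widetilde f\,\d\sigma_{P_Q}$ (the Lemma~\ref{pqlq} step). Finally it applies the mean value theorem to the $\mu$-integral along $[X',X]$.

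\emph{Your order.} You keep the base point on $\Gamma$ and do the $\alpha$ comparison there. You apply the mean value theorem only to the flat integral against $\mathcal L_Q$, moving to the projection onto $L_Q$. Lemma~\ref{pqlq} then enters only through the pointwise inequality $\dist(X,L_Q)\le\dist(X,P_Q)+\dist_H(P_Q\cap C_Q,L_Q\cap C_Q)$.

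This buys a slightly cleaner argument. You never compare integrals of one test function over two different planes. Your mean value step is against a plane, where the transversal separation $\lesssim\epsilon\ell(Q)$ and the support measure $\lesssim\ell(Q)^{k+2}$ are immediate. In fact that step gives $\epsilon^2\dist(X,L_Q)/\ell(Q)$, better than the $\epsilon$ you claim.

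Two points you leave implicit must be supplied, exactly as the paper does.
\begin{enumerate}
\item You need uniform, not just $L^2$, closeness of $P_Q$ (hence of $L_Q$) to $\Gamma$ on $C_Q$. This comes from $\beta_2(5Q)\lesssim\epsilon$, using the horizontal plane through $(z_Q,A(z_Q,\tau_Q),\tau_Q)$ as a competitor. Then use that the distance between two flat objects is controlled uniformly by its average.
\item The transversal cutoff must be taken at scale $\epsilon\ell(Q)$, with gradient $\lesssim(\epsilon\ell(Q))^{-1}$. Its contribution $|\nabla\chi|\,|D_{(j)}|\lesssim\epsilon^{2}\ell(Q)^{-k-3}$ to the Lipschitz norm is harmless.
\end{enumerate}
Your remaining concern about the time direction is resolved as you suggest. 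Use the bound $\epsilon\ell(Q)^{-k-4}$ on $\partial_s$ for increments $|s-s'|\le\ell(Q)^2$, and the $L^\infty$ bound $\epsilon^3\ell(Q)^{-k-2}$ for larger increments.
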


\begin{proof}
Consider the kernel $D_{(j)}=J_{(j)}-J_{comm,(j)}$ and let $T_{(j)}$ be the associated operator.

Set
\[
H_Q
:=
\bigl\{(y,A(z_Q,\tau_Q),s):y\in\mathbb R^k,\ s\in\mathbb R\bigr\}.
\]
The Lipschitz bound for $A$ gives
\[
d_p\bigl((y,A(y,s),s),H_Q\bigr)
\lesssim
\epsilon\ell(Q),
\qquad (y,s)\in10Q.
\]
In particular, $H_Q$ is an admissible competitor for $\beta_2(5Q)$.
Since $P_Q$ realizes the infimum,
\[
\fint_{(5Q)^\Gamma}
d_p\bigl((Y,s),P_Q\bigr)^2\,\d\mu(Y,s)
\approx
\beta_2(5Q)^2\ell(Q)^2
\lesssim
\epsilon^2\ell(Q)^2.
\]
For $(y,s)\in5Q$, the triangle inequality gives
\[
d_p\bigl((y,A(z_Q,\tau_Q),s),P_Q\bigr)
\leq
|A(y,s)-A(z_Q,\tau_Q)|
+
d_p\bigl((y,A(y,s),s),P_Q\bigr).
\]
Consequently,
\[
\fint_{5Q}
d_p\bigl((y,A(z_Q,\tau_Q),s),P_Q\bigr)^2\,\d y\,\d s
\lesssim
\epsilon^2\ell(Q)^2.
\]
Thus,
\[
\sup_{(Y,s)\in H_Q\cap\pi^{-1}(4Q)}
d_p((Y,s),P_Q)
\lesssim \epsilon\ell(Q).
\]
So, $\angle(P_Q,H_Q) \lesssim \epsilon$.
Combining these estimates with the Lipschitz bound for $A$, we obtain
\[
\Gamma\cap C_Q
\subset
\bigl\{(Y,s):d_p((Y,s),P_Q)\leq C\epsilon\ell(Q)\bigr\},
\qquad
P_Q\cap C_Q
\subset
\bigl\{(Y,s):d_p((Y,s),\Gamma)\leq C\epsilon\ell(Q)\bigr\}.
\]

Fix $(X,t)=(x,x^\perp,t)\in Q^\Gamma$, and let $(X',t)$ be its
orthogonal projection onto $P_Q$. Then
\[
\|(X',t)-(X,t)\|
\lesssim
\epsilon\ell(Q).
\]
The support of $\varphi_j$ and the estimates above imply
\[
\supp\!\left(
D_{(j)}((X',t)-\cdot)\big|_\Gamma
\right)
\cup
\supp\!\left(
D_{(j)}((X',t)-\cdot)\big|_{P_Q}
\right)
\subset C_Q.
\]

We first estimate $T_{(j)}1(X',t)$. Since $P_Q$ is vertical, we may choose a smooth function $\chi_Q$, independent of the time variable, such that
\[
\chi_Q=1
\quad\text{when }d_p((Y,s),P_Q)\leq C\epsilon\ell(Q),
\qquad
\supp\chi_Q
\subset
\{(Y,s):d_p((Y,s),P_Q)\leq 2C\epsilon\ell(Q)\},
\]
and
\[
|\nabla\chi_Q|
\lesssim
\frac{1}{\epsilon\ell(Q)}.
\]
Set
\[
f(Y,s)
=
D_{(j)}\big((X',t)-(Y,s)\big),
\qquad
\widetilde f(Y,s)
=
\chi_Q(Y,s)f(Y,s).
\]
The small slope of $P_Q$ and the support of $f$ imply that
\[
|x'^\perp-y^\perp|
\lesssim
\epsilon\ell(Q)
\]
whenever $\chi_Q(Y,s)f(Y,s)\neq0$. Lemma~\ref{czestim} therefore gives
\[
|f(Y,s)|
\lesssim
\frac{\epsilon^3}{\ell(Q)^{k+2}},
\qquad
|\nabla f(Y,s)|
\lesssim
\frac{\epsilon^2}{\ell(Q)^{k+3}},
\qquad
|\partial_s f(Y,s)|
\lesssim
\frac{\epsilon}{\ell(Q)^{k+4}}
\]
on $\supp\chi_Q\cap\supp f$. Consequently,
\[
\|\widetilde f\|_\infty
\lesssim
\frac{\epsilon^3}{\ell(Q)^{k+2}},
\qquad
|\nabla\widetilde f|
\leq
|\nabla\chi_Q|\,|f|+|\nabla f|
\lesssim
\frac{\epsilon^2}{\ell(Q)^{k+3}},
\qquad
|\partial_s\widetilde f|
\lesssim
\frac{\epsilon}{\ell(Q)^{k+4}}.
\]
For $|s-s'|\leq \ell(Q)^2$, the bound for
$\partial_s\widetilde f$ gives
\[
|\widetilde f(Y,s)-\widetilde f(Y,s')|
\lesssim
\frac{\epsilon}{\ell(Q)^{k+3}}|s-s'|^{1/2}.
\]
For $|s-s'| > \ell(Q)^2$, we have the same bound using the $L^\infty$ estimate.  Together with the spatial gradient bound, this yields
\[
\|\widetilde f\|_{lip}
\lesssim
\frac{\epsilon}{\ell(Q)^{k+3}}.
\]
Moreover, $\supp\widetilde f\subset C_Q$, and $\widetilde f=f$ on
$\Gamma\cap\supp f$ and on $P_Q\cap\supp f$.

Since $J_{(j)}$ and $J_{comm,(j)}$ are odd in the spatial variables, reflection in the spatial part of $P_Q$ about $X'$ gives
\[
\int D_{(j)}\big((X',t)-(Y,s)\big)\,\d\sigma_{P_Q}(Y,s)=0.
\]
Since $\widetilde f=f$ on $\Gamma\cap\supp f$ and on $P_Q\cap\supp f$,
\begin{multline*}
\left|
\int D_{(j)}\big((X',t)-(Y,s)\big)\,\d\mu(Y,s)
\right|\\
=
\bigg|
\int D_{(j)}\big((X',t)-(Y,s)\big)\,\d\mu(Y,s)
-
c_Q\int D_{(j)}\big((X',t)-(Y,s)\big)\,\d\sigma_{P_Q}(Y,s)
\bigg|\\
\leq
\left|
\int \widetilde f(Y,s)\,\d\mu(Y,s)
-
c_Q\int \widetilde f(Y,s)\,\d\sigma_{L_Q}(Y,s)
\right|\\
+
c_Q
\left|
\int \widetilde f(Y,s)\,\d\sigma_{L_Q}(Y,s)
-
\int \widetilde f(Y,s)\,\d\sigma_{P_Q}(Y,s)
\right|\\
\lesssim
\epsilon\big(\alpha(Q)+\beta_2(5Q)\big).
\end{multline*}
Here $c_Q$ is the constant associated to $L_Q$ minimizing $\alpha(Q)$. In the last inequality we used Lemmas
\ref{cqbd} and \ref{pqlq}.

Therefore
\[
|T_{(j)}1(X',t)|
\lesssim
\epsilon\big(\alpha(Q)+\beta_2(5Q)\big).
\]
To compare the values at $(X,t)$ and $(X',t)$, we apply the mean value
theorem along $[X',X]$, the spatial segment joining $X'$ and $X$:
\[
|T_{(j)}1(X,t)-T_{(j)}1(X',t)|
\leq
\dist((X,t),P_Q)
\sup_{\xi\in[X',X]}|\nabla T_{(j)}1(\xi,t)|.
\]
If $\xi\in[X',X]$ and
$D_{(j)}((\xi,t)-(Y,s))\neq0$ for $(Y,s)\in\Gamma$, then the
estimates above give
\[
|\xi^\perp-y^\perp|
\lesssim
\epsilon\ell(Q).
\]
Lemma~\ref{czestim} and $\mu(C_Q)\lesssim\ell(Q)^{k+2}$ therefore give
\[
|\nabla T_{(j)}1(\xi,t)|
\leq
\int
|\nabla D_{(j)}((\xi,t)-(Y,s))|\,\d\mu(Y,s)
\lesssim
\frac{\epsilon^2}{\ell(Q)}.
\]
Hence
\[
|T_{(j)}1(X,t)-T_{(j)}1(X',t)|
\lesssim
\epsilon^2
\frac{\dist((X,t),P_Q)}{\ell(Q)}.
\]
We then have
\begin{align*}
|T_{(j)}1(X,t)|^2
&\lesssim
|T_{(j)}1(X',t)|^2
+
|T_{(j)}1(X,t)-T_{(j)}1(X',t)|^2\\
&\lesssim
\epsilon^2
\left(
\alpha(Q)^2
+
\left(\frac{\dist((X,t),P_Q)}{\ell(Q)}\right)^2
+
\beta_2(5Q)^2
\right).
\end{align*}
Since $Q^\Gamma\subset(5Q)^\Gamma$ and $P_Q$ minimizes
$\beta_2(5Q)$,
\[
\int_{Q^\Gamma}
\left(
\frac{\dist((X,t),P_Q)}{\ell(Q)}
\right)^2
\,\d\mu(X,t)
\lesssim
\beta_2(5Q)^2|Q|.
\]
Integrating the preceding pointwise estimate proves the lemma.
\end{proof}

\begin{lemma}\label{bigbound}
For all $Q\in\mathbb D(\mathbb P^k)$ with $\ell(Q)=2^{-j}$, there holds
\[
\int_{Q^\Gamma} |\mathcal R_{(j)}1|^2\,\d\mu
\lesssim
\big[\alpha(Q)^2+\beta_2(5Q)^2\big]|Q|.
\]
\end{lemma}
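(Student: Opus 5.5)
By Lemma~\ref{commapprox}, it suffices to prove the analogous bound for the commutator truncation:
\[
\int_{Q^\Gamma}|\mathcal C_{(j)}1|^2\,\d\mu\lesssim\big[\alpha(Q)^2+\beta_2(5Q)^2\big]|Q|.
\]
Indeed, $|\mathcal R_{(j)}1|^2\le 2|\mathcal R_{(j)}1-\mathcal C_{(j)}1|^2+2|\mathcal C_{(j)}1|^2$, and the first term is already controlled, even with a factor $\epsilon^2$.

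For the commutator term I will use the structure of $J_{comm,(j)}$: it is linear in the transversal variable $z$, and its remaining factor depends only on $(x,t)$. Fix $(x,t)\in Q$ and let $a_Q$ be the vector of affine minimizers for $\beta_2(F_m,5Q)$, as in the proof of Lemma~\ref{vectorDorronsoro}. Then
\[
\mathcal C^m_{(j)}1(x,t)=\int \big(A_m(x,t)-A_m(y,s)\big)\,k_j(x-y,t-s)\,\d y\,\d s,
\]
where $k_j(x,t)=|t|^{-(k+3)/2}e^{-|x|^2/4|t|}\varphi_j(x,t)$. This kernel is even in space and time, is supported in $\|(x-y,t-s)\|\le 2^{-j-1}$ (so $(y,s)\in 5Q$), and satisfies $\|k_j\|_{L^1}\lesssim \ell(Q)^{-1}$.

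Write $A_m(x,t)-A_m(y,s)$ as
\[
\big[A_m(x,t)-a_{Q,m}(x)\big]+\big[a_{Q,m}(x)-a_{Q,m}(y)\big]-\big[A_m(y,s)-a_{Q,m}(y)\big].
\]
The middle term integrates to zero, because $a_{Q,m}(x)-a_{Q,m}(y)$ is linear in $x-y$ and $k_j$ is even in space. This is the key cancellation, and it is where the evenness and the affine (time-independent) approximants enter. The remaining two terms give the pointwise bound
\[
|\mathcal C_{(j)}1(x,t)|\lesssim \frac{|A(x,t)-a_Q(x)|}{\ell(Q)}+\ell(Q)^{-k-3}\int_{5Q}|A(y,s)-a_Q(y)|\,\d y\,\d s.
\]
Squaring, integrating over $Q$, and applying Cauchy--Schwarz to the average yields
\[
\int_{Q^\Gamma}|\mathcal C_{(j)}1|^2\,\d\mu\lesssim \sum_m\beta_2(A_m,5Q)^2|Q|.
\]

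The main obstacle is that this bound is in terms of the function-level numbers $\beta_2(A_m,5Q)$, not the geometric $\beta_2(5Q)$ of the graph. I would close this gap with the comparison established before Theorem~\ref{regularbetaequiv}. Since $\epsilon$ is small, Lemma~\ref{commapprox}'s argument shows that the minimizing plane $P_Q$ has small angle with $H_Q$. Hence $P_Q$ is the graph of an affine map $a$ with small slope, and $|A(y,s)-a(y)|\lesssim d_p((y,A(y,s),s),P_Q)$. This gives $\sum_m\beta_2(A_m,5Q)^2\lesssim\beta_2(5Q)^2$, with constants depending only on $k$. Note that $\alpha(Q)$ is not even needed for the commutator part; it appears only through Lemma~\ref{commapprox}.
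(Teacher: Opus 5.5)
Your proof is correct, but it follows a genuinely different route from the paper's.

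\textbf{The paper's route.} The paper proves the lemma by rerunning the geometric argument of Lemma~\ref{commapprox} for the full kernel $J_{(j)}$, in the spirit of Lemma~\ref{TQintbd}, with Lemma~\ref{b1abd} controlling distances to $L_Q$. One evaluates at the projection of $(X,t)$ onto an approximating vertical plane. There, spatial oddness of the kernel and the flat approximation $\mu\approx c_Q\sigma_{L_Q}$ give a bound by $\alpha(Q)+\beta_2(5Q)$. The mean value theorem then moves the estimate back to $\Gamma$ at a cost of $\dist((X,t),P_Q)/\ell(Q)$, whose square integrates to $\beta_2(5Q)^2|Q|$.

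\textbf{Your route.} You use Lemma~\ref{commapprox} only as a black box to pass to $\mathcal C_{(j)}1$. You then exploit that the commutator kernel is $(A_m(x,t)-A_m(y,s))k_j$ with $k_j$ even in $x$. Subtracting a time-independent affine map therefore kills the linear part exactly, and you get a pointwise Dorronsoro-type bound by $|A-a|$. Since the cancellation holds for every affine $a$, you could take $a$ to be the map whose graph is $P_Q$ from the start. That removes the detour through the componentwise numbers $\beta_2(A_m,5Q)$; only the small-slope comparison $|A(y,s)-a(y)|\lesssim d_p((y,A(y,s),s),P_Q)$ is needed.

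\textbf{What each approach buys.} Your argument is more elementary for the main term. It also shows that $\alpha(Q)$ enters only through the $\epsilon^2$ error in Lemma~\ref{commapprox}. However, it is tied to the graph parametrization and to the linear-in-$A$ form of the commutator. It also still relies on the $\alpha$/plane machinery inside Lemma~\ref{commapprox}. The paper's argument, by contrast, applies to any spatially odd kernel without comparing to a commutator. It is the same mechanism used for general ADR measures in Section~\ref{last}.
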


\begin{proof}
We obtain this estimate by using Lemma \ref{b1abd} in the argument of
Lemma \ref{commapprox}. See also the proof of Lemma \ref{TQintbd} in Section \ref{last}.
\end{proof}

\begin{proposition}\label{errorest}
For $j \in \mathbb{Z}$, set
\begin{align*}
\beta_{2,j}(\Gamma)^2 = \sum_{Q \in \mathbb{D}_j} \beta_2(5Q)^2|Q|,\\
\alpha_j(\Gamma)^2 = \sum_{Q \in \mathbb{D}_j} \alpha(Q)^2 |Q|.
\end{align*}
We have the estimate
\begin{align*}
|\langle\mathcal R_{(j)}1, \mathcal R_{(\ell)}1 \rangle-\langle \mathcal C_{(j)} 1, \mathcal C_{(\ell)} 1\rangle| \lesssim \epsilon(\beta_{2,j}(\Gamma) + \alpha_j(\Gamma))(\beta_{2,\ell}(\Gamma) + \alpha_\ell(\Gamma)).
\end{align*}
\end{proposition}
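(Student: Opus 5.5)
The plan is to write the difference as a sum of two cross terms and estimate each with Cauchy--Schwarz, using Lemmas~\ref{commapprox} and~\ref{bigbound} for the square-integrable factors. Concretely,
\[
\langle\mathcal R_{(j)}1,\mathcal R_{(\ell)}1\rangle-\langle\mathcal C_{(j)}1,\mathcal C_{(\ell)}1\rangle
=
\langle\mathcal R_{(j)}1-\mathcal C_{(j)}1,\mathcal R_{(\ell)}1\rangle
+
\langle\mathcal C_{(j)}1,\mathcal R_{(\ell)}1-\mathcal C_{(\ell)}1\rangle .
\]
For the first term, Cauchy--Schwarz in $L^2(\mathbb P^k)=L^2(\mu)$ bounds it by $\|\mathcal R_{(j)}1-\mathcal C_{(j)}1\|_2\,\|\mathcal R_{(\ell)}1\|_2$. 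We decompose $\mathbb P^k$ into the cubes of $\mathbb D_j$; since $\mu(Q^\Gamma)=|Q|$ and the sets $Q^\Gamma$, $Q\in\mathbb D_j$, partition $\Gamma$, Lemma~\ref{commapprox} gives
\[
\|\mathcal R_{(j)}1-\mathcal C_{(j)}1\|_2^2
\lesssim
\epsilon^2\sum_{Q\in\mathbb D_j}\bigl[\alpha(Q)^2+\beta_2(5Q)^2\bigr]|Q|
=
\epsilon^2\bigl(\alpha_j(\Gamma)^2+\beta_{2,j}(\Gamma)^2\bigr).
\]
Likewise, summing Lemma~\ref{bigbound} over $Q\in\mathbb D_\ell$ yields $\|\mathcal R_{(\ell)}1\|_2\lesssim\alpha_\ell(\Gamma)+\beta_{2,\ell}(\Gamma)$. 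The product of these two bounds is exactly the claimed right-hand side.

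For the second term we need the companion bound $\|\mathcal C_{(j)}1\|_2\lesssim\alpha_j(\Gamma)+\beta_{2,j}(\Gamma)$. We get it from the triangle inequality $\|\mathcal C_{(j)}1\|_2\leq\|\mathcal R_{(j)}1\|_2+\|\mathcal R_{(j)}1-\mathcal C_{(j)}1\|_2$, together with Lemma~\ref{bigbound} and the estimate just proved (using $\epsilon\leq1$). Combining this with the $\epsilon$-gain bound for $\|\mathcal R_{(\ell)}1-\mathcal C_{(\ell)}1\|_2$, Cauchy--Schwarz gives the same bound for the second term, and the proposition follows.

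The main point requiring care is the passage from the cube-localized estimates to the global $L^2$ norms. Lemmas~\ref{commapprox} and~\ref{bigbound} integrate over $Q^\Gamma$ with $\ell(Q)=2^{-j}$, which matches the scale of the annular truncation $\varphi_j$. So each operator $\mathcal R_{(j)}$, $\mathcal C_{(j)}$ must be paired with the grid $\mathbb D_j$ at its own scale, and we sum over the disjoint cubes of that generation, with no overlap issues. The genuinely delicate input, the $\epsilon$-gain in Lemma~\ref{commapprox}, is already available. The one thing left to check is that $\mathcal C_{(j)}1$ is a well-defined function in $L^2$, which the triangle-inequality argument above settles.
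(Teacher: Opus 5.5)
Your proposal is correct and is essentially the paper's proof: split the difference into two cross terms, apply Cauchy--Schwarz, and use Lemma~\ref{commapprox} for the factor carrying the $\epsilon$ gain and Lemma~\ref{bigbound} for the other factor. The only differences are cosmetic. The paper uses the mirrored split $\langle\mathcal R_{(j)}1,\mathcal R_{(\ell)}1-\mathcal C_{(\ell)}1\rangle+\langle\mathcal R_{(j)}1-\mathcal C_{(j)}1,\mathcal C_{(\ell)}1\rangle$, and it leaves implicit the bound $\|\mathcal C_{(\ell)}1\|_2\lesssim\alpha_\ell(\Gamma)+\beta_{2,\ell}(\Gamma)$, which you correctly derive from the triangle inequality.
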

\begin{proof}
First we use the triangle inequality
\begin{align*}
\left|
\left\langle\mathcal R_{(j)}1,\mathcal R_{(\ell)}1\right\rangle
-
\left\langle\mathcal C_{(j)}1,\mathcal C_{(\ell)}1\right\rangle
\right|
\leq
\left|
\left\langle \mathcal R_{(j)}1,\mathcal R_{(\ell)}1 - \mathcal C_{(\ell)}1\right\rangle
\right|
+
\left|
\left\langle \mathcal R_{(j)}1-\mathcal C_{(j)}1,
\mathcal C_{(\ell)}1\right\rangle
\right|.
\end{align*}
We now apply H\"older's inequality, Lemma \ref{commapprox}, and Lemma \ref{bigbound}. More precisely,
Lemma \ref{commapprox} gives
$\|\mathcal R_{(j)}1-\mathcal C_{(j)}1\|_2
\lesssim \epsilon\big(\beta_{2,j}(\Gamma)+\alpha_j(\Gamma)\big)$,
and Lemma \ref{bigbound} then gives the desired bound.
\end{proof}
\begin{proposition}\label{fourierest}
Assume that $A\in C_c^\infty(\mathbb P^k;\mathbb R^{n-k})$.
For each $1\leq m\leq n-k$ and each $j,\ell\in\mathbb Z$, we have
\[
\left\langle \mathcal C^m_{(j)}1,\mathcal C^m_{(\ell)}1\right\rangle
\geq 0.
\]
Moreover,
\[
\sum_{j\in\mathbb Z}
\|\mathcal C^m_{(j)}1\|_2^2
\gtrsim
\int_{\mathbb R^{k+1}}
|\widehat{A_m}(\xi,\tau)|^2
\|(\xi,\tau)\|^2
\,\d\xi\,\d\tau.
\]
Equivalently,
\[
\sum_{j\in\mathbb Z}
\|\mathcal C^m_{(j)}1\|_2^2
\gtrsim
\|D_{par}A_m\|_2^2.
\]
\end{proposition}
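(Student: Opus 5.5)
The plan is to write each $\mathcal C^m_{(j)}1$ as a Fourier multiplier applied to $A_m$ and then use Plancherel. Since $A_m\in C_c^\infty$, we have
\[
\mathcal C^m_{(j)}1(x,t)=\int_{\mathbb P^k}\bigl(A_m(x,t)-A_m(y,s)\bigr)\,k_j\bigl((x,t)-(y,s)\bigr)\,\d y\,\d s,
\]
where
\[
k_j(x,t):=|t|^{-(k+1)/2-1}e^{-|x|^2/(4|t|)}\varphi_j(x,t),\qquad k_j(x,0):=0 .
\]
This kernel has four properties we will use:
\begin{itemize}
\item it is nonnegative;
\item it is even in $x$ and in $t$;
\item it is bounded and supported in the annulus $A(0,2^{-j-5},2^{-j-1})$, hence integrable;
\item it is not identically zero.
\end{itemize}
Thus $\mathcal C^m_{(j)}1=\bigl(\int k_j\bigr)A_m-k_j*A_m$, and so
\[
\widehat{\mathcal C^m_{(j)}1}(\xi,\tau)=m_j(\xi,\tau)\widehat{A_m}(\xi,\tau),\qquad m_j(\zeta):=\widehat{k_j}(0)-\widehat{k_j}(\zeta)=\int k_j(x,t)\bigl(1-\cos(x\cdot\xi+t\tau)\bigr)\,\d x\,\d t .
\]
Here the cosine form uses that $k_j$ is real and even. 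It follows that $m_j$ is real and nonnegative. By Plancherel,
\[
\langle\mathcal C^m_{(j)}1,\mathcal C^m_{(\ell)}1\rangle=\int|\widehat{A_m}|^2m_jm_\ell\geq0,
\]
which is the first claim. Each $\mathcal C^m_{(j)}1$ lies in $L^2$ since $A_m\in L^2$ and $0\le m_j\le 2\|k_j\|_1$.

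For the lower bound we use parabolic scaling. The factor $|t|^{-(k+3)/2}e^{-|x|^2/4|t|}$ is parabolically homogeneous of degree $-(k+3)$, and $\varphi_j(x,t)=\varphi_0(2^jx,2^{2j}t)$; the latter follows from the definition through $\rho$, and the normalizing sum is dilation invariant. Hence
\[
k_j(x,t)=2^{j(k+3)}k_0(2^jx,2^{2j}t).
\]
Changing variables gives
\[
m_j(\xi,\tau)=2^{j}\,m_0(2^{-j}\xi,2^{-2j}\tau).
\]

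The key step is that $m_0$ is bounded below on a fixed parabolic annulus, namely
\[
m_0\geq c>0\quad\text{on }\{\zeta:1\le\|\zeta\|\le 4\}.
\]
We obtain this in three steps.
\begin{itemize}
\item Continuity: $m_0$ is continuous, because $k_0\in L^1$.
\item Strict positivity at each $\zeta\neq0$: $k_0>0$ on a set of positive measure, namely where $\widetilde\varphi_0>0$ away from $t=0$, while $1-\cos(x\cdot\xi+t\tau)$ vanishes only on a countable union of hyperplanes, a Lebesgue-null set. Hence $m_0(\zeta)>0$.
\item Compactness of the annulus turns pointwise positivity into a uniform lower bound $c$.
\end{itemize}
We expect this positivity-and-compactness step to be the only point needing care. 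In particular, we must check that the scaling exponent is exactly $2^{j}$, which produces the factor $\|(\xi,\tau)\|$.

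Given $(\xi,\tau)\neq0$, we then choose $j\in\mathbb Z$ so that $\|(2^{-j}\xi,2^{-2j}\tau)\|\in[1,4]$. This is possible because
\[
\|(2^{-j}\xi,2^{-2j}\tau)\|=2^{-j}|\xi|+2^{-j}|\tau|^{1/2}=2^{-j}\|(\xi,\tau)\|,
\]
so any $j$ with $2^j\le\|(\xi,\tau)\|\le 4\cdot2^j$ works. For this $j$ we have $2^j\approx\|(\xi,\tau)\|$, and therefore
\[
\sum_{i\in\mathbb Z}m_i(\xi,\tau)^2\ \geq\ m_j(\xi,\tau)^2\ \geq\ c^2\,2^{2j}\ \gtrsim\ \|(\xi,\tau)\|^2 .
\]
Finally, by Plancherel and monotone convergence,
\[
\sum_j\|\mathcal C^m_{(j)}1\|_2^2=\int|\widehat{A_m}|^2\sum_jm_j^2\ \gtrsim\ \int|\widehat{A_m}(\xi,\tau)|^2\|(\xi,\tau)\|^2\,\d\xi\,\d\tau=\|D_{par}A_m\|_2^2 .
\]
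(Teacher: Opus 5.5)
Your proposal is correct and follows essentially the same route as the paper. Both write $\mathcal C^m_{(j)}1$ as the multiplier $\widehat{k_j}(0)-\widehat{k_j}$ applied to $A_m$, use parabolic scaling to reduce to the single symbol $m_0$ (the paper's $g$), and get a uniform lower bound on a fixed parabolic annulus from the cosine representation, the null-hyperplane argument and compactness; the remaining differences (the annulus $[1,4]$ versus $[1,2]$, and the omitted $2\pi$ in the phase) are cosmetic.
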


\begin{proof}
Set
\[
\eta_j(x,t)
:=
\frac{\varphi_j(x,t)}{|t|^{(k+1)/2+1}}
\exp\left(-\frac{|x|^2}{4|t|}\right).
\]
Then
\[
\mathcal C^m_{(j)}1(x,t)
=
\int_{\mathbb P^k}
\big(A_m(x,t)-A_m(y,s)\big)
\eta_j(x-y,t-s)\,\d y\,\d s.
\]
Equivalently, after the change of variables $(p,u)=(x-y,t-s)$,
\[
\mathcal C^m_{(j)}1(x,t)
=
\int_{\mathbb P^k}
\big(A_m(x,t)-A_m(x-p,t-u)\big)
\eta_j(p,u)\,\d p\,\d u.
\]

Let $\delta_j:=2^{-j}$. By the definition of $\varphi_j$,\[\varphi_j(x,t)=\varphi_0\left(\frac{x}{\delta_j},\frac{t}{\delta_j^2}\right).\]Define\[\eta(x,t):=\begin{cases}\dfrac{\varphi_0(x,t)}{|t|^{(k+1)/2+1}}\exp\left(-\dfrac{|x|^2}{4|t|}\right), & t\ne 0,\\0, & t=0.\end{cases}\] Then\[\eta_j(x,t)=\delta_j^{-(k+3)}\eta\left(\frac{x}{\delta_j},\frac{t}{\delta_j^2}\right).\]Consequently,\[\widehat{\eta_j}(\xi,\tau)=\delta_j^{-1}\widehat{\eta}(\delta_j\xi,\delta_j^2\tau).\]Taking the Fourier transform in $(x,t)$ gives
\[
\widehat{\mathcal C^m_{(j)}1}(\xi,\tau)
=
\widehat{A_m}(\xi,\tau)
\left(
\widehat{\eta_j}(0,0)-\widehat{\eta_j}(\xi,\tau)
\right).
\]
Thus
\[
\widehat{\mathcal C^m_{(j)}1}(\xi,\tau)
=
\delta_j^{-1}\widehat{A_m}(\xi,\tau)
\left(
\widehat{\eta}(0,0)
-
\widehat{\eta}(\delta_j\xi,\delta_j^2\tau)
\right).
\]

Define
\[
g(\xi,\tau)
:=
\widehat{\eta}(0,0)-\widehat{\eta}(\xi,\tau).
\]
Then

\begin{equation}\label{commutatorfourieridentity}
\widehat{\mathcal C^m_{(j)}1}(\xi,\tau)
=
\delta_j^{-1}\widehat{A_m}(\xi,\tau)
g(\delta_j\xi,\delta_j^2\tau).
\end{equation}

Since $\eta$ is even in both the spatial and time variables, $\widehat\eta$ is real-valued, and hence $g$ is real-valued. Moreover,
\[
g(\xi,\tau)
=
\int_{\mathbb P^k}
\bigl(1-\cos(2\pi(x\cdot \xi+t\tau))\bigr)\eta(x,t)\,\d x\,\d t.
\]
The integrand is non-negative, so $g(\xi,\tau)\geq 0$ for all $(\xi,\tau)$. Therefore, by Plancherel and \eqref{commutatorfourieridentity},
\begin{align*}
\left\langle \mathcal C^m_{(j)}1,\mathcal C^m_{(\ell)}1\right\rangle
&=
\int_{\mathbb R^{k+1}}
|\widehat{A_m}(\xi,\tau)|^2
\delta_j^{-1}\delta_\ell^{-1}
g(\delta_j\xi,\delta_j^2\tau)
g(\delta_\ell\xi,\delta_\ell^2\tau)
\,\d\xi\,\d\tau\\
&\geq 0.
\end{align*}

We also have
\[
\|\mathcal C^m_{(j)}1\|_2^2
=
\int_{\mathbb R^{k+1}}
|\widehat{A_m}(\xi,\tau)|^2
\delta_j^{-2}
\left|
g(\delta_j\xi,\delta_j^2\tau)
\right|^2
\,\d\xi\,\d\tau.
\]
Summing in $j$, we obtain
\[
\sum_{j\in\mathbb Z}
\|\mathcal C^m_{(j)}1\|_2^2
=
\int_{\mathbb R^{k+1}}
|\widehat{A_m}(\xi,\tau)|^2
M(\xi,\tau)
\,\d\xi\,\d\tau,
\]
where
\[
M(\xi,\tau)
:=
\sum_{j\in\mathbb Z}
\delta_j^{-2}
\left|
g(\delta_j\xi,\delta_j^2\tau)
\right|^2.
\]

We claim that
\[
M(\xi,\tau)\gtrsim \|(\xi,\tau)\|^2
\qquad
\text{for every }(\xi,\tau)\neq(0,0).
\]

We need to show that $g$ is strictly positive away from the origin. If $g(\xi,\tau)=0$, then, since the integrand defining $g$ is non-negative,
\[
1-\cos(2\pi(x\cdot \xi+t\tau))=0
\]
for almost every $(x,t)$ in the nonempty open set $\{\eta>0\}$. Hence $x\cdot\xi+t\tau\in\mathbb Z$ for almost every $(x,t)\in\{\eta>0\}$. If $(\xi,\tau)\neq(0,0)$, however, the set where $x\cdot\xi+t\tau\in\mathbb Z$ is a countable union of hyperplanes and therefore has Lebesgue measure zero, a contradiction. Thus $(\xi,\tau)=(0,0)$.  It follows that $g(\xi,\tau)>0$ whenever $(\xi,\tau)\neq(0,0)$.

Now fix $(\xi,\tau)\neq(0,0)$ and choose $j_0\in\mathbb Z$ so that $1\leq\|(\delta_{j_0}\xi,\delta_{j_0}^2\tau)\|\leq2$. Since $g$ is continuous and strictly positive on the annulus $\{(\zeta,\sigma):1\leq\|(\zeta,\sigma)\|\leq2\}$, we have $|g(\delta_{j_0}\xi,\delta_{j_0}^2\tau)|\gtrsim1$; moreover, $\delta_{j_0}^{-1}\approx\|(\xi,\tau)\|$. Therefore\[M(\xi,\tau)\geq\delta_{j_0}^{-2}|g(\delta_{j_0}\xi,\delta_{j_0}^2\tau)|^2\gtrsim\delta_{j_0}^{-2}\approx\|(\xi,\tau)\|^2.\]Consequently,\[\sum_{j\in\mathbb Z}\|\mathcal C^m_{(j)}1\|_2^2\gtrsim\int_{\mathbb R^{k+1}}|\widehat{A_m}(\xi,\tau)|^2\|(\xi,\tau)\|^2\,\d\xi\,\d\tau.\]This proves the proposition.
\end{proof}

\begin{definition}\label{DMD}
Given $g:\mathbb P^k\to\mathbb R$ and $I\in\mathbb D(\mathbb P^k)$, define
\[
\Delta_I g
:=
\sum_{\substack{P\subseteq I,\ P\in\mathbb D(\mathbb P^k)\\ \ell(P)=2^{-1}\ell(I)}}
\mathbf 1_P\,\langle g\rangle_P
-
\mathbf 1_I\,\langle g\rangle_I.
\]
Here
\[
\langle g\rangle_E
:=
\frac{1}{|E|}\int_E g(y,s)\,\d y\,\d s
\]
for every parabolic dyadic cube $E$.
\end{definition}

We next bound the $\alpha$-number sum by the corresponding $\beta$-number sum for a Lip-$(1,1/2)$ graph.

\begin{proposition}\label{alphabetagraph}
We have
\[
\sum_{Q\in\mathbb D(\mathbb P^k)}\alpha(Q)^2|Q|
\lesssim
\sum_{Q\in\mathbb D(\mathbb P^k)}\beta_1(5Q)^2|Q|.
\]
\end{proposition}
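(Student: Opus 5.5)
The plan is to prove the stronger pointwise bound $\alpha(Q)\lesssim\beta_1(5Q)$ for every $Q\in\mathbb D(\mathbb P^k)$; squaring, multiplying by $|Q|$ and summing then gives the proposition at once. Since $\mu=\Phi_\#(\d x\,\d t)$ is a pushforward of Lebesgue measure, an approximating flat measure can be written in the same parametrization. A single $\beta_1$ competitor plane should then control $\alpha(Q)$ directly, with no need for multiscale (Dorronsoro-type) bookkeeping.

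\textbf{Step 1 (the approximating plane is a flat graph).} Fix $Q$ and let $L$ be a vertical $k$-plane (near-)minimizing $\beta_1(5Q)$. As in the proof of Lemma~\ref{commapprox}, the horizontal plane $H_Q$ through $(z_Q,A(z_Q,\tau_Q),\tau_Q)$ is a competitor with cost $\lesssim\epsilon$, so $\beta_1(5Q)\lesssim\epsilon$. By Chebyshev, a set of $(y,s)\in5Q$ of measure $\approx|5Q|$ has lifted points within $C\epsilon\ell(Q)$ of $L$. These lifted points project onto a set spread over $5Q$ in the horizontal variables, and they lie within $C\epsilon\ell(Q)$ of $H_Q$. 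I expect this to force $L$ to make angle $\lesssim\epsilon$ with $H_Q$ and to pass within $C\epsilon\ell(Q)$ of $(z_Q,A(z_Q,\tau_Q),\tau_Q)$. If so, $L=\{(y,a(y),s)\}$ for an affine $a:\mathbb R^k\to\mathbb R^{n-k}$ with $|\nabla a|\lesssim\epsilon$. For such small-slope planes,
\[
d_p\bigl((y,A(y,s),s),L\bigr)\approx|A(y,s)-a(y)|.
\]
The only mildly delicate point is excluding steep planes. A steep plane misses most of the lifted points at distance $\gtrsim\ell(Q)$, which would give $\beta_1(5Q)\gtrsim1$ and contradict $\beta_1(5Q)\lesssim\epsilon$ once $\epsilon$ is small.

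\textbf{Step 2 (choice of constant).} Since $a$ is affine and time-independent, the slice measure satisfies $\sigma_L=j_a\,(\Phi_a)_\#(\d y\,\d s)$, where $j_a=\sqrt{\det(I_k+\nabla a^T\nabla a)}$ is a constant. Take $c=1/j_a$, which is $\approx 1$. Then $c\,\sigma_L=(\Phi_a)_\#(\d y\,\d s)$.

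\textbf{Step 3 (the estimate).} For $\|f\|_{lip}\leq1$ with $\supp f\subseteq C_Q$,
\[
\Bigl|\int f\,\d\mu-\int f\,\d(c\sigma_L)\Bigr|
\leq\int\bigl|f(y,A(y,s),s)-f(y,a(y),s)\bigr|\,\d y\,\d s .
\]
The integrand vanishes unless one of the two points lies in $C_Q$. Since both $A$ and $a$ are $\epsilon$-close to $A(z_Q,\tau_Q)$ on the relevant region, this restricts $(y,s)$ to $\pi(C_Q)=4Q\subseteq5Q$ (possibly a slightly enlarged cube; if so, replace $5Q$ by that dilate in Step 1, which is harmless). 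The integrand is at most $|A(y,s)-a(y)|\approx d_p((y,A(y,s),s),L)$. Hence
\[
\dist_{C_Q}(\mu,c\sigma_L)\lesssim\int_{(5Q)^\Gamma}d_p\bigl((Y,s),L\bigr)\,\d\mu\lesssim\beta_1(5Q)\,\ell(Q)^{k+3},
\]
and therefore $\alpha(Q)\lesssim\beta_1(5Q)$.

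The main obstacle is Step 1: making rigorous that a $\beta_1$-minimizing plane for a small-constant Lip-$(1,1/2)$ graph is a small-slope graph over $\mathbb P^k$, uniformly in $Q$. I would argue this by contradiction using the mass lower bound $\mu((5Q)^\Gamma)\approx\ell(Q)^{k+2}$ together with the $H_Q$ comparison, exactly mirroring the angle argument in Lemma~\ref{commapprox}.
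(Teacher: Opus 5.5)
Your proposal is correct and is essentially the paper's own proof: the paper also proves the pointwise bound $\alpha(Q)\lesssim\beta_1(5Q)$ by writing the $\beta_1(5Q)$-minimizing plane as the graph of an affine map $a$ of bounded slope. It then takes $c=j_a^{-1}$ so that $c\,\sigma_L=(\Phi_a)_\#(\d x\,\d t)$, and bounds the difference of the two parametrized integrals over $\pi(C_Q)\subseteq 5Q$ by $\int_{5Q}d_p\bigl((x,A(x,t),t),L\bigr)\,\d x\,\d t\lesssim\beta_1(5Q)\ell(Q)^{k+3}$.

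Your Step~1 supplies a justification for the bounded-slope claim, which the paper simply asserts from $\|A\|_{lip}\leq\epsilon$; in Step~3 no enlargement of $5Q$ is needed, since $(y,A(y,s),s)$ and $(y,a(y),s)$ both project to $(y,s)$, so the integrand vanishes off $\pi(C_Q)=4Q$ with no closeness argument.
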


\begin{proof}

Fix \(Q\in\mathbb D(\mathbb P^k)\), and let \(L\) be a vertical plane minimizing \(\beta_1(5Q)\). Because \(\|A\|_{lip}\leq\epsilon\), \(L\) has uniformly bounded slope. So, we can write
\[
L=\{(x,a(x),t):(x,t)\in\mathbb P^k\},
\]
where \(a:\mathbb R^k\to\mathbb R^{n-k}\) is affine. Set
\[
j_a:=\sqrt{\det\!\left(I_k+(\nabla a)^T\nabla a\right)},
\qquad
c:=j_a^{-1}.
\]
Let \(f\) be supported in \(C_Q\) and have Lipschitz constant at most \(1\). The projection of \(C_Q\) onto \(\mathbb P^k\) is contained in \(5Q\). Since \(\mu=\Phi_\#(dx\,dt)\), while \(d\sigma_L=j_a\,dx\,dt\) under the parametrization \((x,t)\mapsto(x,a(x),t)\), we have
\begin{align*}
&\left|
\int f(X,t)\,\d\mu(X,t)
-
c\int f(X,t)\,\d\sigma_L(X,t)
\right|\\
&\quad=
\left|
\int_{5Q}
\big[
f(x,A(x,t),t)-f(x,a(x),t)
\big]
\,\d x\,\d t
\right|\\
&\quad\lesssim
\int_{5Q}
d_p\big((x,A(x,t),t),L\big)
\,\d x\,\d t\\
&\quad\lesssim
\beta_1(5Q)\ell(Q)^{k+3}.
\end{align*}
Here the first inequality uses the Lipschitz bound for \(f\) and the uniformly bounded slope of \(L\), while the second follows from the definition of \(\beta_1(5Q)\) and
\[
\mu\big(\pi^{-1}(5Q)\cap\Gamma\big)
=
|5Q|
\approx
\ell(Q)^{k+2}.
\]
Thus
\[
\alpha(Q)\lesssim\beta_1(5Q).
\]
Squaring this inequality, multiplying by \(|Q|\), and summing over \(Q\in\mathbb D(\mathbb P^k)\) proves the proposition.

\end{proof}

\begin{proposition}\label{offdiagbeta}
For each integer $N_0>0$, we have
\[
\sum_{i,j:\ |i-j|>N_0}
\big|\langle \mathcal R_{(i)}1,\mathcal R_{(j)}1\rangle
\big|
\lesssim
2^{-cN_0}
\sum_{Q\in\mathbb D(\mathbb P^k)}
\beta_2(5Q)^2|Q|.
\]
\end{proposition}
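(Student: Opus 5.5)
The plan is to deduce this from the general off-diagonal estimate of Section~\ref{last}, Proposition~\ref{offdiagprop}, applied to the time-symmetric transversal Riesz kernels $J^m$. For $k\geq1$ these kernels are smooth away from the origin, odd in the spatial variables, even in time, and satisfy the derivative bounds $|\nabla^j\partial_t^qJ^m|\lesssim\|(X,t)\|^{-k-2-j-2q}$. Proposition~\ref{offdiagprop} should give, for a $(k+2)$-ADR measure,
\[
\sum_{|i-j|>N_0}\big|\langle T_{(i)}\mu,T_{(j)}\mu\rangle_{L^2(\mu)}\big|
\lesssim 2^{-cN_0}\sum_{Q\in\mathbb D}\alpha(Q)^2\mu(Q).
\]
Here the dyadic system is the one on $\mu$. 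In our graph setting we take it to be the lifted cubes $Q^\Gamma$, with $\mu(Q^\Gamma)=|Q|$.

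The first step is to match the objects. By the pullback identity in Subsection~\ref{graphnotation}, $\mathcal R^m_{(i)}1(x,t)$ equals the smooth annular truncation of the ambient operator with kernel $J^m$ applied to $\mu=\mu_0$, evaluated at $\Phi(x,t)$. The truncation is done in the horizontal variables $(x,t)-(y,s)$, which is comparable to the ambient parabolic distance because $\|A\|_{lip}\ll1$. Consequently $\langle\mathcal R_{(i)}1,\mathcal R_{(j)}1\rangle=\langle T_{(i)}\mu,T_{(j)}\mu\rangle_{L^2(\mu)}$. I would check that the proof of Proposition~\ref{offdiagprop} uses only the annular support, smoothness and oddness of the truncated kernels. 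If it instead requires the cutoff to be a function of the ambient norm, I would show that the horizontal cutoff composed with $\pi$ is a smooth, parabolically adapted function on a neighborhood of $\Gamma$ with the same bounds. I would also check that its cancellation on vertical planes (used via reflection, as in Lemma~\ref{commapprox}) still holds for planes $L_Q$ of small slope. For that I would reparametrize the cutoff along $L_Q$, or use the already established bound $\dist_H(P_Q\cap C_Q, L_Q\cap C_Q)\lesssim(\alpha+\beta_2)\ell(Q)$. Summing over the $n-k$ components $m$ costs only a constant.

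The second step converts $\alpha$ to $\beta$. Proposition~\ref{alphabetagraph} gives $\sum_Q\alpha(Q)^2|Q|\lesssim\sum_Q\beta_1(5Q)^2|Q|$, and Cauchy--Schwarz gives $\beta_1(5Q)\leq\beta_2(5Q)$. Combining the two steps yields the claim with the constant $2^{-cN_0}$ inherited from Proposition~\ref{offdiagprop}. The $\alpha$-numbers there are defined relative to the ambient cubes $C_Q$, which agree with Definition~\ref{alphadefs}. If Proposition~\ref{offdiagprop} is stated with a slightly enlarged family of cubes, a bounded-overlap argument over neighbors and ancestors at bounded generations absorbs the difference.

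I expect the main obstacle to be the compatibility in the first step: verifying that the horizontal smooth truncation $\varphi_i((x,t)-(y,s))$ fits the hypotheses of Proposition~\ref{offdiagprop}. The key point is to retain the cancellation $\int T_{(i)}$-kernel $\d\sigma_L=0$ up to errors of size $\alpha(Q)$ on tilted planes. If a direct fit fails, my fallback is to rerun the argument of Section~\ref{last} directly. One would write $\langle\mathcal R_{(i)}1,\mathcal R_{(j)}1\rangle$ for $i<j$ (coarse scale $i$) by testing the smooth, slowly varying function $\mathcal R_{(i)}1$ against $\mathcal R_{(j)}1$. Then one would replace $\mu$ by $\mathcal L_Q$ on cubes of size $2^{-j}$ and use oddness to gain a factor $2^{-(j-i)}$ from the smoothness of $\mathcal R_{(i)}1$. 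Lemma~\ref{bigbound} supplies the $L^2$ sizes, giving $\sum_{|i-j|>N_0}2^{-c|i-j|}(\alpha_i+\beta_{2,i})(\alpha_j+\beta_{2,j})$, and Schur's test in $(i,j)$ finishes the argument.
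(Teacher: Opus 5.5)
Your route is the paper's. It lifts the grid to $\Gamma$, with a fixed generation shift so that the lifted cubes satisfy the requirements of Section~\ref{last}. It then applies Proposition~\ref{offdiagprop} to the time-symmetric kernels and finishes with Proposition~\ref{alphabetagraph} and $\beta_1(5Q)\leq\beta_2(5Q)$. The paper treats the horizontal truncation as matching the ambient one after a fixed relabeling of the annuli, so your compatibility check is extra care rather than a departure.

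The only correction concerns your claim that the $\alpha$-numbers agree. The testing cylinders of Section~\ref{last} do not literally coincide with the cubes $C_Q$ of Definition~\ref{alphadefs}. The paper handles this exactly as in your fallback: it bounds $\widetilde\alpha(Q^\Gamma)\lesssim\alpha(Q^\sharp)$ for the ancestor $Q^\sharp$ with $\ell(Q^\sharp)=2^M\ell(Q)$, and uses that each $R$ is $Q^\sharp$ for boundedly many $Q$ of total measure $|R|$.
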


\begin{proof}
\begingroup
Let $\mathbb D_0=\mathbb D(\mathbb P^k)$. We construct a dyadic grid of $\Gamma$ as follows.  Fix $\Delta\geq1$ so that, when
$Q\in(\mathbb D_0)_q$, the lifted cube $Q^\Gamma$ is assigned to
generation $q-\Delta$. With this indexing,
\[
\widetilde\ell(Q^\Gamma)
=
2^\Delta\ell(Q),
\]
and $\{Q^\Gamma:Q\in\mathbb D_0\}$ satisfies the dyadic-grid
requirements of Section~\ref{last}. We write
$\widetilde\alpha(Q^\Gamma)$ for the $\alpha$-number defined in Section~\ref{last}
and $\widetilde C_{Q^\Gamma}$ for its testing cylinder.

Relabeling the annular pieces by the same fixed shift does not change
$|i-j|$. Proposition~\ref{offdiagprop} therefore gives
\[
\sum_{|i-j|>N_0}
\bigl|\langle\mathcal R_{(i)}1,\mathcal R_{(j)}1\rangle\bigr|
\lesssim
2^{-cN_0}
\sum_{Q\in\mathbb D_0}
\widetilde\alpha(Q^\Gamma)^2\mu(Q^\Gamma).
\]

Choose an integer $M>\Delta$, fixed independently of $Q$, as follows. For
$Q\in\mathbb D_0$, let $Q^\sharp$ be its ancestor with
\[
\ell(Q^\sharp)=2^M\ell(Q).
\]
Since
\[
\widetilde\ell(Q^\Gamma)
=
2^\Delta\ell(Q),
\]
we may fix $M$ large enough that
\[
\widetilde C_{Q^\Gamma}
\subset
C_{Q^\sharp}
\qquad
\text{for every }Q\in\mathbb D_0.
\]
Here $C_{Q^\sharp}$ is the ambient cube used in Section~\ref{graphregsection}
to define $\alpha(Q^\sharp)$. This containment and the fact that
$\ell(Q^\sharp)/\widetilde\ell(Q^\Gamma)$ is constant give
\[
\widetilde\alpha(Q^\Gamma)
\lesssim
\alpha(Q^\sharp).
\]
Moreover, each $Q^\sharp$ has a fixed number of descendants $Q$ with
$\ell(Q)=2^{-M}\ell(Q^\sharp)$, and the sum of their measures is
$|Q^\sharp|$. Since $\mu(Q^\Gamma)=|Q|$,
\[
\sum_{Q\in\mathbb D_0}
\widetilde\alpha(Q^\Gamma)^2\mu(Q^\Gamma)
\lesssim
\sum_{R\in\mathbb D_0}
\alpha(R)^2|R|.
\]
Proposition~\ref{alphabetagraph} and $\beta_1(5R)\leq\beta_2(5R)$
complete the proof.
\endgroup
\end{proof}

We now combine these estimates to prove Theorem \ref{localdpara}.
\begin{proof}[Proof of Theorem \ref{localdpara}]
For the moment, we assume that $A$ is smooth and compactly supported. Then each component
$A_m$ belongs to $C_c^\infty(\mathbb P^k)$, and hence
$A_m,\nabla_x A_m,\partial_tA_m\in L^2(\mathbb P^k)$. Since
\[
\|(\xi,\tau)\|^2\lesssim |\xi|^2+|\tau|
\qquad\text{and}\qquad
|\tau|\leq 1+\tau^2,
\]
Plancherel gives
\[
\int_{\mathbb R^{k+1}}
|\widehat A_m(\xi,\tau)|^2\|(\xi,\tau)\|^2\,\d\xi\,\d\tau
\lesssim
\|\nabla_x A_m\|_2^2+\|A_m\|_2^2+\|\partial_tA_m\|_2^2<\infty.
\]
Summing over $m=1,\dots,n-k$, we obtain
\[
\|D_{par}A\|_2^2
=
\int_{\mathbb R^{k+1}}
|\widehat A(\xi,\tau)|^2\|(\xi,\tau)\|^2\,\d \xi\,\d\tau
<\infty.
\]

Fix an integer $\ell$. We will take $\ell$ large in this proof. Write
\begin{equation}\label{initdecom}
\|\mathcal R_\ell1\|_2^2
=
\sum_{\substack{i,j:\ |i-j|\leq N_0\\ i,j\leq\ell}}
\left\langle \mathcal R_{(i)}1,\mathcal R_{(j)}1\right\rangle
+
\sum_{\substack{i,j:\ |i-j|>N_0\\ i,j\leq\ell}}
\left\langle \mathcal R_{(i)}1,\mathcal R_{(j)}1\right\rangle
=:T_1+T_2.
\end{equation}
Proposition~\ref{offdiagbeta} gives
\[
|T_2|
\lesssim
2^{-cN_0}
\sum_{Q\in\mathbb D(\mathbb P^k)}
\beta_2(5Q)^2|Q|.
\]

To obtain a lower bound for $T_1$, we write
\[
\left\langle \mathcal R_{(i)}1,\mathcal R_{(j)}1\right\rangle
=
\left\langle \mathcal C_{(i)}1,\mathcal C_{(j)}1\right\rangle
+
\left(
\left\langle \mathcal R_{(i)}1,\mathcal R_{(j)}1\right\rangle
-
\left\langle \mathcal C_{(i)}1,\mathcal C_{(j)}1\right\rangle
\right).
\]
By Proposition \ref{fourierest},
\[
\left\langle \mathcal C_{(i)}1,\mathcal C_{(j)}1\right\rangle\geq 0
\qquad\text{for all }i,j\in\mathbb Z.
\]
Therefore
\[
\sum_{\substack{i,j:\ |i-j|\leq N_0\\ i,j\leq\ell}}
\left\langle \mathcal C_{(i)}1,\mathcal C_{(j)}1\right\rangle
\geq
\sum_{j\leq\ell}
\|\mathcal C_{(j)}1\|_2^2.
\]
Thus, by Proposition \ref{fourierest},
\[
\liminf_{\ell\to\infty}
\sum_{\substack{i,j:\ |i-j|\leq N_0\\ i,j\leq\ell}}
\left\langle \mathcal C_{(i)}1,\mathcal C_{(j)}1\right\rangle
\gtrsim
\|D_{par}A\|_2^2.
\]

On the other hand, by Propositions \ref{errorest} and \ref{alphabetagraph}, we have, uniformly in $\ell$,
\[
\sum_{\substack{i,j:\ |i-j|\leq N_0\\ i,j\leq\ell}}
\left|
\left\langle \mathcal R_{(i)}1,\mathcal R_{(j)}1\right\rangle
-
\left\langle \mathcal C_{(i)}1,\mathcal C_{(j)}1\right\rangle
\right|
\lesssim
N_0\epsilon
\sum_{Q\in\mathbb D(\mathbb P^k)}
\beta_2(5Q)^2|Q|.
\]
Applying Lemma~\ref{vectorDorronsoro} with $F=A$, the sum on the
right-hand side is bounded by $N_0\epsilon\|D_{par}A\|_2^2$. Thus there are constants \(c_0,C_1>0\), independent of \(N_0\) and \(\epsilon\), such that
\[
\liminf_{\ell\to\infty} T_1
\geq c_0
\|D_{par}A\|_2^2
-
C_1N_0\epsilon
\|D_{par}A\|_2^2.
\]
Applying Lemma~\ref{vectorDorronsoro} to the estimate for $T_2$ gives a constant $C_2>0$, independent of $N_0$, such that
\[
\sup_{\ell\in\mathbb Z}|T_2|
\leq
C_2 2^{-cN_0}\|D_{par}A\|_2^2.
\]
Choose $N_0$ so large that $C_2 2^{-cN_0}\leq c_0/4$. Then, having chosen $N_0$, choose $\epsilon>0$ so that $C_1N_0\epsilon\leq c_0/4$. It follows from \eqref{initdecom} that
\[
\|D_{par}A\|_2^2
\lesssim
\limsup_{\ell\to\infty}
\|\mathcal R_\ell1\|_2^2.
\]

It remains to bound the right-hand side of the display above. For each $\ell$,
\[
\|\mathcal R_\ell1\|_2^2
\lesssim
\|\mathcal R_\ell\chi_{2C'_R}\|_2^2
+
\|\mathcal R_\ell(1-\chi_{2C'_R})\|_2^2.
\]
The first term satisfies
\[
\|\mathcal R_\ell\chi_{2C'_R}\|_2^2
\lesssim
\|\mathcal R\|^2 R^{k+2}.
\]
For the second term, we use that $A$ is supported in $C'_R$. If $(x,t)\in C'_R$, then
\[
|\mathcal R_\ell(1-\chi_{2C'_R})(x,t)|
\lesssim
R\int_{\|(x,t)-(y,s)\|\gtrsim R}
\frac{1}{\|(x,t)-(y,s)\|^{k+3}}\,\d y\,\d s
\lesssim 1.
\]
If $(x,t)\notin C'_R$, then $\mathcal R_\ell(1-\chi_{2C'_R})(x,t)=0$. Hence
\[
\|\mathcal R_\ell(1-\chi_{2C'_R})\|_2^2
\lesssim
R^{k+2}.
\]
Therefore,
\begin{equation}\label{e:dparsmootha}
\|D_{par}A\|_2^2
\lesssim
\left(1+\|\mathcal R\|^2\right)R^{k+2}.
\end{equation}

Now suppose that $A$ is not necessarily smooth. Let $\eta_\lambda$ be as in Subsection~\ref{smoothing}, and set $B_\lambda=A*\eta_\lambda$. This differs from the function denoted by $B_\lambda$ in that subsection by the constant vector $2C_A\lambda{\bf 1}$.
Adding a constant vector to a graph function leaves the associated kernel unchanged, and $D_{par}$ annihilates constants, so the estimates obtained from Proposition~\ref{Blabounded} are unchanged.
For $0<\lambda\leq R$, we have $\supp(A*\eta_\lambda)\subset C'_{R+\lambda}\subset C'_{2R}$. Hence the argument above applies to $A*\eta_\lambda$ with $C'_R$ replaced by $C'_{2R}$. Thus, by Proposition \ref{Blabounded} and \eqref{e:dparsmootha}, we have
\[
\|D_{par}B_\lambda\|_2^2
\lesssim
\left(1+\|\mathcal R\|^2\right)R^{k+2},
\]
with an implicit constant independent of $0<\lambda\leq R$.

Finally, we pass to the limit as $\lambda\to0$. We have
\[
D_{par}B_\lambda
=
D_{par}(A*\eta_\lambda).
\]
On the Fourier transform side,
\[
\widehat{A*\eta_\lambda}(\xi,\tau)
=
\widehat A(\xi,\tau)\widehat\eta(\lambda\xi,\lambda^2\tau).
\]
Since $\widehat\eta(0,0)=1$, we have
\[
\widehat{A*\eta_\lambda}(\xi,\tau)
\to
\widehat A(\xi,\tau)
\qquad\text{for every }(\xi,\tau).
\]
Thus, by Fatou's lemma,
\[
\|D_{par}A\|_2^2
=
\int_{\mathbb R^{k+1}}
|\widehat A(\xi,\tau)|^2\|(\xi,\tau)\|^2\,\d\xi\,\d\tau
\leq
\liminf_{\lambda\to0}
\int_{\mathbb R^{k+1}}
|\widehat{A*\eta_\lambda}(\xi,\tau)|^2\|(\xi,\tau)\|^2\,\d\xi\,\d\tau.
\]
Therefore,
\[
\|D_{par}A\|_2^2
\leq
\liminf_{\lambda\to0}
\|D_{par}B_\lambda\|_2^2
\lesssim
\left(1+\|\mathcal R\|^2\right)R^{k+2}.
\]
This proves the theorem.
\end{proof}

\subsection{Localization}\label{localization}

In this subsection we prove the following proposition.

\begin{proposition}\label{localsios}
\begingroup
Suppose $A:\mathbb P^k\to\mathbb R^{n-k}$ is Lip-$(1,1/2)$,
$\|A\|_{lip}\leq\epsilon\ll1$, and
$\|\mathcal R\|<\infty$. For every $(x_0,t_0)\in\mathbb P^k$ and $R>0$,
there is a function
$A_{(x_0,t_0),R}:\mathbb P^k\to\mathbb R^{n-k}$ such that
$A_{(x_0,t_0),R}=A$ on $C'_{4R}(x_0,t_0)$,
$\supp\bigl(A_{(x_0,t_0),R}-A(x_0,t_0)\bigr)
\subset C'_{101R}(x_0,t_0)$, and
\[
\|A_{(x_0,t_0),R}\|_{lip}
\lesssim
\|A\|_{lip},
\qquad
\|\mathcal R^{A_{(x_0,t_0),R}}\|
\lesssim
1+\|\mathcal R\|.
\]
\endgroup
\end{proposition}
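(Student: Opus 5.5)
The localized function will be built by freezing $A$ outside a large cylinder with a cutoff, in the same way $B_{\la,i}$ was built in Subsection~\ref{smoothing}. By translation invariance we may take $(x_0,t_0)=(0,0)$ and $A(0,0)=0$. Let $\psi_R$ be a smooth parabolic bump with $\psi_R\equiv1$ on $C'_{4R}$ and $\supp\psi_R\subset C'_{101R}$. Its derivatives should satisfy $|\nabla_x\psi_R|\lesssim R^{-1}$ and $|\partial_t\psi_R|\lesssim R^{-2}$. Set $F:=A_{(0,0),R}:=\psi_RA$. On $C'_{101R}$ we have $|A|\lesssim\epsilon R$, so the product rule gives $\|F\|_{lip}\lesssim\|A\|_{lip}$; the time increment is handled as in the bound for $\widetilde f$ in Lemma~\ref{commapprox}. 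The support and agreement properties are immediate. The real content is the estimate $\|\mathcal R^F\|\lesssim1+\|\mathcal R\|$, uniformly over the truncations $\mathcal R^F_\ell$.

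To bound $\mathcal R^F_\ell$, fix $f\in L^2$ and split $\mathbb P^k$ into a near region $E_1=C'_{200R}$ and a far region $E_2=\mathbb P^k\setminus E_1$. Write $f=f\chi_{E_1}+f\chi_{E_2}$ and estimate $\chi_{E_a}\mathcal R^F_\ell(f\chi_{E_b})$ for $a,b\in\{1,2\}$.

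\emph{(i) Both points far.} For $(x,t),(y,s)\notin C'_{101R}$ we have $F=0$ at both points, so the kernel vanishes. The only contribution comes from $(y,s)\in E_2\cap C'_{101R}=\emptyset$, so this piece is zero.

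\emph{(ii) Separated points.} Take $(x,t)\in C'_{150R}$ and $(y,s)\notin C'_{200R}$, or the reverse. Then $\|(x,t)-(y,s)\|\gtrsim\max(R,\|(x,t)\|,\|(y,s)\|)$, and $|F(x,t)-F(y,s)|\lesssim\epsilon R$. Hence $|K^F|\lesssim R\,\|(x,t)-(y,s)\|^{-(k+3)}$, with one variable confined to $C'_{150R}$. Schur's test (or Cauchy--Schwarz) then bounds these operators by $O(1)$. The remaining pairs with $(x,t)\in E_1\setminus C'_{150R}$ and $(y,s)\in E_2$ have $F(x,t)=F(y,s)=0$, so their kernel vanishes.

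\emph{(iii) Both points near.} Both points lie in $C'_{200R}$, and here $F$ is compared with $A$. The kernel $K^F_\ell$ equals $K_\ell$ when both points lie in $C'_{4R}$. In general we compare $K^{F,F}$ with $K^{A,A}$ through the intermediate kernel $K^{F,A}$, using the mean value theorem in the transversal argument exactly as in Lemma~\ref{vectorvaluedsmoothingestimate}. This gives
\[
|K^{F,A}(x,t,y,s)-K(x,t,y,s)|\lesssim\frac{|F(x,t)-A(x,t)|}{\|(x,t)-(y,s)\|^{k+3}}.
\]
Here $|F-A|=(1-\psi_R)|A|$. This is not small pointwise near the diagonal, so the mean value step alone is insufficient, and I expect this to be the main obstacle.

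The plan for the obstacle is to observe that the symmetric difference $F-A=(\psi_R-1)A$ vanishes on $C'_{4R}$ and is smooth at scale $R$. Near a point $(x,t)$ with $\|(x,t)-(y,s)\|=r\ll R$, we have $F(x,t)-F(y,s)=\psi_R(x,t)\bigl(A(x,t)-A(y,s)\bigr)+O(\epsilon r^2/R)$. The kernel at scale $r$ is therefore a perturbation of $J$ evaluated at the scaled displacement $\psi_R(x,t)\bigl(A(x,t)-A(y,s)\bigr)$. The error term is of size $\epsilon r/R\cdot r^{-(k+2)}$, which Schur's test sums over $r\le R$.

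This still leaves the multiplicative factor $\psi_R(x,t)$ inside $J$, and it is not a simple scalar multiple of the original kernel. To avoid handling that factor directly, I would instead imitate Subsection~\ref{smoothing}:
\begin{itemize}
\item For scales $r\ge cR$, use the size bounds from (ii).
\item For scales $r<cR$, cover the annulus $C'_{200R}\setminus C'_{4R}$ by cubes of size $\approx R$. On each cube, replace $F$ by a function that is regular Lip-$(1,1/2)$, namely the $B_{\la,i}$ construction at scale $R$ applied to $A*\eta_R$.
\end{itemize}
Theorem~\ref{highcodimBHHLN}, Cotlar's inequality and bounded overlap then give uniform bounds, as in the estimate of term $II$ in Proposition~\ref{Blabounded}.

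If this substitution turns out to be too lossy, the alternative is to build $F$ itself so that it is $A$ on $C'_{4R}$ and agrees with $A*\eta_R$ plus a regular cutoff away from a thin layer. The discrepancy between $A$ and $A*\eta_R$ would then be controlled by Lemmas~\ref{BlaAbounded} and~\ref{ABlabounded} with $\la\approx R$, together with the Riesz bound $\|\mathcal R\|$ for the pieces where $F=A$.
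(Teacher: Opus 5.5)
Your preliminary steps are fine: the Lipschitz bound, the support and agreement properties, pieces (i) and (ii), and the scales $\rho\gtrsim R$ in (iii). Even the mixed interactions at scales $\rho\gtrsim D(x,t):=d_p\bigl((x,t),\overline{C'_{4R}(0,0)}\bigr)$ are harmless, since $|F-A|=(1-\psi_R)|A|\lesssim\epsilon D$. Your $F=\psi_RA$ is essentially the paper's intermediate function $\widetilde A=\eta A$.

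\textbf{The gap.} The problem is at scales $\rho\ll D(x,t)$ in the region where $0<\psi_R<1$, and nothing in the proposal handles it. There $F$ is exactly as rough as $A$. Near a point with $c:=\psi_R(x_1,t_1)\in(0,1)$, the graph $\Gamma_F$ looks at scale $r\ll R$ like a tilted copy of $\Gamma_{cA}$.
\begin{itemize}
\item Your expansion drops the tilt. The term $A(y,s)\bigl(\psi_R(x,t)-\psi_R(y,s)\bigr)$ has size $\epsilon r$, not $\epsilon r^2/R$, and its leading part is the affine term $A(x,t)\nabla_x\psi_R(x,t)\cdot(x-y)$. So your claimed kernel error is really $\epsilon r^{-(k+2)}$, which is not summable over scales.
\item More seriously, the hypotheses give no Riesz bound on $\Gamma_{cA}$. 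Transversal dilation is not a symmetry of $J$. Such a bound would essentially be the regularity of $A$, which this proposition is used to prove.
\item Your fix does not help. Replacing $F$ on $R$-cubes by a $B_{\la,i}$-type function built from $A*\eta_R$ is not a legitimate substitution. The difference $|F-\psi_R(A*\eta_R)|$ can be $\approx\epsilon R\gg r$, so the two kernels are not comparable at scale $r$. In Subsection~\ref{smoothing} the point was that $B_\la$ itself is smooth at scale $\la$. Your $F$ is smooth at no scale, and it must equal $A$ across $\partial C'_{4R}$.
\item The fixed-scale alternative fails the same way. A Lipschitz interpolation between $A$ and $A*\eta_R$, which may differ by $\approx\epsilon R$, needs a layer of width $\gtrsim R$. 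In that layer the function is $\phi A+(1-\phi)(A*\eta_R)$, and the same obstruction reappears.
\end{itemize}

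\textbf{The missing idea.} You need a regularization whose scale tends to zero at $\partial C'_{4R}$. The paper mollifies $\widetilde A$ at the Whitney scale, $\overline A(x,t)=\widetilde A*\theta_{H(x,t)}(x,t)$. Here $H\approx D$ is a regularized distance with $\|D_t^{1/2}H\|_{BMO}\lesssim1$ and $|\nabla_x^{|\alpha|}\partial_t^\beta H|\lesssim D^{1-|\alpha|-2\beta}$. This gives $|\overline A-A|\lesssim\epsilon D$ and $|\nabla_x^2\overline A|+|\partial_t\overline A|\lesssim\epsilon/D$. Consequently:
\begin{itemize}
\item for $\rho>D/2$, kernels are compared with error $D/\rho^{k+3}$, which is bounded by $\mathcal Mf$;
\item for $\rho\le D/2$, the localizations of $\overline A$ (and of $B$ below) to Whitney cubes are uniformly regular Lip-$(1,1/2)$, so Theorem~\ref{highcodimBHHLN} applies with bounded overlap.
\end{itemize}
The mixed kernels are handled by the lift $B=\overline A+H\mathbf 1$. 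It separates $\Gamma_B$ from $\Gamma_A$ by $\approx D$, which gives $|K^{B,A}|\lesssim D^{-(k+2)}$ for $\rho\le D/2$. The argument then runs through $K\to K^{B,A}\to K^{A,B}\to K^B$, using duality for the middle step. It then repeats with $(B,\overline A)$ in place of $(A,B)$ to reach $K^{\overline A}$.
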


Fix a point $(x_0,t_0)\in\mathbb P^k$ and a scale $R>0$. We will localize the
function $A$ to this point and scale. By translating and subtracting the constant
$A(x_0,t_0)$, we may assume that $(x_0,t_0)=(0,0)$ and $A(0,0)=0$.

Let $\eta:\mathbb P^k\to\mathbb R$ be equal to $1$ on
$\overline{C'_{4R}(0,0)}$, equal to $0$ outside of $C'_{25R}(0,0)$, such that
$0\leq \eta\leq 1$, and so that $\|\eta\|_{lip}\lesssim R^{-1}$. Set
\[
    \widetilde A(x,t)=A(x,t)\eta(x,t).
\]
A standard argument shows $\|\widetilde A\|_{lip}\lesssim\|A\|_{lip}$. Also, by the definition of $\widetilde A$, for every $(x,t)\in\mathbb P^k$,
\[
    |\widetilde A(x,t)-A(x,t)|
    =
    |A(x,t)(\eta(x,t)-1)|.
\]
If $(x,t)\in\overline{C'_{4R}(0,0)}$, the expression above is equal to $0$.
Otherwise, set $(x^*,t^*)$ to be a point of $\overline{C'_{4R}(0,0)}$ closest to
$(x,t)$. Then
\[
    |\widetilde A(x,t)-A(x,t)|
    =
    |A(x,t)(\eta(x,t)-\eta(x^*,t^*))|.
\]
Using the triangle inequality, we bound this by
\[
\begin{aligned}
&|(A(x,t)-A(x^*,t^*))(\eta(x,t)-\eta(x^*,t^*))|  \\
&\qquad
+
|A(x^*,t^*)(\eta(x,t)-\eta(x^*,t^*))|,
\end{aligned}
\]
which is bounded by
\[
    C\|A\|_{lip}\,d_p\big((x,t),\overline{C'_{4R}(0,0)}\big),
\]
where $C$ depends only on the fixed bound for $R\|\eta\|_{lip}$ and is independent of $R$.
Thus, setting
\[
    D(x,t):=d_p\big((x,t),\overline{C'_{4R}(0,0)}\big),
\]
we have
\begin{align}\label{graphclose}
    |\widetilde A(x,t)-A(x,t)|
    \lesssim
    \|A\|_{lip}D(x,t).
\end{align}

By Lemma~3.24 of \cite{BHHLN2}, there is a regularized distance function
\[
    H:\mathbb P^k\to[0,\infty)
\]
with the following properties:
$\|H\|_{lip}\lesssim 1$,  $H=0$ on $\overline{C'_{4R}(0,0)}$, $H>0$ on
$\mathbb P^k\setminus\overline{C'_{4R}(0,0)}$, and, on
$\mathbb P^k\setminus\overline{C'_{4R}(0,0)}$,
\begin{align*}
    c_nD(x,t)\leq H(x,t)&\leq \frac12D(x,t),\\
    \|D_t^{1/2}H\|_{BMO(\mathbb P^k)}&\lesssim 1,\\
    |\nabla_x^{|\alpha|}\partial_t^\beta H(x,t)|
    &\lesssim
    D(x,t)^{1-|\alpha|-2\beta},
    \qquad |\alpha|+2\beta\geq 1.
\end{align*}

We also need estimates for the second spatial derivatives outside
$\overline{C'_{4R}(0,0)}$. To obtain these estimates, we smooth
$\widetilde A$ as follows. 
Let $\theta\in C_c^\infty(\mathbb P^k)$ be nonnegative, with
\[
\int_{\mathbb P^k}\theta=1,
\qquad
\supp\theta
\subset
\bigl\{(x,t)\in\mathbb P^k:\|(x,t)\|\leq1\bigr\}.
\]
 For $\omega>0$, write
\[
    \theta_\omega(x,t)
    =
    \omega^{-(k+2)}
    \theta\left(\frac{x}{\omega},\frac{t}{\omega^2}\right).
\]
Define
\[
    \overline A(x,t):=\widetilde A(x,t),
    \qquad
    (x,t)\in\overline{C'_{4R}(0,0)},
\]
and
\[
    \overline A(x,t)
    :=
    \widetilde A*\theta_{H(x,t)}(x,t),
    \qquad
    (x,t)\notin\overline{C'_{4R}(0,0)}.
\]
\begingroup
We set
\[
    A_{(0,0),R}:=\overline A.
\]
By construction, $A_{(0,0),R}=A$ on $C'_{4R}(0,0)$. We also have
\[
    \supp(A_{(0,0),R})
    \subset
    \overline{C'_{100R}(0,0)}
    \subset
    C'_{101R}(0,0).
\]
Indeed, suppose that $(x,t)\notin\overline{C'_{4R}(0,0)}$ and $\overline A(x,t)\neq0$. Then there is a point $(y,s)\in\supp(\theta_{H(x,t)})$ such that
$((x,t)-(y,s))\in\supp(\widetilde A)\subset\overline{C'_{25R}(0,0)}$. Therefore,
\[
\begin{aligned}
    \|(x,t)\|
    &\leq
    \|(x,t)-(y,s)\|+\|(y,s)\| \\
    &\leq
    50R+H(x,t)
    \leq
    50R+\frac12D(x,t)
    \leq
    50R+\frac12\|(x,t)\|.
\end{aligned}
\]
Here the last inequality follows because $(0,0)\in\overline{C'_{4R}(0,0)}$. Hence $\|(x,t)\|\leq100R$.
\endgroup

When $(x,t)\in\overline{C'_{4R}(0,0)}$, we have
$\overline A(x,t)=\widetilde A(x,t)=A(x,t)$. Otherwise, using that all $(y,s)$ in the
integral below satisfy $\|(y,s)\|\lesssim H(x,t)\approx D(x,t)$, we get
\[
\begin{aligned}
    |\widetilde A(x,t)-\overline A(x,t)|
    &\leq
    \int
    |\widetilde A((x,t)-(y,s))-\widetilde A(x,t)|
    \theta_{H(x,t)}(y,s)\,\d y\,\d s  \\
    &\lesssim
    \|A\|_{lip}H(x,t)
    \lesssim
    \|A\|_{lip}D(x,t).
\end{aligned}
\]
Combining this with \eqref{graphclose}, we obtain
\begin{align}\label{Abarclose}
    |\overline A(x,t)-A(x,t)|
    \lesssim
    \|A\|_{lip}D(x,t).
\end{align}
\begingroup
Fix $(x,t)\notin\overline{C'_{4R}(0,0)}$ and put $h=H(x,t)$. For
$i\in\{1,\ldots,k\}$, the chain rule gives
\[
\partial_{x_i}\overline A(x,t)
=
(\partial_{x_i}\widetilde A*\theta_h)(x,t)
+
(\partial_{x_i}H)(x,t)(\widetilde A*\partial_h\theta_h)(x,t).
\]
Since
$\int\partial_h\theta_h=0$, parabolic scaling and the Lip-$(1,1/2)$
bound for $\widetilde A$ give
\[
|\widetilde A*\partial_h\theta_h|(x,t)
\lesssim
\|A\|_{lip}h\|\partial_h\theta_h\|_1
\lesssim
\|A\|_{lip}.
\]
Together with $|\nabla_xH|\lesssim1$, this proves
\begin{equation}\label{Abarspatialgrad}
|\nabla_x\overline A(x,t)|\lesssim\|A\|_{lip}.
\end{equation}

To estimate the second spatial derivatives, fix
$i,j\in\{1,\ldots,k\}$. We differentiate once more and move one
spatial derivative from $\widetilde A$ to $\theta_h$, obtaining
\[
\begin{aligned}
\partial_{x_j}\partial_{x_i}\overline A
={}&
(\partial_{x_i}\widetilde A*\partial_{x_j}\theta_h)
+(\partial_{x_j}H)(\partial_{x_i}\widetilde A*\partial_h\theta_h)
+(\partial_{x_i}H)(\partial_{x_j}\widetilde A*\partial_h\theta_h)\\
&+(\partial_{x_j}\partial_{x_i}H)(\widetilde A*\partial_h\theta_h)
+(\partial_{x_i}H)(\partial_{x_j}H)
 (\widetilde A*\partial_h^2\theta_h).
\end{aligned}
\]
To estimate the terms on the right-hand side, we use
\[
\|\nabla_x\widetilde A\|_\infty\lesssim\|A\|_{lip},
\qquad
|\nabla_xH(x,t)|\lesssim1,
\qquad
|\nabla_x^2H(x,t)|\lesssim D(x,t)^{-1},
\]
together with the estimates
\[
\|\nabla_x\theta_h\|_1+\|\partial_h\theta_h\|_1
\lesssim h^{-1},
\qquad
\|\partial_h^2\theta_h\|_1\lesssim h^{-2},
\]
and the mean value zero property of $\partial_h \theta_h$ and $\partial_h^2\theta_h$:
\[
\int\partial_h\theta_h=0,
\qquad
\int\partial_h^2\theta_h=0.
\]
Since $h\approx D(x,t)$, these estimates bound each term in the preceding
identity by $\|A\|_{lip}/D(x,t)$. Therefore,
\begin{align}\label{2ndgrad}
|\nabla_x^2\overline A(x,t)|
\lesssim
\frac{\|A\|_{lip}}{D(x,t)}, \quad (x,t) \notin\overline{C'_{4R}(0,0)}.
\end{align}
\endgroup

\begingroup
We estimate the time derivative in the same way. For fixed $h>0$,
we differentiate $\widetilde A*\theta_h$ and then apply the chain rule with
$h=H(x,t)$ to obtain
\[
\partial_t\overline A(x,t)
=
(\widetilde A*\partial_s\theta_h)(x,t)
+
\partial_tH(x,t)
(\widetilde A*\partial_h\theta_h)(x,t),
\qquad h=H(x,t).
\]
Using $|\partial_tH(x,t)|\lesssim D(x,t)^{-1}$ and
$H(x,t)\approx D(x,t)$, the Lip-$(1,1/2)$ bound for $\widetilde A$, and the $L^1$ estimates and vanishing integrals of $\partial_s\theta_h$ and $\partial_h\theta_h$, we conclude that
\begin{align}\label{timegrad}
|\partial_t\overline A(x,t)|
\lesssim
\frac{\|A\|_{lip}}{D(x,t)},
\qquad
(x,t)\notin\overline{C'_{4R}(0,0)}.
\end{align}
\endgroup

\begingroup
We now verify that $\overline A$ is Lip-$(1,1/2)$. Estimate \eqref{Abarspatialgrad} gives
\[
|\overline A(x,t)-\overline A(y,t)|
\lesssim
\|A\|_{lip}|x-y|.
\]
For $t,s\in\mathbb R$, set $r=|t-s|^{1/2}$. If
\[
\min\{D(x,t),D(x,s)\}\leq 2r,
\]
then the $1$-Lipschitz property of $D$ gives
$\max\{D(x,t),D(x,s)\}\leq 3r$. Hence \eqref{Abarclose} and the
Lip-$(1,1/2)$ bound for $A$ give
\[
|\overline A(x,t)-\overline A(x,s)|
\lesssim
\|A\|_{lip}r.
\]
If instead $\min\{D(x,t),D(x,s)\}>2r$, then, for every $\tau$ between
$t$ and $s$,
\[
D(x,\tau)\geq \min\{D(x,t),D(x,s)\}-r
\geq \frac12\min\{D(x,t),D(x,s)\}.
\]
The segment joining $(x,t)$ to $(x,s)$ therefore lies outside
$\overline{C'_{4R}(0,0)}$, and \eqref{timegrad} yields
\[
|\overline A(x,t)-\overline A(x,s)|
\lesssim
\|A\|_{lip}\frac{|t-s|}{\min\{D(x,t),D(x,s)\}}
\lesssim
\|A\|_{lip}r.
\]
Combining the spatial and temporal estimates, we obtain
\[
\|\overline A\|_{lip}\lesssim\|A\|_{lip}.
\]
\endgroup

Define the function $B$ by
\begin{align}\label{Bdef}
    B(x,t)=\overline A(x,t)+H(x,t){\bf 1},
    \qquad
    {\bf 1}:=(1,\ldots,1)\in\mathbb R^{n-k}.
\end{align}

Since $H=0$ on $\overline{C'_{4R}(0,0)}$, we have $B=A$ on
$\overline{C'_{4R}(0,0)}$. Moreover, by \eqref{Abarclose} and $H\approx D$ outside
$\overline{C'_{4R}(0,0)}$,
\[
    |B(x,t)-A(x,t)|
    \lesssim
    D(x,t),
\]
with constants depending only on dimension and $\|A\|_{lip}$.
\begingroup
Recall that $K=(K^1,\ldots,K^{n-k})$ is the vector-valued kernel associated to $A$,
and let
$K^{\overline A}=(K^{\overline A,1},\ldots,K^{\overline A,n-k})$ be the
corresponding kernel associated to $\overline A$, i.e.
$$K^{\overline A}(x,t,y,s) = \frac{\overline A(x,t)-\overline A(y,s)}{|t-s|^{\frac{k+1}{2}+1}}\exp\bigg(-\frac{|x-y|^2 +|\overline A(x,t)-\overline A(y,s)|^2 }{4|t-s|} \bigg).$$
We will prove
\[
\left\|
    \int_{\mathbb P^k}
    K^{\overline A}_\ell(x,t,y,s)f(y,s)\,\d y\,\d s
\right\|_2^2
\lesssim
(1+\|\mathcal R\|^2)\|f\|_2^2
\]
uniformly over $\ell\in\mathbb Z$. This is the Riesz transform bound required in Proposition~\ref{localsios}.
\endgroup

We also define the singular integrals $\mathcal R^{A,B}$, $\mathcal R^{B,A}$, and
$\mathcal R^B$, which are formally associated to the kernels
\[
K^{A,B}(x,t,y,s)
=
\frac{A(x,t)-B(y,s)}{|t-s|^{(k+1)/2+1}}
\exp\left(
-\frac{|x-y|^2+|A(x,t)-B(y,s)|^2}{4|t-s|}
\right),
\]
\[
K^{B,A}(x,t,y,s)
=
\frac{B(x,t)-A(y,s)}{|t-s|^{(k+1)/2+1}}
\exp\left(
-\frac{|x-y|^2+|B(x,t)-A(y,s)|^2}{4|t-s|}
\right),
\]
and
\[
K^B(x,t,y,s)
=
\frac{B(x,t)-B(y,s)}{|t-s|^{(k+1)/2+1}}
\exp\left(
-\frac{|x-y|^2+|B(x,t)-B(y,s)|^2}{4|t-s|}
\right).
\]
For a kernel $U$ as above, we let
\[
    U_\ell(x,t,y,s)
    :=
    U(x,t,y,s)\Psi_\ell\big((x,t)-(y,s)\big),
\]
and if $T$ denotes the operator associated to $U$, we let $T_\ell$ denote the
operator associated to $U_\ell$.

By Lemma~\ref{smoothingCZkernels}, since $A, \overline A, B$ are
Lip-$(1,1/2)$, these kernels are standard parabolic Calder\'on--Zygmund kernels. Once we have proved uniform
$L^2$ bounds for their smooth truncations, we can apply Cotlar's inequality
to the associated singular integral operators.

We record the following estimates, which will be used in the proof below.

\begin{lemma}\label{czestimates}
Let
\[
    \rho:=\|(x,t)-(y,s)\|.
\]
The following estimates hold:
\begin{align}
|K^{B,A}(x,t,y,s) - K(x,t,y,s)|
&\lesssim
\frac{D(x,t)}{\rho^{k+3}},
&&
(x,t)\notin \overline{C'_{4R}(0,0)},\quad
\rho>\frac12D(x,t),
\label{czsmoothpush1}
\\
|K^{B,A}(x,t,y,s)|
&\lesssim
\frac{1}{D(x,t)^{k+2}},
&&
(x,t)\notin \overline{C'_{4R}(0,0)},\quad
\rho\leq \frac12D(x,t),
\label{czsmoothpush2}
\\
|K^B(x,t,y,s) - K^{A,B}(x,t,y,s)|
&\lesssim
\frac{D(x,t)}{\rho^{k+3}},
&&
(x,t)\notin \overline{C'_{4R}(0,0)},\quad
\rho>\frac12D(x,t),
\label{czsmoothpush3}
\\
|K^{A,B}(x,t,y,s)|
&\lesssim
\frac{1}{\rho^{k+2}}.
\label{czsmoothpush4}
\end{align}
\end{lemma}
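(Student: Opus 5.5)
The four estimates are pointwise kernel bounds. I would prove them by the same mean value argument used in Lemma~\ref{vectorvaluedsmoothingestimate}, with the smoothing scale $\la$ replaced by the local scale $D(x,t)$. Throughout write $h=|x-y|$, $\tau=t-s$, $\rho=h+|\tau|^{1/2}$. When $\tau=0$ all kernels vanish, so assume $\tau\neq0$. The basic tool is the function
\[
F(z)=\frac{z}{|\tau|^{(k+1)/2+1}}\exp\Bigl(-\frac{h^2+|z|^2}{4|\tau|}\Bigr),
\qquad
|\nabla F(z)|\lesssim \frac{1}{|\tau|^{(k+1)/2+1}}e^{-h^2/(4|\tau|)}\lesssim \rho^{-(k+3)},
\]
which holds uniformly in $z$ by Gaussian absorption.

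\textbf{Estimate \eqref{czsmoothpush4}.} This is immediate from Lemma~\ref{smoothingCZkernels} applied with $F=A$, $G=B$. We have $\|B\|_{lip}\lesssim\|\overline A\|_{lip}+\|H\|_{lip}\lesssim 1$, using $\|\overline A\|_{lip}\lesssim\|A\|_{lip}$ (proved above) and $\|H\|_{lip}\lesssim1$. The size bound therefore holds, with $D(\Phi_A(u),\Phi_B(v))\geq\rho$.

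\textbf{Estimates \eqref{czsmoothpush1} and \eqref{czsmoothpush3}.} Write $K^{B,A}-K=F(B(x,t)-A(y,s))-F(A(x,t)-A(y,s))$. The two arguments differ by $B(x,t)-A(x,t)$, and $|B(x,t)-A(x,t)|\lesssim D(x,t)$ by \eqref{Abarclose} and $H\le\frac12 D$. The mean value theorem then gives a bound of $D(x,t)\rho^{-(k+3)}$; the restriction $\rho>\frac12D$ is not even needed. For \eqref{czsmoothpush3}, use $G(z)=F(z-B(y,s))$ and compare $G(B(x,t))$ with $G(A(x,t))$ in the same way.

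\textbf{Estimate \eqref{czsmoothpush2}: the main step.} Here the sign structure of $B$ must be used, exactly as $2C_A\la\mathbf 1$ was used in \eqref{nearBAestimate}. Fix a component $m$ and split
\[
B_m(x,t)-A_m(y,s)=H(x,t)+\bigl(\overline A_m(x,t)-A_m(x,t)\bigr)+\bigl(A_m(x,t)-A_m(y,s)\bigr).
\]
The terms are bounded as follows:
\[
H(x,t)\geq c_nD(x,t),\qquad
|\overline A_m(x,t)-A_m(x,t)|\leq C\|A\|_{lip}D(x,t),\qquad
|A_m(x,t)-A_m(y,s)|\leq\|A\|_{lip}\rho\leq\tfrac12\|A\|_{lip}D(x,t).
\]
Since $\|A\|_{lip}\le\epsilon\ll1$, the error terms are absorbed. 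Decreasing $\epsilon$ if necessary, so that $C\epsilon<c_n/4$ with $c_n$ from the regularized distance, we get $|B(x,t)-A(y,s)|\approx D(x,t)$.

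Then
\[
|K^{B,A}|\lesssim D\,|\tau|^{-(k+3)/2}e^{-cD^2/|\tau|}\lesssim D\cdot D^{-(k+3)}=D(x,t)^{-(k+2)},
\]
using $\sup_{u>0}u^{-(k+3)/2}e^{-cD^2/u}\lesssim D^{-(k+3)}$.

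The only real obstacle is ensuring the lower bound on $H$ dominates the errors. This depends on the smallness of $\epsilon$ relative to the constant $c_n$ in the construction of $H$, and I would state that dependence explicitly.
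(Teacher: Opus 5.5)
Your proof is correct and essentially the paper's: \eqref{czsmoothpush2} and \eqref{czsmoothpush4} are argued as there, via a lower bound $|B(x,t)-A(y,s)|\gtrsim D(x,t)$ from $H\geq c_nD$ and small $\epsilon$, and the trivial size estimate, respectively. For \eqref{czsmoothpush1} and \eqref{czsmoothpush3}, your single mean value argument replaces the paper's split into the cases $|B(x,t)-A(x,t)|\leq\rho/2$ and $|B(x,t)-A(x,t)|>\rho/2$; it is valid because the transversal gradient of $J^m$ at $(x-y,z,t-s)$ is bounded by $\rho^{-(k+3)}$ uniformly in $z$, and it shows that the restriction $\rho>\frac12D(x,t)$ is not needed.
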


\begin{proof}
\begingroup
We start with \eqref{czsmoothpush1}. By the construction of $B$ and
\eqref{Abarclose},
\[
\delta:=|B(x,t)-A(x,t)|\lesssim D(x,t),
\qquad
(x,t)\notin\overline{C'_{4R}(0,0)}.
\]
If $\delta\leq\rho/2$, the standard Calder\'on--Zygmund smoothness estimate
in the transversal variable gives
\[
|K^{B,A}(x,t,y,s)-K(x,t,y,s)|
\lesssim
\frac{\delta}{\rho^{k+3}}
\lesssim
\frac{D(x,t)}{\rho^{k+3}}.
\]
If $\delta>\rho/2$, the size estimates for the two kernels give
\[
|K^{B,A}(x,t,y,s)-K(x,t,y,s)|
\lesssim
\frac{1}{\rho^{k+2}}.
\]
In this case $\delta\lesssim D(x,t)$ implies $D(x,t)\gtrsim\rho$, and hence
\[
\frac{1}{\rho^{k+2}}
\lesssim
\frac{D(x,t)}{\rho^{k+3}}.
\]
This proves \eqref{czsmoothpush1}.
\endgroup

We next prove \eqref{czsmoothpush2}. By the definition
\[
    B=\overline A+H{\bf 1},
\]
the estimate \eqref{Abarclose}, and the fact that $H(x,t)\approx D(x,t)$ outside
$\overline{C'_{4R}(0,0)}$, we have, provided $\epsilon=\|A\|_{lip}$ is sufficiently
small,
\[
    |B(x,t)-A(x,t)|\gtrsim D(x,t).
\]
If $\rho\leq D(x,t)/2$, then
\[
|A(x,t)-A(y,s)|
\leq
\|A\|_{lip}\rho
\leq
\epsilon D(x,t)/2.
\]
Hence, again taking $\epsilon$ small enough,
\[
|B(x,t)-A(y,s)|
\geq
|B(x,t)-A(x,t)|-|A(x,t)-A(y,s)|
\gtrsim
D(x,t).
\]
The standard size estimate for the Riesz kernel then gives
\[
|K^{B,A}(x,t,y,s)|
\lesssim
\frac{1}{\rho^{k+2}+|B(x,t)-A(y,s)|^{k+2}}
\lesssim
\frac{1}{D(x,t)^{k+2}},
\]
which proves \eqref{czsmoothpush2}.

\begingroup
We prove \eqref{czsmoothpush3}.  Set $\delta = |B(x,t)-A(x,t)|.$  Applying the same argument as was used to prove \eqref{czsmoothpush1} gives
\[
|K^B(x,t,y,s)-K^{A,B}(x,t,y,s)|
\lesssim
\frac{D(x,t)}{\rho^{k+3}}.
\]
\endgroup

Finally, \eqref{czsmoothpush4} follows from the standard Calder\'on--Zygmund size
estimate:
\[
|K^{A,B}(x,t,y,s)|
\lesssim
\frac{1}{\rho^{k+2}+|A(x,t)-B(y,s)|^{k+2}}
\lesssim
\frac{1}{\rho^{k+2}}.
\]
This completes the proof.
\end{proof}

\begin{lemma}\label{sepbound}
The following inequality holds uniformly over $\ell \in \mathbb Z$:
\begin{align*}
\left\|
    \int_{\mathbb P^k}
    K^{B,A}_\ell(x,t,y,s)f(y,s)\,\d y\,\d s
\right\|_2^2
\lesssim
(1+\|\mathcal R\|^2)\|f\|_2^2 .
\end{align*}
\end{lemma}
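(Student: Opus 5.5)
The plan is to follow the proof of Lemma~\ref{BlaAbounded}, with the constant scale $\la$ replaced by the variable scale $\frac12 D(x,t)$. Inside the closed cylinder $\overline{C'_{4R}(0,0)}$ we have $B=A$, so $K^{B,A}_\ell(x,t,\cdot)=K_\ell(x,t,\cdot)$ there. For $(x,t)$ in this set the $L^2$ contribution is therefore bounded by $\|\mathcal R\|^2\|f\|_2^2$ directly.

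For $(x,t)\notin\overline{C'_{4R}(0,0)}$ we split the $(y,s)$-integral into the regions $\rho\le \frac12 D(x,t)$ and $\rho>\frac12D(x,t)$, where $\rho=\|(x,t)-(y,s)\|$. On the near region, \eqref{czsmoothpush2} gives $|K^{B,A}_\ell|\lesssim D(x,t)^{-k-2}$. This piece is therefore pointwise bounded by $\mathcal M f(x,t)$, and the Hardy--Littlewood maximal inequality handles it.

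On the far region we write
\[
K^{B,A}_\ell = (K^{B,A}_\ell-K_\ell)+K_\ell .
\]
The difference is controlled by \eqref{czsmoothpush1}, which gives the bound $D(x,t)\rho^{-k-3}\chi_{\{\rho>D(x,t)/2\}}$. The $(y,s)$-integral of this bound is uniformly $\lesssim1$. For the Schur test in the other variable, we fix $(y,s)$ and must bound
\[
\int_{\{\rho> D(x,t)/2\}} \frac{D(x,t)}{\rho^{k+3}}\,\d x\,\d t .
\]
Here $D$ is $1$-Lipschitz, so $D(x,t)\le D(y,s)+\rho$. On the region $\rho\ge D(y,s)$ this gives $D(x,t)\le 2\rho$. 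The difficulty is that the resulting integrand, of order $\rho^{-k-2}$, is not integrable at infinity. This is where the main obstacle lies, and the plan needs more care at this step.

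The remedy is to use also the cancellation of the transversal difference. Rather than settling for the size bound $D\rho^{-k-3}$, we use the exact form of the estimate in the proof of \eqref{farBAestimate}. Since $|B-A|$ is a fixed function of $(x,t)$ only, we can instead treat the difference operator as $\delta(x,t)$ times an operator whose kernel is $\partial_z J$ evaluated at intermediate points. In the far region with $\rho\gg D(x,t)$, Gaussian decay in the time variable gives additional decay $\rho^{-k-3}$ together with exponential suppression when the spatial part dominates. If this still does not close, we fall back on a dyadic decomposition: write the far region as $\bigcup_j\{\rho\approx 2^jD(x,t)\}$. Each annular piece is bounded by $2^{-j}\mathcal M f(x,t)$, because the average of $|f|$ over a cylinder of radius $2^jD$ is at most $\mathcal Mf(x,t)$. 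Summing over $j$ gives a pointwise bound $\lesssim \mathcal M f$. This dyadic-shell estimate is the argument I would actually use; it avoids Schur's test altogether.

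For the main term $K_\ell\chi_{\{\rho>D(x,t)/2\}}$, we compare with smooth truncations exactly as for $I_2$ in Lemma~\ref{BlaAbounded}. Since the truncation radius $\frac12D(x,t)$ depends only on $(x,t)$, the standard pointwise comparison still gives a bound by
\[
\sup_j|\mathcal R_j f(x,t)|+\mathcal M f(x,t).
\]
The maximal truncation is bounded on $L^2$ by Cotlar's inequality and the standing assumption $\|\mathcal R\|<\infty$. Collecting the three pieces gives the stated bound, uniform in $\ell$; the truncation factor $\Psi_\ell$ is harmless throughout because $0\le\Psi_\ell\le1$ and it is smooth, so all the pointwise comparisons persist.
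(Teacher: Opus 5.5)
Your proposal is correct and is essentially the paper's proof. It uses the same steps: the region $C'_{4R}(0,0)$, where $K^{B,A}_\ell=K_\ell$; the near part bounded by $\mathcal M f$ via \eqref{czsmoothpush2}; the far difference bounded by $\mathcal M f$ by summing \eqref{czsmoothpush1} over dyadic annuli (the paper likewise avoids Schur's test here); and the $K_\ell$ piece handled by comparing sharp and smooth truncations together with Cotlar's inequality. The only difference is cosmetic: the paper writes the far $K_\ell$ piece as the full $K_\ell$ integral minus its near part (terms $V$ and $IV$), which is equivalent to your direct comparison.
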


\begin{proof}
Fix $\ell\in\mathbb Z$. We estimate
\begin{multline*}
\int_{\mathbb P^k}
\left|
    \int_{\mathbb P^k}
    K^{B,A}_\ell(x,t,y,s)f(y,s)\,\d y\,\d s
\right|^2
\d x\,\d t \\
\lesssim
\int_{C'_{4R}(0,0)}
\left|
    \int_{\mathbb P^k}
    K^{B,A}_\ell(x,t,y,s)f(y,s)\,\d y\,\d s
\right|^2
\d x\,\d t \\
+
\int_{\mathbb P^k\setminus C'_{4R}(0,0)}
\left|
    \int_{\|(x,t)-(y,s)\|>\frac12D(x,t)}
    \bigl(K^{B,A}_\ell(x,t,y,s)-K_\ell(x,t,y,s)\bigr)
    f(y,s)\,\d y\,\d s
\right|^2
\d x\,\d t \\
+
\int_{\mathbb P^k\setminus C'_{4R}(0,0)}
\left|
    \int_{\|(x,t)-(y,s)\|\leq \frac12D(x,t)}
    K^{B,A}_\ell(x,t,y,s)f(y,s)\,\d y\,\d s
\right|^2
\d x\,\d t \\
+
\int_{\mathbb P^k\setminus C'_{4R}(0,0)}
\left|
    \int_{\|(x,t)-(y,s)\|\leq \frac12D(x,t)}
    K_\ell(x,t,y,s)f(y,s)\,\d y\,\d s
\right|^2
\d x\,\d t \\
+
\int_{\mathbb P^k\setminus C'_{4R}(0,0)}
\left|
    \int_{\mathbb P^k}
    K_\ell(x,t,y,s)f(y,s)\,\d y\,\d s
\right|^2
\d x\,\d t \\
\shoveleft{=: I+II+III+IV+V.}
\end{multline*}

Terms $I$ and $V$ are bounded by
\[
I+V\lesssim \|\mathcal R\|^2\|f\|_2^2.
\]
Indeed, on $C'_{4R}(0,0)$ we have $B(x,t)=A(x,t)$, and hence
$K^{B,A}_\ell(x,t,y,s)=K_\ell(x,t,y,s)$.

We next estimate $II$. By \eqref{czsmoothpush1},
\[
\left|
K^{B,A}_\ell(x,t,y,s)-K_\ell(x,t,y,s)
\right|
\lesssim
\frac{D(x,t)}{\|(x,t)-(y,s)\|^{k+3}}
\]
whenever $(x,t)\notin C'_{4R}(0,0)$ and
$\|(x,t)-(y,s)\|>\frac12D(x,t)$. Therefore,
\begin{align*}
&\left|
    \int_{\|(x,t)-(y,s)\|>\frac12D(x,t)}
    \bigl(K^{B,A}_\ell(x,t,y,s)-K_\ell(x,t,y,s)\bigr)
    f(y,s)\,\d y\,\d s
\right|  \\
&\qquad\lesssim
\sum_{q\geq 0}
\frac{D(x,t)}{(2^qD(x,t))^{k+3}}
\int_{\|(x,t)-(y,s)\|\leq 2^{q+1}D(x,t)}
|f(y,s)|\,\d y\,\d s \\
&\qquad\lesssim
\mathcal M f(x,t).
\end{align*}
Hence
\[
II\lesssim \|\mathcal M f\|_2^2\lesssim \|f\|_2^2.
\]

For $III$, using \eqref{czsmoothpush2}, we get
\[
\left|
    \int_{\|(x,t)-(y,s)\|\leq \frac12D(x,t)}
    K^{B,A}_\ell(x,t,y,s)f(y,s)\,\d y\,\d s
\right|
\lesssim
\frac{1}{D(x,t)^{k+2}}
\int_{\|(x,t)-(y,s)\|\leq \frac12D(x,t)}
|f(y,s)|\,\d y\,\d s.
\]
Thus
\[
III\lesssim \|\mathcal M f\|_2^2\lesssim \|f\|_2^2.
\]

\begingroup
We can bound the term $IV$ by\begin{multline*}\sum_{j} \bigg(\int_{2^{-j-1} \leq D(x,t) \leq 2^{-j}}  \left| \int_{2^{-j-2}\leq\|(x,t) - (y,s)\| \leq \frac{1}{2}D(x,t)} K_\ell(x,t,y,s)f(y,s) \d y \d s \right|^2 \d x \d t\\+\int_{2^{-j-1} \leq D(x,t) \leq 2^{-j}}  \left| \int_{\|(x,t) - (y,s)\| \leq 2^{-j-2}} K_\ell(x,t,y,s)f(y,s) \d y \d s \right|^2 \d x \d t\bigg).\end{multline*}The first part of the sum is bounded by$$\int \big(\mathcal Mf(x,t)\big)^2 \d x \d t \lesssim \int |f(y,s)|^2 \d y \d s.$$The standard comparison between sharp and smooth truncations, together with Cotlar's inequality, shows that the second part of the sum is bounded by a constant multiple of $(1+\|\mathcal R\|^2)\|f\|_2^2$.

\endgroup

Combining the estimates for $I$, $II$, $III$, $IV$, and $V$ gives
\[
\left\|
    \int_{\mathbb P^k}
    K^{B,A}_\ell(x,t,y,s)f(y,s)\,\d y\,\d s
\right\|_2^2
\lesssim
(1+\|\mathcal R\|^2)\|f\|_2^2,
\]
uniformly in $\ell\in\mathbb Z$.
\end{proof}
\begin{lemma}\label{dualitybound}
The following inequality holds uniformly over $\ell\in\mathbb Z$:
\begin{align*}
\left\|
    \int_{\mathbb P^k}
    K^{A,B}_\ell(x,t,y,s)f(y,s)\,\d y\,\d s
\right\|_2^2
\lesssim
(1+\|\mathcal R\|^2)\|f\|_2^2 .
\end{align*}
\end{lemma}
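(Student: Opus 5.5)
This follows from Lemma~\ref{sepbound} by duality, exactly as Lemma~\ref{ABlabounded} followed from Lemma~\ref{BlaAbounded}. Fix a component $m\in\{1,\dots,n-k\}$ and let $T_\ell^{A,B,m}$ and $T_\ell^{B,A,m}$ be the scalar operators with kernels $K^{A,B,m}_\ell$ and $K^{B,A,m}_\ell$. The factor $z_m/|t|^{(k+1)/2+1}$ in $J^m$ is odd under $(z,t)\mapsto(-z,-t)$, and the exponential is even under this map. Hence
\[
K^{A,B,m}(x,t,y,s)=J^m\bigl(x-y,A(x,t)-B(y,s),t-s\bigr)=-J^m\bigl(y-x,B(y,s)-A(x,t),s-t\bigr)=-K^{B,A,m}(y,s,x,t).
\]
To verify this, one checks that the exponent depends only on $|x-y|^2$, $|A(x,t)-B(y,s)|^2$ and $|t-s|$. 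All three are invariant under swapping the variables, and the numerator changes sign. The convention $J^m(X,0)=0$ is compatible with this identity.

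Since $\Psi_\ell$ is even, $\Psi_\ell((x,t)-(y,s))=\Psi_\ell((y,s)-(x,t))$. The same antisymmetry therefore holds for the truncated kernels, which gives $(T_\ell^{A,B,m})^*=-T_\ell^{B,A,m}$ as operators on $L^2(\mathbb P^k)$. Both truncated kernels are bounded and vanish near the diagonal. By Lemma~\ref{smoothingCZkernels} they decay like $\rho^{-(k+2)}$, so for $f,g\in L^2$ the pairing $\langle T^{A,B,m}_\ell f,g\rangle$ is given by an absolutely convergent integral after truncation of $f,g$ to compact sets. Fubini then justifies the adjoint identity, and density extends it to all of $L^2$.

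Lemma~\ref{sepbound} bounds the vector-valued operator with kernel $K^{B,A}_\ell$, so each scalar component satisfies $\|T_\ell^{B,A,m}\|_{2\to2}^2\lesssim 1+\|\mathcal R\|^2$ uniformly in $\ell$. Duality transfers the same bound to $T_\ell^{A,B,m}$. Summing the squares over the $n-k$ components gives the stated vector-valued estimate, uniformly in $\ell\in\mathbb Z$.

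There is essentially no obstacle. The only points to check are the sign and symmetry of $J^m$ under the full reflection $(X,t)\mapsto(-X,-t)$ and the evenness of $\Psi_\ell$, both of which are immediate from the definitions. The size estimate \eqref{czsmoothpush4} is not needed for the duality argument, though it confirms that the truncated kernel defines a bounded integral operator for each fixed $\ell$.
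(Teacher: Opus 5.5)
Your proposal is correct and follows the paper's own argument: the antisymmetry $K^{A,B,m}_\ell(x,t,y,s)=-K^{B,A,m}_\ell(y,s,x,t)$ gives $(T^{A,B,m}_\ell)^*=-T^{B,A,m}_\ell$, and then Lemma~\ref{sepbound}, duality, and summation over the $n-k$ components give the stated bound uniformly in $\ell$. Your explicit check of the reflection symmetry of $J^m$ and your Fubini/density justification of the adjoint identity supply details that the paper leaves implicit.
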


\begin{proof}
\begingroup
For $m\in\{1,\ldots,n-k\}$, let $T_\ell^{A,B,m}$ and
$T_\ell^{B,A,m}$ denote the scalar operators associated with the $m$-th
components of $K_\ell^{A,B}$ and $K_\ell^{B,A}$, respectively. Since $K_\ell^{A,B,m}(x,t,y,s)
=
-K_\ell^{B,A,m}(y,s,x,t)$,
we have $(T_\ell^{A,B,m})^*=-T_\ell^{B,A,m}$.

Lemma~\ref{sepbound} bounds the vector-valued operator associated with
$K_\ell^{B,A}$, and hence bounds $T_\ell^{B,A,m}$ on $L^2$ for every $m$.
Duality therefore gives
\[
\|T_\ell^{A,B,m}f\|_2^2
\lesssim
(1+\|\mathcal R\|^2)\|f\|_2^2
\]
uniformly in $m$ and $\ell$. Summing over $m=1,\ldots,n-k$ proves the estimate.
\endgroup
\end{proof}

To estimate the part of the $K^B$ integral where the two points are close, we localize $B$ to Whitney cubes. We will show that these localized functions are regular Lip-$(1,1/2)$, with constants independent of the cube. We then apply Theorem~\ref{highcodimBHHLN} to obtain bounds for the corresponding Riesz transforms, uniformly over the Whitney cubes.

Let $\{I_i\}$ be a collection of parabolic Whitney cubes for
$\mathbb P^k\setminus \overline{C'_{4R}(0,0)}$. These cubes satisfy the following
properties:
\begin{itemize}
\item each $I_i$ is closed, and the collection $\{I_i\}$ has pairwise disjoint interiors;

\item
\[
    \bigcup_i I_i
    =
    \mathbb P^k\setminus \overline{C'_{4R}(0,0)};
\]

\item
\begin{align}
\label{diamtodist}
\frac{1}{1000}
\dist\big(I_i,C'_{4R}(0,0)\big)
\leq
\diam I_i
\leq
\frac{1}{10}
\dist\big(I_i,C'_{4R}(0,0)\big).
\end{align}
\end{itemize}

Let $\psi$ be a smooth bump equal to $1$ on $Q'_2(0,0)$, supported in
$Q'_4(0,0)$, with $0\leq \psi\leq 1$, and such that
\[
    \|\nabla_x^{|\alpha|}\partial_t^\beta\psi\|_\infty\lesssim 1
    \qquad\text{for all multi-indices $\alpha$ and all $\beta\geq 0$.}
\]
Let $(x_i,t_i)$ be the center of $I_i$, and set
\[
    \psi_i(x,t)
    :=
    \psi\left(
        \frac{x-x_i}{\ell(I_i)},
        \frac{t-t_i}{\ell(I_i)^2}
    \right).
\]
Then $\psi_i$ is equal to $1$ on $2I_i$, supported in $8I_i$, and satisfies
\[
    \|\nabla_x^{|\alpha|}\partial_t^\beta\psi_i\|_\infty
    \lesssim
    \ell(I_i)^{-|\alpha|-2\beta}.
\]
We localize $B$ to $I_i$ by setting
\[
    B_i(x,t)
    :=
    \psi_i(x,t)\big(B(x,t)-B(x_i,t_i)\big).
\]

Let $\beta_i$ be the $L^2$-$\beta$ numbers adapted to the graph of $B_i$:
\[
\beta_i((x,t),r)
:=
\inf_a
\left(
    \fint_{C'_r(x,t)}
    \left(\frac{|B_i(y,s)-a(y)|}{r}\right)^2
    \d y\,\d s
\right)^{1/2},
\]
where the infimum is taken over all affine functions
$a:\mathbb R^k\to\mathbb R^{n-k}$, viewed as functions independent of time.

We claim that
\begin{align}
\label{beta1}
\beta_i((x,t),r)^2
&\lesssim
(1+\|A\|_{lip})^2\frac{r^2}{\ell(I_i)^2},
\qquad
r\lesssim \ell(I_i),
\\
\label{beta2}
\beta_i((x,t),r)^2
&\lesssim
(1+\|A\|_{lip})^2\frac{\ell(I_i)^2}{r^2},
\qquad
r\gtrsim \ell(I_i).
\end{align}

For \eqref{beta2}, take $a\equiv 0$. Since $B_i$ is supported in $8I_i$, and since
\[
    |B(x,t)-B(x_i,t_i)|
    \lesssim
    (1+\|A\|_{lip})\ell(I_i),
    \qquad
    (x,t)\in 8I_i,
\]
we have
\[
    |B_i(x,t)|
    \lesssim
    (1+\|A\|_{lip})\ell(I_i).
\]
Therefore
\[
\beta_i((x,t),r)
\lesssim
(1+\|A\|_{lip})\frac{\ell(I_i)}{r},
\qquad
r\gtrsim \ell(I_i),
\]
which proves \eqref{beta2}.

For \eqref{beta1}, fix $(x,t)\in\mathbb P^k$ and take
\[
    a(y)
    =
    B_i(x,t)+\nabla_x B_i(x,t)\cdot(y-x).
\]
For $(y,s)\in C'_r(x,t)$, write
\[
B_i(y,s)-a(y)
=
\bigl[B_i(y,s)-B_i(y,t)\bigr]
+
\bigl[
B_i(y,t)-B_i(x,t)-\nabla_xB_i(x,t)\cdot(y-x)
\bigr].
\]
The first term is controlled by
\[
    |B_i(y,s)-B_i(y,t)|
    \leq
    \|\partial_t B_i\|_\infty |s-t|,
\]
and the second term is controlled by Taylor's theorem in the spatial variables:
\[
\left|
B_i(y,t)-B_i(x,t)-\nabla_xB_i(x,t)\cdot(y-x)
\right|
\lesssim
\|\nabla_x^2B_i\|_\infty |y-x|^2.
\]
On $8I_i$, we have $D(x,t)\approx \ell(I_i)$. Using \eqref{2ndgrad}, \eqref{timegrad}, the derivative estimates for $H$,
and the estimates for $\psi_i$ and its derivatives, we obtain
\[
    \|\nabla_x^2B_i\|_\infty
    \lesssim
    (1+\|A\|_{lip})\ell(I_i)^{-1},
    \qquad
    \|\partial_tB_i\|_\infty
    \lesssim
    (1+\|A\|_{lip})\ell(I_i)^{-1}.
\]
Since $(y,s)\in C'_r(x,t)$ implies $|y-x|\lesssim r$ and $|s-t|\lesssim r^2$, we get
\[
    |B_i(y,s)-a(y)|
    \lesssim
    (1+\|A\|_{lip})\frac{r^2}{\ell(I_i)}
\]
whenever $r\lesssim \ell(I_i)$. Dividing by $r$ and averaging over $C'_r(x,t)$ gives
\[
    \beta_i((x,t),r)
    \lesssim
    (1+\|A\|_{lip})\frac{r}{\ell(I_i)},
\]
which proves \eqref{beta1}.

\begingroup
Integrating \eqref{beta1} for $r\leq\ell(I_i)$ and \eqref{beta2} for $r>\ell(I_i)$, we obtain, for every $(z,\tau)\in\mathbb P^k$ and $\varrho>0$,
\begin{align}\label{bireg}
\int_0^\varrho\int_{C'_\varrho(z,\tau)}
    \beta_i((x,t),r)^2
    \frac{\d x\,\d t\,\d r}{r}
\lesssim
(1+\|A\|_{lip})^2\varrho^{k+2}
\lesssim
\varrho^{k+2}.
\end{align}
\endgroup

\begingroup
By \eqref{bireg} and Theorem~\ref{regularbetaequiv}, each $B_i$ is regular
Lip-$(1,1/2)$, with constants bounded uniformly in $i$.
\endgroup

We now show that the cubes $\{2I_i\}$ have bounded overlap, i.e.
\[
    \sum_i \mathbf 1_{2I_i}\leq C.
\]
By \eqref{diamtodist}, if $(x,t)\in 2I_i$, then
\[
\frac{4}{5}
\dist\big(I_i,C'_{4R}(0,0)\big)
\leq
D(x,t)
\leq
\frac{6}{5}
\dist\big(I_i,C'_{4R}(0,0)\big).
\]
Consequently, if $2I_i\cap 2I_j\neq\emptyset$, then
\[
\frac{2}{3}
\dist\big(I_j,C'_{4R}(0,0)\big)
\leq
\dist\big(I_i,C'_{4R}(0,0)\big)
\leq
\frac{3}{2}
\dist\big(I_j,C'_{4R}(0,0)\big).
\]
It follows that, for fixed $I_i$, the cylinder
$C'_{1500\diam I_i}(x_i,t_i)$ contains all Whitney cubes $I_j$ such that
$2I_j\cap 2I_i\neq\emptyset$. Since the Whitney cubes have disjoint interiors and
the cubes $I_j$ for which $2I_j\cap 2I_i\neq\emptyset$ have diameters comparable to that of $I_i$, this implies the bounded overlap property.

\begin{lemma}\label{sepbound2}
The following inequality holds uniformly over $\ell\in\mathbb Z$:
\[
\left\|
    \int_{\mathbb P^k}
    K^B_\ell(x,t,y,s)f(y,s)\,\d y\,\d s
\right\|_2^2
\lesssim
(1+\|\mathcal R\|^2)\|f\|_2^2 .
\]
\end{lemma}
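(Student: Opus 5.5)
We model the argument on the proof of Proposition~\ref{Blabounded}, with the fixed smoothing scale $\la$ replaced by the variable scale $D(x,t)$. Fix $\ell$ and $f$. We split the squared norm into three pieces. The first is the integral over $(x,t)\in C'_{4R}(0,0)$. The other two live on $(x,t)\notin\overline{C'_{4R}(0,0)}$: one with the inner integral restricted to $\|(x,t)-(y,s)\|>\frac12 D(x,t)$ (the far part), the other with the inner integral restricted to $\|(x,t)-(y,s)\|\leq\frac12 D(x,t)$ (the near part).

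On $C'_{4R}(0,0)$ we have $B(x,t)=A(x,t)$, so $K^B_\ell(x,t,y,s)=K^{A,B}_\ell(x,t,y,s)$ there. Lemma~\ref{dualitybound} therefore bounds the first piece by $(1+\|\mathcal R\|^2)\|f\|_2^2$.

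For the far part, we write $K^B_\ell=(K^B_\ell-K^{A,B}_\ell)+K^{A,B}_\ell$. The difference term is controlled by \eqref{czsmoothpush3}. Summing that estimate over dyadic annuli $\rho\approx 2^qD(x,t)$, exactly as in the estimate of $II$ in Lemma~\ref{sepbound}, bounds it pointwise by $\mathcal Mf(x,t)$. For the $K^{A,B}_\ell$ term, we write the far integral as the full integral minus the near integral. The full integral is handled by Lemma~\ref{dualitybound}. The near integral $\int_{\rho\leq D/2}K^{A,B}_\ell f$ is treated by the dyadic decomposition used for term $IV$ of Lemma~\ref{sepbound}: we decompose according to $2^{-j-1}\leq D(x,t)\leq 2^{-j}$, bound the annulus $2^{-j-2}\leq\rho\leq D/2$ by $\mathcal Mf$ using \eqref{czsmoothpush4}, and control the remaining part $\rho\leq 2^{-j-2}$ by the maximal smooth truncations of $K^{A,B}$ plus $\mathcal Mf$. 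These maximal truncations are $L^2$ bounded by Cotlar's inequality, since Lemma~\ref{dualitybound} gives uniform bounds and Lemma~\ref{smoothingCZkernels} gives the Calder\'on--Zygmund constants.

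For the near part, let $(x,t)\in I_i$ with $\rho\leq\frac12D(x,t)$. By \eqref{diamtodist} this forces $(y,s)\in 2I_i$; this inclusion should be checked, and if necessary the threshold $\frac12 D$ is shrunk to $cD$, with the leftover annulus absorbed into $\mathcal Mf$ by the size estimate. On $2I_i$ we have $\psi_i\equiv1$, so $B_i=B-B(x_i,t_i)$ there, and hence $K^B_\ell(x,t,y,s)=K^{B_i}_\ell(x,t,y,s)$. The near part therefore equals
\[
\sum_i\int_{I_i}\Bigl|\int_{\rho\leq\frac12D(x,t)}K^{B_i}_\ell(x,t,y,s)\,\mathbf 1_{2I_i}(y,s)f(y,s)\,\d y\,\d s\Bigr|^2\d x\,\d t.
\]
Each $B_i$ is regular Lip-$(1,1/2)$ with uniform constants, by \eqref{bireg} and Theorem~\ref{regularbetaequiv}. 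Theorem~\ref{highcodimBHHLN}, Lemma~\ref{smoothingCZkernels} and Cotlar's inequality then give uniform bounds for $\sup_j|\mathcal R^{B_i}_j g|$. Since $D(x,t)\approx\ell(I_i)$ on $I_i$, the sharp cutoff at $\frac12D(x,t)$ differs from a smooth truncation at a single scale $\approx\ell(I_i)$ only on an annulus of comparable radii, which costs $\mathcal M g$. Taking $g=\mathbf 1_{2I_i}f$ and using the bounded overlap of $\{2I_i\}$ bounds the near part by $\|f\|_2^2$.

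The main obstacle is the bookkeeping in the comparison between the variable sharp cutoff $\rho\leq\frac12D(x,t)$ and the smooth truncations. This matters both for $K^{A,B}$, where the constants must not depend on the scale, and for $K^{B_i}$, where we must also confirm that every $(y,s)$ entering lies in $2I_i$, so that the localization is exact.
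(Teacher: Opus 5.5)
Your proposal is correct and follows essentially the same route as the paper's proof. It uses the same five-term decomposition: the piece on $C'_{4R}(0,0)$ and the full $K^{A,B}_\ell$ integral via Lemma~\ref{dualitybound}, the far difference via \eqref{czsmoothpush3} and dyadic annuli, the near $K^{A,B}_\ell$ piece via Cotlar and sharp/smooth comparison, and the near $K^B_\ell$ piece via Whitney localization to $B_i$ and Theorem~\ref{highcodimBHHLN}. The point you flagged does require your fallback. On $I_i$ one has $D(x,t)\geq\dist(I_i,C'_{4R}(0,0))\geq 10\diam I_i$, so $\rho\leq\frac12 D(x,t)$ does not force $(y,s)\in 2I_i$. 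The paper handles this exactly as you suggest: it splits at $\rho=\ell(I_i)/4$ and absorbs the annulus $\ell(I_i)/4\leq\rho\leq\frac12D(x,t)$ into $\mathcal Mf$ by the size estimate.
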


\begin{proof}
We split the term we seek to estimate as follows:
\begin{multline*}
\int_{\mathbb P^k}
\left|
    \int_{\mathbb P^k}
    K^B_\ell(x,t,y,s)f(y,s)\,\d y\,\d s
\right|^2
\d x\,\d t \\
\lesssim
\int_{C'_{4R}(0,0)}
\left|
    \int_{\mathbb P^k}
    K^B_\ell(x,t,y,s)f(y,s)\,\d y\,\d s
\right|^2
\d x\,\d t \\
+
\int_{\mathbb P^k\setminus C'_{4R}(0,0)}
\left|
    \int_{\|(x,t)-(y,s)\|>\frac12D(x,t)}
    \bigl(K^B_\ell(x,t,y,s)-K^{A,B}_\ell(x,t,y,s)\bigr)
    f(y,s)\,\d y\,\d s
\right|^2
\d x\,\d t \\
+
\int_{\mathbb P^k\setminus C'_{4R}(0,0)}
\left|
    \int_{\|(x,t)-(y,s)\|\leq \frac12D(x,t)}
    K^B_\ell(x,t,y,s)f(y,s)\,\d y\,\d s
\right|^2
\d x\,\d t \\
+
\int_{\mathbb P^k\setminus C'_{4R}(0,0)}
\left|
    \int_{\|(x,t)-(y,s)\|\leq \frac12D(x,t)}
    K^{A,B}_\ell(x,t,y,s)f(y,s)\,\d y\,\d s
\right|^2
\d x\,\d t \\
+
\int_{\mathbb P^k\setminus C'_{4R}(0,0)}
\left|
    \int_{\mathbb P^k}
    K^{A,B}_\ell(x,t,y,s)f(y,s)\,\d y\,\d s
\right|^2
\d x\,\d t \\
\shoveleft{=: I+II+III+IV+V.}
\end{multline*}

Term $V$ is bounded by
\[
        V
    \lesssim
    (1+\|\mathcal R\|^2)\|f\|_2^2
\]
by Lemma~\ref{dualitybound}. For term $I$, we use that
$B(x,t)=A(x,t)$ whenever $(x,t)\in C'_{4R}(0,0)$, and hence
\[
    K^B_\ell(x,t,y,s)=K^{A,B}_\ell(x,t,y,s)
\]

for such $(x,t)$. It follows from Lemma~\ref{dualitybound} that
\[
    I\lesssim (1+\|\mathcal R\|^2)\|f\|_2^2.
\]

For term $II$, we use \eqref{czsmoothpush3}. Namely,
\[
\left|
K^B_\ell(x,t,y,s)-K^{A,B}_\ell(x,t,y,s)
\right|
\lesssim
\frac{D(x,t)}{\|(x,t)-(y,s)\|^{k+3}}
\]
whenever $(x,t)\notin C'_{4R}(0,0)$ and
\[
    \|(x,t)-(y,s)\|>\frac12D(x,t).
\]
Summing over dyadic annuli centered at $(x,t)$ gives
\[
\left|
    \int_{\|(x,t)-(y,s)\|>\frac12D(x,t)}
    \bigl(K^B_\ell(x,t,y,s)-K^{A,B}_\ell(x,t,y,s)\bigr)
    f(y,s)\,\d y\,\d s
\right|
\lesssim
\mathcal M f(x,t).
\]
Therefore
\[
    II\lesssim \|\mathcal M f\|_2^2\lesssim \|f\|_2^2.
\]

\begingroup
Let $T_j^{A,B}$ denote the operator with kernel $K_j^{A,B}$.
Lemma~\ref{dualitybound} and Cotlar's inequality (applied separately to each
component) give
\[
\left\|\sup_{j\in\mathbb Z}|T_j^{A,B}f|\right\|_2
\lesssim
(1+\|\mathcal R\|)\|f\|_2.
\]
For $(x,t)\notin C'_{4R}(0,0)$, comparing the sharp and smooth truncations applied to $f$ gives
\[
\left|
\int_{\|(x,t)-(y,s)\|\leq D(x,t)/2}
K^{A,B}_\ell(x,t,y,s)f(y,s)\,\d y\,\d s
\right|
\lesssim
\sup_{j\in\mathbb Z}|T_j^{A,B}f(x,t)|+\mathcal M f(x,t).
\]
Consequently,
\[
IV\lesssim(1+\|\mathcal R\|^2)\|f\|_2^2.
\]
\endgroup

It remains to estimate term $III$. The Riesz kernel
adapted to $B_i$ is
\[
K^i(x,t,y,s)
:=
\frac{B_i(x,t)-B_i(y,s)}{|t-s|^{(k+1)/2+1}}
\exp\left(
-\frac{|x-y|^2+|B_i(x,t)-B_i(y,s)|^2}{4|t-s|}
\right).
\]
Let $\mathcal R^i$ be the singular integral associated to this kernel, and define
$K^i_N$ using the same smooth truncations as above:
\[
    K^i_N(x,t,y,s)
    :=
    K^i(x,t,y,s)\Psi_N((x,t)-(y,s)).
\]

\begingroup
By \eqref{bireg} and Theorem~\ref{regularbetaequiv}, the functions $B_i$ are
regular Lip-$(1,1/2)$, with constants bounded uniformly in $i$. Hence Theorem~\ref{highcodimBHHLN} applies to the singular integrals associated to each $K^i$, with bounds uniform in $i$.
Thus,
\[
\left\|
\sup_{N\in\mathbb Z}
\left|
\int_{\mathbb P^k}
K_N^i(x,t,y,s)g(y,s)\,\d y\,\d s
\right|
\right\|_2
\lesssim
\|g\|_2,
\]
with a constant independent of $i$.
\endgroup
Moreover, since $\psi_i\equiv1$ on
$2I_i$, if $(x,t),(y,s)\in 2I_i$, then
\[
    B_i(x,t)-B_i(y,s)=B(x,t)-B(y,s),
\]
and hence
\[
    K^i_\ell(x,t,y,s)=K^B_\ell(x,t,y,s)
\]
for such pairs.

We now return to $III$. The Whitney cubes $\{I_i\}$ cover
$\mathbb P^k\setminus \overline{C'_{4R}(0,0)}$, and
$\partial C'_{4R}(0,0)$ has Lebesgue measure zero. Thus we may write
\begin{align*}
III
&=
\sum_i
\int_{I_i}
\left|
    \int_{\|(x,t)-(y,s)\|\leq \frac12D(x,t)}
    K^B_\ell(x,t,y,s)f(y,s)\,\d y\,\d s
\right|^2
\d x\,\d t .
\end{align*}
For $(x,t)\in I_i$, we have $D(x,t)\approx \diam(I_i)$. Split the inner integral into
\[
    \ell(I_i)/4
    \leq
    \|(x,t)-(y,s)\|
    \leq
    \frac12D(x,t)
\]
and
\[
    \|(x,t)-(y,s)\|<\ell(I_i)/4.
\]
The contribution of the first region is bounded pointwise by $\mathcal M f(x,t)$,
using the size estimate for $K^B_\ell$. Thus its contribution to $III$ is bounded by
\[
    \|\mathcal M f\|_2^2
    \lesssim
    \|f\|_2^2.
\]

For the second region, note that if $(x,t)\in I_i$ and
\[
    \|(x,t)-(y,s)\|<\ell(I_i)/4,
\]
then $(y,s)\in 2I_i$. Hence
\[
    K^B_\ell(x,t,y,s)=K^i_\ell(x,t,y,s)
\]
on this region. Therefore the second contribution is bounded by
\begin{align*}
\sum_i
\int_{I_i}
\left|
    \int_{\|(x,t)-(y,s)\|<\ell(I_i)/4}
    K^i_\ell(x,t,y,s)\mathbf 1_{2I_i}(y,s)f(y,s)\,\d y\,\d s
\right|^2
\d x\,\d t .
\end{align*}
Comparing the sharp and smooth truncations applied to ${\bf1}_{2I_i}f$ and using the bounded overlap of the cubes $\{2I_i\}$, we obtain
\[
\begin{aligned}
&\sum_i
\int_{I_i}
\left|
    \int_{\|(x,t)-(y,s)\|<\ell(I_i)/4}
    K^i_\ell(x,t,y,s)\mathbf 1_{2I_i}(y,s)f(y,s)\,\d y\,\d s
\right|^2
\d x\,\d t \\
&\qquad\lesssim
\sum_i
\int_{\mathbb P^k}
\sup_N
\left|
    \int_{\mathbb P^k}
    K^i_N(x,t,y,s)\mathbf 1_{2I_i}(y,s)f(y,s)\,\d y\,\d s
\right|^2
\d x\,\d t
+
\sum_i \|\mathcal M(\mathbf 1_{2I_i}f)\|_2^2 \\
&\qquad\lesssim
\sum_i
\|\mathbf 1_{2I_i}f\|_2^2
\lesssim
\|f\|_2^2.
\end{aligned}
\]

Combining the estimates for $I,\dots,V$, we obtain
\[
\left\|
    \int_{\mathbb P^k}
    K^B_\ell(x,t,y,s)f(y,s)\,\d y\,\d s
\right\|_2^2
\lesssim
(1+\|\mathcal R\|^2)\|f\|_2^2,
\]
uniformly over $\ell\in\mathbb Z$.
\end{proof}

Lemma~\ref{sepbound2} gives, uniformly over $\ell\in\mathbb Z$,
\begin{align}\label{BL2}
\left\|
    \int_{\mathbb P^k}
    K^B_\ell(x,t,y,s)f(y,s)\,\d y\,\d s
\right\|_2^2
\lesssim
(1+\|\mathcal R\|^2)\|f\|_2^2.
\end{align}

The function $B$ does not have compact support. We now use \eqref{BL2}
to prove that the Riesz transform adapted to the graph of $\overline A$ is
bounded from $L^2$ to $L^2$. We repeat the argument for
\eqref{BL2}, with $B$ in place of $A$ and $\overline A$ in place
of $B$.

We use the following kernels, recalling the definition of $K^{\overline A}$:
\[
K^{\overline A}(x,t,y,s)
=
\frac{\overline A(x,t)-\overline A(y,s)}{|t-s|^{(k+1)/2+1}}
\exp\left(
-\frac{|x-y|^2+|\overline A(x,t)-\overline A(y,s)|^2}{4|t-s|}
\right),
\]
\[
K^{\overline A,B}(x,t,y,s)
=
\frac{\overline A(x,t)-B(y,s)}{|t-s|^{(k+1)/2+1}}
\exp\left(
-\frac{|x-y|^2+|\overline A(x,t)-B(y,s)|^2}{4|t-s|}
\right),
\]
and
\[
K^{B,\overline A}(x,t,y,s)
=
\frac{B(x,t)-\overline A(y,s)}{|t-s|^{(k+1)/2+1}}
\exp\left(
-\frac{|x-y|^2+|B(x,t)-\overline A(y,s)|^2}{4|t-s|}
\right).
\]

\begin{lemma}\label{czestimatesbarA}
Let
\[
    \rho:=\|(x,t)-(y,s)\|.
\]
The following estimates hold:
\begin{align}
|K^{\overline A,B}(x,t,y,s) - K^B(x,t,y,s)|
&\lesssim
\frac{D(x,t)}{\rho^{k+3}},
&&
(x,t)\notin \overline{C'_{4R}(0,0)},\quad
\rho>\frac12D(x,t),
\label{czsmoothpush5}
\\
|K^{\overline A,B}(x,t,y,s)|
&\lesssim
\frac{1}{D(x,t)^{k+2}},
&&
(x,t)\notin \overline{C'_{4R}(0,0)},\quad
\rho\leq \frac12D(x,t),
\label{czsmoothpush6}
\\
|K^{\overline A}(x,t,y,s) - K^{B,\overline A}(x,t,y,s)|
&\lesssim
\frac{D(x,t)}{\rho^{k+3}},
&&
(x,t)\notin \overline{C'_{4R}(0,0)},\quad
\rho>\frac12D(x,t),
\label{czsmoothpush7}
\\
|K^{B,\overline A}(x,t,y,s)|
&\lesssim
\frac{1}{\rho^{k+2}}.
\label{czsmoothpush8}
\end{align}
\end{lemma}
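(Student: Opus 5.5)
The plan is to mirror the proof of Lemma~\ref{czestimates}, with $B$ now playing the role of the background graph $A$ and $\overline A$ the role of the perturbed graph $B$. The only geometric inputs are the two pointwise facts about $B-\overline A$ coming from \eqref{Bdef}. First, $B(x,t)-\overline A(x,t)=H(x,t)\mathbf 1$, so
\[
\delta:=|B(x,t)-\overline A(x,t)|=\sqrt{n-k}\,H(x,t)\approx D(x,t)
\qquad\text{for }(x,t)\notin\overline{C'_{4R}(0,0)}.
\]
Second, $\|B\|_{lip}\lesssim 1$ and $\|\overline A\|_{lip}\lesssim\|A\|_{lip}\le\epsilon$. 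Lemma~\ref{smoothingCZkernels} then gives standard parabolic Calder\'on--Zygmund size and smoothness bounds for $K^{\overline A,B}$, $K^{B,\overline A}$, $K^B$ and $K^{\overline A}$, with constants uniform in $R$.

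For \eqref{czsmoothpush5}, note that $K^{\overline A,B}-K^B$ only changes the first graph value, from $B(x,t)$ to $\overline A(x,t)$, with $(y,s)$ and $B(y,s)$ fixed. If $\delta\le\rho/2$, I apply the mean value argument in the transversal variable, as in \eqref{ambientJsmoothness} or via the function $G$ from Lemma~\ref{vectorvaluedsmoothingestimate}. This gives a bound $\delta/\rho^{k+3}\lesssim D/\rho^{k+3}$. If $\delta>\rho/2$, then $D\gtrsim\rho$, and the size bound $\rho^{-(k+2)}$ for each kernel is already $\lesssim D/\rho^{k+3}$. The estimate \eqref{czsmoothpush7} is identical: $K^{\overline A}-K^{B,\overline A}$ again only swaps $B(x,t)$ for $\overline A(x,t)$ in the first slot, and the second slot is $\overline A(y,s)$. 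Estimate \eqref{czsmoothpush8} is just the size bound from Lemma~\ref{smoothingCZkernels}.

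The step needing care is \eqref{czsmoothpush6}, where a lower bound on the transversal separation is required. For $\rho\le D(x,t)/2$, I write
\[
\overline A(x,t)-B(y,s)=\bigl(\overline A(x,t)-\overline A(y,s)\bigr)-H(y,s)\mathbf 1 .
\]
Since $H$ is $1$-Lipschitz (up to constants) relative to $D$, we have $D(y,s)\ge D(x,t)-\rho\ge D(x,t)/2$. This puts $(y,s)$ outside $\overline{C'_{4R}(0,0)}$, so $H(y,s)\ge c_nD(y,s)\gtrsim D(x,t)$. Meanwhile the first term is at most $\|\overline A\|_{lip}\rho\lesssim\epsilon D(x,t)$. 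For $\epsilon$ small this gives $|\overline A(x,t)-B(y,s)|\gtrsim D(x,t)$. Then, exactly as in the proof of \eqref{nearBAestimate}, the factor
\[
\frac{|z|}{|\tau|^{(k+3)/2}}\,e^{-|z|^2/4|\tau|}
\]
with $|z|\approx D$ is bounded by $D^{-(k+2)}$ after taking the supremum over $\tau$. Combined with the Gaussian decay this proves \eqref{czsmoothpush6}. The main obstacle is this lower bound, specifically ensuring that $H(y,s)$ rather than $H(x,t)$ controls the separation, which is handled by the lower bound $D(y,s)\ge D(x,t)/2$.
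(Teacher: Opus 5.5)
Your proposal is correct and follows the paper's proof essentially step for step. This includes the key point in \eqref{czsmoothpush6}: using $D(y,s)\gtrsim D(x,t)$ so that $H(y,s)$, rather than $H(x,t)$, supplies the transversal separation $|\overline A(x,t)-B(y,s)|\gtrsim D(x,t)$. One small correction: the inequality $D(y,s)\geq D(x,t)-\rho$ comes from $D$ being a distance function, hence $1$-Lipschitz, not from any Lipschitz property of $H$.
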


\begin{proof}
The proof is analogous to the proof of Lemma~\ref{czestimates}. We give the details
for completeness.

\begingroup
For \eqref{czsmoothpush5}, set
\[
\delta:=|B(x,t)-\overline A(x,t)|
=\sqrt{n-k}\,H(x,t)\lesssim D(x,t),
\qquad
(x,t)\notin \overline{C'_{4R}(0,0)}.
\]
If $\delta\leq\rho/2$, the Calder\'on--Zygmund smoothness estimate in the
first transversal variable gives
\[
|K^{\overline A,B}(x,t,y,s)-K^B(x,t,y,s)|
\lesssim
\frac{\delta}{\rho^{k+3}}
\lesssim
\frac{D(x,t)}{\rho^{k+3}}.
\]
If $\delta>\rho/2$, the size estimates for the two kernels give
\[
|K^{\overline A,B}(x,t,y,s)-K^B(x,t,y,s)|
\lesssim
\frac{1}{\rho^{k+2}}.
\]
In this case $\delta\lesssim D(x,t)$ implies $D(x,t)\gtrsim\rho$, and therefore
\[
\frac{1}{\rho^{k+2}}
\lesssim
\frac{D(x,t)}{\rho^{k+3}}.
\]
This proves \eqref{czsmoothpush5}.
\endgroup

We next prove \eqref{czsmoothpush6}. Suppose that
\[
    (x,t)\notin \overline{C'_{4R}(0,0)}
    \qquad\text{and}\qquad
    \rho\leq \frac12D(x,t).
\]
Then $(y,s)\notin \overline{C'_{4R}(0,0)}$ and
\[
    D(y,s)\approx D(x,t).
\]
Since $B=\overline A+H{\bf 1}$ and $H(y,s)\approx D(y,s)$ outside
$\overline{C'_{4R}(0,0)}$, we have
\[
    |B(y,s)-\overline A(y,s)|\gtrsim D(x,t).
\]
Moreover, using that $\overline A$ is Lip-$(1,1/2)$ with small Lipschitz constant,
\[
    |\overline A(x,t)-\overline A(y,s)|
    \leq
    \|\overline A\|_{lip}\rho
    \leq
    \|\overline A\|_{lip}\frac{D(x,t)}{2}.
\]
Taking $\|A\|_{lip}$ sufficiently small, and hence $\|\overline A\|_{lip}$ sufficiently
small, we get
\[
|\overline A(x,t)-B(y,s)|
\geq
|B(y,s)-\overline A(y,s)|
-
|\overline A(x,t)-\overline A(y,s)|
\gtrsim
D(x,t).
\]
The standard size estimate for the Riesz kernel then gives
\[
|K^{\overline A,B}(x,t,y,s)|
\lesssim
\frac{1}{\rho^{k+2}+|\overline A(x,t)-B(y,s)|^{k+2}}
\lesssim
\frac{1}{D(x,t)^{k+2}},
\]
which proves \eqref{czsmoothpush6}.

\begingroup
We now prove \eqref{czsmoothpush7}. If
$\delta=|B(x,t)-\overline A(x,t)|\leq\rho/2$, smoothness in the first
transversal variable gives
\[
|K^{\overline A}(x,t,y,s)-K^{B,\overline A}(x,t,y,s)|
\lesssim
\frac{\delta}{\rho^{k+3}}
\lesssim
\frac{D(x,t)}{\rho^{k+3}}.
\]
If $\delta>\rho/2$, the size estimates give a bound by
$\rho^{-(k+2)}$, while $\delta\lesssim D(x,t)$ implies
$D(x,t)\gtrsim\rho$. The required bound follows.
\endgroup

Finally, \eqref{czsmoothpush8} follows directly from the standard Calder\'on--Zygmund
size estimate:
\[
|K^{B,\overline A}(x,t,y,s)|
\lesssim
\frac{1}{\rho^{k+2}+|B(x,t)-\overline A(y,s)|^{k+2}}
\lesssim
\frac{1}{\rho^{k+2}}.
\]
This completes the proof.
\end{proof}

\begin{lemma}\label{sepbound3}
The following inequality holds uniformly over $\ell\in\mathbb Z$:
\[
\left\|
    \int_{\mathbb P^k}
    K^{\overline A,B}_\ell(x,t,y,s)f(y,s)\,\d y\,\d s
\right\|_2^2
\lesssim
(1+\|\mathcal R\|^2)\|f\|_2^2.
\]
\end{lemma}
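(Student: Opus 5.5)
We follow the proof of Lemma~\ref{sepbound} line by line, with $B$ now in the role of $A$ and $\overline A$ in the role of $B$. The input bound is \eqref{BL2}, i.e.\ the uniform $L^2$ bound for the smooth truncations of $K^B$ with constant $(1+\|\mathcal R\|^2)$. By Lemma~\ref{smoothingCZkernels}, $K^B$ is a standard parabolic Calder\'on--Zygmund kernel, so Cotlar's inequality also gives the maximal bound $\|\sup_j|T_j^Bf|\|_2\lesssim(1+\|\mathcal R\|)\|f\|_2$.

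Fix $\ell$ and split the square of the operator norm into five terms, as in Lemma~\ref{sepbound}.
\begin{itemize}
\item Term $I$ is the integral over $C'_{4R}(0,0)$. There $H=0$, so $\overline A(x,t)=B(x,t)$ and $K^{\overline A,B}_\ell=K^B_\ell$. Hence $I$ is controlled by \eqref{BL2}.
\item Term $V$ is the full $K^B_\ell$ integral over the complement of $C'_{4R}(0,0)$. It is also controlled by \eqref{BL2}.
\item Term $II$ is the difference $K^{\overline A,B}_\ell-K^B_\ell$ on the region $\rho>\frac12D(x,t)$, for $(x,t)$ outside $\overline{C'_{4R}(0,0)}$. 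By \eqref{czsmoothpush5} its kernel is bounded by $D(x,t)/\rho^{k+3}$. Summing over dyadic annuli $\rho\approx2^qD(x,t)$ bounds it pointwise by $\mathcal Mf(x,t)$.
\item Term $III$ is the local part of $K^{\overline A,B}_\ell$ on $\rho\leq\frac12D(x,t)$. By \eqref{czsmoothpush6} it is bounded by $D(x,t)^{-k-2}\int_{\rho\le D/2}|f|\lesssim\mathcal Mf(x,t)$.
\item Term $IV$ is the local part of $K^B_\ell$ on the same region. Subtracting it is what lets $II$ be compared with $V$.
\end{itemize}
Terms $II$ and $III$ are therefore bounded by $\|f\|_2^2$ via the Hardy--Littlewood maximal theorem.

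For term $IV$ we decompose $\{(x,t):2^{-j-1}\le D(x,t)\le2^{-j}\}$ exactly as in Lemma~\ref{sepbound}. The part with $2^{-j-2}\le\rho\le\frac12D$ is bounded by $\mathcal Mf$ using the size estimate for $K^B$. The part with $\rho\le2^{-j-2}$ is a sharp truncation at a scale comparable to $D(x,t)$. The standard comparison with smooth truncations bounds it by $\sup_N|T^B_Nf|+\mathcal Mf$, whose $L^2$ norm is controlled by the Cotlar consequence of \eqref{BL2}. Combining all five terms gives the uniform bound $(1+\|\mathcal R\|^2)\|f\|_2^2$.

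The only step needing care is $IV$. Here the sharp cutoff depends on $D(x,t)$ rather than being a fixed truncation. The plan is to handle this through the dyadic decomposition in $D$ together with the maximal truncation bound for $K^B$, which is available because \eqref{BL2} holds uniformly in the truncation parameter. We also check that Lemma~\ref{czestimatesbarA} was stated for points $(x,t)$ outside $\overline{C'_{4R}(0,0)}$, which is exactly the range used in $II$ and $III$; inside that set the kernels coincide, so nothing further is needed.
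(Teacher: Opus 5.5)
Your proposal is correct and follows the paper's proof essentially line for line. It uses the same five-term split: $I$ and $V$ come from Lemma~\ref{sepbound2} (i.e.\ \eqref{BL2}), $II$ and $III$ from \eqref{czsmoothpush5}--\eqref{czsmoothpush6} together with the maximal function, and $IV$ from Cotlar's inequality for $K^B$ plus the sharp/smooth truncation comparison. The only cosmetic difference is in $IV$: you first slice into dyadic bands of $D$, as in Lemma~\ref{sepbound}, whereas the paper compares directly at the variable scale $D(x,t)/2$. The two routes give the same estimate.
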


\begin{proof}
Fix $\ell\in\mathbb Z$. We estimate
\begin{multline*}
\int_{\mathbb P^k}
\left|
    \int_{\mathbb P^k}
    K^{\overline A,B}_\ell(x,t,y,s)f(y,s)\,\d y\,\d s
\right|^2
\d x\,\d t \\
\lesssim
\int_{C'_{4R}(0,0)}
\left|
    \int_{\mathbb P^k}
    K^{\overline A,B}_\ell(x,t,y,s)f(y,s)\,\d y\,\d s
\right|^2
\d x\,\d t \\
+
\int_{\mathbb P^k\setminus C'_{4R}(0,0)}
\left|
    \int_{\|(x,t)-(y,s)\|>\frac12D(x,t)}
    \bigl(K^{\overline A,B}_\ell(x,t,y,s)-K^B_\ell(x,t,y,s)\bigr)
    f(y,s)\,\d y\,\d s
\right|^2
\d x\,\d t \\
+
\int_{\mathbb P^k\setminus C'_{4R}(0,0)}
\left|
    \int_{\|(x,t)-(y,s)\|\leq \frac12D(x,t)}
    K^{\overline A,B}_\ell(x,t,y,s)f(y,s)\,\d y\,\d s
\right|^2
\d x\,\d t \\
+
\int_{\mathbb P^k\setminus C'_{4R}(0,0)}
\left|
    \int_{\|(x,t)-(y,s)\|\leq \frac12D(x,t)}
    K^B_\ell(x,t,y,s)f(y,s)\,\d y\,\d s
\right|^2
\d x\,\d t \\
+
\int_{\mathbb P^k\setminus C'_{4R}(0,0)}
\left|
    \int_{\mathbb P^k}
    K^B_\ell(x,t,y,s)f(y,s)\,\d y\,\d s
\right|^2
\d x\,\d t \\
\shoveleft{=: I+II+III+IV+V.}
\end{multline*}

Term $I$ is bounded by Lemma~\ref{sepbound2}, since
$B(x,t)=\overline A(x,t)$ for $(x,t)\in C'_{4R}(0,0)$, and hence
\[
    K^{\overline A,B}_\ell(x,t,y,s)=K^B_\ell(x,t,y,s)
\]
for such $(x,t)$. Term $V$ is also bounded by Lemma~\ref{sepbound2}. Therefore
\[
    I+V\lesssim (1+\|\mathcal R\|^2)\|f\|_2^2.
\]

For term $II$, we use \eqref{czsmoothpush5}. Summing over dyadic annuli centered at
$(x,t)$ gives
\[
\left|
    \int_{\|(x,t)-(y,s)\|>\frac12D(x,t)}
    \bigl(K^{\overline A,B}_\ell(x,t,y,s)-K^B_\ell(x,t,y,s)\bigr)
    f(y,s)\,\d y\,\d s
\right|
\lesssim
\mathcal M f(x,t).
\]
Hence
\[
    II\lesssim \|\mathcal M f\|_2^2\lesssim \|f\|_2^2.
\]

For term $III$, using \eqref{czsmoothpush6}, we obtain
\[
\left|
    \int_{\|(x,t)-(y,s)\|\leq \frac12D(x,t)}
    K^{\overline A,B}_\ell(x,t,y,s)f(y,s)\,\d y\,\d s
\right|
\lesssim
\frac{1}{D(x,t)^{k+2}}
\int_{\|(x,t)-(y,s)\|\leq \frac12D(x,t)}
|f(y,s)|\,\d y\,\d s.
\]
Therefore
\[
    III\lesssim \|\mathcal M f\|_2^2\lesssim \|f\|_2^2.
\]

\begingroup
Let $T_j^B$ denote the operator with kernel $K_j^B$. Since $B$ is
Lip-$(1,1/2)$, $K^B$ is a parabolic Calder\'on--Zygmund kernel with
uniform constants. Lemma~\ref{sepbound2} and Cotlar's inequality give
\[
\left\|\sup_{j\in\mathbb Z}|T_j^Bf|\right\|_2
\lesssim
(1+\|\mathcal R\|)\|f\|_2.
\]
For each $(x,t)\notin C'_{4R}(0,0)$, comparing the sharp and smooth truncations applied to $f$ gives
\[
\left|
\int_{\|(x,t)-(y,s)\|\leq \frac12D(x,t)}
K^B_\ell(x,t,y,s)f(y,s)\,\d y\,\d s
\right|
\lesssim
\sup_{j\in\mathbb Z}|T_j^Bf(x,t)|
+
\mathcal M f(x,t).
\]
Consequently,
\[
IV
\lesssim
\left\|\sup_{j\in\mathbb Z}|T_j^Bf|\right\|_2^2
+
\|\mathcal M f\|_2^2
\lesssim
(1+\|\mathcal R\|^2)\|f\|_2^2.
\]
\endgroup

Combining the estimates for $I,\dots,V$, we conclude that
\[
\left\|
    \int_{\mathbb P^k}
    K^{\overline A,B}_\ell(x,t,y,s)f(y,s)\,\d y\,\d s
\right\|_2^2
\lesssim
(1+\|\mathcal R\|^2)\|f\|_2^2,
\]
uniformly over $\ell\in\mathbb Z$.
\end{proof}

\begin{lemma}\label{dualitybound2}
The following inequality holds uniformly over $\ell\in\mathbb Z$:
\[
\left\|
    \int_{\mathbb P^k}
    K^{B,\overline A}_\ell(x,t,y,s)f(y,s)\,\d y\,\d s
\right\|_2^2
\lesssim
(1+\|\mathcal R\|^2)\|f\|_2^2.
\]
\end{lemma}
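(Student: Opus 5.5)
This follows from Lemma~\ref{sepbound3} by duality, in exactly the way Lemma~\ref{dualitybound} was deduced from Lemma~\ref{sepbound}. For $m\in\{1,\dots,n-k\}$, let $T_\ell^{\overline A,B,m}$ and $T_\ell^{B,\overline A,m}$ be the scalar operators associated with the $m$-th components of $K_\ell^{\overline A,B}$ and $K_\ell^{B,\overline A}$, respectively.

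The first step is the kernel antisymmetry. Since $J^m$ is odd in its spatial variables and even in time, we have
\[
K^{B,\overline A,m}(x,t,y,s)
=
-K^{\overline A,B,m}(y,s,x,t).
\]
Indeed, the numerator $B_m(x,t)-\overline A_m(y,s)$ changes sign under the interchange of the roles of the two functions and the two points. The exponent and the factor $|t-s|^{-(k+1)/2-1}$ depend only on $|x-y|^2$, $|B(x,t)-\overline A(y,s)|^2$ and $|t-s|$, so they are unchanged.

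The truncation factor is also symmetric: $\Psi_\ell((x,t)-(y,s))=\Psi_\ell((y,s)-(x,t))$, because $\Psi_\ell$ is built from $\rho$, which is even. The antisymmetry therefore passes to the truncated kernels, and we obtain
\[
\bigl(T_\ell^{B,\overline A,m}\bigr)^*
=
-T_\ell^{\overline A,B,m}.
\]

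Lemma~\ref{sepbound3} bounds the vector-valued operator with kernel $K_\ell^{\overline A,B}$. Hence it bounds each component $T_\ell^{\overline A,B,m}$ on $L^2(\mathbb P^k)$ by $(1+\|\mathcal R\|^2)^{1/2}$, uniformly in $\ell$ and $m$. Since an operator and its adjoint have the same norm, the same bound holds for $T_\ell^{B,\overline A,m}$. Summing the squared bounds over $m=1,\dots,n-k$ gives the stated vector-valued estimate, uniformly in $\ell\in\mathbb Z$.

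The only point to check is the adjoint identity itself, and it is immediate. No step here presents a real obstacle; all the analytic work is contained in Lemma~\ref{sepbound3}.
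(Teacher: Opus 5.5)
Your proposal is correct and follows the paper's own proof. The paper likewise uses the antisymmetry $K_\ell^{B,\overline A,m}(x,t,y,s)=-K_\ell^{\overline A,B,m}(y,s,x,t)$ to get $(T_\ell^{B,\overline A,m})^*=-T_\ell^{\overline A,B,m}$, transfers the componentwise bounds from Lemma~\ref{sepbound3} by duality, and then sums over $m$.
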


\begin{proof}
\begingroup
For $m\in\{1,\ldots,n-k\}$, let
$T_\ell^{B,\overline A,m}$ and $T_\ell^{\overline A,B,m}$ denote the
scalar operators associated with the $m$-th components of
$K_\ell^{B,\overline A}$ and $K_\ell^{\overline A,B}$, respectively.
Since
\[
K_\ell^{B,\overline A,m}(x,t,y,s)
=
-K_\ell^{\overline A,B,m}(y,s,x,t),
\]
where
\[
\Psi_\ell((x,t)-(y,s))
=
\Psi_\ell((y,s)-(x,t)),
\]
we have
\[
(T_\ell^{B,\overline A,m})^*
=
-T_\ell^{\overline A,B,m}.
\]
Lemma~\ref{sepbound3} bounds the vector-valued operator associated with
$K_\ell^{\overline A,B}$, and hence bounds each
$T_\ell^{\overline A,B,m}$ on $L^2$. Duality therefore gives
\[
\|T_\ell^{B,\overline A,m}f\|_2^2
\lesssim
(1+\|\mathcal R\|^2)\|f\|_2^2
\]
uniformly in $m$ and $\ell$. Summing over $m=1,\ldots,n-k$ proves the lemma.
\endgroup
\end{proof}

\begin{lemma}\label{sepbound4}
The following inequality holds uniformly over $\ell\in\mathbb Z$:
\[
\left\|
    \int_{\mathbb P^k}
    K^{\overline A}_\ell(x,t,y,s)f(y,s)\,\d y\,\d s
\right\|_2^2
\lesssim
(1+\|\mathcal R\|^2)\|f\|_2^2.
\]
\end{lemma}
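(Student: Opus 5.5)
We follow the pattern of Lemma~\ref{sepbound2}, now with $B$ playing the role of the reference graph and $\overline A$ the role of the perturbed graph. Fix $\ell\in\mathbb Z$ and split $\|\int K^{\overline A}_\ell(\cdot,y,s)f(y,s)\,\d y\,\d s\|_2^2$ into five terms $I,\dots,V$. Term $I$ is the integral over $C'_{4R}(0,0)$. Term $II$ is the integral over $\mathbb P^k\setminus C'_{4R}(0,0)$ of the difference $K^{\overline A}_\ell-K^{B,\overline A}_\ell$ over the far region $\rho>\frac12D(x,t)$. Term $III$ is the near-region integral $\rho\leq\frac12 D(x,t)$ of $K^{\overline A}_\ell$. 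Term $IV$ is the near-region integral of $K^{B,\overline A}_\ell$. Term $V$ is the full integral of $K^{B,\overline A}_\ell$ off $C'_{4R}(0,0)$.

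\textbf{Terms $I$ and $V$.} On $C'_{4R}(0,0)$ we have $H=0$, so $B=\overline A$ there and $K^{\overline A}_\ell=K^{B,\overline A}_\ell$. Hence both $I$ and $V$ are controlled by Lemma~\ref{dualitybound2}, giving the bound $(1+\|\mathcal R\|^2)\|f\|_2^2$.

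\textbf{Term $II$.} By \eqref{czsmoothpush7}, summing over dyadic annuli of radius $2^qD(x,t)$ bounds the inner integral pointwise by $\mathcal Mf(x,t)$. The maximal theorem then gives $II\lesssim\|f\|_2^2$.

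\textbf{Term $IV$.} Combine Lemma~\ref{dualitybound2} with Lemma~\ref{smoothingCZkernels}, applied with $F=B$ and $G=\overline A$, both Lip-$(1,1/2)$. Cotlar's inequality then gives $\|\sup_j|T^{B,\overline A}_jf|\|_2\lesssim(1+\|\mathcal R\|)\|f\|_2$. Comparing the sharp truncation at radius $\frac12D(x,t)$ with smooth truncations bounds the inner integral by $\sup_j|T_j^{B,\overline A}f|+\mathcal Mf$.

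\textbf{Term $III$.} This is the main step. There is no separation estimate analogous to \eqref{czsmoothpush6} here, since both endpoints lie on the same graph. We therefore localize $\overline A$ to the Whitney cubes $I_i$, as was done for $B$. Set
\[
\overline A_i:=\psi_i\bigl(\overline A-\overline A(x_i,t_i)\bigr).
\]
By \eqref{2ndgrad}, \eqref{timegrad}, and $D\approx\ell(I_i)$ on $8I_i$, we get
\[
\|\nabla_x^2\overline A_i\|_\infty+\|\partial_t\overline A_i\|_\infty\lesssim\|A\|_{lip}\,\ell(I_i)^{-1}.
\]
Together with the trivial size bound $|\overline A_i|\lesssim\|A\|_{lip}\ell(I_i)$, this yields the analogues of \eqref{beta1} and \eqref{beta2}, and hence the Carleson estimate \eqref{bireg}. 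Theorem~\ref{regularbetaequiv} then shows the $\overline A_i$ are uniformly regular Lip-$(1,1/2)$. Theorem~\ref{highcodimBHHLN} and Cotlar then give uniform maximal bounds for the associated truncated Riesz transforms.

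For $(x,t)\in I_i$, split the near region into $\ell(I_i)/4\leq\rho\leq\frac12D$ and $\rho<\ell(I_i)/4$. On the first piece the size estimate bounds the integral by $\mathcal Mf$. On the second piece $(y,s)\in2I_i$, so $K^{\overline A}_\ell=K^{\overline A_i}_\ell$, since $\psi_i\equiv1$ there and additive constants do not change the kernel. Comparing sharp and smooth truncations applied to $\mathbf 1_{2I_i}f$ bounds this piece by the maximal bound for $\overline A_i$. The bounded overlap of $\{2I_i\}$ then gives $III\lesssim\|f\|_2^2$.

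Alternatively, $III$ can be handled more cheaply as in $IV$. Note that $|\overline A(x,t)-\overline A(y,s)|$ and $|B(x,t)-B(y,s)|$ differ by at most $|H(x,t)-H(y,s)|\lesssim\rho$. This is not small, however, so the localization argument above is the reliable route. The verification of uniform regularity of $\overline A_i$ is the only genuinely new check, and it reduces to the derivative estimates already proved.

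Summing the five terms gives the bound uniformly in $\ell$.
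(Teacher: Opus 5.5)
Your proposal is correct and follows the paper's proof essentially step for step. You use the same five-term splitting with $K^{B,\overline A}$ as the reference kernel, Lemma~\ref{dualitybound2} for $I$ and $V$, \eqref{czsmoothpush7} with the maximal function for $II$, Cotlar with the sharp-versus-smooth comparison for $IV$, and for $III$ the Whitney localization $\overline A_i=\psi_i(\overline A-\overline A(x_i,t_i))$ with the $\beta$-number Carleson bound, Theorem~\ref{regularbetaequiv} and Theorem~\ref{highcodimBHHLN}; the only cosmetic difference is that you cite Lemma~\ref{smoothingCZkernels} where the paper cites \eqref{czsmoothpush8} to make $K^{B,\overline A}$ a Calder\'on--Zygmund kernel, which is equally valid.
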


\begin{proof}
Fix $\ell\in\mathbb Z$. We split the term we seek to estimate as follows:
\begin{multline*}
\int_{\mathbb P^k}
\left|
    \int_{\mathbb P^k}
    K^{\overline A}_\ell(x,t,y,s)f(y,s)\,\d y\,\d s
\right|^2
\d x\,\d t \\
\lesssim
\int_{C'_{4R}(0,0)}
\left|
    \int_{\mathbb P^k}
    K^{\overline A}_\ell(x,t,y,s)f(y,s)\,\d y\,\d s
\right|^2
\d x\,\d t \\
+
\int_{\mathbb P^k\setminus C'_{4R}(0,0)}
\left|
    \int_{\|(x,t)-(y,s)\|>\frac12D(x,t)}
    \bigl(K^{\overline A}_\ell(x,t,y,s)-K^{B,\overline A}_\ell(x,t,y,s)\bigr)
    f(y,s)\,\d y\,\d s
\right|^2
\d x\,\d t \\
+
\int_{\mathbb P^k\setminus C'_{4R}(0,0)}
\left|
    \int_{\|(x,t)-(y,s)\|\leq \frac12D(x,t)}
    K^{\overline A}_\ell(x,t,y,s)f(y,s)\,\d y\,\d s
\right|^2
\d x\,\d t \\
+
\int_{\mathbb P^k\setminus C'_{4R}(0,0)}
\left|
    \int_{\|(x,t)-(y,s)\|\leq \frac12D(x,t)}
    K^{B,\overline A}_\ell(x,t,y,s)f(y,s)\,\d y\,\d s
\right|^2
\d x\,\d t \\
+
\int_{\mathbb P^k\setminus C'_{4R}(0,0)}
\left|
    \int_{\mathbb P^k}
    K^{B,\overline A}_\ell(x,t,y,s)f(y,s)\,\d y\,\d s
\right|^2
\d x\,\d t \\
\shoveleft{=: I+II+III+IV+V.}
\end{multline*}

Term $I$ is bounded by Lemma~\ref{dualitybound2}, since
$B(x,t)=\overline A(x,t)$ for $(x,t)\in C'_{4R}(0,0)$, and hence
\[
    K^{\overline A}_\ell(x,t,y,s)=K^{B,\overline A}_\ell(x,t,y,s)
\]
for such $(x,t)$. Term $V$ is bounded directly by Lemma~\ref{dualitybound2}. Thus
\[
    I+V\lesssim (1+\|\mathcal R\|^2)\|f\|_2^2.
\]

\begingroup
For term $II$, \eqref{czsmoothpush7} gives
\[
\begin{aligned}
&\left|
\int_{\|(x,t)-(y,s)\|>D(x,t)/2}
\bigl(K^{\overline A}_\ell(x,t,y,s)-K^{B,\overline A}_\ell(x,t,y,s)\bigr)
f(y,s)\,\d y\,\d s
\right|\\
&\qquad\lesssim
\sum_{q\geq0}
\frac{D(x,t)}{(2^qD(x,t))^{k+3}}
\int_{\|(x,t)-(y,s)\|\leq2^{q+1}D(x,t)}
|f(y,s)|\,\d y\,\d s
\lesssim \mathcal M f(x,t).
\end{aligned}
\]
Therefore,
\[
II\lesssim\|\mathcal M f\|_2^2\lesssim\|f\|_2^2.
\]
\endgroup

\begingroup
Let $T_j^{B,\overline A}$ denote the operator with kernel
$K_j^{B,\overline A}$. By \eqref{czsmoothpush8} and the Lip-$(1,1/2)$
bounds for $B$ and $\overline A$, the kernel $K^{B,\overline A}$ is a
parabolic Calder\'on--Zygmund kernel with uniform constants. Lemma~\ref{dualitybound2}
and Cotlar's inequality give
\[
\left\|\sup_{j\in\mathbb Z}|T_j^{B,\overline A}f|\right\|_2
\lesssim
(1+\|\mathcal R\|)\|f\|_2.
\]
For each $(x,t)\notin C'_{4R}(0,0)$, comparing the sharp and smooth truncations applied to $f$ gives
\[
\left|
\int_{\|(x,t)-(y,s)\|\leq \frac12D(x,t)}
K^{B,\overline A}_\ell(x,t,y,s)f(y,s)\,\d y\,\d s
\right|
\lesssim
\sup_{j\in\mathbb Z}|T_j^{B,\overline A}f(x,t)|
+
\mathcal M f(x,t).
\]
Consequently,
\[
IV
\lesssim
\left\|\sup_{j\in\mathbb Z}|T_j^{B,\overline A}f|\right\|_2^2
+
\|\mathcal M f\|_2^2
\lesssim
(1+\|\mathcal R\|^2)\|f\|_2^2.
\]
\endgroup

We estimate $III$ as we estimated term $III$ in the proof of
Lemma~\ref{sepbound2}. Using the same Whitney decomposition
of $\mathbb P^k\setminus \overline{C'_{4R}(0,0)}$, we define
\[
    \overline A_i(x,t)
    :=
    \psi_i(x,t)\big(\overline A(x,t)-\overline A(x_i,t_i)\big).
\]

Let $\overline\beta_i((x,t),r)$ denote the $L^2$-$\beta$ number defined above
with $B_i$ replaced by $\overline A_i$. The estimates \eqref{2ndgrad} and
\eqref{timegrad}, the Lip-$(1,1/2)$ bound for $\overline A$, and the
estimates for $\psi_i$ and its derivatives give
\begin{align*}
\overline\beta_i((x,t),r)^2
&\lesssim
\|A\|_{lip}^2\frac{r^2}{\ell(I_i)^2},
\qquad r\lesssim\ell(I_i),
\\
\overline\beta_i((x,t),r)^2
&\lesssim
\|A\|_{lip}^2\frac{\ell(I_i)^2}{r^2},
\qquad r\gtrsim\ell(I_i).
\end{align*}
Splitting the integral in $r$ at $r=\ell(I_i)$, as in \eqref{bireg}, gives
\[
\int_0^\varrho\int_{C'_\varrho(z,\tau)}
\overline\beta_i((x,t),r)^2
\frac{\d x\,\d t\,\d r}{r}
\lesssim
\|A\|_{lip}^2\varrho^{k+2}.
\]
By Theorem~\ref{regularbetaequiv}, the functions $\overline A_i$ are regular
Lip-$(1,1/2)$, with constants bounded uniformly in $i$. \begingroup
Hence, by Theorem~\ref{highcodimBHHLN},
\[
\left\|
\sup_{N\in\mathbb Z}
\left|
\int_{\mathbb P^k}
K_N^{\overline A_i}(x,t,y,s)g(y,s)\,\d y\,\d s
\right|
\right\|_2
\lesssim
\|g\|_2,
\]
with a constant independent of $i$.
\endgroup

For $(x,t)\in I_i$ and
\[
    \|(x,t)-(y,s)\|<\ell(I_i)/4,
\]
we have $(y,s)\in 2I_i$, and therefore
\[
    K^{\overline A}_\ell(x,t,y,s)
    =
    K^{\overline A_i}_\ell(x,t,y,s).
\]
The same argument used for term $III$ in Lemma~\ref{sepbound2}, together with the
bounded overlap of the cubes $\{2I_i\}$, gives
\[
    III\lesssim \|f\|_2^2.
\]

Combining the estimates for $I,\dots,V$, we obtain
\[
\left\|
    \int_{\mathbb P^k}
    K^{\overline A}_\ell(x,t,y,s)f(y,s)\,\d y\,\d s
\right\|_2^2
\lesssim
(1+\|\mathcal R\|^2)\|f\|_2^2,
\]
uniformly over $\ell\in\mathbb Z$.
\end{proof}

Since $A_{(0,0),R}=\overline A$, Lemma~\ref{sepbound4} completes the proof of Proposition~\ref{localsios}.

\subsection{Proof of Theorem~\ref{mainthm2}}

\begingroup

\begin{proof}[Proof of Theorem~\ref{mainthm2}]

Fix $Q_0\in\mathbb D$, and let $(x_0,t_0)$ be its center. Choose
$R=C_0\ell(Q_0)$, where $C_0$ is a fixed constant large enough that
$C'_{4R}(x_0,t_0)$ contains every $5Q$ with $Q\subseteq Q_0$, and let
\[
\overline A:=A_{(x_0,t_0),R}
\]
be the function supplied by Proposition~\ref{localsios}. Then
$\overline A=A$ on every such $5Q$,
\[
\supp\bigl(\overline A-A(x_0,t_0)\bigr)
\subset C'_{101R}(x_0,t_0),
\qquad
\|\mathcal R^{\overline A}\|
\lesssim
1+\|\mathcal R\|,
\]
and
\[
\|\overline A\|_{lip}\lesssim\|A\|_{lip}.
\]
We choose the constant $\epsilon$ in Theorem~\ref{mainthm2} so that $\|\overline A\|_{lip}$ is as small as required in Theorem~\ref{localdpara}.

The reduction to the time-symmetric transforms in Subsection~\ref{symreduction} gives $\|\mathcal R\|\lesssim\|\mathcal R^+\|$. In the proof of Theorem~\ref{localdpara}, after the reduction in
Subsection~\ref{symreduction}, we use only the time-symmetric bounds. We can
therefore apply the same proof to $\overline A-A(x_0,t_0)$, with $101R$ in place
of $R$. This function is
supported in $C'_{101R}(x_0,t_0)$. Subtracting the constant $A(x_0,t_0)$ leaves the kernels and the
$D_{par}$ norm unchanged. Thus, since $101R\approx\ell(Q_0)$, we obtain
\[
\|D_{par}\overline A\|_2^2
\lesssim
(1+\|\mathcal R\|^2)\ell(Q_0)^{k+2}
=
(1+\|\mathcal R\|^2)|Q_0|.
\]

Applying Lemma~\ref{vectorDorronsoro} to
$\overline A-A(x_0,t_0)$, and using that the $\beta_2$-numbers and
the $D_{par}$ norm are unchanged by the addition of a constant, we obtain
\[
\sum_{Q\in\mathbb D}
\beta_2^{\Gamma_{\overline A}}(5Q)^2|Q|
\lesssim
\|D_{par}\overline A\|_2^2.
\]
For $Q\subseteq Q_0$, the parametrizations of $\Gamma_{\overline A}$ and
$\Gamma_A$ agree over $5Q$. By
Definition~\ref{betadef}, the measures also agree there. Therefore
\[
\beta_2^{\Gamma_{\overline A}}(5Q)
=
\beta_2^{\Gamma_A}(5Q),
\qquad Q\subseteq Q_0,
\]
and we conclude that
\begin{equation}\label{dyadicbeta5}
\sum_{Q\subseteq Q_0}
\beta_2^{\Gamma_A}(5Q)^2|Q|
\lesssim
(1+\|\mathcal R\|^2)|Q_0|.
\end{equation}

It follows from \eqref{dyadicbeta5} by standard arguments that, for every
$(z,\tau)\in\mathbb P^k$ and $\rho>0$,
\begin{equation}\label{continuousbeta}
\int_0^\rho\int_{C'_\rho(z,\tau)}
\beta_2^{\Gamma_A}(C'_r(x,t))^2
\frac{\d x\,\d t\,\d r}{r}
\lesssim
(1+\|\mathcal R\|^2)\rho^{k+2}.
\end{equation}
\begingroup
Applying Theorem~\ref{regularbetaequiv} to \eqref{continuousbeta}, we obtain
\[
\|D_t^{1/2}A\|_{BMO(\mathbb P^k)}
\lesssim
1+\|\mathcal R\|
\lesssim
1+\|\mathcal R^+\|.
\]
\endgroup
Hence $A$ is regular Lip-$(1,1/2)$, which proves
Theorem~\ref{mainthm2}. In particular, $\Gamma_A$ is parabolic uniformly
rectifiable.

\end{proof}
\endgroup

\begingroup

\subsection{The case \texorpdfstring{$k=0$}{k=0}}\label{timeonlycase}

We prove Theorems~\ref{mainthm2} and~\ref{localdpara} for $k=0$ using the case $k=1$ of Theorem~\ref{mainthm2}. Here $\mathbb P^0=\mathbb R^1$ as sets, and we equip it with the parabolic distance $d_p(t,s)=|t-s|^{1/2}$. Let $A:\mathbb P^0\to\mathbb R^n$ satisfy the hypotheses of Theorem~\ref{mainthm2}, and set
$$
F(x,t)=A(t),\qquad (x,t)\in\mathbb P^1.
$$
Then $\Gamma_F=\{(x,A(t),t):x,t\in\mathbb R\}\subset\mathbb P^{n+1}$, and $\|F\|_{lip}=\|A\|_{lip}$. We take $\epsilon$ no larger than the value in Theorem~\ref{mainthm2} for $k=1$ in $\mathbb P^{n+1}$. The constants below depend only on $n$ and $\psi_0$. Recall that $\psi_0$ was defined in Subsection~\ref{graphnotation}; see \eqref{psi0}.

We first prove that the transversal Riesz transforms on $\Gamma_F$ are bounded. Fix $m\in\{1,\ldots,n\}$. The forward-time kernel on $\Gamma_A$ is
$$
K^{+,m}(t,s)=\frac{A_m(t)-A_m(s)}{(t-s)^{3/2}}\exp(-\frac{|A(t)-A(s)|^2}{4(t-s)}),\qquad t>s,
$$
and is zero for $t\leq s$. For $t>s$, the corresponding kernel on $\Gamma_F$ satisfies
$$
K^{+,F,m}(x,t,y,s)=(t-s)^{-1/2}\exp(-\frac{|x-y|^2}{4(t-s)})K^{+,m}(t,s).
$$
Let $\mathcal S_\ell^m$ be obtained from $\mathcal R_\ell^{+,F,m}$ by replacing the cutoff $\Psi_\ell(x-y,t-s)$ with $\Psi_\ell(t-s)$, where the latter is defined on $\mathbb P^0$. For $f\in C_c^\infty(\mathbb P^1)$, write $\widehat f(\xi,t)$ for the Fourier transform in $x$. Fourier transformation of the Gaussian gives
$$
\widehat{\mathcal S_\ell^m f}(\xi,t)=2\sqrt\pi\int_{-\infty}^t e^{-4\pi^2\xi^2(t-s)}K^{+,m}(t,s)\Psi_\ell(t-s)\widehat f(\xi,s)\,\d s.
$$
For $\lambda>0$, we have
$$
e^{-\lambda|t-s|}=\int_{\mathbb R}e^{2\pi i\eta(t-s)}\frac{2\lambda}{\lambda^2+4\pi^2\eta^2}\,\d\eta,\qquad \int_{\mathbb R}\frac{2\lambda}{\lambda^2+4\pi^2\eta^2}\,\d\eta=1.
$$
Multiplication of the kernel of $\mathcal R_\ell^{+,m}$ by $e^{2\pi i\eta(t-s)}$ amounts to multiplying the input by $e^{-2\pi i\eta s}$ and the output by $e^{2\pi i\eta t}$, which leaves the $L^2$ operator norm unchanged. Taking $\lambda=4\pi^2\xi^2$ therefore bounds the operator acting on $\widehat f(\xi,\cdot)$ by $2\sqrt\pi\|\mathcal R^{+,m}\|$, uniformly in $\xi\neq0$ and $\ell$. For $\xi=0$, the same bound follows directly from the definition of $\|\mathcal R^{+,m}\|$. Plancherel's theorem gives
\begin{equation}\label{timeonlyfourierbound}
\|\mathcal S_\ell^m f\|_{L^2(\mathbb P^1)}\leq2\sqrt\pi\|\mathcal R^{+,m}\|\|f\|_{L^2(\mathbb P^1)}.
\end{equation}

We next compare the two truncations. Set $r=2^{-\ell-4}$. Telescoping the sums defining $\Psi_\ell$ gives
$$
\Psi_\ell(x-y,t-s)=1-\psi_0(\rho(x-y,t-s)/r),\qquad \Psi_\ell(t-s)=1-\psi_0(|t-s|^{1/2}/r).
$$
For $t>s$, their difference vanishes unless $t-s\leq4r^2$ and $\rho(x-y,t-s)>r/2$. The Lipschitz bound for $A$ gives
$$
|K^{+,F,m}(x,t,y,s)|\leq\|A\|_{lip}(t-s)^{-3/2}\exp(-\frac{|x-y|^2}{4(t-s)}).
$$
If $t-s<r^2/16$ and the cutoffs differ, then $|x-y|>r/3$. Using the Gaussian decay when $0<t-s<r^2/16$, and the bounds $r^2/16\leq t-s\leq4r^2$ otherwise, we find that the kernel of $\mathcal R_\ell^{+,F,m}-\mathcal S_\ell^m$ is bounded in absolute value by
$$
C\|A\|_{lip}r^{-3}(1+|x-y|/r)^{-4}\chi_{\{0<t-s\leq4r^2\}}.
$$
Integrating the bound in the display above with respect to $(y,s)$, or with respect to $(x,t)$, gives at most $C\|A\|_{lip}$, independently of $\ell$. Schur's test and~\eqref{timeonlyfourierbound} give
\begin{equation}\label{timeonlyextensionbound}
\sup_{\ell\in\mathbb Z}\|\mathcal R_\ell^{+,F,m}\|_{L^2(\mathbb P^1)\to L^2(\mathbb P^1)}\leq C(\|\mathcal R^{+,m}\|+\|A\|_{lip}).
\end{equation}
The kernels on $\Gamma_F$ satisfy the parabolic Calder\'on--Zygmund estimates, so Cotlar's inequality gives the maximal-truncation bounds required in Theorem~\ref{mainthm2}. Applying its case $k=1$ and using~\eqref{timeonlyextensionbound}, we obtain
$$
\|D_t^{1/2}F\|_{BMO(\mathbb P^1)}\leq C(1+\|\mathcal R^+\|).
$$

Since $D_t^{1/2}F(x,t)=D_t^{1/2}A(t)$, its mean oscillation on a parabolic cube equals the mean oscillation of $D_t^{1/2}A$ on the corresponding time interval. Hence
$$
\|D_t^{1/2}A\|_{BMO(\mathbb P^0)}\leq C(1+\|\mathcal R^+\|),
$$
which proves Theorem~\ref{mainthm2} for $k=0$. To obtain parabolic uniform rectifiability, use $b_F$, defined before Theorem~\ref{regularbetaequiv}. Since $F$ is independent of $x$, the best affine approximant on $C'_r(x,t)$ is the constant average of $A$ on $(t-r^2,t+r^2)$. Thus
$$
\beta_2^{\Gamma_A}(C'_r(t))^2=b_F(C'_r(x,t))^2\leq C\beta_2^{\Gamma_F}(C'_r(x,t))^2,
$$
where the inequality is the comparison already proved for $k=1$. Apply the case $k=1$ of Theorem~\ref{regularbetaequiv} to $F$ and integrate over $C'_R(x_0,t_0)$. Since $\beta_2^{\Gamma_A}(C'_r(t))$ is independent of $x$, division by $2R$ gives
$$
\int_0^R\int_{|t-t_0|<R^2}\beta_2^{\Gamma_A}(C'_r(t))^2\,\d t\,\frac{\d r}{r}\leq C(1+\|\mathcal R^+\|^2)R^2.
$$
This proves the required square-function estimate for $\Gamma_A$.

Finally, suppose that $A$ satisfies the support assumption of Theorem~\ref{localdpara}. After translating time, we may assume that $\supp A\subset[-R^2,R^2]$. Then $\|A\|_\infty\leq C\|A\|_{lip}R$. Set $I=(-2R^2,2R^2)$. The integral formula in Definition~\ref{halftime} gives
\begin{equation}\label{timeonlyhalftimetail}
|D_t^{1/2}A(t)|\leq C\|A\|_{lip}R^3|t|^{-3/2},\qquad |t|>2R^2.
\end{equation}
Since $D_t^{1/2}A\in BMO$, \eqref{timeonlyhalftimetail} implies $D_t^{1/2}A\in L^1(\mathbb R)$. The Fourier identity
\[
\widehat{D_t^{1/2}A}(\tau)=|\tau|^{1/2}\widehat A(\tau)
\]
gives
\[
\int_{\mathbb R}D_t^{1/2}A(t)\,\d t=\widehat{D_t^{1/2}A}(0)=0.
\]
Hence $|\fint_I D_t^{1/2}A(t)\,\d t|\leq C\|A\|_{lip}$.
The John--Nirenberg inequality on $I$, together with~\eqref{timeonlyhalftimetail}, now yields
$$
\|D_t^{1/2}A\|_2^2\leq CR^2(\|D_t^{1/2}A\|_{BMO}^2+\|A\|_{lip}^2)\leq C(1+\|\mathcal R^+\|^2)R^2.
$$
Since $D_{par}=D_t^{1/2}$ when $k=0$, this proves Theorem~\ref{localdpara} as well.
\endgroup

\section{\texorpdfstring{$\alpha$-Numbers and Parabolic Singular Integrals}{Alpha-Numbers and Parabolic Singular Integrals}}\label{last}

\subsection{Notation and Setting}

Throughout this section we fix an integer $k\in\{1,\ldots,n-1\}$.  We consider kernels $K\in C^\infty(\mathbb P^n\setminus\{0\};\mathbb R)$ satisfying
\begin{itemize}
    \item

\begin{equation}\label{kernelestimates}
|\nabla^j\partial_t^q K(X,t)|
        \leq
        \frac{C_{j,q}}{\|(X,t)\|^{k+2+j+2q}}
\end{equation}

    for all integers $j,q\geq0$ and all $(X,t)\in\mathbb P^n\setminus\{0\}$;

    \item
    \[
        K(-X,t)=-K(X,t)
    \]
    for all $(X,t)\in\mathbb P^n\setminus\{0\}$;

    \item
    \[
        K(X,-t)=K(X,t)
    \]
    for all $(X,t)\in\mathbb P^n\setminus\{0\}$.
\end{itemize}
Set\[\|K\|_{\mathrm{CZ},4}:=\max_{\substack{j,q\geq 0\\ j+2q\leq 4}} C_{j,q}.\]All kernel-dependent implicit constants in this section may depend on $\|K\|_{\mathrm{CZ},4}$, but on no higher-order derivatives of $K$.

To prove Theorem~\ref{alphatosio} for finite-dimensional vector-valued kernels, it suffices to prove the scalar case. If $K=(K_1,\ldots,K_M)$, then each component $K_r$ satisfies the scalar kernel assumptions above. We apply the estimates to each component and sum. For integers $a\leq m$,
\[
\|T_{a,m}\mu\|_{L^2(\mu;\mathbb R^M)}^2
=\sum_{r=1}^M\|T_{a,m}^r\mu\|_{L^2(\mu)}^2.
\]
The asserted $L^2$ limits also follow componentwise. 

Given a finite signed Radon measure $\nu$ on $\mathbb P^n$ and a kernel $K$ as above, we define
\[
    T\nu(X,t)
    :=
    \int K\big((X,t)-(Y,s)\big)\,\d\nu(Y,s),
    \qquad
    (X,t)\in\mathbb P^n\setminus\supp(\nu).
\]
On $\supp(\nu)$ we use the annular truncations
$T_{(j)}\nu$ and their finite sums $T_{a,m}\nu$ introduced in
Subsection~\ref{dyadicgriddecomp}. For an ADR measure $\mu$, the one-sided
truncations $T_m\mu$ are interpreted by the limiting convention stated there.  

We say that a nonzero locally finite Borel measure $\mu$ on $\mathbb P^n$, whose support has positive parabolic diameter, is $(k+2)$-ADR if there is a constant $C_0\geq1$ such that
\[
    C_0^{-1}r^{k+2}
    \leq
    \mu(C_r(X,t))
    \leq
    C_0r^{k+2}
\]
for every $(X,t)\in\supp(\mu)$ and every $0<r\leq\operatorname{diam}(\supp(\mu))$.

\subsection{Dyadic Grids}
For the remainder of this section, we fix a $(k+2)$-ADR measure $\mu$ on $\mathbb P^n$, with ADR constant $C_0$. For a set $A\subset\mathbb P^n$ and $s>0$, let $U_s(A):=\{(X,t)\in\mathbb P^n:\dist((X,t),A)<s\}$. We write
\[
    E:=\supp(\mu).
\]
Let $0<\gamma<2^{-11}$ be a constant in the dyadic construction below. If $\operatorname{diam}(E)<\infty$, choose $j_0\in\mathbb Z$ so that
\[
2^{10}\operatorname{diam}(E)\leq2^{-j_0}<2^{11}\operatorname{diam}(E),
\]
set $\mathcal J:=\{j\in\mathbb Z:j\geq j_0\}$, and take $\mathbb D_{j_0}=\{E\}$. If $\operatorname{diam}(E)=\infty$, set $\mathcal J:=\mathbb Z$. We construct the dyadic grid only for generations $j\in\mathcal J$. We will assume that the Calder\'on--Zygmund kernels we consider satisfy \eqref{kernelestimates}.

All distances and diameters in this subsection are taken with respect to the parabolic metric. The constants implicit in $\lesssim$ and $\approx$, as well as $\gamma$, depend only on $n$, $k$, and $C_0$. Using the tools developed in \cite{Chr,DS1,DS2}, we decompose $E$ into dyadic cubes.  Specifically, for each $j\in\mathcal J$, we can construct a family $\mathbb D_j$ of Borel subsets of $E$, called the cubes of the $j$-th generation, such that
\begin{itemize}
    \item $\mathbb D_j$ is a partition of $E$, i.e.
    \[
        E=\bigcup_{Q\in\mathbb D_j}Q,
    \]
    and if $Q,Q'\in\mathbb D_j$ with $Q\neq Q'$, then
    \[
        Q\cap Q'=\emptyset;
    \]

    \item if $Q\in\mathbb D_j$ and $Q'\in\mathbb D_m$, with $m\leq j$, then either
    \[
        Q\subseteq Q'
        \qquad\text{or}\qquad
        Q\cap Q'=\emptyset;
    \]

    \item if $Q\in\mathbb D_j$, then

    \[
        \gamma 2^{-j}\leq \diam(Q)\leq 2^{-j-10}
        \qquad\text{and}\qquad
        \mu(Q)\approx 2^{-j(k+2)}.
    \]

    \item if $Q\in\mathbb D_j$, there is a point $(Z_Q,\tau_Q)\in Q$, called the center of $Q$, such that
    \[
        \dist\big((Z_Q,\tau_Q),E\setminus Q\big)
        \gtrsim
        2^{-j}.
    \]
\end{itemize}
We denote
\[
    \mathbb D=\mathbb D(\mu):=\bigcup_{j\in\mathcal J}\mathbb D_j.
\]
We regard a dyadic cube as a tagged pair $(Q,j)$ with $Q\in\mathbb D_j$; thus identical Borel subsets occurring in different generations are distinct elements of $\mathbb D$. If $j-1\in\mathcal J$ and $Q\in\mathbb D_j$, the unique cube $\widetilde Q\in\mathbb D_{j-1}$ containing $Q$ is called the parent of $Q$. We say that two cubes $Q$ and $P$ are siblings if they belong to the same generation and have the same parent. If $Q$ is from generation $j$, we write $J(Q)=j$. For $R\in\mathbb D$, we let
\[
    \mathbb D(R):=\{Q\in\mathbb D:Q\subseteq R,\ J(Q)\geq J(R)\}.
\]
Thus $\mathbb D(R)$ is the collection of dyadic descendants of $R$, including $R$ itself.
If $J(Q)=j$, we write
\[
    \ell(Q):=2^{-j}.
\]
We also define dilates of cubes.  Given $\lambda\geq1$ and $R\in\mathbb D$, we write
\[
\lambda R
:=
\left\{
(X,t)\in E:
\|(X,t)-(Z_R,\tau_R)\|
\leq
\lambda\diam(R)
\right\}.
\]

\begin{remark}
In the graph case considered earlier, the lifted cubes $\{Q^\Gamma\}$ form a dyadic grid satisfying the above requirements after relabeling the generations. With the original indexing, cubes in the $j$-th generation may fail to satisfy
\[
    \gamma 2^{-j}\leq \diam(Q^\Gamma)\leq 2^{-j-10}.
\]
We obtain this inequality by shifting the generation indices by a fixed integer.
\end{remark}

\subsection{\texorpdfstring{$\alpha$-Numbers With Respect to Cubes}{Alpha-Numbers With Respect to Cubes}}
Given a dyadic cube $Q\in\mathbb D$, we define
\[
    C_Q:=C_{3\ell(Q)}(Z_Q,\tau_Q).
\]
Given two locally finite Borel measures $\nu$ and $\sigma$ on $\mathbb P^n$, we set
\[
    \dist_{C_Q}(\nu,\sigma)
    :=
    \sup
    \left\{
        \left|\int f\,\d\nu-\int f\,\d\sigma\right|:
        \|f\|_{\operatorname{Lip}}\leq1,
        \supp(f)\subseteq C_Q
    \right\},
\]
where the Lipschitz norm is taken with respect to the parabolic distance. We define
\[
    \alpha(Q)
    :=
    \frac{1}{\ell(Q)^{k+3}}
    \inf_{c\geq0,L}
    \dist_{C_Q}\big(\mu,c\,\sigma_L\big),
\]
where $L$ ranges over all parabolic $(k+2)$-planes containing a line parallel to the $t$-axis. Equivalently, the projection of $L$ onto the spatial component of $\mathbb P^n=\mathbb R^n_{sp}\times\mathbb R$ is a $k$-dimensional affine plane. Fix a sufficiently small constant $\eta_*>0$, depending only on $n$, $k$, and $C_0$. If $\alpha(Q)\leq\eta_*$, choose $c_Q\geq0$ and $L_Q$ realizing the infimum above. If $\alpha(Q)>\eta_*$, set $c_Q=0$ and choose a vertical plane $L_Q$ through $(Z_Q,\tau_Q)$. In either case, write
\[
    \mathcal L_Q:=c_Q\,\sigma_{L_Q}.
\]
We also let $(X_Q,t_Q)$ denote the projection of $(Z_Q,\tau_Q)$ onto $L_Q$.

\begin{remark}\label{cqintlq}
We will always choose $L_Q$ so that
\[
    L_Q\cap C_Q\neq\emptyset.
\]
Indeed, if $L_Q\cap C_Q=\emptyset$, then the support of $c_Q\sigma_{L_Q}$ is disjoint from $C_Q$. Thus, for the purposes of $\dist_{C_Q}$, this choice is equivalent to taking $c_Q=0$, and we may then move $L_Q$ so that it intersects $C_Q$.
\end{remark}

We will need the following version of Lemma 3.1 of \cite{To1}.

\begin{lemma}\label{elemalphalem}
For all $Q\in\mathbb D$, we have:
\begin{itemize}
    \item[(a)] $\alpha(Q)\lesssim1$.
    \item[(b)] If $P\subset Q$ and $\ell(P)\approx\ell(Q)$, then $\alpha(P)\lesssim\alpha(Q)$.
    \item[(c)] There exists $0<\eta_0\leq\eta_*$ such that if $\alpha(Q)\leq \eta_0$, then
    \[
        L_Q\cap C_{\diam Q}(Z_Q,\tau_Q)\neq\emptyset
        \qquad\text{and}\qquad
        c_Q\approx1.
    \]
    \item[(d)] $c_Q\lesssim1$ for all $Q\in\mathbb D$.
\end{itemize}
\end{lemma}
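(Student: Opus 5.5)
We follow the proof of Lemma 3.1 in \cite{To1}, testing $\dist_{C_Q}$ against explicit parabolic Lipschitz bump functions and using the $(k+2)$-ADR property of $\mu$.

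For (a), take the competitor $c=0$. If $\|f\|_{\operatorname{Lip}}\le1$ and $\supp f\subseteq C_Q$, then $|f|\lesssim\ell(Q)$, because $f$ vanishes on $\partial C_Q$. Hence $|\int f\,\d\mu|\lesssim\ell(Q)\mu(C_Q)\lesssim\ell(Q)^{k+3}$ by the ADR upper bound, which gives $\alpha(Q)\lesssim1$.

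For (b), the key observation is that $C_P\subseteq C_Q$ as long as the center of $P$ lies in $Q$ and $3\ell(P)+\dist((Z_P,\tau_P),(Z_Q,\tau_Q))\le 3\ell(Q)$. The latter holds since $\diam Q\le 2^{-10}\ell(Q)$ and $\ell(P)\le\ell(Q)$. Every test function supported in $C_P$ is therefore admissible for $C_Q$, so $\dist_{C_P}(\mu,c\sigma_L)\le\dist_{C_Q}(\mu,c\sigma_L)$ for every pair $(c,L)$. Taking the infimum and using $\ell(P)\approx\ell(Q)$ gives $\alpha(P)\lesssim\alpha(Q)$.

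For (c), assume $\alpha(Q)\le\eta_0\le\eta_*$, so that $(c_Q,L_Q)$ is a genuine minimizer. The argument has three steps.
\begin{enumerate}
\item \emph{Lower bound on $c_Q$.} Take a Lipschitz bump $\varphi$ equal to $1$ on $C_{\gamma\ell(Q)/2}(Z_Q,\tau_Q)$ and supported in $C_{\gamma\ell(Q)}(Z_Q,\tau_Q)$, normalized so that $\ell(Q)\varphi$ has Lipschitz constant $\lesssim_\gamma1$. Since $\int\varphi\,\d\mu\gtrsim\ell(Q)^{k+2}$, we get $c_Q\int\varphi\,\d\sigma_{L_Q}\gtrsim\ell(Q)^{k+2}-C\eta_0\ell(Q)^{k+2}>0$ for $\eta_0$ small. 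Thus $L_Q$ meets $C_{\gamma\ell(Q)}(Z_Q,\tau_Q)\subseteq C_{\diam Q}(Z_Q,\tau_Q)$; note $\gamma\ell(Q)\le\diam Q$. Moreover, $\int\varphi\,\d\sigma_{L_Q}\lesssim\ell(Q)^{k+2}$ because $\sigma_{L_Q}$ is flat $(k+2)$-dimensional, so $c_Q\gtrsim1$.
\item \emph{Upper bound on $c_Q$.} Use a bump $\psi$ equal to $1$ on $\tfrac12C_Q$ and supported in $C_Q$. Since $L_Q$ passes near the center, $\int\psi\,\d\sigma_{L_Q}\gtrsim\ell(Q)^{k+2}$. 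Combined with the ADR upper bound $\int\psi\,\d\mu\lesssim\ell(Q)^{k+2}$ and the $\alpha$ error, this gives $c_Q\lesssim1$.
\item \emph{Order of constants.} The constants depend on $\gamma$, hence on $C_0$. This forces $\eta_0$ to be chosen after $\gamma$.
\end{enumerate}

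For (d), there are two cases.
\begin{enumerate}
\item If $\alpha(Q)>\eta_*$, then $c_Q=0$ by convention.
\item If $\alpha(Q)\le\eta_*$, use Remark~\ref{cqintlq}, so that $L_Q\cap C_Q\neq\emptyset$. Suppose first that $L_Q$ meets $\tfrac34C_Q$. Testing with a bump $\psi$ equal to $1$ on $\tfrac{9}{10}C_Q$ gives $c_Q\ell(Q)^{k+2}\lesssim\int\psi\,\d\mu+\eta_*\ell(Q)^{k+2}\lesssim\ell(Q)^{k+2}$. If instead $L_Q$ meets $C_Q$ only near its boundary, use a bump adapted to a smaller cylinder of radius $\approx\ell(Q)$ centered on $L_Q\cap C_Q$ and contained in $C_Q$.
\end{enumerate}

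The main obstacle is exactly this boundary case in (d). The difficulty is that the estimate must not degenerate when $L_Q$ barely grazes $C_Q$. One cannot simply use a small cylinder at a grazing point $p$, because there the flat mass and the Lipschitz normalization are both tiny. Instead, test with $f(Y,s)=\dist\big((Y,s),\mathbb P^n\setminus C_Q\big)$, which is $1$-Lipschitz, supported in $C_Q$, and bounded by $\lesssim\ell(Q)$. Then $\int f\,\d\mu\lesssim\ell(Q)^{k+3}$, so $c_Q\int f\,\d\sigma_{L_Q}\lesssim\ell(Q)^{k+3}$. The remaining step is to bound $\int f\,\d\sigma_{L_Q}$ from below by $\gtrsim\ell(Q)^{k+3}$. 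If this lower bound fails, $\sigma_{L_Q}$ carries little mass inside $C_Q$, so replacing $c_Q$ by $\min(c_Q,C)$ increases $\dist_{C_Q}$ by at most $\lesssim\eta_*\ell(Q)^{k+3}$. One can then argue that the minimizer may be chosen with $c_Q\lesssim1$. This is the point I expect to need the most care.
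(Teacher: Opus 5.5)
Your arguments for (a)--(c) are correct and are essentially the proof of Lemma~3.1 in \cite{To1}, which is all the paper invokes. In (b) you should also check the temporal extent, $|\tau_P-\tau_Q|+9\ell(P)^2\le 9\ell(Q)^2$; this holds because $P\neq Q$ forces $\ell(P)\le\ell(Q)/2$.

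The gap is in (d), in the regime $\eta_0<\alpha(Q)\le\eta_*$ with $L_Q$ only grazing $C_Q$. Your proposed resolution does not work, for two reasons.
\begin{enumerate}
\item Here $c_Q$ is by definition the chosen minimizer, and (d) is a statement about that constant. Replacing it by $\min(c_Q,C)$ gives a pair that no longer realizes the infimum, so it says nothing about $c_Q$.
\item The claimed increase $\lesssim\eta_*\ell(Q)^{k+3}$ is unjustified. For admissible $g$ the change is $(c_Q-\min(c_Q,C))\int g\,\d\sigma_{L_Q}$, which you can only bound by $c_Q\int f\,\d\sigma_{L_Q}$ with $f=\dist(\cdot,\mathbb P^n\setminus C_Q)$. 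When $(c_Q,L_Q)$ is a good approximant, this is comparable to $\int f\,\d\mu\approx\ell(Q)^{k+3}$, an order-one loss.
\end{enumerate}
Indeed, whenever $c=0$ is nearly optimal, planes that barely enter $C_Q$ paired with arbitrarily large $c$ are also nearly optimal. So no bound on $c_Q$ can come from (near-)minimality alone once $\alpha(Q)$ is not small.

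The missing idea is that $\eta_*$ is itself only required to be sufficiently small depending on $n$, $k$, $C_0$, and the paper uses exactly this. Take $\eta_*$ below the threshold of your Step~1 in (c); then the grazing case is vacuous. Suppose $L_Q$ misses $C_{\gamma\ell(Q)}(Z_Q,\tau_Q)\subset\tfrac34 C_Q$. Then your central bump $\varphi$ satisfies $\int\varphi\,\d\mathcal L_Q=0$. Testing with a fixed multiple of $\gamma\ell(Q)\varphi$ gives $\alpha(Q)\ell(Q)^{k+3}\gtrsim\gamma\ell(Q)\,\mu\big(C_{\gamma\ell(Q)/2}(Z_Q,\tau_Q)\big)\gtrsim\gamma^{k+3}\ell(Q)^{k+3}$, that is, $\alpha(Q)\gtrsim\gamma^{k+3}$. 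Hence the argument of (c) applies to every $Q$ with $\alpha(Q)\le\eta_*$ and gives $c_Q\le C(n,k,C_0)$ there, while $c_Q=0$ when $\alpha(Q)>\eta_*$. This proves (d) with a constant unaffected by any later decrease of $\eta_0$; at this stage you may simply take $\eta_0=\eta_*$.
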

\begin{proof}
Parts (a)--(c) follow from the proof of Lemma 3.1 in \cite{To1}, with $\eta_*$ chosen sufficiently small. That argument gives $c_Q\leq C$ whenever $\alpha(Q)\leq\eta_*$, with $C$ depending only on $n$, $k$, and $C_0$. For $\alpha(Q)>\eta_*$, our choice gives $c_Q=0$. This proves (d), independently of any later decrease of $\eta_0$.
\end{proof}

Our choices also give
\begin{equation}\label{alphaflatapproximation}
\dist_{C_Q}(\mu,\mathcal L_Q)\leq C\alpha(Q)\ell(Q)^{k+3}.
\end{equation}
For $\alpha(Q)\leq\eta_*$, this follows from the choice of $c_Q$ and $L_Q$. Otherwise, $\mathcal L_Q=0$, and every admissible test function has absolute value at most $3\ell(Q)$. The ADR upper bound therefore gives $\dist_{C_Q}(\mu,0)\leq C\ell(Q)^{k+3}$; since $\alpha(Q)>\eta_*$, this proves \eqref{alphaflatapproximation}. The constant in \eqref{alphaflatapproximation} depends only on $n$, $k$, $C_0$, and the fixed choice of $\eta_*$, and is independent of $Q$.

We define the $\beta_1$-numbers and their bilateral variant as follows. For $Q\in\mathbb D$, set
\begin{align}\label{b1def}
\beta_1(Q)
:=
\frac{1}{\ell(Q)^{k+2}}
\inf_L
\int_{C_{2\ell(Q)}(Z_Q,\tau_Q)}
\frac{\dist\big((Y,s),L\big)}{\ell(Q)}
\,\d\mu(Y,s),
\end{align}
and
\begin{align}\label{bb1def}
b\beta_1(Q)
:=
\frac{1}{\ell(Q)^{k+2}}
\inf_L
\bigg[
&\int_{C_{2\ell(Q)}(Z_Q,\tau_Q)}
\frac{\dist\big((Y,s),L\big)}{\ell(Q)}
\,\d\mu(Y,s)
\\
&+
\int_{C_{2\ell(Q)}(Z_Q,\tau_Q)}
\frac{\dist\big((X,t),\supp(\mu)\big)}{\ell(Q)}
\,\d\sigma_L(X,t)
\bigg],
\end{align}
where the infimum is taken over the same class of parabolic $(k+2)$-planes $L$ as in the definition of $\alpha(Q)$.

\begin{lemma}\label{betalphacomp}
For $Q\in\mathbb D$, we have
\[
    \beta_1(Q)\leq b\beta_1(Q)\lesssim\alpha(Q).
\]
Moreover,
\begin{equation}\label{LQbetacomp}
\begin{aligned}
\frac{1}{\ell(Q)^{k+2}}
\bigg[
&\int_{C_{2\ell(Q)}(Z_Q,\tau_Q)}
\frac{\dist\big((X,t),L_Q\big)}{\ell(Q)}
\,\d\mu(X,t)
\\
&+
\int_{C_{2\ell(Q)}(Z_Q,\tau_Q)}
\frac{\dist\big((X,t),\supp(\mu)\big)}{\ell(Q)}
\,\d\sigma_{L_Q}(X,t)
\bigg]
\lesssim
\alpha(Q).
\end{aligned}
\end{equation}
\end{lemma}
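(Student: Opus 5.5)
The inequality $\beta_1(Q)\leq b\beta_1(Q)$ is immediate from the definitions. Both quantities are infima over the same class of vertical planes, and for each fixed $L$ the bracket in \eqref{bb1def} is the integrand of \eqref{b1def} plus a nonnegative term. The substance is therefore \eqref{LQbetacomp}, since it bounds $b\beta_1(Q)$ by testing the infimum with $L=L_Q$. We split into two cases according to whether $\alpha(Q)\leq\eta_0$, with $\eta_0$ as in Lemma~\ref{elemalphalem}(c).

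\emph{Large $\alpha$.} If $\alpha(Q)>\eta_0$, it suffices to show that the left side of \eqref{LQbetacomp} is $\lesssim1$. By Remark~\ref{cqintlq}, $L_Q$ meets $C_Q=C_{3\ell(Q)}(Z_Q,\tau_Q)$. Hence every point of $C_{2\ell(Q)}(Z_Q,\tau_Q)$ lies within $\lesssim\ell(Q)$ of $L_Q$. Since $(Z_Q,\tau_Q)\in\supp\mu$, every such point also lies within $\lesssim\ell(Q)$ of $\supp\mu$. Both integrands are therefore $\lesssim1$. The ADR upper bound gives $\mu(C_{2\ell(Q)})\lesssim\ell(Q)^{k+2}$, and $\sigma_{L_Q}(C_{2\ell(Q)})\lesssim\ell(Q)^{k+2}$ holds trivially for a vertical plane. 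This case is done.

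\emph{Small $\alpha$, first term.} Suppose $\alpha(Q)\leq\eta_0$, so that $c_Q\approx1$ and $L_Q$ meets $C_{\diam Q}(Z_Q,\tau_Q)$. We follow Lemma~\ref{b1abd}. Choose a cutoff $\zeta$ equal to $1$ on $C_{2\ell(Q)}(Z_Q,\tau_Q)$ and supported in $C_Q$, with $\|\nabla\zeta\|\lesssim\ell(Q)^{-1}$ and $\|\partial_t\zeta\|\lesssim\ell(Q)^{-2}$. Set $f=\zeta\,\dist(\cdot,L_Q)$. Then $f$ is $C$-Lipschitz in the parabolic metric; this uses that $\dist(\cdot,L_Q)\lesssim\ell(Q)$ on $C_Q$ and that the time derivative of the cutoff only enters through $|t-s|^{1/2}$ increments. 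Moreover $f$ vanishes on $L_Q$. By \eqref{alphaflatapproximation},
\[
\int_{C_{2\ell(Q)}}\dist(\cdot,L_Q)\,\d\mu\leq\int f\,\d\mu=\Big|\int f\,\d\mu-\int f\,\d\mathcal L_Q\Big|\lesssim\alpha(Q)\ell(Q)^{k+3}.
\]

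\emph{Small $\alpha$, second term.} This is where the main work lies. Set $g=\zeta\,\min\{\dist(\cdot,E),\ell(Q)\}$. It is Lipschitz, supported in $C_Q$, and vanishes on $E$. Then
\[
c_Q\int g\,\d\sigma_{L_Q}=\Big|\int g\,\d\mathcal L_Q-\int g\,\d\mu\Big|\lesssim\alpha(Q)\ell(Q)^{k+3}.
\]
Since $c_Q\approx1$, this controls $\int_{C_{2\ell(Q)}}\min\{\dist(\cdot,E),\ell(Q)\}\,\d\sigma_{L_Q}$. It remains to remove the truncation at $\ell(Q)$. For a point $(X,t)\in C_{2\ell(Q)}$, the distance to $E$ is at most $d_p((X,t),(Z_Q,\tau_Q))\lesssim\ell(Q)$. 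So the truncated and untruncated integrands are comparable, with a constant depending only on the fixed dilation factors; if needed, truncate at $C\ell(Q)$ instead of $\ell(Q)$ and rescale $g$ by $1/C$. Combining the two terms and dividing by $\ell(Q)^{k+3}$ gives \eqref{LQbetacomp}.

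Taking $L=L_Q$ in \eqref{bb1def} then yields $b\beta_1(Q)\lesssim\alpha(Q)$. The only delicate point is checking that the test functions are parabolic-Lipschitz with constants independent of $Q$. This follows from the smoothness of the cutoff and the fact that $\dist(\cdot,L_Q)$ and $\dist(\cdot,E)$ are $1$-Lipschitz for $d_p$.
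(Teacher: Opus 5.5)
Your proof is correct and follows the paper's argument: the same case split at $\eta_0$, the same crude ADR bound when $\alpha(Q)$ is large, and, when $\alpha(Q)$ is small, testing \eqref{alphaflatapproximation} against cutoffs of the distance functions. The only cosmetic difference is that the paper uses the single test function $f=[\dist(\cdot,L_Q)-\dist(\cdot,\supp\mu)]\varphi$, whose $\mu$- and $c_Q\sigma_{L_Q}$-integrals differ by exactly the sum of the two nonnegative terms, whereas you test the two terms separately. Your truncation at $\ell(Q)$ is also unnecessary, since $\dist(\cdot,E)\lesssim\ell(Q)$ already holds on $C_Q$ because $(Z_Q,\tau_Q)\in E$.
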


\begin{proof}
The inequality $\beta_1(Q)\leq b\beta_1(Q)$ is immediate. We prove $b\beta_1(Q)\lesssim\alpha(Q)$ and \eqref{LQbetacomp} as follows.

Let $\eta_0>0$ be the small constant from Lemma~\ref{elemalphalem}(c). If $\alpha(Q)\geq\eta_0$, then the desired estimates follow from the ADR property, and the fact that $L_Q\cap C_Q\neq\emptyset$. Indeed, the quantities appearing in the definitions of $\beta_1(Q)$, $b\beta_1(Q)$, and in \eqref{LQbetacomp} are bounded by a dimensional constant, and therefore by $C\eta_0^{-1}\alpha(Q)$.

It remains to consider the case $\alpha(Q)<\eta_0$. Then Lemma~\ref{elemalphalem}(c) implies that
\[
    c_Q\approx1.
\]
Let $\varphi$ be a smooth function such that
\[
    \chi_{C_{2\ell(Q)}(Z_Q,\tau_Q)}
    \leq
    \varphi
    \leq
    \chi_{C_Q},
    \qquad
    \|\varphi\|_{\operatorname{Lip}}\lesssim \ell(Q)^{-1}.
\]
Define
\[
    f(X,t)
    :=
    \big[
        \dist\big((X,t),L_Q\big)
        -
        \dist\big((X,t),\supp(\mu)\big)
    \big]\varphi(X,t).
\]
Since both distance functions are $1$-Lipschitz and the bracketed term is bounded by $C\ell(Q)$ on $\supp\varphi$, we have
\[
    \|f\|_{\operatorname{Lip}}\lesssim1.
\]
Hence, by the definition of $\dist_{C_Q}$,
\begin{align*}
\dist_{C_Q}(\mu,\mathcal L_Q)
&\gtrsim
\left|
    \int f\,\d\mu
    -
    c_Q\int f\,\d\sigma_{L_Q}
\right|
\\
&=
\int \varphi(X,t)\dist\big((X,t),L_Q\big)\,\d\mu(X,t)
\\
&\qquad
+
 c_Q
\int \varphi(X,t)\dist\big((X,t),\supp(\mu)\big)
\,\d\sigma_{L_Q}(X,t).
\end{align*}
Here we used that
\[
    \dist\big((X,t),\supp(\mu)\big)=0
    \quad \mu\text{-a.e.},
\]
and
\[
    \dist\big((X,t),L_Q\big)=0
    \quad \sigma_{L_Q}\text{-a.e.}
\]
Since $c_Q\approx1$ and $\varphi\equiv1$ on $C_{2\ell(Q)}(Z_Q,\tau_Q)$, we obtain
\begin{align*}
&\int_{C_{2\ell(Q)}(Z_Q,\tau_Q)}
\dist\big((X,t),L_Q\big)\,\d\mu(X,t)
\\
&\qquad
+
\int_{C_{2\ell(Q)}(Z_Q,\tau_Q)}
\dist\big((X,t),\supp(\mu)\big)
\,\d\sigma_{L_Q}(X,t)
\\
&\lesssim
\dist_{C_Q}(\mu,\mathcal L_Q)
\lesssim
\alpha(Q)\ell(Q)^{k+3}.
\end{align*}
Dividing by $\ell(Q)^{k+3}$ gives \eqref{LQbetacomp}. Since $b\beta_1(Q)$ is defined by taking the infimum over $L$, \eqref{LQbetacomp} also implies
\[
    b\beta_1(Q)\lesssim\alpha(Q).
\]
This completes the proof.
\end{proof}

We record the following lemma, which is a parabolic version of Lemma 3.4 in \cite{To1}.

\begin{lemma}\label{toalphalemma}
Let $P,Q \in \mathbb D$ be dyadic cubes such that $P \subset Q$, with
$\eta\ell(Q) \leq \ell(P) \leq \ell(Q)$ for some fixed $\eta>0$. Then
\begin{align}\label{lqlpaq}
\dist_H\big( L_P \cap C_Q, L_Q \cap C_Q \big)
\leq C(\eta)\alpha(Q)\ell(Q),
\end{align}
where $\dist_H$ denotes the parabolic Hausdorff distance. Moreover,
\[
    |c_P-c_Q|\leq C(\eta)\alpha(Q).
\]
\end{lemma}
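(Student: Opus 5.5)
The plan is to follow the proof of Lemma 3.4 in \cite{To1}, adapted to vertical planes and the parabolic metric. First I would dispose of the case $\alpha(Q)\geq\eta_1$, where $\eta_1=\eta_1(\eta)>0$ is a small constant fixed below. In that case Remark~\ref{cqintlq} gives $L_P\cap C_P\neq\emptyset$ and $L_Q\cap C_Q\neq\emptyset$. Since $C_P\subset C_{C\ell(Q)}(Z_Q,\tau_Q)$, both Hausdorff distances are at most $C\ell(Q)$. We interpret $\dist_H$ of the traces as the distance from points of $L_P\cap C_Q$ to $L_Q$ and conversely; for vertical planes meeting a slightly enlarged cylinder this is bounded by $C\ell(Q)$. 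Lemma~\ref{elemalphalem}(d) gives $|c_P-c_Q|\lesssim 1$. Both conclusions then hold with constant $C\eta_1^{-1}$.

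Now assume $\alpha(Q)<\eta_1$. By Lemma~\ref{elemalphalem}(b), $\alpha(P)\leq C(\eta)\alpha(Q)$. Choosing $\eta_1$ small, we get $\alpha(P)\leq\eta_0$. Lemma~\ref{elemalphalem}(c) then gives $c_P\approx c_Q\approx1$, with $L_P$ meeting $C_{\diam P}(Z_P,\tau_P)$ and $L_Q$ meeting $C_{\diam Q}(Z_Q,\tau_Q)$.

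The key step is to test both flat approximations against a single function on the small cylinder $C_P$. Let $\varphi$ be a bump with $\chi_{C_{2\ell(P)}(Z_P,\tau_P)}\leq\varphi\leq\chi_{C_P}$ and $\|\varphi\|_{\rm Lip}\lesssim\ell(P)^{-1}$. Set $f=\varphi\,\dist(\cdot,L_P)$, which is $C$-Lipschitz and supported in $C_P\subset C_Q$ (after a harmless adjustment of constants, since $\ell(P)\leq\ell(Q)/2$ or $P$ has the same size as $Q$). From \eqref{alphaflatapproximation} for $P$ and for $Q$ we obtain
\[
c_Q\int f\,\d\sigma_{L_Q}\leq\Big|\int f\,\d\mu-c_Q\int f\,\d\sigma_{L_Q}\Big|+\Big|\int f\,\d\mu-c_P\int f\,\d\sigma_{L_P}\Big|\lesssim\big(\alpha(Q)\ell(Q)^{k+3}+\alpha(P)\ell(P)^{k+3}\big).
\]
Here we used that $f$ vanishes on $L_P$. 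Since $c_Q\approx1$, it follows that
\[
\int_{L_Q\cap C_{2\ell(P)}(Z_P,\tau_P)}\dist(\cdot,L_P)\,\d\sigma_{L_Q}\leq C(\eta)\alpha(Q)\ell(P)^{k+3}.
\]

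The main obstacle is the geometric step that converts this averaged bound into a uniform Hausdorff bound on the larger cylinder $C_Q$. First, $L_Q$ actually meets $C_{2\ell(P)}(Z_P,\tau_P)$ in a set of $\sigma_{L_Q}$-measure $\approx\ell(P)^{k+2}$. This follows from Lemma~\ref{betalphacomp}: because $\mu(C_{\ell(P)}(Z_P,\tau_P))\gtrsim\ell(P)^{k+2}$, there are points of $E$ near $Z_P$ within distance $C(\eta)\alpha(Q)\ell(Q)$ of $L_Q$. On a ball of $L_Q$ of radius $\approx\ell(P)$, the function $\dist(\cdot,L_P)$ is controlled by an affine-type function. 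Concretely, both planes are products of spatial affine $k$-planes with $\mathbb R$, so the time variable plays no role, and the spatial distance between two affine $k$-planes restricted to one of them is a Euclidean seminorm of an affine map. Its $L^1$ average over a $k$-ball of radius $r$ therefore controls its supremum over the concentric ball of radius $Mr$, up to a constant factor $C(M)$. Applying this with $r\approx\ell(P)$ and $M\approx\ell(Q)/\ell(P)\leq\eta^{-1}$ gives $\sup_{L_Q\cap C_Q}\dist(\cdot,L_P)\leq C(\eta)\alpha(Q)\ell(Q)$. Since both planes meet $C_Q$ and have the same dimension with angle $\lesssim C(\eta)\alpha(Q)$, the symmetric bound follows, which proves \eqref{lqlpaq}.

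For the constants, test $\mu-c_Q\sigma_{L_Q}$ and $\mu-c_P\sigma_{L_P}$ against $g=\varphi\,\ell(P)$, which is Lipschitz with norm $\lesssim1$. This yields $|c_Q\int g\,\d\sigma_{L_Q}-c_P\int g\,\d\sigma_{L_P}|\leq C(\eta)\alpha(Q)\ell(Q)^{k+3}$. The plane estimate above shows that $\int g\,\d\sigma_{L_Q}$ and $\int g\,\d\sigma_{L_P}$ differ by at most $C(\eta)\alpha(Q)\ell(P)^{k+3}$. We can compare the two slice measures via orthogonal projection, since nearby vertical planes have slice measures related by a Jacobian of size $1+O(\text{angle})$ together with a translation of size $\lesssim\alpha\ell$, and $g$ is Lipschitz. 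Finally, $\int g\,\d\sigma_{L_Q}\approx\ell(P)^{k+3}$. Dividing by this gives $|c_P-c_Q|\leq C(\eta)\alpha(Q)$.
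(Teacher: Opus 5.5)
Your proof is correct and follows essentially the paper's route (Tolsa's Lemma~3.4 argument): you reduce to small $\alpha$, test the flat approximations against a cutoff times a distance function on $C_P$, convert the resulting $L^1$ bound into a Hausdorff bound, and then compare $c_P$ and $c_Q$ by testing with the cutoff itself. The differences are minor. The paper uses $\varphi_P\dist(\cdot,L_Q)$ and integrates over $L_P$, which Lemma~\ref{elemalphalem}(c) already places near $Z_P$, whereas your choice $\varphi\dist(\cdot,L_P)$ integrated over $L_Q$ needs the extra Chebyshev step via \eqref{LQbetacomp} to locate $L_Q$ near $Z_P$; in exchange, you spell out, via norm equivalence for affine maps, the averaged-to-uniform step that the paper leaves implicit.
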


\begin{proof}
We follow the proof of Lemma 3.4 in \cite{To1}.

Let $\eta_0>0$ be the small constant in
Lemma~\ref{elemalphalem}(c). We may assume that
\[
    \alpha(P),\alpha(Q)\leq\eta_0.
\]
Indeed, by Lemma~\ref{elemalphalem}(b), if either of these inequalities
fails, then $\alpha(Q)\gtrsim_{\eta} \eta_0$. In that case the conclusions follow
from Lemma~\ref{elemalphalem}(d), the fact that
\[
    L_P\cap C_P\neq\emptyset,
    \qquad
    L_Q\cap C_Q\neq\emptyset,
\]
and the containment $C_P\subset C_Q$.

We first prove \eqref{lqlpaq}. Let $\varphi_P$ be a smooth function such
that
\[
    \chi_{C_{2\ell(P)}(Z_P,\tau_P)}
    \leq
    \varphi_P
    \leq
    \chi_{C_P},
    \qquad
    \|\varphi_P\|_{\operatorname{Lip}}
    \lesssim\ell(P)^{-1}.
\]
Since $\ell(P)\geq\eta\ell(Q)$ and $C_P\subset C_Q$, we have
\[
    \big\|
        \varphi_P\dist(\,\cdot\,,L_Q)
    \big\|_{\operatorname{Lip}}
    \lesssim_\eta1.
\]
Therefore,
\begin{align*}
\bigg|
    \int
        \varphi_P(X,t)
        \dist\big((X,t),L_Q\big)
        \,\d\mu(X,t)
    &-
    \int
        \varphi_P(X,t)
        \dist\big((X,t),L_Q\big)
        \,\d\mathcal L_P(X,t)
\bigg|
\\
&\lesssim_\eta
\alpha(P)\ell(P)^{k+3}.
\end{align*}
Moreover, since
$\varphi_P\dist(\,\cdot\,,L_Q)$ vanishes on $L_Q$, we have (by \eqref{LQbetacomp})
\[
    \int
        \varphi_P(X,t)
        \dist\big((X,t),L_Q\big)
        \,\d\mu(X,t)
    \lesssim_\eta
    \alpha(Q)\ell(Q)^{k+3}.
\]
Taking into account that
\[
    \alpha(P)\lesssim_\eta\alpha(Q),
\]
we obtain
\[
    \int_{C_{2\ell(P)}(Z_P,\tau_P)}
        \dist\big((X,t),L_Q\big)
        \,\d\mathcal L_P(X,t)
    \lesssim_\eta
    \alpha(Q)\ell(Q)^{k+3}.
\]
By Lemma~\ref{elemalphalem}(c),
\[
    c_P\approx1
    \qquad\text{and}\qquad
    L_P\cap C_{\diam(P)}(Z_P,\tau_P)\neq\emptyset.
\]
The preceding estimate then implies
\[
    \dist_H\big(
        L_P\cap C_Q,
        L_Q\cap C_Q
    \big)
    \lesssim_\eta
    \alpha(Q)\ell(Q),
\]
which proves \eqref{lqlpaq}.

We now prove the estimate for the constants. Using the same function
$\varphi_P$, we have
\[
\left|
    \int\varphi_P\,\d\mu
    -
    c_P\int\varphi_P\,
        \d\sigma_{L_P}
\right|
\lesssim
\alpha(P)\ell(P)^{k+2}
\lesssim_\eta
\alpha(Q)\ell(Q)^{k+2}.
\]
Likewise,
\[
\left|
    \int\varphi_P\,\d\mu
    -
    c_Q\int\varphi_P\,
        \d\sigma_{L_Q}
\right|
\lesssim_\eta
\alpha(Q)\ell(Q)^{k+2}.
\]
Consequently,
\[
\left|
    c_P\int\varphi_P\,
        \d\sigma_{L_P}
    -
    c_Q\int\varphi_P\,
        \d\sigma_{L_Q}
\right|
\lesssim_\eta
\alpha(Q)\ell(Q)^{k+2}.
\]
It follows that
\begin{align*}
|c_Q-c_P|
\int\varphi_P\,
    \d\sigma_{L_P}
&\lesssim_\eta
\alpha(Q)\ell(Q)^{k+2}
\\
&\quad
+
c_Q
\left|
    \int\varphi_P\,
        \d\sigma_{L_P}
    -
    \int\varphi_P\,
        \d\sigma_{L_Q}
\right|.
\end{align*}
Using \eqref{lqlpaq}, we obtain
\[
\left|
    \int\varphi_P\,
        \d\sigma_{L_P}
    -
    \int\varphi_P\,
        \d\sigma_{L_Q}
\right|
\lesssim_\eta
\alpha(Q)\ell(Q)^{k+2}.
\]
Finally, Lemma~\ref{elemalphalem}(d) gives $c_Q\lesssim1$, while
\[
    \int\varphi_P\,
        \d\sigma_{L_P}
    \approx
    \ell(P)^{k+2}
    \approx_\eta
    \ell(Q)^{k+2}.
\]
Thus,
\[
    |c_P-c_Q|
    \lesssim_\eta
    \alpha(Q),
\]
as desired.
\end{proof}

Given a cube $Q \in \mathbb D$, we define the neighbors of $Q$ by
\begin{align}\label{neighborsdef}
N(Q)
:=
\left\{
T \in \mathbb D_{J(Q)}:
\dist(T,Q) \leq \gamma\ell(Q)
\text{ and } T \neq Q
\right\}.
\end{align}

\begin{lemma}\label{toalphalemma2} Suppose $Q\in\mathbb D$ and $T\in N(Q)$. Then \[ \dist_H\Big( L_Q\cap C_{6\ell(Q)}(Z_Q,\tau_Q), L_T\cap C_{6\ell(Q)}(Z_Q,\tau_Q) \Big) \lesssim \max\{\alpha(Q),\alpha(T)\}\ell(Q). \]
In particular, if $\max\{\alpha(Q),\alpha(T)\} \leq \eta_0$, where $\eta_0$ is as in Lemma \ref{elemalphalem} (c), then 
\[
    \dist_H\big(L_Q\cap C_Q,L_T\cap C_Q\big)
    \lesssim
    \max\{\alpha(Q),\alpha(T)\}\ell(Q).
\]
\end{lemma}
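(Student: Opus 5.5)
The plan is to follow the argument of Lemma~\ref{toalphalemma}, with the pair $(P,Q)$ replaced by the two neighboring cubes $Q$ and $T$, both compared on a common enlarged testing region. The key geometric point is this. Since $T\in N(Q)$, we have $\ell(T)=\ell(Q)$ and $\dist(T,Q)\leq\gamma\ell(Q)$. Moreover $\diam(Q),\diam(T)\leq 2^{-J(Q)-10}=2^{-10}\ell(Q)$. Hence $\|(Z_T,\tau_T)-(Z_Q,\tau_Q)\|\leq(2^{-9}+\gamma)\ell(Q)$, which is much smaller than $\ell(Q)$. It follows that $C_{2\ell(Q)}(Z_Q,\tau_Q)\cap C_{2\ell(T)}(Z_T,\tau_T)$ contains the cylinder $C':=C_{\ell(Q)}(Z_Q,\tau_Q)$, and that both $C_Q$ and $C_T$ lie inside $C_{6\ell(Q)}(Z_Q,\tau_Q)$.

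\emph{Large $\alpha$.} If $\max\{\alpha(Q),\alpha(T)\}\geq\eta_0$, the first estimate is trivial. By Remark~\ref{cqintlq}, both $L_Q$ and $L_T$ meet $C_Q$, respectively $C_T$, so both meet $C_{6\ell(Q)}(Z_Q,\tau_Q)$. Therefore each of $L_Q\cap C_{6\ell(Q)}(Z_Q,\tau_Q)$ and $L_T\cap C_{6\ell(Q)}(Z_Q,\tau_Q)$ lies within $C\ell(Q)$ of the other, and $C\ell(Q)\leq C\eta_0^{-1}\max\{\alpha(Q),\alpha(T)\}\ell(Q)$.

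\emph{Small $\alpha$.} Now assume $\alpha(Q),\alpha(T)<\eta_0$. By Lemma~\ref{elemalphalem}(c), $c_Q\approx c_T\approx1$, and both planes pass within $\diam Q\lesssim 2^{-10}\ell(Q)$ of the respective centers; thus both pass very near $(Z_Q,\tau_Q)$. Apply \eqref{LQbetacomp} to $Q$ and to $T$ and restrict both integrals to $C'$. This gives
\[
\int_{C'}\dist(\cdot,L_Q)\,\d\mu+\int_{C'}\dist(\cdot,L_T)\,\d\mu\lesssim\max\{\alpha(Q),\alpha(T)\}\ell(Q)^{k+3}.
\]
By the triangle inequality, the function $g(Y,s):=\dist((Y,s),L_Q)+\dist((Y,s),L_T)$ has $\int_{C'}g\,\d\mu\lesssim\max\{\alpha(Q),\alpha(T)\}\ell(Q)^{k+3}$.

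Next, use the bilateral term in \eqref{LQbetacomp}, or equivalently the ADR lower bound together with the small $\beta_1$ of $\mu$ relative to $L_Q$. This produces points of $\supp\mu\cap C'$ lying close to $L_Q$ that are spread out in $k$ independent spatial directions at scale $\approx\ell(Q)$, together with the time direction. Concretely, consider the $k+1$ sub-cylinders $C_{c\ell(Q)}(W_i,s_i)$ centered at points $(W_i,s_i)\in L_Q\cap\frac12C'$ forming a nondegenerate $k$-simplex of size $\approx\ell(Q)$ in the spatial direction of $L_Q$. By the bilateral estimate, each such sub-cylinder contains $\mu$-mass $\gtrsim\ell(Q)^{k+2}$, provided $\eta_0$ is small. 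Chebyshev's inequality then yields points $p_i\in\supp\mu$ in each sub-cylinder with $g(p_i)\lesssim\max\{\alpha(Q),\alpha(T)\}\ell(Q)$. Because both planes are vertical, closeness of $L_T$ to $L_Q$ at these $k+1$ well-separated spatial positions controls their distance throughout $C_{6\ell(Q)}(Z_Q,\tau_Q)$: an affine $k$-plane is determined by $k+1$ points, and extrapolating by a fixed factor costs only a constant. This gives the first displayed estimate, with a constant depending only on $n$, $k$, $C_0$. I expect this point-selection and extrapolation step to be the main technical obstacle. It is essentially the same step that Lemma~\ref{toalphalemma} passes over with the phrase ``the preceding estimate then implies''. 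Here it must be carried out carefully, because the region $C_{6\ell(Q)}(Z_Q,\tau_Q)$ is larger than the cylinders $C_{2\ell}$ on which $\beta_1$-control is available.

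\emph{The second statement.} Under $\max\{\alpha(Q),\alpha(T)\}\leq\eta_0$, both planes meet $C_{\diam Q}(Z_Q,\tau_Q)$ deep inside $C_Q\subset C_{6\ell(Q)}(Z_Q,\tau_Q)$, and their mutual angle is $\lesssim\eta_0$. Restricting the Hausdorff estimate from $C_{6\ell(Q)}(Z_Q,\tau_Q)$ to the smaller cylinder $C_Q$ then loses only a constant. The reason is that near the boundary of $C_Q$, each point of $L_Q\cap C_Q$ has a nearby point of $L_T$ that may fall just outside $C_Q$; it can be moved inward along $L_T$ by at most a comparable distance, since $L_T$ crosses $\partial C_Q$ transversally at small angle to $L_Q$.
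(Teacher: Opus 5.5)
Your argument is correct, but in the small-$\alpha$ case it takes a different route from the paper. Write $r=\ell(Q)$ and $\delta=\max\{\alpha(Q),\alpha(T)\}$.

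The paper never selects points of $\supp\mu$. It tests $f_T=\varphi\,\dist(\cdot,L_T)$ against $\mathcal L_Q-\mu$ and $\mu-\mathcal L_T$, where $\varphi=1$ on $C_{2r}(Z_Q,\tau_Q)$ and $\supp\varphi\subset C_{\frac52r}(Z_Q,\tau_Q)\subset C_Q\cap C_T$. Since $f_T$ vanishes on $L_T$, this gives directly $\int_{L_Q\cap C_{2r}(Z_Q,\tau_Q)}\dist(\cdot,L_T)\,\d\sigma_{L_Q}\lesssim\delta r^{k+3}$, and symmetrically with $Q$ and $T$ interchanged. The control then lives on the flat plane $L_Q$, whose measure is automatically spread out. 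The sup bound follows because $\dist(\cdot,L_T)$ restricted to $L_Q$ is the norm of an affine function of the spatial variable.

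You instead stay on the $\mu$ side, so you must manufacture spread-out points. The bilateral part of \eqref{LQbetacomp} plus ADR puts mass in $k+1$ prescribed sub-cylinders, and then Chebyshev and barycentric extrapolation finish.

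What each route buys:
\begin{itemize}
\item Your route uses only bilateral $\beta_1$-type information. It also makes explicit the extrapolation step that the paper compresses into ``Therefore''.
\item Its cost is the mass lower bound, which needs a threshold $\eta_1\le\eta_0$ depending on the sub-cylinder radius. This is harmless, but you should split the first statement at $\eta_1$ rather than ``shrinking $\eta_0$''. The constant $\eta_0$ is fixed by the second statement, and the trivial case works for any fixed threshold.
\item The second statement still holds at $\eta_0$, since Lemma~\ref{elemalphalem}(c) places both planes near $(Z_Q,\tau_Q)$.
\end{itemize}

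Three smaller points:
\begin{itemize}
\item One-sided $\beta_1$ plus ADR is not ``equivalent'' to the bilateral estimate for your construction. A half-plane has vanishing one-sided $\beta_1$ relative to its full plane but no mass over the missing half. So it is really the bilateral term you use.
\item In the boundary adjustment, the relevant fact is not the small angle. It is that the other plane passes near the center of the cylinder, whose spatial cross-sections are convex balls. Sliding the nearest point toward that near-center point costs a displacement comparable to the overshoot.
\item The same adjustment is needed for the $C_{6\ell(Q)}$ estimate itself. Both you and the paper leave it implicit there.
\end{itemize}
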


\begin{proof}
Set $r:=\ell(Q)=\ell(T)$ and $\delta:=\max\{\alpha(Q),\alpha(T)\}$.   We first consider the case $\delta > \eta_0$. Choose $W_Q\in L_Q\cap C_Q$ and $W_T\in L_T\cap C_T$.
Since $T\in N(Q)$,
\[
\begin{aligned}
d_p\big((Z_Q,\tau_Q),(Z_T,\tau_T)\big)
&\leq
\diam(Q)+\dist(Q,T)+\diam(T)
\\
&\leq
(2^{-9}+\gamma)r.
\end{aligned}
\]
Consequently, $W_Q,W_T\in C_{6r}(Z_Q,\tau_Q)$.
For any $W\in L_Q\cap C_{6r}(Z_Q,\tau_Q)$,
we have
\[
    \dist\Big(
        W,
        L_T\cap C_{6r}(Z_Q,\tau_Q)
    \Big)
    \leq d_p(W,W_T)
    \lesssim r.
\]
Similarly,
\[
    \sup_{W\in L_T\cap C_{6r}(Z_Q,\tau_Q)}
    \dist\Big(
        W,
        L_Q\cap C_{6r}(Z_Q,\tau_Q)
    \Big)
    \lesssim r.
\]
It follows that
\[
    \dist_H\Big(
        L_Q\cap C_{6r}(Z_Q,\tau_Q),
        L_T\cap C_{6r}(Z_Q,\tau_Q)
    \Big)
    \lesssim r.
\]
Since $\delta > \eta_0$, $r < \eta_0^{-1}\delta r$,
and hence
\[
    \dist_H\Big(
        L_Q\cap C_{6r}(Z_Q,\tau_Q),
        L_T\cap C_{6r}(Z_Q,\tau_Q)
    \Big)
    \lesssim
    \delta r.
\]

Suppose now that $\delta\leq\eta_0$. Since $T\in N(Q)$, the preceding center estimate gives
\[
d_p\big((Z_Q,\tau_Q),(Z_T,\tau_T)\big)\leq(2^{-9}+\gamma)r.
\]
By the choice of $\gamma$, this implies
\[
    C_{\frac52r}(Z_Q,\tau_Q)\subset C_Q\cap C_T.
\] 
Hence, by the ADR property,
\[
    \mu(C_Q\cap C_T)\gtrsim \ell(Q)^{k+2}.
\]
Using \eqref{LQbetacomp} for both $Q$ and $T$, we get
\[
\frac{1}{r^{k+2}}
\int_{C_{2r}(Z_Q,\tau_Q)\cap C_{2r}(Z_T,\tau_T)}
\frac{\dist\big((X,t),L_Q\big)}{\ell(Q)}
\,\d\mu(X,t)
\lesssim
\alpha(Q),
\]
and
\[
\frac{1}{r^{k+2}}
\int_{C_{2r}(Z_Q,\tau_Q)\cap C_{2r}(Z_T,\tau_T)}
\frac{\dist\big((X,t),L_T\big)}{\ell(Q)}
\,\d\mu(X,t)
\lesssim
\alpha(T).
\]

Choose a Lipschitz function $\varphi$ such that
\[
    0\leq\varphi\leq1,
    \qquad
    \varphi=1
    \quad\text{on }C_{2r}(Z_Q,\tau_Q),
\]
\[
    \supp\varphi
    \subset C_{\frac52r}(Z_Q,\tau_Q),
    \qquad
    \|\varphi\|_{\operatorname{Lip}}\lesssim r^{-1}.
\]

Since $\alpha(Q),\alpha(T)<\eta_0$,
Lemma~\ref{elemalphalem}(c) gives
\[
    c_Q\approx c_T\approx1,
\]
\[
    L_Q\cap C_{\diam(Q)}(Z_Q,\tau_Q)\neq\emptyset,
    \qquad
    L_T\cap C_{\diam(T)}(Z_T,\tau_T)\neq\emptyset.
\]
In particular, after taking $\gamma$ sufficiently small,
\[
    L_Q\cap C_{r/4}(Z_Q,\tau_Q)\neq\emptyset,
    \qquad
    L_T\cap C_{r/4}(Z_Q,\tau_Q)\neq\emptyset.
\]
Define
\[
    f_T(X,t)
    :=
    \varphi(X,t)\dist\big((X,t),L_T\big).
\]
The distance function to $L_T$ is $1$-Lipschitz. Moreover, since
$L_T$ meets $C_{r/4}(Z_Q,\tau_Q)$ and
\[
    \supp\varphi\subset C_{\frac52r}(Z_Q,\tau_Q),
\]
we have
\[
    \sup_{\supp\varphi}
    \dist\big((X,t),L_T\big)
    \lesssim r.
\]
Consequently,
\begin{align*}
\|f_T\|_{\operatorname{Lip}}
&\lesssim
\|\varphi\|_{\infty}
\big\|\dist(\,\cdot\,,L_T)\big\|_{\operatorname{Lip}}+
\|\varphi\|_{\operatorname{Lip}}
\sup_{\supp\varphi}
\dist(\,\cdot\,,L_T)\lesssim
1+r^{-1}r
\lesssim1.
\end{align*}
Since $f_T$ vanishes on $L_T$, the definitions of $\alpha(Q)$ and
$\alpha(T)$ give
\begin{align*}
\int f_T\,\d\mathcal L_Q
&\leq
\left|
    \int f_T\,\d\mathcal L_Q
    -
    \int f_T\,\d\mu
\right|
+
\left|
    \int f_T\,\d\mu
    -
    \int f_T\,\d\mathcal L_T
\right|
\\
&\lesssim
\alpha(Q)r^{k+3}
+
\alpha(T)r^{k+3}
\\
&\lesssim
\delta r^{k+3}.
\end{align*}
Since
\[
    \mathcal L_Q
    =
    c_Q\sigma_{L_Q},
    \qquad
    c_Q\approx1,
\]
and $\varphi=1$ on $C_{2r}(Z_Q,\tau_Q)$, it follows that
\begin{equation}\label{LQLTintegralcomparison}
\int_{L_Q\cap C_{2r}(Z_Q,\tau_Q)}
    \dist\big((X,t),L_T\big)
    \,\d\sigma_{L_Q}(X,t)
\lesssim
\delta r^{k+3}.
\end{equation}

Applying the same argument to
\[
    f_Q(X,t)
    :=
    \varphi(X,t)\dist\big((X,t),L_Q\big)
\]
gives
\begin{equation}\label{LTLQintegralcomparison}
\int_{L_T\cap C_{2r}(Z_Q,\tau_Q)}
    \dist\big((X,t),L_Q\big)
    \,\d\sigma_{L_T}(X,t)
\lesssim
\delta r^{k+3}.
\end{equation}
Therefore,
\[
    \sup_{(X,t)\in L_Q\cap C_Q}
        \dist\big((X,t),L_T\cap C_Q\big)
    \lesssim
    \delta r,
\]
and
\[
    \sup_{(X,t)\in L_T\cap C_Q}
        \dist\big((X,t),L_Q\cap C_Q\big)
    \lesssim
    \delta r.
\]
Hence
\begin{equation}\label{smallneighborcomparison}
    \dist_H\big(L_Q\cap C_Q,L_T\cap C_Q\big)
    \lesssim
    \delta r,
\end{equation}
and thus
\[
    \dist_H\Big(
        L_Q\cap C_{6r}(Z_Q,\tau_Q),
        L_T\cap C_{6r}(Z_Q,\tau_Q)
    \Big)
    \lesssim
    \delta r.
\]
\end{proof}

\begin{lemma}\label{smallanglelemma}
Suppose $Q\in\mathbb D$ and $T\in N(Q)$. There exists $\varepsilon_0>0$ such that, if
\[
    \max\{\alpha(Q),\alpha(T)\}\leq\varepsilon_0,
\]
then the angle between $L_Q$ and $L_T$ is less than $\pi/4$.
\end{lemma}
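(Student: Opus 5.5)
The angle bound will come out of Lemma~\ref{toalphalemma2}. Set $r:=\ell(Q)=\ell(T)$ and $\delta:=\max\{\alpha(Q),\alpha(T)\}$, and take $\varepsilon_0\leq\eta_0$, so that the second conclusion of that lemma applies. It gives
\[
\dist_H\big(L_Q\cap C_Q,L_T\cap C_Q\big)\leq C\delta r\leq C\varepsilon_0 r .
\]
The task is to turn this Hausdorff closeness on a cylinder of size comparable to $r$ into a bound on the angle between the two vertical planes. Both planes are vertical, $L_Q=(a_Q+V_Q)\times\mathbb R$ and $L_T=(a_T+V_T)\times\mathbb R$. Their angle is therefore the angle between the linear $k$-planes $V_Q,V_T\subset\mathbb R^n$, which we measure, say, by $\|\pi_{V_Q}-\pi_{V_T}\|$. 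It suffices to show this quantity is $\lesssim\delta$.

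To do this, restrict to a fixed time slice. By Lemma~\ref{elemalphalem}(c) and the choice of $\gamma$, as in the proof of Lemma~\ref{toalphalemma2}, $L_Q$ meets $C_{r/4}(Z_Q,\tau_Q)$ at a point $(W_0,t_0)$. The spatial slice $\{X:(X,t_0)\in L_Q\}$ is the affine $k$-plane $W_0+V_Q$, and it contains the Euclidean $k$-ball $W_0+(V_Q\cap B(0,r))$ inside the spatial part of $C_Q=C_{3r}(Z_Q,\tau_Q)$. Fix an orthonormal basis $e_1,\dots,e_k$ of $V_Q$. The points $W_0$ and $W_0+re_i$ give points $(W_0,t_0),(W_0+re_i,t_0)\in L_Q\cap C_Q$. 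By the Hausdorff bound, there are points $(W_0',s_0),(W_i',s_i)\in L_T$ within parabolic distance $C\delta r$ of them. In particular their spatial parts lie within Euclidean distance $C\delta r$. Since $L_T$ is vertical, the time coordinates play no role: $W_0',W_i'\in a_T+V_T$. Hence $v_i:=(W_i'-W_0')/r\in V_T$ and $|v_i-e_i|\leq 2C\delta$.

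Finally, for a unit vector $u=\sum c_ie_i\in V_Q$ we get $\dist(u,V_T)\leq\sum|c_i||e_i-v_i|\leq 2C\sqrt{k}\,\delta$. So $\|\pi_{V_T^\perp}\pi_{V_Q}\|\lesssim\delta$. Because $\dim V_Q=\dim V_T=k$, this controls the symmetric quantity $\|\pi_{V_Q}-\pi_{V_T}\|$ as well, and thus the angle. Choosing $\varepsilon_0$ small gives angle $<\pi/4$.

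The only delicate point is ensuring that the test segments $W_0+re_i$ genuinely stay inside $C_Q$, so that the Hausdorff estimate applies to them. This is why we use the anchor point in $C_{r/4}(Z_Q,\tau_Q)$ and the spatial radius $3r$ of $C_Q$, with margin $3r-r/4-r>0$. Everything else is linear algebra.
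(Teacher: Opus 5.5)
Your proposal is correct and follows the paper's route: the paper also obtains the angle bound directly from the Hausdorff estimate of Lemma~\ref{toalphalemma2}, citing the version on $C_{6\ell(Q)}(Z_Q,\tau_Q)$ rather than the one on $C_Q$, and then simply says that choosing $\varepsilon_0$ small completes the proof. Your anchor-point and basis-vector argument makes explicit the linear algebra, from Hausdorff closeness on a cylinder of size comparable to $\ell(Q)$ to a small angle between $V_Q$ and $V_T$, which the paper leaves implicit.
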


\begin{proof}
By Lemma~\ref{toalphalemma2},
\[
\dist_H\Big(L_Q\cap C_{6\ell(Q)}(Z_Q,\tau_Q),L_T\cap C_{6\ell(Q)}(Z_Q,\tau_Q)\Big)
\lesssim
\max\{\alpha(Q),\alpha(T)\}\ell(Q).
\]
 Choosing $\varepsilon_0$ sufficiently small completes the proof.
\end{proof}

By decreasing the constant $\eta_0$ from
Lemma~\ref{elemalphalem}(c) if necessary, we henceforth assume that
$\eta_0\leq\varepsilon_0$, where $\varepsilon_0$ is as in
Lemma~\ref{smallanglelemma}.

\subsection{\texorpdfstring{Decomposing $T\mu$ With Respect to the Dyadic Grid}{Decomposing T mu With Respect to the Dyadic Grid}}\label{dyadicgriddecomp}

Set
\[
    \rho(X,t):=\big(|X|^4+t^2\big)^{1/4}.
\]
Then $\rho$ is smooth away from the origin, $\rho(\lambda X,\lambda^2 t) = \lambda \rho(X,t)$, and (recalling that $\|(X,t)\| = |X| + |t|^{1/2}$)
\[
    2^{-3/4}\|(X,t)\|\leq \rho(X,t)\leq \|(X,t)\|.
\]
Let $\psi_0:[0,\infty)\to[0,1]$ be a smooth non-increasing function such that
\[
    \chi_{[0,1/2]}\leq \psi_0\leq \chi_{[0,2]}.
\]
For each $j\in\mathbb Z$, define
\[
    \psi_j(X,t):=\psi_0(2^j\rho(X,t)).
\]
Since $\psi_0$ is constant near the origin, each $\psi_j$ is smooth on all of
$\mathbb P^n$.

Fix $\omega\in\mathbb N$ sufficiently large so that $2^{-\omega}\leq\gamma/32$.
Set
\[
    \widetilde\varphi_j:=\psi_{j+\omega}-\psi_{j+1+\omega}.
\]
Then $\widetilde\varphi_j\geq0$, and
\[
    \supp \widetilde\varphi_j
    \subset
    \left\{
        (X,t):
        2^{-j-\omega-2}\leq \rho(X,t)\leq 2^{-j-\omega+1}
    \right\}.
\]
Using the comparability of $\rho$ and $\|\cdot\|$, we have
\[
    \supp \widetilde\varphi_j
    \subset
    A\bigl(0,2^{-j-\omega-2}, 2^{-j-\omega+2}\bigr).
\]

Hence if $j\in\mathcal J$, $(X,t)\in Q\in\mathbb D_j$, $(Y,s)\in E$, and
\[
    \widetilde\varphi_j\big((X,t)-(Y,s)\big)\neq0,
\]
then the dyadic cube of generation $j$ containing $(Y,s)$ is either $Q$ itself or
belongs to $N(Q)$.

We normalize the functions
$\widetilde\varphi_j$ by setting
\[
    \varphi_j(X,t)
    :=
    \frac{\widetilde\varphi_j(X,t)}
    {\sum_{\ell\in\mathbb Z}\widetilde\varphi_\ell(X,t)}.
\]
The denominator is bounded above and below by positive constants on
$\mathbb P^n\setminus\{0\}$, and hence
\[
    \sum_{j\in\mathbb Z}\varphi_j(X,t)=1,
    \qquad (X,t)\in\mathbb P^n\setminus\{0\}.
\]
The functions $\varphi_j$ satisfy
\[
    \supp \varphi_j
    \subset
    A\bigl(0,2^{-j-\omega-2},2^{-j-\omega+2}\bigr).
\]
They also satisfy the bounds
\[
    |\varphi_j|
    \lesssim
    \chi_{A(0,2^{-j-\omega-2},2^{-j-\omega+2})},
\]
\[
    |\nabla\varphi_j|
    \lesssim
    2^j\chi_{A(0,2^{-j-\omega-2},2^{-j-\omega+2})},
    \qquad
    |\partial_t\varphi_j|
    \lesssim
    2^{2j}\chi_{A(0,2^{-j-\omega-2},2^{-j-\omega+2})}.
\]

Set
\[
    \Psi_j:=\sum_{k\leq j}\varphi_k.
\]
For a kernel $K$ as above and $(X,t)\neq0$, set
\[
    K_{(j)}(X,t):=\varphi_j(X,t)K(X,t),
    \qquad
    K_j(X,t):=\Psi_j(X,t)K(X,t).
\]
We then define
\[
    T_{(j)}\mu(X,t)
    :=
    \int K_{(j)}\big((X,t)-(Y,s)\big)\,\d\mu(Y,s).
\]
For integers $a\leq m$, set
\[
T_{a,m}\mu:=\sum_{j=a}^{m}T_{(j)}\mu.
\]
Each annular integral is absolutely convergent, since its kernel is
bounded and supported in a bounded annulus away from the origin.
When $\sum_{Q\in\mathbb D}\alpha(Q)^2\mu(Q)<\infty$, we define
\[
T_m\mu:=\lim_{a\to-\infty}T_{a,m}\mu
\quad\text{in }L^2(\mu).
\]
The existence of this limit will follow from the estimates below.
If $E$ is bounded, the finite sums eventually stabilize on $E$ as
$a\to-\infty$, and their limit agrees there with the absolutely convergent
integral against $K_m$.

For integers $a\leq m$, write
\[
T_{a,m}\mu
=
\sum_{\substack{a\leq j\leq m\\ j\notin\mathcal J}}T_{(j)}\mu
+
\sum_{\substack{a\leq j\leq m\\ j\in\mathcal J}}T_{(j)}\mu,
\]
and set
\[ T_{a,m}^{\mathcal J}\mu
:=
\sum_{\substack{a\leq j\leq m\\ j\in\mathcal J}}T_{(j)}\mu.\]
When $\operatorname{diam}(E)=\infty$, we have $\mathcal J=\mathbb Z$, so the first sum is zero. Suppose now that $\operatorname{diam}(E)<\infty$, and let $Q_0=E$ denote the cube in $\mathbb D_{j_0}$. We claim that
\[
\alpha(Q_0)\gtrsim1.
\]
This is immediate if $\alpha(Q_0)\geq\eta_0$. Otherwise, Lemma~\ref{elemalphalem}(c) gives $c_{Q_0}\approx1$ and
\[
L_{Q_0}\cap C_{\operatorname{diam}(E)}(Z_{Q_0},\tau_{Q_0})\neq\emptyset.
\]
Since $\ell(Q_0)\geq2^{10}\operatorname{diam}(E)$, we may choose $W\in L_{Q_0}\cap C_{Q_0}$ such that
\[
\dist(W,E)\geq2\operatorname{diam}(E),\qquad
C_{\operatorname{diam}(E)}(W)\subset C_{Q_0}.
\]
Testing the definition of $\alpha(Q_0)$ with
\[
f(V)=\big(\operatorname{diam}(E)-\dist(V,W)\big)_+
\]
gives $\alpha(Q_0)\ell(Q_0)^{k+3}\gtrsim\operatorname{diam}(E)^{k+3}$, and hence
\begin{equation}\label{topalphamass}
\mu(E)\lesssim\alpha(Q_0)^2\mu(Q_0)
\leq
\sum_{Q\in\mathbb D}\alpha(Q)^2\mu(Q).
\end{equation}

For $j<j_0$, the support of $\varphi_j$ shows that $T_{(j)}\mu$ can be nonzero on $E$ only if
\[
2^{-j-\omega-2}\leq\operatorname{diam}(E)\leq2^{-j_0-10}.
\]
Thus there are at most $\omega$ such indices. Put $r_j=2^{-j-\omega}$. For each of these indices, the size estimate for $K$ gives
\[
|T_{(j)}\mu(X,t)|
\lesssim
r_j^{-(k+2)}
\mu\big(E\cap C_{Cr_j}(X,t)\big)
\lesssim1,
\qquad (X,t)\in E.
\]
Here the last inequality follows from ADR when $Cr_j\leq\operatorname{diam}(E)$, and from $\mu(E)\approx\operatorname{diam}(E)^{k+2}$ otherwise. Therefore, uniformly in integers $a\leq m$,
\begin{equation}\label{outsideJbound}
\left\|
\sum_{\substack{a\leq j\leq m\\ j\notin\mathcal J}}T_{(j)}\mu
\right\|_{L^2(\mu)}^2
\lesssim
\mu(E)
\lesssim
\sum_{Q\in\mathbb D}\alpha(Q)^2\mu(Q).
\end{equation}

In proving Theorem~\ref{alphatosio}, we may assume
that $\sum_{Q\in\mathbb D}\alpha(Q)^2\mu(Q)<\infty$, since otherwise
the finite-truncation estimate is automatic. Proposition~\ref{calAQ}
below then shows that every $T_{(j)}\mu$, $j\in\mathcal J$, belongs to $L^2(\mu)$;
\eqref{outsideJbound} gives the same conclusion when $j\notin\mathcal J$.
Expanding the finite sum gives
\[
\|T_{a,m}^{\mathcal J}\mu\|_{L^2(\mu)}^2
=
\sum_{\substack{a\leq j\leq m\\ j\in\mathcal J}}
\|T_{(j)}\mu\|_{L^2(\mu)}^2
+
\sum_{\substack{a\leq i,j\leq m\\ i,j\in\mathcal J\\ i\neq j}}
\big\langle T_{(i)}\mu,T_{(j)}\mu\big\rangle_{L^2(\mu)}.
\]
By \eqref{outsideJbound}, it is enough to prove, uniformly in integers $a\leq m$, that
\[
\|T_{a,m}^{\mathcal J}\mu\|_{L^2(\mu)}^2
\lesssim
\sum_{Q\in\mathbb D}\alpha(Q)^2\mu(Q).
\]
For each $Q\in\mathbb D$, set
\[
    T_Q\mu:=\chi_Q T_{(J(Q))}\mu.
\]
Since $\mathbb D_j$ partitions $E=\supp(\mu)$ for each $j\in\mathcal J$, we have
\[
    T_{(j)}\mu
    =
    \sum_{Q\in\mathbb D_j}T_Q\mu.
\]
Consequently,
\[
    T_{a,m}^{\mathcal J}\mu
    =
    \sum_{\substack{a\leq j\leq m\\ j\in\mathcal J}}T_{(j)}\mu
    =
    \sum_{\substack{a\leq j\leq m\\ j\in\mathcal J}}\sum_{Q\in\mathbb D_j}T_Q\mu.
\]
\subsection{\texorpdfstring{Estimating $T_Q\mu$ In Terms of the $\alpha$'s}{Estimating T Q mu In Terms of the Alphas}}

Given $Q \in \mathbb D$, define
\begin{align}\label{mathcalA}
\mathcal A(Q)
:=
\left(
    \frac{1}{\mu(Q)}
    \sum_{P \in \mathbb D(Q)}
    \alpha(P)^2
    \frac{\ell(P)}{\ell(Q)}
    \mu(P)
\right)^{1/2}.
\end{align}

\begin{lemma}\label{distLQalphabd}
Let $(X,t) \in \supp(\mu)$ and $Q \in \mathbb D$ be such that $(X,t) \in Q$.
Then
\begin{align}\label{distalphabound}
\dist\big((X,t),L_Q\big)
\lesssim
\sum_{\substack{P \in \mathbb D(Q)\\ (X,t)\in P}}
\alpha(P)\ell(P),
\end{align}
and
\begin{align}\label{distAbound}
\int_Q
\left(
    \frac{\dist\big((X,t),L_Q\big)}{\ell(Q)}
\right)^2
\,\d\mu(X,t)
\lesssim
\mathcal A(Q)^2\mu(Q).
\end{align}
\end{lemma}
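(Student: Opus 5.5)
We follow the telescoping argument of Tolsa \cite{To1} for the corresponding Euclidean lemma. Fix $(X,t)\in Q$ and let $Q=P_0\supset P_1\supset P_2\supset\cdots$ be the chain of dyadic cubes in $\mathbb D(Q)$ containing $(X,t)$, with $J(P_i)=J(Q)+i$.

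\emph{Step 1: approximating planes of consecutive cubes are close.} Lemma~\ref{toalphalemma} applies to each pair $P_{i+1}\subset P_i$, since $\ell(P_{i+1})=\tfrac12\ell(P_i)$. It gives
\[
\dist_H\big(L_{P_{i+1}}\cap C_{P_i},\,L_{P_i}\cap C_{P_i}\big)\lesssim\alpha(P_i)\ell(P_i).
\]

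\emph{Step 2: the point is close to the planes of small cubes.} Since $(X,t)\in P_i$ and $L_{P_i}\cap C_{P_i}\neq\emptyset$ (Remark~\ref{cqintlq}), we have
\[
\dist((X,t),L_{P_i})\lesssim\ell(P_i)\to0.
\]

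\emph{Step 3: telescope.} Let $W_i$ be a nearest point to $(X,t)$ on $L_{P_i}$. For $i$ large, $W_i\in C_{P_{i-1}}\cap\cdots\cap C_{P_0}$. Moving from $L_{P_i}$ up to $L_{P_0}=L_Q$ one scale at a time costs $\lesssim\alpha(P_m)\ell(P_m)$ at the step $m$, by Step 1. The intermediate points stay in the relevant cylinders $C_{P_m}$, because their displacement is at most $\sum_m\ell(P_m)$ times a small constant, or trivially controlled by $\ell(P_m)$ when $\alpha(P_m)\gtrsim1$. Summing and letting $i\to\infty$ gives
\[
\dist((X,t),L_Q)\le\lim_i\Big[\dist((X,t),L_{P_i})+\sum_{m<i}C\alpha(P_m)\ell(P_m)\Big],
\]
which is \eqref{distalphabound}.

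\emph{Step 4: the $L^2$ bound.} Dividing by $\ell(Q)$ and squaring, apply Cauchy--Schwarz with weights $\big(\ell(P)/\ell(Q)\big)^{1/2}$:
\[
\Big(\sum_{P\ni(X,t)}\alpha(P)\tfrac{\ell(P)}{\ell(Q)}\Big)^2\le\Big(\sum_P\tfrac{\ell(P)}{\ell(Q)}\Big)\Big(\sum_P\alpha(P)^2\tfrac{\ell(P)}{\ell(Q)}\Big)\lesssim\sum_{P\ni(X,t)}\alpha(P)^2\tfrac{\ell(P)}{\ell(Q)}.
\]
The first factor is a geometric series, hence bounded. Integrating over $Q$ with respect to $\mu$ and interchanging sum and integral, $\int_Q\chi_P\,\d\mu=\mu(P)$, yields $\mathcal A(Q)^2\mu(Q)$. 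This proves \eqref{distAbound}.

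\emph{Main obstacle.} The delicate point is the bookkeeping in Step 3. One must ensure that each comparison of $L_{P_{m+1}}$ with $L_{P_m}$ is made inside a region where Lemma~\ref{toalphalemma} controls the Hausdorff distance, namely $C_{P_m}$. The points near $(X,t)$ lie well inside $C_{P_m}$, since $(X,t)\in P_m$ and $\diam P_m\le2^{-10}\ell(P_m)$, so any displacement bounded by a small multiple of $\ell(P_m)$ keeps them there. When $\alpha(P_m)$ is not small, the trivial bound $\ell(P_m)\lesssim\alpha(P_m)\ell(P_m)$ handles that step instead.
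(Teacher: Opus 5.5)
Your proposal is correct and follows the paper's proof: the chain of ancestors, Lemma~\ref{toalphalemma} with $\eta=\tfrac12$ at each parent--child step, telescoping with an $\ell(P_N)\to0$ error term, and then weighted Cauchy--Schwarz and Fubini for \eqref{distAbound}. One correction in Step 3: the cylinder bookkeeping is automatic, and your justification via accumulated displacement does not work as stated, because the constant in Lemma~\ref{toalphalemma} is not small, so the displacement need not be a small multiple of $\ell(P_m)$. What actually makes it work is that the Hausdorff bound in that lemma gives the next point directly in $L_{P_m}\cap C_{P_m}$, and $C_{P_{m+1}}\subset C_{P_m}$ follows from $\diam P_m\le2^{-10}\ell(P_m)$; this is exactly what the paper checks.
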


\begin{proof}
We first show that \eqref{distalphabound} implies \eqref{distAbound}. By
Cauchy's inequality,
\begin{align*}
\left(
    \frac{\dist\big((X,t),L_Q\big)}{\ell(Q)}
\right)^2
&\lesssim
\left(
    \sum_{\substack{P \in \mathbb D(Q)\\ (X,t)\in P}}
    \alpha(P)\frac{\ell(P)}{\ell(Q)}
\right)^2
\\
&\leq
\left(
    \sum_{\substack{P \in \mathbb D(Q)\\ (X,t)\in P}}
    \alpha(P)^2\frac{\ell(P)}{\ell(Q)}
\right)
\left(
    \sum_{\substack{P \in \mathbb D(Q)\\ (X,t)\in P}}
    \frac{\ell(P)}{\ell(Q)}
\right)
\\
&\lesssim
\sum_{\substack{P \in \mathbb D(Q)\\ (X,t)\in P}}
\alpha(P)^2\frac{\ell(P)}{\ell(Q)}.
\end{align*}
Integrating over $Q$ gives
\[
\int_Q
\left(
    \frac{\dist\big((X,t),L_Q\big)}{\ell(Q)}
\right)^2
\,\d\mu(X,t)
\lesssim
\sum_{P\in\mathbb D(Q)}
\alpha(P)^2\frac{\ell(P)}{\ell(Q)}\mu(P)
=
\mathcal A(Q)^2\mu(Q),
\]
which proves \eqref{distAbound}.

It remains to prove \eqref{distalphabound}. Let
\[
    Q=Q_0\supset Q_1\supset Q_2\supset\cdots
\]
be the chain of cubes in $\mathbb D(\mu)$ containing $(X,t)$, so that
\[
    \ell(Q_m)=2^{-m}\ell(Q),
    \qquad
    (X,t)\in Q_m
\]
for every $m\geq0$.

We claim that $C_{Q_{m+1}}\subset C_{Q_m}$, for every $m \geq 0$.  Indeed, since $Q_{m+1}\subset Q_m$ and both centers belong
to $Q_m$,
\[
d_p\bigl((Z_{Q_{m+1}},\tau_{Q_{m+1}}),
        (Z_{Q_m},\tau_{Q_m})\bigr)
\leq \diam(Q_m)
\leq 2^{-10}\ell(Q_m).
\]
Thus
\[
3\ell(Q_{m+1})+2^{-10}\ell(Q_m)
=
\left(\frac32+2^{-10}\right)\ell(Q_m)
<3\ell(Q_m),
\]
which proves the claim.

Let $N\geq0$ be arbitrary. Choose $a_N\in L_{Q_N}\cap C_{Q_N}$. Since $(X,t)\in Q_N\subset C_{Q_N}$, we have
\[
    \|(X,t)-a_N\|\lesssim\ell(Q_N).
\]

For $m=N,N-1,\ldots,1$, suppose that $a_m\in L_{Q_m}\cap C_{Q_m}$
has already been chosen. Since $C_{Q_m}\subset C_{Q_{m-1}}$, we have $a_m\in L_{Q_m}\cap C_{Q_{m-1}}$.
Applying Lemma~\ref{toalphalemma} to
$Q_m\subset Q_{m-1}$ with $\eta=\frac12$, we may choose
\[
    a_{m-1}\in L_{Q_{m-1}}\cap C_{Q_{m-1}}
\]
such that
\begin{align}
\label{successiveplaneproj}
    \|a_m-a_{m-1}\|
    \lesssim
    \alpha(Q_{m-1})\ell(Q_{m-1}).
\end{align}

Since $a_0\in L_Q$, the triangle inequality and
\eqref{successiveplaneproj} give
\[
\dist(a_N,L_Q)
\leq
\sum_{m=0}^{N-1}\|a_m-a_{m+1}\|
\lesssim
\sum_{m=0}^{N-1}\alpha(Q_m)\ell(Q_m).
\]
Consequently,
\[
\dist\bigl((X,t),L_Q\bigr)
\lesssim
\ell(Q_N)
+
\sum_{m=0}^{N-1}\alpha(Q_m)\ell(Q_m).
\]
Letting $N\to\infty$, and using $\ell(Q_N)\to0$, yields
\[
\dist\bigl((X,t),L_Q\bigr)
\lesssim
\sum_{m=0}^{\infty}\alpha(Q_m)\ell(Q_m).
\]

Since the cubes $Q_m$ form a subcollection of the dyadic cubes
$P\in\mathbb D(Q)$ containing $(X,t)$, it follows that
\[
\dist\bigl((X,t),L_Q\bigr)
\lesssim
\sum_{\substack{P\in\mathbb D(Q)\\(X,t)\in P}}
\alpha(P)\ell(P),
\]
which proves \eqref{distalphabound}.
\end{proof}

\begin{lemma}\label{TQintbd}
Given $Q \in \mathbb D_m$, we have
\begin{itemize}
    \item[(a)] For any $(X,t) \in L_Q \cap C_{2\diam(Q)}(Z_Q,\tau_Q)$,
    $|T_{(m)}\mu(X,t)| \lesssim \alpha(Q)$.
    \item[(b)] $\displaystyle \int |T_Q\mu|\,\d\mu \lesssim \alpha(Q)\mu(Q)$.
    \item[(c)] $\displaystyle \int |T_Q\mu|^2\,\d\mu \lesssim \mathcal A(Q)^2\mu(Q)$.
\end{itemize}
\end{lemma}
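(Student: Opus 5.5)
The plan is to prove (a), then deduce (b) and (c) from (a) by a mean value argument that moves from a point of $Q$ to a nearby point of $L_Q$. This is the scheme of Lemma~\ref{commapprox}, with Lemma~\ref{betalphacomp} and Lemma~\ref{distLQalphabd} supplying the geometric control.

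For (a), if $\alpha(Q)>\eta_0$, the size estimate \eqref{kernelestimates} and ADR give $|T_{(m)}\mu(X,t)|\lesssim 1\lesssim\eta_0^{-1}\alpha(Q)$. This is valid for any $(X,t)$, since the kernel $K_{(m)}$ is bounded by $\ell(Q)^{-k-2}$ and supported in a cylinder of radius $\approx\ell(Q)$.

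Now suppose $\alpha(Q)\leq\eta_0$ and fix $(X,t)\in L_Q\cap C_{2\diam Q}(Z_Q,\tau_Q)$. Because $K$ is odd in space, reflecting in the spatial part of $L_Q$ about $X$ gives
\[
\int K_{(m)}((X,t)-\cdot)\,\d\sigma_{L_Q}=0.
\]
Since $\varphi_m$ is even, the reflection preserves the cutoff, and the time variable is untouched. Hence
\[
T_{(m)}\mu(X,t)=\int f\,\d\mu-c_Q\int f\,\d\sigma_{L_Q},
\qquad f:=K_{(m)}((X,t)-\cdot).
\]
Using $\|\nabla K_{(m)}\|\lesssim\ell^{-k-3}$ and $|\partial_t K_{(m)}|\lesssim\ell^{-k-4}$, the function $f$ has parabolic Lipschitz constant $\lesssim\ell(Q)^{-k-3}$, exactly as in the time-Lipschitz computation of Lemma~\ref{commapprox}. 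Its support lies in a ball of radius $\lesssim 2^{-m-\omega+2}$ around a point within $2\diam Q\leq 2^{-9}\ell(Q)$ of the center, so it is contained in $C_Q$. Then \eqref{alphaflatapproximation} yields $|T_{(m)}\mu(X,t)|\lesssim\alpha(Q)$.

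For (b) and (c), take $(X,t)\in Q$ with $\alpha(Q)\leq\eta_0$; otherwise the trivial bound $|T_Q\mu|\lesssim 1\lesssim\eta_0^{-1}\alpha(Q)$ and $\alpha(Q)^2\lesssim\mathcal A(Q)^2$ suffice. Let $(X',t)$ be the orthogonal projection of $(X,t)$ onto $L_Q$. The planes are vertical, so the projection is purely spatial. If $\dist((X,t),L_Q)\leq\diam Q$, then $(X',t)\in C_{2\diam Q}$ and (a) applies there. The mean value theorem in the spatial variable, with $|\nabla T_{(m)}\mu|\lesssim\ell(Q)^{-1}$ (from the kernel gradient and ADR), gives
\[
|T_Q\mu(X,t)|\lesssim\alpha(Q)+\frac{\dist((X,t),L_Q)}{\ell(Q)}.
\]
If instead $\dist((X,t),L_Q)>\diam Q$, the trivial bound $|T_Q\mu|\lesssim 1$ is itself dominated by the distance term. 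Integrating over $Q$, Lemma~\ref{betalphacomp} (via \eqref{LQbetacomp}, noting $Q\subset C_{2\ell(Q)}(Z_Q,\tau_Q)$) bounds the distance term in $L^1$ by $\alpha(Q)\mu(Q)$, which gives (b). Squaring and using \eqref{distAbound} together with $\alpha(Q)^2\mu(Q)\leq\mathcal A(Q)^2\mu(Q)$ (the $P=Q$ term of \eqref{mathcalA}) gives (c).

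The main obstacle is (a): checking that the reflection symmetry holds with the truncation $\varphi_m$, which depends on the full parabolic variable and is reflection-invariant since $\rho$ is, and that the support of $f$ genuinely lies in $C_Q$ with the stated Lipschitz constant. The time Hölder-$1/2$ bound needs the case split $|s-s'|\lessgtr\ell(Q)^2$ used in Lemma~\ref{commapprox}.
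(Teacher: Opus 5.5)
Your proposal is correct and follows the paper's argument. Part (a) uses the same spatial-reflection cancellation on $L_Q$ together with \eqref{alphaflatapproximation}, and parts (b) and (c) use the same projection onto $L_Q$, mean value theorem, and integration via \eqref{LQbetacomp} and \eqref{distAbound}. The differences are cosmetic. You treat points far from $L_Q$ pointwise, absorbing them into the distance term using $\diam Q\geq\gamma\ell(Q)$, whereas the paper makes a global split and shows $\alpha(Q)\gtrsim1$ in the bad case. Your case split in (a) is also unnecessary, since \eqref{alphaflatapproximation} holds for every $Q$.
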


\begin{proof}
We first prove (a). Notice that if
\[
    (X,t) \in C_{2\diam(Q)}(Z_Q,\tau_Q),
\]
then, by the choice of $\omega$ in the definition of the annular
truncations,
\[
    \supp\big(K_{(m)}((X,t)-\cdot)\big)\subseteq C_Q.
\]
Moreover,
\[
    \|K_{(m)}((X,t)-\cdot)\|_{\operatorname{Lip}}
    \lesssim
    \ell(Q)^{-(k+3)},
\]
where the Lipschitz norm is taken with respect to the parabolic distance.

Let $L$ be a parabolic $(k+2)$-plane containing a line parallel to the $t$-axis,
and let $c\geq0$. If $(X,t)\in L$, then, by the spatial oddness of $K$ and the spatial evenness of $\varphi_m$,
\[
    \int K_{(m)}\big((X,t)-(Y,s)\big)
    \,\d\sigma_L(Y,s)
    =
    0.
\]
Therefore
\begin{multline*}
\left|
    \int K_{(m)}\big((X,t)-(Y,s)\big)\,\d\mu(Y,s)
\right| \\
=
\left|
    \int K_{(m)}\big((X,t)-(Y,s)\big)\,\d\mu(Y,s)
    -
    c\int K_{(m)}\big((X,t)-(Y,s)\big)
    \,\d\sigma_L(Y,s)
\right| \\
\lesssim
\frac{1}{\ell(Q)^{k+3}}
\dist_{C_Q}\big(\mu,c\,\sigma_L\big).
\end{multline*}
Choosing $L=L_Q$ and $c=c_Q$, and using \eqref{alphaflatapproximation}, gives
\[
    |T_{(m)}\mu(X,t)|
    \lesssim
    \alpha(Q),
\]
which proves (a).

\begingroup
We now prove (b) and (c). Given $(X,t)\in Q$, let $(X',t)$ be the
orthogonal projection of $(X,t)$ onto $L_Q$.

Suppose first that there is a point $(X,t)\in Q$ for which
$(X',t)\notin C_{2\diam(Q)}(Z_Q,\tau_Q)$. Since $(X,t)\in Q$,
\begin{align*}
\dist\big((X,t),L_Q\big)
&=
d_p\big((X,t),(X',t)\big)
\\
&\geq
d_p\big((X',t),(Z_Q,\tau_Q)\big)
-
d_p\big((X,t),(Z_Q,\tau_Q)\big)
\\
&>
\diam(Q)
\gtrsim
\ell(Q).
\end{align*}
Thus there
is a constant $c_1>0$ such that $
    \dist((X,t),L_Q)\geq c_1\ell(Q).$
If $(Y,s)\in C_{\frac{c_1}{4}\ell(Q)}(X,t)$, then the $1$-Lipschitz property of the distance to $L_Q$ gives
\[
    \dist\big((Y,s),L_Q\big)\geq \frac{3c_1}{4}\ell(Q).
\]
Moreover,
\[
    C_{\frac{c_1}{4}\ell(Q)}(X,t)
    \subset C_{2\ell(Q)}(Z_Q,\tau_Q).
\]
Therefore, by ADR and \eqref{LQbetacomp},
\[
\alpha(Q)
\gtrsim
\frac{1}{\ell(Q)^{k+2}}
\int_{C_{2\ell(Q)}(Z_Q,\tau_Q)}
\frac{\dist\big((Y,s),L_Q\big)}{\ell(Q)}
\,\d\mu(Y,s)
\gtrsim 1.
\]
Since $\mathcal A(Q)\geq\alpha(Q)$ and the size estimate on $K$ gives
$|T_{(m)}\mu|\lesssim1$ on $Q$, both (b) and (c) follow.

We may therefore assume that, for every $(X,t)\in Q$,
\[
    (X',t) \in C_{2\diam(Q)}(Z_Q,\tau_Q).
\]
For such a point, the support of
$K_{(m)}((Z,t)-\cdot)$ is contained in $C_Q$ for every $Z$ on the line
segment joining $X$ to $X'$. The mean value theorem, the derivative bounds
for $K_{(m)}$, and the ADR property therefore give
\[
|T_{(m)}\mu(X,t)-T_{(m)}\mu(X',t)|
\lesssim
\frac{\dist((X,t),L_Q)}{\ell(Q)}
\]
and applying part (a) at $(X',t)$, we obtain
\[
|T_{(m)}\mu(X,t)|
\lesssim
\frac{\dist((X,t),L_Q)}{\ell(Q)}
+
|T_{(m)}\mu(X',t)|
\lesssim
\frac{\dist((X,t),L_Q)}{\ell(Q)}
+
\alpha(Q).
\]
\endgroup
Hence, for $p=1,2$,
\[
\int |T_Q\mu|^p\,\d\mu
\lesssim
\int_Q
\left(
    \frac{\dist((X,t),L_Q)}{\ell(Q)}
\right)^p
\,\d\mu(X,t)
+
\alpha(Q)^p\mu(Q).
\]

When $p=1$, \eqref{LQbetacomp} gives
\[
\int_Q
\frac{\dist((X,t),L_Q)}{\ell(Q)}
\,\d\mu(X,t)
\lesssim
\alpha(Q)\mu(Q),
\]
and therefore (b) follows. For $p=2$, (c) follows from Lemma~\ref{distLQalphabd}, together with the fact that $\alpha(Q)\leq \mathcal A(Q)$.
\end{proof}

\begin{lemma}\label{dillem}
Let $(X,t) \in E$, and let $Q \in \mathbb D$ be such that
\[
    \dist\big((X,t),Q\big)\leq \gamma\ell(Q),
\]
where $\gamma$ is the constant in the dyadic construction. Then
\begin{align}\label{ndistlqalpha}
\dist\big((X,t),L_Q\big)
\lesssim
\alpha(Q)\ell(Q)
+
\sum_{T \in N(Q)\cup \{Q\}}
\sum_{\substack{P \in \mathbb D(T)\\ (X,t) \in P}}
\alpha(P)\ell(P).
\end{align}
Consequently,
\begin{align}\label{Adilated}
    \int_{U_{\frac{\gamma}{4}\ell(Q)}(Q)}
    \left(
        \frac{\dist\big((X,t),L_Q\big)}{\ell(Q)}
    \right)^2
    \,\d\mu(X,t)
    \lesssim
    \left(
        \sum_{T\in N(Q)\cup\{Q\}}\mathcal A(T)^2
    \right)
    \mu(Q).
\end{align}
\end{lemma}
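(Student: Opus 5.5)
The plan is to reduce \eqref{ndistlqalpha} to Lemma~\ref{distLQalphabd} applied to the neighbor cube containing $(X,t)$, and then to pass from that cube's plane to $L_Q$ using Lemma~\ref{toalphalemma2}. Let $T$ be the cube of generation $J(Q)$ containing $(X,t)$. Since $\mathbb D_{J(Q)}$ partitions $E$ and $\dist((X,t),Q)\leq\gamma\ell(Q)$, either $T=Q$ or $T\in N(Q)$. Lemma~\ref{distLQalphabd} applied to $T$ gives
\[
\dist\big((X,t),L_T\big)\lesssim\sum_{\substack{P\in\mathbb D(T)\\(X,t)\in P}}\alpha(P)\ell(P).
\]
If $T=Q$, this is already \eqref{ndistlqalpha}.

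If $T\in N(Q)$, I will first show that the point of $L_T$ nearest to $(X,t)$ lies in $C_{6\ell(Q)}(Z_Q,\tau_Q)$. We have $d_p((X,t),(Z_Q,\tau_Q))\leq\diam Q+\gamma\ell(Q)\ll\ell(Q)$. Moreover, $\dist((X,t),L_T)\lesssim\ell(T)$, because $L_T$ meets $C_T$ and $(X,t)\in T\subset C_T$. Here I must check that the implicit constant, together with the smallness of $\diam T$ and $\gamma$, keeps the nearest point inside the $6\ell(Q)$ cylinder. This is plausible since $C_T$ has radius $3\ell(Q)$ and its center lies within $(2^{-9}+\gamma)\ell(Q)$ of $(Z_Q,\tau_Q)$, so $C_T\subset C_{6\ell(Q)}(Z_Q,\tau_Q)$. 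I will then apply the first assertion of Lemma~\ref{toalphalemma2}, which holds with no smallness assumption on the $\alpha$'s. It gives a point of $L_Q$ within $\lesssim\max\{\alpha(Q),\alpha(T)\}\ell(Q)$ of that nearest point. This yields
\[
\dist\big((X,t),L_Q\big)\lesssim\dist\big((X,t),L_T\big)+\big(\alpha(Q)+\alpha(T)\big)\ell(Q).
\]
The term $\alpha(T)\ell(Q)=\alpha(T)\ell(T)$ is the $P=T$ term of the inner sum over $\mathbb D(T)$, since $(X,t)\in T$. So the right side is bounded by the right side of \eqref{ndistlqalpha}.

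For \eqref{Adilated}, I square \eqref{ndistlqalpha}, divide by $\ell(Q)^2$, and use $\#N(Q)\lesssim1$ (by ADR and bounded geometry) to bound $(\sum_T\cdots)^2\lesssim\sum_T(\cdots)^2$. For each $T$, Cauchy--Schwarz as in the proof of Lemma~\ref{distLQalphabd} gives
\[
\Big(\sum_{\substack{P\in\mathbb D(T)\\ (X,t)\in P}}\alpha(P)\frac{\ell(P)}{\ell(Q)}\Big)^2\lesssim\sum_{\substack{P\in\mathbb D(T)\\ (X,t)\in P}}\alpha(P)^2\frac{\ell(P)}{\ell(T)},
\]
using $\ell(T)=\ell(Q)$. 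For $(X,t)\in U_{\frac\gamma4\ell(Q)}(Q)\cap E$ we have $\dist((X,t),Q)<\gamma\ell(Q)$, so the pointwise bound applies. This set is covered by $Q\cup\bigcup_{T\in N(Q)}T$. Integrating each $T$-term over $T$, where it is supported, gives $\mathcal A(T)^2\mu(T)$. The $\alpha(Q)^2$ term contributes at most $\alpha(Q)^2\mu(U_{\frac\gamma4\ell(Q)}(Q))\lesssim\alpha(Q)^2\mu(Q)\leq\mathcal A(Q)^2\mu(Q)$. Finally, $\mu(T)\approx\mu(Q)$ for the neighbors. This proves \eqref{Adilated}. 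The main obstacle is the geometric containment needed to invoke Lemma~\ref{toalphalemma2}, that is, verifying that the nearest point on $L_T$ lies in $C_{6\ell(Q)}(Z_Q,\tau_Q)$ with the given constants.
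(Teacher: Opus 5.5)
Your proposal is correct and follows the paper's proof essentially step for step. You take the generation-$J(Q)$ cube $T\ni(X,t)$, pass from $L_T$ to $L_Q$ via Lemma~\ref{toalphalemma2}, bound $\dist((X,t),L_T)$ by Lemma~\ref{distLQalphabd}, and integrate over $Q\cup\bigcup_{T\in N(Q)}T$ after Cauchy--Schwarz along chains; the only difference is that the paper applies Lemma~\ref{toalphalemma2} with $Q$ and $T$ interchanged, using the cylinder centered at $(Z_T,\tau_T)$. The containment you flag is harmless: since $L_T$ is vertical, the nearest point $(Y,t)$ of $L_T$ to $(X,t)$ has time coordinate $t$, and for any $(W,w)\in L_T\cap C_T$ the point $(W,t)$ also lies in $L_T$, so $|Y-X|\leq|W-X|<3\ell(Q)+\diam T$ and hence $(Y,t)\in C_{6\ell(Q)}(Z_Q,\tau_Q)$.
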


\begin{proof}
We first prove \eqref{ndistlqalpha}. If $(X,t)\in Q$, then the estimate follows
from Lemma~\ref{distLQalphabd}.  So, assume that $(X,t)\notin Q$. Let $T\in\mathbb D_{J(Q)}$ be the dyadic cube
containing $(X,t)$. Since $\dist((X,t),Q)\leq \gamma\ell(Q)$, we have
$T\in N(Q)$.

Since the neighbor relation is symmetric, $Q\in N(T)$. By
Lemma~\ref{toalphalemma2}, applied with $Q$ and $T$ interchanged, we know that
\begin{align}\label{sclaim}
\sup_{W\in L_T\cap C_{6\ell(Q)}(Z_T,\tau_T)}
\dist(W,L_Q)
\lesssim
\max\{\alpha(Q),\alpha(T)\}\ell(Q).
\end{align}

Let $(Y,s)$ be the orthogonal projection of $(X,t)$ onto the vertical
plane $L_T$. Since $(X,t)\in T$ and $L_T\cap C_T\neq\emptyset$, we have
\[
    (Y,s)\in L_T\cap C_{6\ell(Q)}(Z_T,\tau_T).
\]
Thus \eqref{sclaim} gives
\[
    \dist\big((Y,s),L_Q\big)
    \lesssim
    \max\{\alpha(Q),\alpha(T)\}\ell(Q).
\]
Consequently,
\begin{align*}
\dist\big((X,t),L_Q\big)
&\leq
\dist\big((X,t),L_T\big)
+
\dist\big((Y,s),L_Q\big)
\\
&\lesssim
\dist\big((X,t),L_T\big)
+
\alpha(Q)\ell(Q)
+
\alpha(T)\ell(T).
\end{align*}
By Lemma~\ref{distLQalphabd}, applied to $T$,
\[
\dist\big((X,t),L_T\big)
\lesssim
\sum_{\substack{P\in\mathbb D(T)\\ (X,t)\in P}}
\alpha(P)\ell(P).
\]
Since $\ell(T)=\ell(Q)$ and the term $\alpha(T)\ell(T)$ appears in the last sum,
we obtain \eqref{ndistlqalpha}.

We now prove \eqref{Adilated}. If
\[
    (X,t)\in E\cap U_{\frac{\gamma}{4}\ell(Q)}(Q),
\]
then the dyadic cube $T\in\mathbb D_{J(Q)}$ containing $(X,t)$
is either $Q$ or belongs to $N(Q)$. Therefore,
\[
    E\cap U_{\frac{\gamma}{4}\ell(Q)}(Q)
    \subset
    Q\cup\bigcup_{T\in N(Q)}T.
\]
Apply \eqref{ndistlqalpha} and Cauchy's inequality along each dyadic chain.
Integrating as in the proof of Lemma~\ref{distLQalphabd} gives
\[
\int_{U_{\frac{\gamma}{4}\ell(Q)}(Q)}
\left(
    \frac{\dist\big((X,t),L_Q\big)}{\ell(Q)}
\right)^2
\,\d\mu(X,t)
\lesssim
\alpha(Q)^2\mu(Q)
+
\sum_{T\in N(Q)\cup\{Q\}}
\mathcal A(T)^2\mu(T).
\]
Since $\alpha(Q)\leq\mathcal A(Q)$, $\mu(T)\approx\mu(Q)$ for
$T\in N(Q)\cup\{Q\}$, and the number of neighbors is uniformly bounded,
we obtain \eqref{Adilated}.
\end{proof}

\begin{lemma}\label{distLQalphabd2}
Let $R,Q\in\mathbb D$ be such that $Q\subset R$. Let $(X_Q,t_Q)$ be the
orthogonal projection of $(Z_Q,\tau_Q)$ onto $L_Q$. Then
\[
\dist\big((X_Q,t_Q),L_R\big)
\lesssim
\sum_{P:\,Q\subsetneq P\subseteq R}
\alpha(P)\ell(P).
\]
\end{lemma}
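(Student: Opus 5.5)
We follow the chain argument from the proof of \eqref{distalphabound} in Lemma~\ref{distLQalphabd}, run upward from $Q$ to $R$. Write the chain of ancestors
\[
Q=P_0\subsetneq P_1\subsetneq\cdots\subsetneq P_N=R,
\qquad \ell(P_m)=2^m\ell(Q).
\]
As shown in the proof of Lemma~\ref{distLQalphabd}, we have $C_{P_m}\subset C_{P_{m+1}}$ for every $m$. We start from the point $a_0:=(X_Q,t_Q)\in L_Q$. This point lies in $C_Q$, since $L_Q\cap C_Q\neq\emptyset$ by Remark~\ref{cqintlq}, so $\dist((Z_Q,\tau_Q),L_Q)\leq 3\ell(Q)$. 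If we want a slightly better location, Lemma~\ref{elemalphalem}(c) gives it when $\alpha(Q)$ is small. In the other case it suffices that $a_0\in C_{P_1}$, which holds because $C_Q\subset C_{P_1}$.

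Inductively, suppose $a_m\in L_{P_m}\cap C_{P_m}$ has been chosen. Then $a_m\in L_{P_m}\cap C_{P_{m+1}}$. Apply Lemma~\ref{toalphalemma} to the pair $P_m\subset P_{m+1}$ with $\eta=\tfrac12$. It gives
\[
\dist_H\bigl(L_{P_m}\cap C_{P_{m+1}},\,L_{P_{m+1}}\cap C_{P_{m+1}}\bigr)\lesssim\alpha(P_{m+1})\ell(P_{m+1}),
\]
so we can pick $a_{m+1}\in L_{P_{m+1}}\cap C_{P_{m+1}}$ with $\|a_m-a_{m+1}\|\lesssim\alpha(P_{m+1})\ell(P_{m+1})$.

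Since $a_N\in L_R$, the triangle inequality gives
\[
\dist\bigl((X_Q,t_Q),L_R\bigr)\leq\sum_{m=0}^{N-1}\|a_m-a_{m+1}\|\lesssim\sum_{m=1}^{N}\alpha(P_m)\ell(P_m)=\sum_{P:\,Q\subsetneq P\subseteq R}\alpha(P)\ell(P).
\]
The index shift is the point to check. Lemma~\ref{toalphalemma} controls the Hausdorff distance by $\alpha$ of the \emph{larger} cube. That is exactly why $\alpha(Q)$ itself does not appear, which matches the strict inclusion $Q\subsetneq P$ in the statement. If $Q=R$, the sum is empty, and the left side is $0$ because $(X_Q,t_Q)\in L_Q=L_R$.

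The main obstacle is making sure each $a_m$ really lies in the window $C_{P_{m+1}}$ where Lemma~\ref{toalphalemma} applies. Only then can we pass from $L_{P_m}$ to $L_{P_{m+1}}$ with a uniform constant, in particular at the first step starting from $(X_Q,t_Q)$. The inclusion $C_{P_m}\subset C_{P_{m+1}}$ handles this. Note also that Lemma~\ref{toalphalemma} is proved with no smallness assumption on $\alpha$: the large-$\alpha$ case is absorbed via Lemma~\ref{elemalphalem}(d). So no case split is needed along the chain.
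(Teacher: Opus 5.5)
Your proposal is correct and is essentially the paper's own argument. Both follow the ancestor chain from $Q$ to $R$, start at $(X_Q,t_Q)\in L_Q\cap C_Q$, and use $C_{P_m}\subset C_{P_{m+1}}$. Both then apply Lemma~\ref{toalphalemma} with $\eta=\tfrac12$ at each parent--child step, with $\alpha$ of the parent, and telescope; the paper merely indexes the chain downward from $R$ rather than upward from $Q$.
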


\begin{proof}
Choose the chain of dyadic ancestors of $Q$ contained in $R$,
\[
    R=:S_0\supset S_1\supset\cdots\supset S_M:=Q,
\]
where
\[
    \ell(S_{m+1})=\frac12\ell(S_m),
    \qquad 0\leq m<M.
\]

Set
\[
    a_M:=(X_Q,t_Q).
\]
Since $L_Q\cap C_Q\neq\emptyset$ and $(X_Q,t_Q)$ is the
orthogonal projection of $(Z_Q,\tau_Q)$ onto $L_Q$, we have
\[
    a_M\in L_Q\cap C_Q=L_{S_M}\cap C_{S_M}.
\]

We recursively construct points
\[
    a_m\in L_{S_m}\cap C_{S_m},
    \qquad m=M-1,\ldots,0.
\]
Suppose that
\[
    a_{m+1}\in L_{S_{m+1}}\cap C_{S_{m+1}}.
\]
Since $C_{S_{m+1}}\subset C_{S_m}$, we have
\[
    a_{m+1}\in L_{S_{m+1}}\cap C_{S_m}.
\]
Applying Lemma~\ref{toalphalemma} to
$S_{m+1}\subset S_m$, with $\eta=\frac12$, gives
\[
    \dist_H\bigl(
        L_{S_{m+1}}\cap C_{S_m},
        L_{S_m}\cap C_{S_m}
    \bigr)
    \lesssim
    \alpha(S_m)\ell(S_m).
\]
Therefore, we may choose $a_m\in L_{S_m}\cap C_{S_m}$ such that
\[
    \|a_m-a_{m+1}\|
    \lesssim
    \alpha(S_m)\ell(S_m).
\]

Since $a_0\in L_R$, the triangle inequality yields
\[
\begin{aligned}
\dist\bigl((X_Q,t_Q),L_R\bigr)
&\leq
\|(X_Q,t_Q)-a_0\|
\\
&\leq
\sum_{m=0}^{M-1}\|a_m-a_{m+1}\|
\\
&\lesssim
\sum_{m=0}^{M-1}\alpha(S_m)\ell(S_m)
\\
&\leq
\sum_{P:\,Q\subsetneq P\subseteq R}
\alpha(P)\ell(P).
\end{aligned}
\]
This proves the lemma.
\end{proof}

\begin{lemma}\label{planealpha}
Suppose $S,Q \in \mathbb D$ are cubes such that $S \subseteq Q$,
$(X,t)\in E$, and
\[
    \dist\big((X,t),S\big)\leq\diam(S).
\]
Then
\begin{align*}
    \dist\big((X,t),L_Q\big)
    \lesssim
    \dist\big((X,t),L_S\big)
    +
    \sum_{P:\,S\subsetneq P\subseteq Q}
    \alpha(P)\ell(P).
\end{align*}
\end{lemma}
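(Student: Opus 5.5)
The plan is to reduce to Lemma~\ref{distLQalphabd2} by passing through the plane $L_S$ near the point $(X,t)$. Let $(X',t')$ be the orthogonal projection of $(X,t)$ onto $L_S$, so that $\|(X,t)-(X',t')\|=\dist((X,t),L_S)$. Since distance to $L_Q$ is $1$-Lipschitz,
\[
\dist\big((X,t),L_Q\big)\leq \dist\big((X,t),L_S\big)+\dist\big((X',t'),L_Q\big),
\]
and it remains to control $\dist((X',t'),L_Q)$ by $\dist((X,t),L_S)$ plus the $\alpha$-sum.

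We split into two cases according to the size of $\dist((X,t),L_S)$.

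\textbf{Case 1: $\dist((X,t),L_S)\geq c\,\ell(S)$.} Here we only need $\dist((X',t'),L_Q)\lesssim \dist((X,t),L_S)+\sum_{S\subsetneq P\subseteq Q}\alpha(P)\ell(P)$. We use the projected centre $(X_S,t_S)$ of $S$ onto $L_S$. Since $L_S\cap C_S\neq\emptyset$ (Remark~\ref{cqintlq}), we have $\|(X_S,t_S)-(Z_S,\tau_S)\|\lesssim \ell(S)$, and hence
\[
\|(X',t')-(X_S,t_S)\|\lesssim \dist((X,t),L_S)+\ell(S)\lesssim \dist((X,t),L_S).
\]
The difficulty is that $(X',t')$ is not $(X_S,t_S)$, and distance to $L_Q$ is not constant along $L_S$, because the planes may tilt. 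So I would not compare along $L_S$; instead use the Lipschitz property of $\dist(\cdot,L_Q)$ directly:
\[
\dist((X,t),L_Q)\leq \|(X,t)-(X_S,t_S)\|+\dist((X_S,t_S),L_Q)\lesssim \ell(S)+\dist((X,t),L_S)+\dist((X_S,t_S),L_Q).
\]
Bounding the last term by Lemma~\ref{distLQalphabd2} finishes this case, since $\ell(S)\lesssim\dist((X,t),L_S)$.

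\textbf{Case 2: $\dist((X,t),L_S)< c\,\ell(S)$.} This is the main case. Since $\dist((X,t),S)\leq\diam S$, the point $(X',t')$ lies in $L_S\cap C_{S}$ for suitable small $c$. We then rerun the chain argument of Lemma~\ref{distLQalphabd2}, starting from $a_M:=(X',t')$ rather than the projected centre. Because $a_M\in L_S\cap C_S$, Lemma~\ref{toalphalemma} applied successively to the ancestors $S=S_M\subset\cdots\subset S_0=Q$ produces points $a_m\in L_{S_m}\cap C_{S_m}$ with $\|a_m-a_{m+1}\|\lesssim\alpha(S_m)\ell(S_m)$. This uses the nesting $C_{S_{m+1}}\subset C_{S_m}$ shown in the proof of Lemma~\ref{distLQalphabd}. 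Summing gives
\[
\dist((X',t'),L_Q)\lesssim\sum_{S\subsetneq P\subseteq Q}\alpha(P)\ell(P).
\]

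The main obstacle is checking that $(X',t')\in C_S$ in Case 2. From $\dist((X,t),S)\leq\diam S\leq 2^{-10}\ell(S)$ and $\diam S\leq 2^{-10}\ell(S)$, the point $(X,t)$ lies within about $2^{-9}\ell(S)$ of $(Z_S,\tau_S)$. Adding $c\,\ell(S)$ keeps $(X',t')$ well inside $C_S=C_{3\ell(S)}(Z_S,\tau_S)$, so this should be routine with these constants.

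In the degenerate situation where some $\alpha(S_m)>\eta_0$, Lemma~\ref{toalphalemma} still holds as stated, so no further case split is needed.
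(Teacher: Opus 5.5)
Your proof is correct and is essentially the paper's argument: you start the chain of Lemma~\ref{toalphalemma} at the orthogonal projection of $(X,t)$ onto $L_S$ and telescope up to $L_Q$. The only difference is that the paper needs no case split. Orthogonal projection onto the vertical plane $L_S$ is $1$-Lipschitz, so the projection $a_M$ of $(X,t)$ lies within $2\diam(S)$ of the projection of $(Z_S,\tau_S)$, and hence always in the parent cylinder $C_{S_{M-1}}$, which is all Lemma~\ref{toalphalemma} needs to start the chain. Your Case~1 detour through Lemma~\ref{distLQalphabd2} is therefore valid but unnecessary.
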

\begin{proof}
If $S=Q$, there is nothing to prove. Assume $S\subsetneq Q$, and let
\[
Q=:S_0\supset S_1\supset\cdots\supset S_M:=S
\]
be the chain of dyadic cubes from $Q$ to $S$, so that $\ell(S_{m+1})=\frac12\ell(S_m)$. Since the centers of both $S_m$ and $S_{m+1}$ belong to $S_m$,
\[
d_p\big((Z_{S_{m+1}},\tau_{S_{m+1}}),(Z_{S_m},\tau_{S_m})\big)\leq\diam(S_m)\leq2^{-10}\ell(S_m),
\]
and therefore
\[
C_{S_{m+1}}\subset C_{S_m},\qquad 0\leq m<M.
\]

Let $a_M$ be the orthogonal projection of $(X,t)$ onto $L_S$. Then
\[
\|(X,t)-a_M\|=\dist\big((X,t),L_S\big).
\]
Let $b_M$ be the orthogonal projection of
$(Z_S,\tau_S)$ onto $L_S$. Since $L_S$ is vertical and
$L_S\cap C_S\neq\emptyset$, we have
\[
    b_M\in L_S\cap C_S.
\]
Thus,
\[
d_p(a_M,b_M)
\leq
d_p\big((X,t),(Z_S,\tau_S)\big)
\leq
\dist\big((X,t),S\big)+\diam(S)
\leq
2\diam(S).
\]
Since the centers of $S$ and its parent $S_{M-1}$ both belong to
$S_{M-1}$, it follows that
\begin{align*}
d_p\big(a_M,(Z_{S_{M-1}},\tau_{S_{M-1}})\big)
&\leq
d_p(a_M,b_M)
+
d_p\big(b_M,(Z_S,\tau_S)\big)
\\
&\quad+
d_p\big((Z_S,\tau_S),
       (Z_{S_{M-1}},\tau_{S_{M-1}})\big)
\\
&\leq
2\diam(S)+3\ell(S)+\diam(S_{M-1})
\\
&<
6\ell(S)
=
3\ell(S_{M-1}).
\end{align*}
Therefore,
\[
    a_M\in L_S\cap C_{S_{M-1}}.
\]

We now construct $a_m\in L_{S_m}\cap C_{S_m}$ recursively for $m=M-1,\ldots,0$. Suppose that $a_{m+1}\in L_{S_{m+1}}\cap C_{S_m}$. Lemma~\ref{toalphalemma}, applied to $S_{m+1}\subset S_m$ with $\eta=\frac12$, allows us to choose $a_m\in L_{S_m}\cap C_{S_m}$ such that
\begin{align}\label{planealpha-step}
\|a_m-a_{m+1}\|\lesssim\alpha(S_m)\ell(S_m).
\end{align}
For $m\geq1$, the inclusion $C_{S_m}\subset C_{S_{m-1}}$ allows us to continue the process.

Since $a_0\in L_Q$, the triangle inequality and \eqref{planealpha-step} give
\[
\dist\big((X,t),L_Q\big)\leq\|(X,t)-a_0\|
\leq\|(X,t)-a_M\|+\sum_{m=0}^{M-1}\|a_m-a_{m+1}\|
\lesssim\dist\big((X,t),L_S\big)+\sum_{m=0}^{M-1}\alpha(S_m)\ell(S_m).
\]
The cubes $S_0,\ldots,S_{M-1}$ are precisely the dyadic cubes $P$ satisfying $S\subsetneq P\subseteq Q$. 
Hence
\[
\dist\big((X,t),L_Q\big)
\lesssim
\dist\big((X,t),L_S\big)
+
\sum_{P:\,S\subsetneq P\subseteq Q}
\alpha(P)\ell(P),
\]
which proves the lemma.
\end{proof}

\subsection{Estimating the Diagonal Term}

\begin{proposition}\label{calAQ}
For every $R\in\mathbb D$, we have
\[
\sum_{Q\in\mathbb D(R)}
\|T_Q\mu\|_{L^2(\mu)}^2
\lesssim
\sum_{Q\in\mathbb D(R)}
\mathcal A(Q)^2\mu(Q)
\lesssim
\sum_{Q\in\mathbb D(R)}
\alpha(Q)^2\mu(Q).
\]
Moreover,
\[
\sum_{j\in\mathcal J}
\|T_{(j)}\mu\|_{L^2(\mu)}^2
=
\sum_{Q\in\mathbb D}
\|T_Q\mu\|_{L^2(\mu)}^2
\lesssim
\sum_{Q\in\mathbb D}
\alpha(Q)^2\mu(Q).
\]
\end{proposition}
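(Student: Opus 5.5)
The first inequality follows by summing Lemma~\ref{TQintbd}(c) over $Q\in\mathbb D(R)$:
\[
\sum_{Q\in\mathbb D(R)}\|T_Q\mu\|_{L^2(\mu)}^2\lesssim\sum_{Q\in\mathbb D(R)}\mathcal A(Q)^2\mu(Q).
\]
The content of the proposition is therefore the second inequality, which is a standard Fubini exchange. By \eqref{mathcalA},
\[
\sum_{Q\in\mathbb D(R)}\mathcal A(Q)^2\mu(Q)
=\sum_{Q\in\mathbb D(R)}\sum_{P\in\mathbb D(Q)}\alpha(P)^2\frac{\ell(P)}{\ell(Q)}\mu(P)
=\sum_{P\in\mathbb D(R)}\alpha(P)^2\mu(P)\sum_{\substack{Q\in\mathbb D(R)\\ P\in\mathbb D(Q)}}\frac{\ell(P)}{\ell(Q)}.
\]
For fixed $P$, the cubes $Q$ in the inner sum are the ancestors of $P$ (including $P$) up to $R$. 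There is exactly one in each generation, with $\ell(Q)=2^{i}\ell(P)$ for $i\geq0$. The inner sum is therefore at most $\sum_{i\geq0}2^{-i}=2$, which gives the bound by $\sum_{Q\in\mathbb D(R)}\alpha(Q)^2\mu(Q)$. Because cubes are tagged pairs, repeated Borel sets in different generations are counted once per generation, so no double counting occurs. All terms are nonnegative, so the rearrangement is legitimate even when the sums are infinite.

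For the global statement, I use that $\mathbb D_j$ partitions $E$ for each $j\in\mathcal J$. The functions $T_Q\mu=\chi_QT_{(j)}\mu$, $Q\in\mathbb D_j$, therefore have disjoint supports, so
\[
\|T_{(j)}\mu\|_{L^2(\mu)}^2=\sum_{Q\in\mathbb D_j}\|T_Q\mu\|_{L^2(\mu)}^2.
\]
Summing over $j\in\mathcal J$ gives the stated equality.

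For the final inequality, I repeat the Fubini argument over all of $\mathbb D$ rather than over $\mathbb D(R)$. For fixed $P$, the cubes $Q$ with $P\in\mathbb D(Q)$ are again the ancestors of $P$, at most one per generation $j\in\mathcal J$ with $j\leq J(P)$. The geometric series is still bounded by $2$, whether $\mathcal J=\mathbb Z$ or $\mathcal J$ is bounded below by $j_0$. Combined with Lemma~\ref{TQintbd}(c) applied to every $Q\in\mathbb D$, this gives
\[
\sum_{Q\in\mathbb D}\|T_Q\mu\|_{L^2(\mu)}^2\lesssim\sum_{Q\in\mathbb D}\mathcal A(Q)^2\mu(Q)\lesssim\sum_{Q\in\mathbb D}\alpha(Q)^2\mu(Q).
\]
Alternatively, when $\operatorname{diam}(E)=\infty$, one can apply the local statement to an exhausting family of cubes $R$ and use monotone convergence.

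There is no real obstacle: all the analytic work is in Lemma~\ref{TQintbd}(c), which rests on Lemma~\ref{distLQalphabd}. The only points needing care are the ancestor count with the tagged-cube convention and the measurability and finiteness bookkeeping, which is why only nonnegative sums are manipulated.
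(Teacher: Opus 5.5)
Your proof is correct and follows the paper's argument: Lemma~\ref{TQintbd}(c), then the interchange of sums with the ancestor count bounded by $\sum_{r\geq0}2^{-r}=2$, then the disjoint-support identity for $T_{(j)}\mu$. The only difference is in the global bound, where you do the ancestor count directly over all of $\mathbb D$ and the paper instead applies the local estimate to every $R\in\mathbb D_m$ and lets $m\to-\infty$ by monotone convergence (your alternative ``exhaustion'' remark must likewise use all of $\mathbb D_m$, not a single ancestor chain, since in the unbounded case one chain of ancestors need not cover $E$).
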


\begin{proof}
By part~(c) of Lemma~\ref{TQintbd},
\begin{align*}
\sum_{Q\in\mathbb D(R)}
\|T_Q\mu\|_{L^2(\mu)}^2
&\lesssim
\sum_{Q\in\mathbb D(R)}
\mathcal A(Q)^2\mu(Q)
\\
&=
\sum_{\substack{Q\in\mathbb D\\ Q\subseteq R}}
\sum_{\substack{P\in\mathbb D\\ P\subseteq Q}}
\alpha(P)^2
\frac{\ell(P)}{\ell(Q)}
\mu(P)
\\
&=
\sum_{\substack{P\in\mathbb D\\ P\subseteq R}}
\alpha(P)^2\mu(P)
\sum_{\substack{Q\in\mathbb D\\ P\subseteq Q\subseteq R}}
\frac{\ell(P)}{\ell(Q)}.
\end{align*}
For each fixed $P\subseteq R$, there is exactly one dyadic ancestor of
$P$ at each generation between $P$ and $R$. Therefore,
\[
\sum_{\substack{Q\in\mathbb D\\ P\subseteq Q\subseteq R}}
\frac{\ell(P)}{\ell(Q)}
\leq
\sum_{r=0}^{\infty}2^{-r}
=
2.
\]
Consequently,
\[
\sum_{Q\in\mathbb D(R)}
\|T_Q\mu\|_{L^2(\mu)}^2
\lesssim
\sum_{Q\in\mathbb D(R)}
\mathcal A(Q)^2\mu(Q)
\lesssim
\sum_{Q\in\mathbb D(R)}
\alpha(Q)^2\mu(Q).
\]

We now prove the second estimate in the statement. Fix $m\in\mathcal J$. Since the cubes
in $\mathbb D_m$ form a disjoint partition of $E$, applying the 
estimate above to each $R\in\mathbb D_m$ gives
\[
\sum_{\substack{Q\in\mathbb D\\ J(Q)\geq m}}
\|T_Q\mu\|_{L^2(\mu)}^2
\lesssim
\sum_{\substack{Q\in\mathbb D\\ J(Q)\geq m}}
\alpha(Q)^2\mu(Q).
\]
Indeed, every cube $Q$ with $J(Q)\geq m$ is contained in a unique cube
$R\in\mathbb D_m$. If $\mathcal J=\mathbb Z$, we let $m\to-\infty$ and use monotone convergence. If $\mathcal J=\{j\geq j_0\}$, we take $m=j_0$. In either case, we obtain
\[
\sum_{Q\in\mathbb D}
\|T_Q\mu\|_{L^2(\mu)}^2
\lesssim
\sum_{Q\in\mathbb D}
\alpha(Q)^2\mu(Q).
\]

Finally, for every $j\in\mathcal J$,
\[
T_{(j)}\mu
=
\sum_{Q\in\mathbb D_j}T_Q\mu
\qquad
\mu\text{-a.e.}
\]
Since the cubes in $\mathbb D_j$ are disjoint and $T_Q\mu$ is supported
on $Q$, the summands have disjoint supports. Hence
\[
\|T_{(j)}\mu\|_{L^2(\mu)}^2
=
\sum_{Q\in\mathbb D_j}
\|T_Q\mu\|_{L^2(\mu)}^2.
\]
Summing over $j\in\mathcal J$ gives
\[
\sum_{j\in\mathcal J}
\|T_{(j)}\mu\|_{L^2(\mu)}^2
=
\sum_{Q\in\mathbb D}
\|T_Q\mu\|_{L^2(\mu)}^2
\lesssim
\sum_{Q\in\mathbb D}
\alpha(Q)^2\mu(Q).
\]
\end{proof}

\begin{corollary}\label{finitebanddiag}
Let $M\geq 0$ be fixed. Then
\[
\sum_{\substack{i,j\in\mathcal J\\ |i-j|\leq M}}
\big|
    \langle T_{(i)}\mu,T_{(j)}\mu\rangle
\big|
\lesssim_M
\sum_{Q\in\mathbb D}\alpha(Q)^2\mu(Q).
\]
\end{corollary}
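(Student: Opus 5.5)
The plan is to reduce the banded sum to the diagonal estimate of Proposition~\ref{calAQ} by Cauchy--Schwarz. For each pair $i,j\in\mathcal J$ with $|i-j|\leq M$, we have
\[
\big|\langle T_{(i)}\mu,T_{(j)}\mu\rangle\big|
\leq
\|T_{(i)}\mu\|_{L^2(\mu)}\|T_{(j)}\mu\|_{L^2(\mu)}
\leq
\tfrac12\Big(\|T_{(i)}\mu\|_{L^2(\mu)}^2+\|T_{(j)}\mu\|_{L^2(\mu)}^2\Big).
\]
Each $T_{(i)}\mu$ lies in $L^2(\mu)$ by Proposition~\ref{calAQ}, under the standing reduction that $\sum_{Q}\alpha(Q)^2\mu(Q)<\infty$; otherwise the claim is trivial.

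Next we sum over the band. For fixed $i$, the number of $j\in\mathcal J$ with $|i-j|\leq M$ is at most $2M+1$, and the relation is symmetric. Hence
\[
\sum_{\substack{i,j\in\mathcal J\\ |i-j|\leq M}}
\big|\langle T_{(i)}\mu,T_{(j)}\mu\rangle\big|
\leq
(2M+1)\sum_{i\in\mathcal J}\|T_{(i)}\mu\|_{L^2(\mu)}^2 .
\]
We then invoke the second estimate of Proposition~\ref{calAQ}:
\[
\sum_{j\in\mathcal J}\|T_{(j)}\mu\|_{L^2(\mu)}^2
=\sum_{Q\in\mathbb D}\|T_Q\mu\|_{L^2(\mu)}^2
\lesssim\sum_{Q\in\mathbb D}\alpha(Q)^2\mu(Q).
\]
Combining the two displays yields the claim with implicit constant proportional to $2M+1$.

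There is no real obstacle here; the only point to check is that all the inner products are finite and the rearrangements are legitimate. Both follow from the absolute bounds above, since every term of the double sum is nonnegative and dominated by the convergent diagonal sum.
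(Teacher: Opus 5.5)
Your proposal is correct and is essentially the paper's own argument: Cauchy--Schwarz combined with the arithmetic--geometric mean inequality, the fact that each index has at most $2M+1$ partners in the band, and then the second estimate of Proposition~\ref{calAQ}. Your explicit constant $2M+1$ and your remark that all terms are finite (or the claim is trivial) are details the paper leaves implicit.
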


\begin{proof}
By Cauchy--Schwarz,
\[
\big|
    \langle T_{(i)}\mu,T_{(j)}\mu\rangle
\big|
\leq
\frac12
\left(
    \|T_{(i)}\mu\|_{L^2(\mu)}^2
    +
    \|T_{(j)}\mu\|_{L^2(\mu)}^2
\right).
\]
Therefore
\[
\sum_{\substack{i,j\in\mathcal J\\ |i-j|\leq M}}
\big|
    \langle T_{(i)}\mu,T_{(j)}\mu\rangle
\big|
\lesssim_M
\sum_{j\in\mathcal J}\|T_{(j)}\mu\|_{L^2(\mu)}^2.
\]
The conclusion follows from Proposition~\ref{calAQ}.
\end{proof}

\subsection{Estimating the Off-Diagonal Term}

\begin{proposition}\label{offdiagprop}
Suppose that $\mu$ is a $(k+2)$-ADR measure on $\mathbb P^n$ and that
$T$ is a $(k+2)$-dimensional parabolic Calder\'on--Zygmund operator with
associated kernel $K$ satisfying $K(X,-t)=K(X,t)$. If $N_0$ is sufficiently large, depending only on
the dimension and the ADR constants, then
\begin{align}\label{off-diag}
\sum_{\substack{i,j\in\mathcal J\\ |i-j|>N_0}}
\left|
\left\langle T_{(i)}\mu,T_{(j)}\mu\right\rangle
\right|
\lesssim
2^{-cN_0}\sum_{Q\in\mathbb D}\alpha(Q)^2\mu(Q).
\end{align}
Here $c>0$ and the implicit constant are independent of $N_0$.
\end{proposition}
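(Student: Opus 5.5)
We would follow the scheme of Tolsa's off-diagonal estimate in \cite{To1}: we exploit the smoothness of the coarse-scale piece and the cancellation of the fine-scale kernel against flat measures. By symmetry it suffices to treat $i,j\in\mathcal J$ with $j>i+N_0$, so that $T_{(j)}\mu$ lives at scale $2^{-j}$ and $g:=T_{(i)}\mu$ is smooth at the much larger scale $2^{-i}$.

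\emph{Step 1: symmetrize the fine piece.} The key algebraic input is that $K$ is odd in $X$ and even in $t$. Hence $K(-X,-t)=-K(X,t)$, and the same holds for $K_{(j)}$ because $\varphi_j$ is even in both variables; this is exactly where the time-evenness hypothesis enters. It gives
\[
\langle T_{(j)}\mu,g\rangle=\tfrac12\iint K_{(j)}\big((X,t)-(Y,s)\big)\big(g(X,t)-g(Y,s)\big)\,\d\mu(Y,s)\,\d\mu(X,t).
\]
We then localize the outer integral to $Q\in\mathbb D_j$, so $(Y,s)$ ranges over $Q$ and its neighbours $N(Q)$. We Taylor expand $g$ about $(X,t)$ to second order in $X$ and first order in $t$. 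Each remainder is bounded using the kernel bounds \eqref{kernelestimates} applied to $K_{(i)}$ and ADR, which give $|\nabla^2 g|\lesssim 2^{2i}$ and $|\partial_t g|\lesssim 2^{2i}$.

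\emph{Step 2: extract an $\alpha$ factor from each term.}
\begin{itemize}
\item The second-order spatial term and the $\partial_t g\,(t-s)$ term each carry $2^{2(i-j)}$. Against the flat measure $\mathcal L_Q$ on $L_Q$ the inner $(Y,s)$-integral of $K_{(j)}$ times $(t-s)$ vanishes: for $(X,t)\in L_Q$ this follows from oddness in $X$ after reflecting $L_Q$ about $X$. Replacing $\mu$ by $\mathcal L_Q$ therefore costs $\alpha(Q)$ by \eqref{alphaflatapproximation}. Moving $(X,t)$ off $L_Q$, exactly as in Lemma~\ref{TQintbd}(a),(c), costs $\dist((X,t),L_Q)/\ell(Q)$. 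Together these give a pointwise bound by a multiple of $2^{2(i-j)}$ times a quantity whose $L^2(\mu|_Q)$-norm squared is $\lesssim\mathcal A(Q)^2\mu(Q)$, by Lemma~\ref{distLQalphabd} and Lemma~\ref{dillem}.
\item For the first-order term $\nabla g(X,t)\cdot(X-Y)$, we split $X-Y$ into its component tangential to $L_Q$ and its normal component. The normal component is at most $\dist((X,t),L_Q)+\dist((Y,s),L_Q)$, which we control by Lemmas~\ref{distLQalphabd} and \ref{dillem}, with $|\nabla g|\lesssim 2^{i}$. The tangential component must be paired with a bound on the tangential gradient of $g=T_{(i)}\mu$ near $L_Q$. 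We compare $L_Q$ with $L_R$, where $R\supset Q$ is the generation-$i$ ancestor, using Lemma~\ref{planealpha}. Differentiating the cancellation identity of Lemma~\ref{TQintbd}(a) along $L_R$ gives $|\nabla_{\tan}g|\lesssim 2^{i}\big(\alpha(R)+\sum_{T\in N(R)}\alpha(T)\big)$ near $L_R$, plus the usual distance-to-plane error.
\end{itemize}
Each contribution thus has size $2^{-(j-i)}$ times a product of an $\alpha$-type quantity at scale $2^{-i}$ and one at scale $2^{-j}$.

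\emph{Step 3: sum.} By Cauchy--Schwarz,
\[
|\langle T_{(i)}\mu,T_{(j)}\mu\rangle|\lesssim 2^{-(j-i)}\Big(\sum_{S\in\mathbb D_i}\widetilde{\mathcal A}(S)^2\mu(S)\Big)^{1/2}\Big(\sum_{Q\in\mathbb D_j}\widetilde{\mathcal A}(Q)^2\mu(Q)\Big)^{1/2}.
\]
Here $\widetilde{\mathcal A}$ collects $\mathcal A$ over a cube and its neighbours, together with the chain sums $\sum_{Q\subsetneq P\subseteq R}\alpha(P)\ell(P)/\ell(Q)$. The chain sums produce at most an extra factor $2^{(j-i)/2}$ after Cauchy--Schwarz along the chain, so a decay $2^{-c(j-i)}$ survives. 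Schur's test in $(i,j)$ over $|i-j|>N_0$, together with Proposition~\ref{calAQ} (which bounds $\sum_Q\mathcal A(Q)^2\mu(Q)$ by $\sum_Q\alpha(Q)^2\mu(Q)$), yields \eqref{off-diag}.

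\emph{Main obstacle.} We expect the tangential-gradient bound for $T_{(i)}\mu$ near $L_R$ in Step 2 to be the hardest part. It must be proved uniformly at points that are only $\alpha$-close to $L_R$, and the errors must be bookkept so that the chain sums from Lemmas~\ref{distLQalphabd2} and \ref{planealpha} still give geometric decay in $j-i$. We would first try the direct approach: write $\nabla_{\tan}T_{(i)}\mu(Z)$ as the integral of $\nabla_{\tan}K_{(i)}$ against $\mu-\mathcal L_R$ plus a flat term that vanishes for $Z\in L_R$, and then pass from $Z\in L_R$ to nearby points via third-derivative bounds on $K_{(i)}$.
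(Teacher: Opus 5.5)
There is a genuine gap in Step~2, and it breaks Step~3. After symmetrizing, your first-order Taylor term is
\[
\tfrac12\int\nabla g(X,t)\cdot\int K_{(j)}\bigl((X,t)-(Y,s)\bigr)(X-Y)\,\d\mu(Y,s)\,\d\mu(X,t).
\]
The vector kernel $K_{(j)}(Z)\,Z$ is \emph{even} in space, so it has no cancellation on flat measures.
\begin{itemize}
\item Tangential part: for $(X,t)\in L_Q$, the integral $\int K_{(j)}((X,t)-\cdot)\,\pi_{\tan}(X-\cdot)\,\d\mathcal L_Q$ is in general a nonzero vector of size $\approx2^{-j}$. For instance, take $K(X,t)=X_1|t|^{-(k+3)/2}e^{-|X|^2/(4|t|)}$ with $L_Q$ containing the $X_1$-direction. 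So the only small factor there is the coarse one, $|\nabla_{\tan}g|$.
\item Normal part: the only small factor is the fine one, $\dist((X,t),L_Q)+\dist((Y,s),L_Q)$. Indeed $\nabla_\perp g$ is genuinely of size $2^{i}$ on $\supp\mu$. Already for a flat measure, $\partial_\nu T_{(i)}\sigma_L=\int\partial_\nu K_{(i)}((X,t)-\cdot)\,\d\sigma_L\neq0$ on $L$ in general, because $\partial_\nu K$ is even in space.
\end{itemize}
Likewise, your second-order and $\partial_t$ terms use only the trivial bounds $|\nabla^2g|,|\partial_tg|\lesssim2^{2i}$. A Taylor remainder bounded through size estimates on derivatives of $g$ alone carries no $\alpha$-factor at all.

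So every piece of your pointwise bound is $2^{-c(j-i)}$ times at most \emph{one} $\alpha$-type quantity. Integrating in $(X,t)$ then gives something like $2^{-c(j-i)}\sum_{Q\in\mathbb D_j}\alpha(Q)\mu(Q)$. This is linear rather than quadratic in the $\alpha$'s, so it is not controlled by $\sum_Q\alpha(Q)^2\mu(Q)$: for graphs with slopes scaled by $\lambda\to0$, both sides of \eqref{off-diag} are of order $\lambda^2$, while your bound is of order $\lambda$. The cancellation that makes the first-order term quadratic appears only after integrating in $(X,t)$ (e.g.\ $\int\nabla_{\tan}g\cdot v\,\d\mathcal L$ is the integral of a derivative), and no pointwise estimate sees it. The product structure you invoke in Step~3 is therefore never produced.

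The missing idea is an intermediate scale, with antisymmetry used only on a piecewise-constant part. Symmetrizing at the fine scale throws away the fine-scale $L^2$ smallness of $T_{(j)}\mu$ itself. The paper labels the fine index $i$ and the coarse index $j$, the reverse of your convention. It sets $m=\lfloor(i+j)/2\rfloor$, takes the partition $\{\theta_S\}_{S\in\mathbb D_m}$, and writes $\langle T_{(j)}\mu,T_{(i)}\mu\rangle=A_{j,i}+B_{j,i}$.
\begin{itemize}
\item In $A_{j,i}$, the coarse function minus its value at $(X_S,\tau_S)$ is paired with the fine function $T_{(i)}\mu$ itself. Lemma~\ref{bigest}, together with $|X-X_S|\lesssim\ell(S)$, gives a gain $\ell(S)/\ell(R)\approx2^{-(i-j)/2}$ times a small quantity. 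The second small factor comes from $T_{(i)}\mu$ through Lemma~\ref{TQintbd}(c) and $\mathcal A$.
\item Only $B_{j,i}=\sum_ST_{(j)}\mu(X_S,\tau_S)\langle\theta_S,T_{(i)}\mu\rangle$ is antisymmetrized, across neighbouring cubes $S,T\in\mathbb D_m$. Combined with the flat cancellation of Lemma~\ref{B1ambientcancellationlemma}, this leaves one of two kinds of term. The first is a comparison of neighbouring flat planes, bounded by Lemma~\ref{neighborslem} with gain $r_i/r_m$ and factor $\alpha^\#(S)$, and paired with Lemma~\ref{coarsepointB}. The second is the difference $T_{(j)}\mu(X_S,\tau_S)-T_{(j)}\mu(X_T,\tau_T)$ from Lemma~\ref{neighborcoarsedifference}, paired with the fine-scale errors $\mathcal E_{S,T}$.
\end{itemize}
That is where a genuine product of two $\alpha$-type quantities, together with the decay in $|i-j|$, comes from.
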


By symmetry, it suffices to consider the terms with $i>j$. Throughout
this section, $N_0$ is taken sufficiently large, depending only on the
dimension and the ADR constants.

Together with Corollary~\ref{finitebanddiag}, Proposition~\ref{offdiagprop}
gives the estimate needed to prove Theorem~\ref{alphatosio}. Indeed,
Corollary~\ref{finitebanddiag} controls the nearly diagonal terms:
\[
\sum_{\substack{i,j\in\mathcal J\\ |i-j|\leq N_0}}
\left|
\left\langle T_{(i)}\mu,T_{(j)}\mu\right\rangle
\right|
\lesssim
\sum_{Q\in\mathbb D}\alpha(Q)^2\mu(Q).
\]
Here the implicit constant is harmless, since $N_0$ is fixed sufficiently
large depending only on the dimension and the ADR constants. On the other
hand, Proposition~\ref{offdiagprop} controls the off-diagonal terms:
\[
\sum_{\substack{i,j\in\mathcal J\\ |i-j|>N_0}}
\left|
\left\langle T_{(i)}\mu,T_{(j)}\mu\right\rangle
\right|
\lesssim
\sum_{Q\in\mathbb D}\alpha(Q)^2\mu(Q).
\]
Combining these estimates yields
\[
\sum_{i,j\in\mathcal J}
\left|
\left\langle T_{(i)}\mu,T_{(j)}\mu\right\rangle
\right|
\lesssim
\sum_{Q\in\mathbb D}\alpha(Q)^2\mu(Q).
\]
Restricting the last double sum to
$a\leq i,j\leq m$ bounds $\|T_{a,m}^{\mathcal J}\mu\|_{L^2(\mu)}^2$.
Together with \eqref{outsideJbound}, this proves
\eqref{smoothtruncbound}, uniformly in $a\leq m$.

If $\sum_{Q\in\mathbb D}\alpha(Q)^2\mu(Q)<\infty$, the double series above converges absolutely. For fixed $m$ and integers $a<b\leq m$, it follows that
\[
\begin{aligned}
\|T_{a,m}^{\mathcal J}\mu-T_{b,m}^{\mathcal J}\mu\|_{L^2(\mu)}^2
&=\left\|\sum_{\substack{a\leq j<b\\j\in\mathcal J}}T_{(j)}\mu\right\|_{L^2(\mu)}^2\\
&\leq\sum_{\substack{i,j\in\mathcal J\\i,j<b}}
\big|\langle T_{(i)}\mu,T_{(j)}\mu\rangle\big|
\longrightarrow0\qquad(b\to-\infty),
\end{aligned}
\]
uniformly in $a<b$. The terms with $j\notin\mathcal J$ are absent when
$E$ is unbounded and form a finite sum on $E$ otherwise. Thus
$T_{a,m}\mu$ is Cauchy in $L^2(\mu)$ as $a\to-\infty$.
Its limit $T_m\mu$ satisfies the same bound by passage to the limit in
\eqref{smoothtruncbound}. This proves Theorem~\ref{alphatosio}.

Fix integers $i,j\in\mathcal J$ with $i>j$ and $i-j>N_0$, and set
\[
    m=\left\lfloor\frac{i+j}{2}\right\rfloor.
\]

For each \(m\in\mathcal J\), we choose a family $\bigl\{\theta_S\bigr\}_{S\in\mathbb D_m} \subset C_c^\infty(\mathbb P^n)$ with \(0\leq \theta_S\leq1\) and the following properties.  Set \[ \Omega_m^{(1)}:= \left\{(X,t):\dist\big((X,t),E\big)<\frac{\gamma}{64}2^{-m}\right\}. \] The supports have uniformly bounded overlap, and \begin{equation}\label{partitionneighborchoice} \supp(\theta_S) \subset U_{\frac{\gamma}{8}\ell(S)}(S) \subset U_{\frac{\gamma}{4}\ell(S)}(S). \end{equation} Moreover, \begin{equation}\label{partitionhigherderivatives} \big|\nabla_X^a\partial_t^q\theta_S\big| \lesssim_{a,q}\ell(S)^{-|a|-2q}, \qquad |a|+q\leq2. \end{equation} Also, \begin{equation}\label{ambientpartitionidentity} \sum_{S\in\mathbb D_m}\theta_S=1 \qquad\text{on }\Omega_m^{(1)}. \end{equation} In particular, the sum is equal to $1$ on \(E\). The construction is given in Appendix~\ref{smoothpartitionappendix}.

Since $\gamma\ell(S)\leq\diam(S)$, we have
\[
    U_{\frac{\gamma}{4}\ell(S)}(S)\subset
    \left\{
        (X,t):
        \dist\big((X,t),S\big)\leq\diam(S)/4
    \right\}.
\]

We then write
\[
\langle T_{(j)}\mu,T_{(i)}\mu\rangle
=
\sum_{S\in\mathbb D_m}
\langle\theta_ST_{(j)}\mu,T_{(i)}\mu\rangle.
\]

For each $S\in\mathbb D_m$, let $(X_S,\tau_S)$ be the orthogonal
projection of $(Z_S,\tau_S)$ onto $L_S$.

We decompose
\begin{align*}
\langle T_{(j)}\mu,T_{(i)}\mu\rangle
&=
\sum_{S\in\mathbb D_m}
\left\langle
    \theta_S\big(T_{(j)}\mu-T_{(j)}\mu(X_S,\tau_S)\big),
    T_{(i)}\mu
\right\rangle
\\
&\qquad+
\sum_{S\in\mathbb D_m}
T_{(j)}\mu(X_S,\tau_S)
\left\langle\theta_S,T_{(i)}\mu\right\rangle
\\
&=:A_{j,i}+B_{j,i}.
\end{align*}

\subsubsection{Estimating the Sum of the $A_{j,i}$'s}

\begingroup

Since $m>j$, every cube $S\in\mathbb D_m$ is contained in a unique cube
$R\in\mathbb D_j$. Therefore,
\begin{align*}
A_{j,i}
&=
\sum_{R\in\mathbb D_j}
\sum_{\substack{S\in\mathbb D_m\\ S\subset R}}
\left\langle
\theta_S
\left(
T_{(j)}\mu-T_{(j)}\mu(X_S,\tau_S)
\right),
T_{(i)}\mu
\right\rangle.
\end{align*}
Hence, by the triangle inequality,
\begin{align*}
|A_{j,i}|
&\leq
\sum_{R\in\mathbb D_j}
\left|
\sum_{\substack{S\in\mathbb D_m\\ S\subset R}}
\left\langle
\theta_S
\left(
T_{(j)}\mu-T_{(j)}\mu(X_S,\tau_S)
\right),
T_{(i)}\mu
\right\rangle
\right|.
\end{align*}

For $R\in\mathbb D$ and $i>J(R)+N_0$, set
\[
m(R,i):=
\left\lfloor\frac{J(R)+i}{2}\right\rfloor.
\]
Summing the preceding estimate over all pairs $i>j$ satisfying
$i-j>N_0$, and writing $j=J(R)$, we obtain
\begin{align*}
&\sum_{\substack{i,j\in\mathcal J\\ i-j>N_0}}
|A_{j,i}|
\\
&\qquad\leq
\sum_{R\in\mathbb D}
\sum_{i>J(R)+N_0}
\left|
\sum_{\substack{
S\in\mathbb D_{m(R,i)}\\
S\subset R
}}
\left\langle
\theta_S
\left(
T_{(J(R))}\mu
-
T_{(J(R))}\mu(X_S,\tau_S)
\right),
T_{(i)}\mu
\right\rangle
\right|.
\end{align*}

For each $R\in\mathbb D$, define
\begin{align*}
A_R
:=
\sum_{i>J(R)+N_0}
\left|
\sum_{\substack{
S\in\mathbb D_{m(R,i)}\\
S\subset R
}}
\left\langle
\theta_S
\left(
T_{(J(R))}\mu
-
T_{(J(R))}\mu(X_S,\tau_S)
\right),
T_{(i)}\mu
\right\rangle
\right|.
\end{align*}
Then
\[
\sum_{\substack{i,j\in\mathcal J\\ i-j>N_0}}
|A_{j,i}|
\leq
\sum_{R\in\mathbb D}A_R.
\]

\endgroup

We will need the following lemma.

\begin{lemma}\label{bigest}
Let $Q\in\mathbb D_j$ and suppose that
\[
    (X,t),(Y,s)\in C_{2\diam(Q)}(Z_Q,\tau_Q).
\]
Then
\begin{align*}
|T_{(j)}\mu(X,t)-T_{(j)}\mu(Y,s)|
&\lesssim
\frac{
    \alpha(Q)\ell(Q)
    +\dist\big((X,t),L_Q\big)
    +\dist\big((Y,s),L_Q\big)
}{\ell(Q)^2}
\|(X,t)-(Y,s)\|
\\
&\qquad+
\frac{
    \big\|\pi_{L_Q^\perp}\big((X,t)-(Y,s)\big)\big\|
}{\ell(Q)}.
\end{align*}
\end{lemma}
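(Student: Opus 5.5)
I would prove Lemma~\ref{bigest} by comparing $T_{(j)}\mu$ with $T_{(j)}\mathcal L_Q$. The latter is constant along $L_Q$ and varies only through the component normal to $L_Q$. Write
\[
T_{(j)}\mu(X,t)-T_{(j)}\mu(Y,s)
=\int\big[K_{(j)}((X,t)-\cdot)-K_{(j)}((Y,s)-\cdot)\big]\,\d(\mu-\mathcal L_Q)
+\big[T_{(j)}\mathcal L_Q(X,t)-T_{(j)}\mathcal L_Q(Y,s)\big].
\]

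\emph{The flat term.} Since $L_Q$ is vertical and $\sigma_{L_Q}$ is invariant under translations along $L_Q$, including time translations, $T_{(j)}\mathcal L_Q(X,t)$ depends only on $\pi_{L_Q^\perp}$ of $(X,t)$ relative to $L_Q$. By the size and derivative bounds for $K_{(j)}$, the map $V\mapsto T_{(j)}\mathcal L_Q(V)$ is Lipschitz in the normal direction with constant $\lesssim c_Q\,\ell(Q)^{-(k+3)}\ell(Q)^{k+2}\lesssim\ell(Q)^{-1}$, using Lemma~\ref{elemalphalem}(d). Hence this term is bounded by $\|\pi_{L_Q^\perp}((X,t)-(Y,s))\|/\ell(Q)$, which is the last term in the claimed estimate.

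\emph{The measure-difference term.} Let $g:=K_{(j)}((X,t)-\cdot)-K_{(j)}((Y,s)-\cdot)$. By the choice of $\omega$, $g$ is supported in $C_Q$, as in Lemma~\ref{TQintbd}(a). Using second-order derivative bounds on $K$, one gets $\|g\|_{\mathrm{Lip}}\lesssim \|(X,t)-(Y,s)\|\,\ell(Q)^{-(k+4)}$. The time part needs care, since the parabolic Lipschitz norm of the time increment involves $\partial_t$ and gives $|t-s|/\ell^{k+5}\cdot\ell$, but $|t-s|\le\|\cdot\|^2\lesssim\|\cdot\|\,\ell(Q)$. Then \eqref{alphaflatapproximation} gives
\[
\Big|\int g\,\d(\mu-\mathcal L_Q)\Big|\lesssim \alpha(Q)\ell(Q)^{k+3}\cdot\frac{\|(X,t)-(Y,s)\|}{\ell(Q)^{k+4}}=\frac{\alpha(Q)\ell(Q)\|(X,t)-(Y,s)\|}{\ell(Q)^2}.
\]
This yields the $\alpha(Q)$ part of the estimate.

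The $\dist(\cdot,L_Q)$ terms appear because the linearization above is too crude when the points are far from $L_Q$ relative to their separation. To handle this, project $(X,t)$ and $(Y,s)$ orthogonally onto $L_Q$, obtaining $(X',t)$ and $(Y',s)$, and split
\[
T_{(j)}\mu(X,t)-T_{(j)}\mu(Y,s)=\big[T_{(j)}\mu(X,t)-T_{(j)}\mu(X',t)\big]+\big[T_{(j)}\mu(X',t)-T_{(j)}\mu(Y',s)\big]+\big[T_{(j)}\mu(Y',s)-T_{(j)}\mu(Y,s)\big].
\]
For the middle piece, apply the measure-difference argument to points lying on $L_Q$, where the flat term vanishes. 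For the end pieces, use the mean value theorem together with the second-derivative bound $|\nabla^2 T_{(j)}\mu|\lesssim\ell(Q)^{-2}$, as in Lemma~\ref{TQintbd}, combined with the first-order bound along the normal direction.

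\emph{Main obstacle.} The hardest step is organizing the end pieces so that they produce exactly $\dist(\cdot,L_Q)\,\|(X,t)-(Y,s)\|/\ell(Q)^2$ plus the normal-projection term. I would first try a second-order Taylor expansion of the difference of the two normal displacements, writing $\nabla T_{(j)}\mu$ at the two base points and controlling the difference of the gradients by $\|\nabla^2 T_{(j)}\|\,\|(X,t)-(Y,s)\|$. This leaves $|\nabla T_{(j)}\mu|\cdot\|\pi_{L_Q^\perp}(\cdot)\|\lesssim\|\pi_{L_Q^\perp}(\cdot)\|/\ell(Q)$. If the points are far apart, with $\|(X,t)-(Y,s)\|\gtrsim\ell(Q)$, the estimate instead follows trivially from $|T_{(j)}\mu|\lesssim1$ together with Lemma~\ref{TQintbd}.
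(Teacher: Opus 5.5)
Your first two steps already prove the lemma, and in fact a sharper form of it. The splitting
\[
T_{(j)}\mu(X,t)-T_{(j)}\mu(Y,s)=\int g\,\d(\mu-\mathcal L_Q)+\bigl[T_{(j)}\mathcal L_Q(X,t)-T_{(j)}\mathcal L_Q(Y,s)\bigr]
\]
is exact, and $g$ is supported in $C_Q$. Moreover $\|g\|_{\mathrm{Lip}}\lesssim\|(X,t)-(Y,s)\|\,\ell(Q)^{-(k+4)}$. For increments in the time variable $\rho$ of $g=g(Z,\rho)$, interpolate between $\|g\|_\infty\lesssim\|(X,t)-(Y,s)\|\,\ell(Q)^{-(k+3)}$ and $\|\partial_\rho g\|_\infty\lesssim\|(X,t)-(Y,s)\|\,\ell(Q)^{-(k+5)}$, using $|t-s|\lesssim\ell(Q)\|(X,t)-(Y,s)\|$. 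Only kernel derivatives of parabolic order at most $4$ enter, as in the paper. Combined with your flat-term bound, which is correct, this gives
\[
|T_{(j)}\mu(X,t)-T_{(j)}\mu(Y,s)|\lesssim\frac{\alpha(Q)\,\|(X,t)-(Y,s)\|+\|\pi_{L_Q^\perp}((X,t)-(Y,s))\|}{\ell(Q)}.
\]
This implies the stated estimate, since the distance terms there are nonnegative.

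So your claim that this linearization is ``too crude'' is mistaken, and you should delete the projection/Taylor detour. As sketched it would not close anyway. If you Taylor-expand each end piece separately, the remainders are of size $\dist(\cdot,L_Q)^2/\ell(Q)^2$. These are not dominated by $\dist(\cdot,L_Q)\,\|(X,t)-(Y,s)\|/\ell(Q)^2$ when the two points are much closer to each other than to $L_Q$.

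The paper argues differently. It first proves \eqref{uest} and \eqref{test}, which bound the derivatives of $T_{(j)}\mu$ in directions parallel to $L_Q$. To do so it tests $\alpha(Q)$ against $\nabla_uK_{(j)}((W,\sigma)-\cdot)$ and $\partial_tK_{(j)}((W,\sigma)-\cdot)$, uses that the flat integrals vanish at the projection $(W_Q,\sigma)$, and pays $|W-W_Q|/\ell(Q)^2$ to return to $(W,\sigma)$. It then integrates along a spatial segment followed by a vertical segment. The spatial segment is split into tangential and normal parts, using convexity of $\dist(\cdot,L_Q)$ and the crude bound $|\nabla T_{(j)}\mu|\lesssim\ell(Q)^{-1}$ on the normal part.

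You instead test $\alpha(Q)$ once against the difference kernel, and you replace path integration by the invariance of $\mathcal L_Q$ under translations parallel to $L_Q$. That invariance also shows that the flat integrals of tangential derivatives vanish at every point, not only on $L_Q$. Thus the distance terms in the lemma come only from the paper's choice to evaluate the cancellation at $(W_Q,\sigma)$. Your version is shorter and sharper, and it would only simplify the later uses of the lemma (the bound for $A_R$ and Lemmas~\ref{coarsepointB} and~\ref{neighborcoarsedifference}).
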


\begin{proof}
Let $u$ be a spatial unit vector parallel to $L_Q$. We first claim that
\begin{align}\label{uest}
|\nabla_uT_{(j)}\mu(W,\sigma)|
\lesssim
\frac{\alpha(Q)}{\ell(Q)}
+
\frac{\dist\big((W,\sigma),L_Q\big)}{\ell(Q)^2}
\end{align}
and
\begin{align}\label{test}
|\partial_tT_{(j)}\mu(W,\sigma)|
\lesssim
\frac{\alpha(Q)}{\ell(Q)^2}
+
\frac{\dist\big((W,\sigma),L_Q\big)}{\ell(Q)^3}
\end{align}
whenever $(W,\sigma)\in C_{2\diam(Q)}(Z_Q,\tau_Q)$.

Recall that $\supp K_{(j)}\subset C_{2^{-j-\omega+2}}(0,0)$.
Fix $\omega\in\mathbb N$ sufficiently large that
\begin{align}\label{omegabigest}
    2^{-\omega}\leq\frac{\gamma}{32}.
\end{align}
Thus, whenever $(W,\sigma)\in C_{\frac{11}{5}\diam(Q)}(Z_Q,\tau_Q)$,
the functions
\[
(Z,\rho)\longmapsto
\nabla_uK_{(j)}\big((W,\sigma)-(Z,\rho)\big)
\]
and
\[
(Z,\rho)\longmapsto
\partial_tK_{(j)}\big((W,\sigma)-(Z,\rho)\big)
\]
are supported in
\begin{align*}
C_{\frac{11}{5}\diam(Q)+2^{-j-\omega+2}}(Z_Q,\tau_Q)
&\subset
C_{\left(\frac{11}{5}2^{-10}+2^{2-\omega}\right)\ell(Q)}(Z_Q,\tau_Q)
\\
&\subset C_{3\ell(Q)}(Z_Q,\tau_Q)
=C_Q.
\end{align*}
Moreover,
\begin{align}\label{gradspacetestlip}
\left\|
    \nabla_uK_{(j)}\big((W,\sigma)-\cdot\big)
\right\|_{\operatorname{Lip}}
&\lesssim
\ell(Q)^{-(k+4)},
\\\label{gradtimetestlip}
\left\|
    \partial_tK_{(j)}\big((W,\sigma)-\cdot\big)
\right\|_{\operatorname{Lip}}
&\lesssim
\ell(Q)^{-(k+5)}.
\end{align}
By \eqref{alphaflatapproximation},
\[
    \dist_{C_Q}(\mu,\mathcal L_Q)
    \leq C\alpha(Q)\ell(Q)^{k+3}.
\]
Together with \eqref{gradspacetestlip}, this gives
\begin{align}\begin{aligned}\label{muplanegradcomparison}
&\left|
\int\nabla_uK_{(j)}\big((W,\sigma)-(Z,\rho)\big)\,\d\mu(Z,\rho)
-
\int\nabla_uK_{(j)}\big((W,\sigma)-(Z,\rho)\big)\,\d\mathcal L_Q(Z,\rho)
\right|
\\
&\hspace{7cm}\lesssim\frac{\alpha(Q)}{\ell(Q)}
\end{aligned}
\end{align}
for every $(W,\sigma)\in C_{\frac{11}{5}\diam(Q)}(Z_Q,\tau_Q)$.  For the same $(W,\sigma)$, \eqref{gradtimetestlip} implies
\begin{align}\begin{aligned}\label{muplanetimecomparison}
&\left|
\int\partial_tK_{(j)}\big((W,\sigma)-(Z,\rho)\big)\,\d\mu(Z,\rho)
-
\int\partial_tK_{(j)}\big((W,\sigma)-(Z,\rho)\big)\,\d\mathcal L_Q(Z,\rho)
\right|
\\
&\hspace{7cm}\lesssim\frac{\alpha(Q)}{\ell(Q)^2}
\end{aligned}
\end{align}

Now take $(W,\sigma)\in C_{2\diam(Q)}(Z_Q,\tau_Q)$ and let
$(W_Q,\sigma)$ be its orthogonal projection onto $L_Q$. If
\[
    (W_Q,\sigma)\notin
    C_{\frac{11}{5}\diam(Q)}(Z_Q,\tau_Q),
\]
then the triangle inequality implies
\[
    \dist\big((W,\sigma),L_Q\big)
    =|W-W_Q|
    \geq\frac15\diam(Q)
    \geq\frac{\gamma}{5}\ell(Q).
\]
Consequently, the estimate $|
\nabla T_{(j)}\mu|\lesssim\ell(Q)^{-1}$ implies \eqref{uest}, and $|\partial_t T_{(j)} \mu | \lesssim \ell(Q)^{-2}$ implies \eqref{test}.

We may therefore assume that
$(W_Q,\sigma)\in C_{\frac{11}{5}\diam(Q)}(Z_Q,\tau_Q)$.
Since $(W_Q,\sigma)\in L_Q$ and $u$ is tangent to the spatial direction
of $L_Q$, the function
\[
    s\longmapsto
    K_{(j)}
    \big((W_Q,\sigma)-((Z,\rho)+su)\big)
\]
is differentiated in a direction lying entirely in $L_Q$. Since
$K_{(j)}$ is smooth and compactly supported, integration by parts in the
$u$-direction produces no boundary terms. Hence
\[
\int
\nabla_uK_{(j)}
\big((W_Q,\sigma)-(Z,\rho)\big)
\,\d\mathcal L_Q(Z,\rho)
=0.
\]
The same argument gives 
$$\int
\partial_t K_{(j)}
\big((W_Q,\sigma)-(Z,\rho)\big)
\,\d\mathcal L_Q(Z,\rho)
=0.$$
Thus,
\begin{align*}
&\left|
\int
\nabla_uK_{(j)}\big((W,\sigma)-(Z,\rho)\big)
\,\d\mathcal L_Q(Z,\rho)
\right|
\\
&\qquad\leq
\int
\left|
\nabla_uK_{(j)}\big((W,\sigma)-(Z,\rho)\big)
-
\nabla_uK_{(j)}\big((W_Q,\sigma)-(Z,\rho)\big)
\right|
\,\d\mathcal L_Q(Z,\rho)
\\
&\qquad\lesssim
\frac{|W-W_Q|}{\ell(Q)^2}
=
\frac{\dist\big((W,\sigma),L_Q\big)}{\ell(Q)^2}.
\end{align*}
Combining this with \eqref{muplanegradcomparison} proves \eqref{uest}.  The same argument, using \eqref{gradtimetestlip}, gives \eqref{test}.

Now let $(X,t),(Y,s)\in C_{2\diam(Q)}(Z_Q,\tau_Q)$.
The spatial segment joining $(X,t)$ to $(Y,t)$ and the vertical segment joining
$(Y,t)$ to $(Y,s)$ remain in
$C_{2\diam(Q)}(Z_Q,\tau_Q)$. Along the spatial segment, decompose $X-Y$ into
its component parallel to $L_Q$ and its component in $L_Q^\perp$. Since the
distance to the affine plane $L_Q$ is convex along spatial line segments,
\eqref{uest} and the estimate
$|
\nabla T_{(j)}\mu|\lesssim\ell(Q)^{-1}$ give
\begin{align}\label{bigestspatial}
|T_{(j)}\mu(X,t)-T_{(j)}\mu(Y,t)|
&\lesssim
\frac{
    \alpha(Q)\ell(Q)
    +\dist\big((X,t),L_Q\big)
    +\dist\big((Y,t),L_Q\big)
}{\ell(Q)^2}
|X-Y|
\\
&\qquad+
\frac{
    \big\|\pi_{L_Q^\perp}\big((X,t)-(Y,t)\big)\big\|
}{\ell(Q)}.
\end{align}
Furthermore,
\[
    |t-s|^{1/2}
    \leq 4\diam(Q)
    \lesssim\ell(Q)
\]
and
\[
    |t-s|^{1/2}
    \leq\|(X,t)-(Y,s)\|.
\]
Hence
\[
    |t-s|
    \lesssim
    \ell(Q)\|(X,t)-(Y,s)\|.
\]
Using \eqref{test}, we obtain
\begin{align}\label{bigesttime}
|T_{(j)}\mu(Y,t)-T_{(j)}\mu(Y,s)|
&\leq
\sup_{\tau\in[\min\{s,t\},\max\{s,t\}]}|\partial_tT_{(j)}\mu(Y,\tau)|\,|t-s|
\\
&\lesssim
\frac{
    \alpha(Q)\ell(Q)
    +\dist\big((Y,t),L_Q\big)
    +\dist\big((Y,s),L_Q\big)
}{\ell(Q)^2}
\|(X,t)-(Y,s)\|.
\end{align}
Because $L_Q$ is vertical,
\[
    \dist\big((Y,t),L_Q\big)
    =
    \dist\big((Y,s),L_Q\big),
\]
and
\[
    \pi_{L_Q^\perp}\big((Y,t)-(Y,s)\big)=0.
\]
Combining \eqref{bigestspatial} and \eqref{bigesttime} proves the lemma.
\end{proof}

We now apply Lemma~\ref{bigest} with $Q=R$. If
$S\in\mathbb D_{m(R,i)}$ and $S\subset R$, then
\[
J(S)-J(R)
=
\left\lfloor\frac{i-J(R)}2\right\rfloor.
\]

Since $i-J(R)\geq N_0+1$, we have
\begin{align}\label{generationseparation}
J(S)-J(R)
\geq
\left\lfloor\frac{N_0+1}{2}\right\rfloor.
\end{align}
Moreover, upon taking $N_0$ sufficiently large depending only on the dimension and the ADR constants, we have
\begin{align}\label{projectionradiusvsdiamR}(4+\gamma)\ell(S) \leq (4+\gamma)\cdot 2^{- \lfloor \frac{N_0+1}{2}  \rfloor}\ell(R) \leq \gamma \ell(R) \leq \diam R
\end{align}

Let $(X,t)\in U_{\frac{\gamma}{4}\ell(S)}(S)$. There exists $(W,\sigma)\in S$ such that
\[
    \|(X,t)-(W,\sigma)\|
    \leq\frac{\gamma}{4}\ell(S).
\]

Since $(W,\sigma),(Z_R,\tau_R)\in R$, we have
\[
    \|(W,\sigma)-(Z_R,\tau_R)\|\leq\diam(R).
\]
Consequently,

\[
    \|(X,t)-(Z_R,\tau_R)\|
    \leq\frac{\gamma}{4}\ell(S)+\diam(R)
    \leq\frac54\diam(R)
    <2\diam(R).
\]

Thus
\[
(X,t)\in C_{2\diam(R)}(Z_R,\tau_R).
\]

Since $L_S\cap C_S\neq\emptyset$ and $(X_S,\tau_S)$ is the orthogonal
projection of $(Z_S,\tau_S)$ onto $L_S$,
\[
    \|(X_S,\tau_S)-(Z_S,\tau_S)\|\leq3\ell(S).
\]
Also, $(Z_S,\tau_S),(Z_R,\tau_R)\in R$, so
\[
    \|(Z_S,\tau_S)-(Z_R,\tau_R)\|\leq\diam(R).
\]
Therefore, by \eqref{projectionradiusvsdiamR},
\[
    \|(X_S,\tau_S)-(Z_R,\tau_R)\|
    \leq3\ell(S)+\diam(R)
    \leq2\diam(R).
\]
Hence
\[
(X_S,\tau_S)\in C_{2\diam(R)}(Z_R,\tau_R).
\]

For $S\subset R$, put $r(S,R):=J(S)-J(R)$.
Since $J(S)=\lfloor(J(R)+i)/2\rfloor$, the index $i$ is one of
\[
2J(S)-J(R)
\qquad\text{or}\qquad
2J(S)-J(R)+1.
\]
Define
\[
\mathcal T(S,R)
:=
|T_{(2J(S)-J(R))}\mu|
+
|T_{(2J(S)-J(R)+1)}\mu|
\]
and
\[
\mathcal S_{N_0}(R)
:=
\left\{
S\subset R:
2r(S,R)+1>N_0
\right\}.
\]

Lemma~\ref{bigest}, applied with $Q=R$, gives
\begin{align}\label{bigAR1}
A_R
\lesssim&
\sum_{S\in\mathcal S_{N_0}(R)}
\int_{U_{\frac{\gamma}{4}\ell(S)}(S)}
\mathcal T(S,R)(X,t)
\bigg[
    \alpha(R)\frac{\ell(S)}{\ell(R)}
\nonumber\\
&\qquad+
\dist\big((X,t),L_R\big)\frac{\ell(S)}{\ell(R)^2}
+
\dist\big((X_S,\tau_S),L_R\big)\frac{\ell(S)}{\ell(R)^2}
\nonumber\\
&\qquad+
\frac{
    \big|\pi_{L_R^\perp}\big((X,t)-(X_S,\tau_S)\big)\big|
}{\ell(R)}
\bigg]
\,\d\mu(X,t).
\end{align}

By Lemma~\ref{planealpha}, for $(X,t)\in U_{\frac{\gamma}{4}\ell(S)}(S)\cap E$,
\begin{align}\label{bigAR2}
\dist\big((X,t),L_R\big)
\lesssim
\dist\big((X,t),L_S\big)
+
\sum_{P:\,S\subset P\subseteq R}\alpha(P)\ell(P).
\end{align}
By Lemma~\ref{distLQalphabd2},
\begin{align}\label{XSdistR}
\dist\big((X_S,\tau_S),L_R\big)
\lesssim
\sum_{P:\,S\subset P\subseteq R}\alpha(P)\ell(P).
\end{align}

We next estimate
\[
    \big|\pi_{L_R^\perp}\big((X,t)-(X_S,\tau_S)\big)\big|.
\]
Set $V:=(X,t)-(X_S,\tau_S)$.
Telescoping along the ancestors from $S$ to
$R$ gives
\[
\big|\pi_{L_R^\perp}V\big|
\leq
\big|\pi_{L_S^\perp}V\big|
+
\sum_{P:\,S\subseteq P\subsetneq R}
\big|
(\pi_{L_P^\perp}-\pi_{L_{P'}^\perp})V
\big|,
\]
where $P'$ is the parent of $P$. Since $(X_S,\tau_S)\in L_S$,
\[
    \big|\pi_{L_S^\perp}V\big|
    =
    \dist\big((X,t),L_S\big).
\]
Lemma~\ref{toalphalemma}, applied to the pair $P\subset P'$, gives
\[
    \|\pi_{L_P^\perp}-\pi_{L_{P'}^\perp}\|
    \lesssim\alpha(P').
\]
Indeed, when $\alpha(P')$ is small this follows from the Hausdorff comparison
of the planes on $C_{P'}$, while when $\alpha(P')$ is bounded below it follows
from the trivial bound on the two projections.
Moreover, the projection differences vanish on the time direction, so only
the spatial component of $V$ matters. Since $(X,t)\in U_{\frac{\gamma}{4}\ell(S)}(S)$ and
$|(X_S,\tau_S)-(Z_S,\tau_S)|\leq3\ell(S)$, we have
\[
|X-X_S|\lesssim\ell(S).
\]
It follows that
\begin{align}\label{bigAR3}
\big|\pi_{L_R^\perp}\big((X,t)-(X_S,\tau_S)\big)\big|
\lesssim
\dist\big((X,t),L_S\big)
+
\ell(S)
\sum_{P:\,S\subset P\subseteq R}\alpha(P).
\end{align}

Combining \eqref{bigAR1}, \eqref{bigAR2}, \eqref{XSdistR}, and
\eqref{bigAR3}, using $\ell(P)\leq\ell(R)$ for $S\subset P\subseteq R$, and
then using
\[
    \frac{\ell(S)}{\ell(R)}
    \leq
    \left(\frac{\ell(S)}{\ell(R)}\right)^{1/2},
\]
we obtain
\begin{align}
\begin{aligned} 
A_R
\lesssim&
\sum_{S\in\mathcal S_{N_0}(R)}
\frac{\ell(S)}{\ell(R)}
\int_{U_{\frac{\gamma}{4}\ell(S)}(S)}
\mathcal T(S,R)
\frac{\dist\big((X,t),L_S\big)}{\ell(S)}
\,\d\mu(X,t)
\\
&+
\sum_{S\in\mathcal S_{N_0}(R)}
\left(\frac{\ell(S)}{\ell(R)}\right)^{1/2}
\int_{U_{\frac{\gamma}{4}\ell(S)}(S)}
\mathcal T(S,R)
\left[
    \sum_{P:\,S\subset P\subseteq R}\alpha(P)
\right]
\,\d\mu(X,t)
\\
\label{ARsplit}=:&\  A_R^1+A_R^2. \end{aligned}
\end{align}

We first estimate the sum of the $A_R^1$'s. By Cauchy--Schwarz,
\begin{align}
\begin{aligned}\label{A1split}
A_R^1
\lesssim&
\sum_{S\in\mathcal S_{N_0}(R)}
\frac{\ell(S)}{\ell(R)}
\int_{U_{\frac{\gamma}{4}\ell(S)}(S)}\mathcal T(S,R)^2\,\d\mu
\\
&+
\sum_{S\in\mathcal S_{N_0}(R)}
\frac{\ell(S)}{\ell(R)}
\int_{U_{\frac{\gamma}{4}\ell(S)}(S)}
\left(
    \frac{\dist\big((X,t),L_S\big)}{\ell(S)}
\right)^2
\,\d\mu.
\end{aligned}
\end{align}

We estimate the term involving $\mathcal T(S,R)$. It suffices to treat
$T_{(2J(S)-J(R))}\mu$ because the term with index
$2J(S)-J(R)+1$ is nearly identical.  Put $q=2J(S)-J(R)$. Since
\[
T_{(q)}\mu=\sum_{Q\in\mathbb D_q}T_Q\mu
\qquad \mu\text{-a.e.},
\]
and the cubes in $\mathbb D_q$ are disjoint,
\[
\int_{U_{\frac{\gamma}{4}\ell(S)}(S)}|T_{(q)}\mu|^2\,\d\mu
\leq
\sum_{\substack{Q\in\mathbb D_q\\Q\cap U_{\frac{\gamma}{4}\ell(S)}(S)\neq\emptyset}}
\|T_Q\mu\|_{L^2(\mu)}^2
\lesssim
\sum_{\substack{Q\in\mathbb D_q\\Q\cap U_{\frac{\gamma}{4}\ell(S)}(S)\neq\emptyset}}
\mathcal A(Q)^2\mu(Q),
\]
where we used Lemma~\ref{TQintbd}(c). Since $J(Q)=2J(S)-J(R)$,
\[
\frac{\ell(S)}{\ell(R)}
=
\left(\frac{\ell(Q)}{\ell(R)}\right)^{1/2}.
\]
For the term with $J(Q)=2J(S)-J(R)+1$ the above display holds with comparability instead of equality. Also, since $S\in\mathcal S_{N_0}(R)$, in either case $J(Q)-J(R)>N_0-1$.

Finally, if $Q\cap U_{\frac{\gamma}{4}\ell(S)}(S)\neq\emptyset$, then $S\subset R$, $\ell(Q)\leq\ell(S)$, and
\[
\dist(Q,R)
\leq
\diam(Q)+\frac{\gamma}{4}\ell(S)
< \frac{1}{2}\ell(S),
\]
so $Q\subset3R$. Since at each generation there are only boundedly many relevant cubes $S$, we obtain
\begin{align*}
&\sum_{R\in\mathbb D}\sum_{S\in\mathcal S_{N_0}(R)}
\frac{\ell(S)}{\ell(R)}
\int_{U_{\frac{\gamma}{4}\ell(S)}(S)}\mathcal T(S,R)^2\,\d\mu\\
&\qquad\lesssim
\sum_{Q\in\mathbb D}\mathcal A(Q)^2\mu(Q)
\sum_{\substack{R\in\mathbb D\\Q\subset3R\\J(Q)-J(R)>N_0-1}}
\left(\frac{\ell(Q)}{\ell(R)}\right)^{1/2}.
\end{align*}
For fixed $Q$, writing $r=J(Q)-J(R)$ and using bounded overlap of the cubes $3R$ at each generation,
\[
\sum_{\substack{R\in\mathbb D\\Q\subset3R\\J(Q)-J(R)>N_0-1}}
\left(\frac{\ell(Q)}{\ell(R)}\right)^{1/2}
\lesssim
\sum_{r\geq N_0}2^{-r/2}
\lesssim
2^{-N_0/2}.
\]
Therefore, by Proposition~\ref{calAQ},
\[
\sum_{R\in\mathbb D}\sum_{S\in\mathcal S_{N_0}(R)}
\frac{\ell(S)}{\ell(R)}
\int_{U_{\frac{\gamma}{4}\ell(S)}(S)}\mathcal T(S,R)^2\,\d\mu
\lesssim
2^{-N_0/2}
\sum_{Q\in\mathbb D}\alpha(Q)^2\mu(Q).
\]

We now deal with the other term in \eqref{A1split}.  By Lemma~\ref{dillem}, namely
\eqref{Adilated}, we have
\begin{align*}
&\sum_{R\in\mathbb D}
\sum_{S\in\mathcal S_{N_0}(R)}
\frac{\ell(S)}{\ell(R)}
\int_{U_{\frac{\gamma}{4}\ell(S)}(S)}
\left(
    \frac{\dist\big((X,t),L_S\big)}{\ell(S)}
\right)^2
\,\d\mu
\\
&\qquad\lesssim
\sum_{R\in\mathbb D}
\sum_{S\in\mathcal S_{N_0}(R)}
\frac{\ell(S)}{\ell(R)}
\sum_{T\in N(S)\cup\{S\}}
\mathcal A(T)^2\mu(T)
\\
&\qquad\lesssim
\sum_{T\in\mathbb D}\mathcal A(T)^2\mu(T)
\sum_{\substack{R\in\mathbb D\\
3R\supset T\\
J(T)-J(R)\geq (N_0-1)/2}}
\frac{\ell(T)}{\ell(R)}
\\
&\qquad\lesssim
2^{-N_0/2}
\sum_{T\in\mathbb D}\mathcal A(T)^2\mu(T)
\lesssim
2^{-N_0/2}
\sum_{T\in\mathbb D}\alpha(T)^2\mu(T),
\end{align*}
Consequently,
\[
    \sum_{R\in\mathbb D}A_R^1
    \lesssim
    2^{-N_0/2}
    \sum_{Q\in\mathbb D}\alpha(Q)^2\mu(Q).
\]

We now estimate the sum of the $A_R^2$'s. By Cauchy--Schwarz,
\begin{align*}
A_R^2
\lesssim&
\sum_{S\in\mathcal S_{N_0}(R)}
\left(\frac{\ell(S)}{\ell(R)}\right)^{1/2}
\int_{U_{\frac{\gamma}{4}\ell(S)}(S)}\mathcal T(S,R)^2\,\d\mu
\\
&+
\sum_{S\in\mathcal S_{N_0}(R)}
\left(\frac{\ell(S)}{\ell(R)}\right)^{1/2}
\left[
    \sum_{P:\,S\subset P\subseteq R}\alpha(P)
\right]^2
\mu(S).
\end{align*}
For the first term, we repeat the argument used for the first term in \eqref{A1split} and obtain
\begin{align}
\sum_{R \in \mathbb D}\sum_{S\in\mathcal S_{N_0}(R)}
\left(\frac{\ell(S)}{\ell(R)}\right)^{1/2}
\int_{U_{\frac{\gamma}{4}\ell(S)}(S)}\mathcal T(S,R)^2\,\d\mu
\lesssim 
2^{- N_0/4} \sum_{Q \in \mathbb D} \alpha(Q)^2 \mu(Q)
\end{align}

So, it remains to deal with the other term. Since the number of cubes $P$ such that
$S\subset P\subseteq R$ is at most $1+J(S)-J(R)$, Cauchy--Schwarz gives
\[
\left[
    \sum_{P:\,S\subset P\subseteq R}\alpha(P)
\right]^2
\lesssim
\log\left(2+\frac{\ell(R)}{\ell(S)}\right)
\sum_{P:\,S\subset P\subseteq R}\alpha(P)^2.
\]
For $0<x\leq1$, $x^{1/2}\log(2+x^{-1})\lesssim x^{1/3}.$
Moreover, if $S\in\mathcal S_{N_0}(R)$, then
\[
J(S)-J(R)\geq\frac{N_0-1}{2}.
\]
Hence
\[
\left(\frac{\ell(S)}{\ell(R)}\right)^{1/3}
=
\left(\frac{\ell(S)}{\ell(R)}\right)^{1/4}
2^{-\frac{J(S)-J(R)}{12}}
\lesssim
2^{-N_0/24}
\left(\frac{\ell(S)}{\ell(R)}\right)^{1/4}.
\] Therefore
\begin{align*}
&\sum_{R\in\mathbb D}
\sum_{S\in\mathcal S_{N_0}(R)}
\left(\frac{\ell(S)}{\ell(R)}\right)^{1/2}
\left[
    \sum_{P:\,S\subset P\subseteq R}\alpha(P)
\right]^2
\mu(S)
\\
&\qquad\lesssim
2^{-N_0/24}
\sum_{R\in\mathbb D}
\sum_{S\subset R}
\left(\frac{\ell(S)}{\ell(R)}\right)^{1/4}
\sum_{P:\,S\subset P\subseteq R}\alpha(P)^2\mu(S)
\\
&\qquad=
2^{-N_0/24}
\sum_{P\in\mathbb D}\alpha(P)^2
\left(
    \sum_{S\subset P}\ell(S)^{1/4}\mu(S)
\right)
\left(
    \sum_{R\supset P}\ell(R)^{-1/4}
\right)
\\
&\qquad\lesssim
2^{-N_0/24}
\sum_{P\in\mathbb D}\alpha(P)^2\mu(P).
\end{align*}
Indeed, summing generation by generation,
\[
\sum_{S\subset P}\ell(S)^{1/4}\mu(S)
\lesssim
\ell(P)^{1/4}\mu(P),
\]
while, since $P$ has a unique ancestor at each generation,
\[
\sum_{R\supseteq P}\ell(R)^{-1/4}
\lesssim
\ell(P)^{-1/4}.
\]
Combining the estimates for $A_R^1$ and $A_R^2$, we conclude that
\[
\sum_{R\in\mathbb D}A_R
\lesssim
2^{-N_0/24}
\sum_{Q\in\mathbb D}\alpha(Q)^2\mu(Q).
\]

\subsubsection{Further decomposing the $B_{j,i}$'s}

We retain the notation from the preceding decomposition, so that $i>j$,
$m=\lfloor(i+j)/2\rfloor$. Set $r_i:=2^{-i}$ and $r_m:=2^{-m}$. Thus $\ell(Q)=r_i$ for $Q\in\mathbb D_i$, while $\ell(S)=\ell(T)=r_m$ for $S,T\in\mathbb D_m$. Then
\[
B_{j,i}
=
\sum_{S\in\mathbb D_m}
T_{(j)}\mu(X_S,\tau_S)\,
\langle\theta_S,T_{(i)}\mu\rangle.
\]
We justify the rearrangements below by first using finitely supported coefficients. For fixed $i,j$, set $a_S:=T_{(j)}\mu(X_S,\tau_S)$. Estimate \eqref{coarsepointBselfestimate} below, weighted Cauchy--Schwarz, and the partition of each ancestor into its generation-$m$ descendants give
$$
\sum_{S\in\mathbb D_m}|a_S|^2\mu(S)\leq C\sum_{\substack{P\in\mathbb D\\j\leq J(P)\leq m}}\alpha(P)^2\mu(P)<\infty.
$$
The bounded overlap of the partition also gives
$$
\sum_{S\in\mathbb D_m}\frac{|\langle\theta_S,T_{(i)}\mu\rangle|^2}{\mu(S)}\leq C\|T_{(i)}\mu\|_{L^2(\mu)}^2<\infty.
$$
Here $C$ depends only on $n$, $k$, $C_0$, and $\|K\|_{\mathrm{CZ},4}$, and is independent of $i,j$. For $\delta>0$, set
$$
a_S^\delta:=\operatorname{sgn}(a_S)(|a_S|-\delta)_+,\qquad B_{j,i}^\delta:=\sum_{S\in\mathbb D_m}a_S^\delta\langle\theta_S,T_{(i)}\mu\rangle,
$$
where $u_+:=\max\{u,0\}$. Since the generation-$m$ cube masses have a uniform positive lower bound, only finitely many $a_S^\delta$ are nonzero. Moreover, $|a_S^\delta|\leq|a_S|$ and $|a_S^\delta-a_T^\delta|\leq|a_S-a_T|$. Dominated convergence in $\ell^2(\mu(S))$, followed by Cauchy--Schwarz, gives $B_{j,i}^\delta\to B_{j,i}$ as $\delta\downarrow0$.

In the definitions and rearrangements of $B^1_{j,i}$, $B^2_{j,i}$, and their remainders below, we interpret the series as limits of the finite sums obtained by replacing each coefficient $a_S$ by $a_S^\delta$. Write $B^{1,\delta}_{j,i}$ and $B^{2,\delta}_{j,i}$ for the corresponding finite sums. The displayed formulas retain the original coefficients; the algebraic identities are first applied to the finite sums. The bounds on $|a_S^\delta|$ and $|a_S^\delta-a_T^\delta|$ ensure that the estimates for these sums are uniform in $\delta$. All lemmas concerning $T_{(j)}\mu$ refer to the original operator. We verify the existence of the limits after the estimates.

Since $K$ is odd in the spatial variables and even in time, \begin{equation}\label{skewkernel} K_{(i)}\big((X,t)-(Y,s)\big)=-K_{(i)}\big((Y,s)-(X,t)\big). \end{equation} For a measure $\mathcal L$, we write
\[
\langle f,g\rangle_{\mathcal L}
:=
\int fg\,d\mathcal L,
\]
and set
\[
\mathcal E_{S,T}
:=
\langle\theta_S,T_{(i)}(\theta_T\mu)\rangle
-
\langle
\theta_S,T_{(i)}(\theta_T\mathcal L_S)
\rangle_{\mathcal L_S}.
\]
We define
\begin{align*}
B^1_{j,i}
&:=
\sum_{S\in\mathbb D_m}
\sum_{T\in N(S)}
\bigl(
T_{(j)}\mu(X_S,\tau_S)
-
T_{(j)}\mu(X_T,\tau_T)
\bigr)
\langle
\theta_S,T_{(i)}(\theta_T\mathcal L_S)
\rangle_{\mathcal L_S},
\\
B^2_{j,i}
&:=
\sum_{S\in\mathbb D_m}
\sum_{T\in N(S)}
\bigl(
T_{(j)}\mu(X_S,\tau_S)
-
T_{(j)}\mu(X_T,\tau_T)
\bigr)
\mathcal E_{S,T}.
\end{align*}

Suppose that
\[
\langle\theta_S,T_{(i)}(\theta_T\mu)\rangle\neq0.
\]
Then there exist $(X,t)\in\supp(\theta_S)\cap E$ and
$(Y,s)\in\supp(\theta_T)\cap E$ such that
\[
K_{(i)}\big((X,t)-(Y,s)\big)\neq0.
\]
Since $S,T\in\mathbb D_m$, we have $\ell(S)=\ell(T)=2^{-m}$, and therefore
\[
\dist(S,T)
\leq
\frac{\gamma}{4}\ell(S)
+
2^{-i-\omega+2}
+
\frac{\gamma}{4}\ell(T).
\]
Since $i>m$,
\[
2^{-i-\omega+2}
\leq
2^{1-\omega}\ell(S)
\leq
\frac{\gamma}{2}\ell(S),
\]
and hence
\[
\dist(S,T)\leq\gamma\ell(S).
\]
Thus either $T=S$ or $T\in N(S)$.

Using $\sum_{T\in\mathbb D_m}\theta_T=1$ on $E$, we obtain
\begin{equation}
\label{premv1-corrected}
B_{j,i}
=
\sum_{S\in\mathbb D_m}
\sum_{T\in N(S)\cup\{S\}}
T_{(j)}\mu(X_S,\tau_S)\,
\langle\theta_S,T_{(i)}(\theta_T\mu)\rangle.
\end{equation}
By \eqref{skewkernel}, \[ \langle f,T_{(i)}(g\mu)
\rangle_\mu=-\langle g,T_{(i)}(f\mu)
\rangle_\mu. \] Using this identity, the symmetry of the neighbor relation, and a change of indices,
\eqref{premv1-corrected} also gives
\begin{equation}
\label{premv2-corrected}
B_{j,i}
=
-\sum_{S\in\mathbb D_m}
\sum_{T\in N(S)\cup\{S\}}
T_{(j)}\mu(X_T,\tau_T)\,
\langle\theta_S,T_{(i)}(\theta_T\mu)\rangle.
\end{equation}
Averaging \eqref{premv1-corrected} and \eqref{premv2-corrected}, and using the definitions above, yields
\begin{equation}
\label{mv}
B_{j,i}
=
\frac12
\bigl(
B^1_{j,i}+B^2_{j,i}
\bigr).
\end{equation}

\subsubsection{Estimating the Sum of the $B_{j,i}^1$'s}

\begingroup  Define \[ \Omega_m^{(2)}:= \left\{(X,t)\in\mathbb P^n: \dist\big((X,t),E\big)<\frac{3\gamma}{8}r_m \right\}. \] There exists a family \[ \{\psi_T\}_{T\in\mathbb D_m} \subset C_c^\infty(\mathbb P^n) \] whose supports have uniformly bounded overlap and such that \begin{equation}\label{planeambientpartitionidentity} \psi_T=\theta_T \qquad\text{on }E, \end{equation} \begin{equation}\label{planeambientpartitionsum} \sum_{T\in\mathbb D_m}\psi_T=1 \qquad\text{on }\Omega_m^{(2)}. \end{equation} In addition, \[ \supp(\psi_T)\subset U_{\frac{\gamma}{2}r_m}(T), \qquad |\psi_T|\lesssim1, \] and \[ \big|\nabla_X^a\partial_t^q\psi_T\big| \lesssim_{a,q}r_m^{-(|a|+2q)}, \qquad |a|+q\leq2. \] The construction is given in Appendix~\ref{smoothpartitionappendix}.

\begingroup
We use the following four lemmas to estimate the sum of the $B^1_{j,i}$'s.
We prove them after completing that estimate.

For neighboring cubes $S,T$ of the same generation, set
\[
    \alpha^\#(S,T):=\max\{\alpha(S),\alpha(T)\}.
\]

For $S\in\mathbb D$, define
\[
    \alpha^\#(S)
    :=
    \max_{T\in N(S)\cup\{S\}}\alpha(T).
\]

\begin{lemma}\label{B1ambientcancellationlemma}
For every $S\in\mathbb D_m$,
\begin{equation}\label{B1firstambientcancellation}
\sum_{T\in N(S)\cup\{S\}}
\langle
\theta_S,T_{(i)}(\psi_T\mathcal L_S)
\rangle_{\mathcal L_S}
=
\langle
\theta_S,T_{(i)}\mathcal L_S
\rangle_{\mathcal L_S}
=
0.
\end{equation}
For every $T\in\mathbb D_m$,
\begin{equation}\label{B1secondambientcancellation}
\sum_{S\in N(T)\cup\{T\}}
\langle
\psi_S,T_{(i)}(\theta_T\mathcal L_T)
\rangle_{\mathcal L_T}
=
\langle
1,T_{(i)}(\theta_T\mathcal L_T)
\rangle_{\mathcal L_T}
=
0.
\end{equation}
\end{lemma}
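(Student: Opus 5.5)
For both identities, the first equality comes from the partition property of the $\psi_T$ on the support of the relevant integrand. The second equality (vanishing) comes from antisymmetry of the kernel \eqref{skewkernel} together with the flatness of $\mathcal L_S$.

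\emph{First equality in \eqref{B1firstambientcancellation}.} The inner integral is $T_{(i)}(\psi_T\mathcal L_S)(X,t)=\int K_{(i)}((X,t)-(Y,s))\psi_T(Y,s)\,\d\mathcal L_S(Y,s)$, evaluated at $(X,t)\in\supp\theta_S\cap L_S$. Here $\supp\theta_S\subset U_{\frac{\gamma}{8}r_m}(S)$ and $\supp K_{(i)}\subset C_{2^{-i-\omega+2}}(0,0)$ with $2^{-i-\omega+2}\le \frac{\gamma}{8}r_m$, so the relevant points $(Y,s)$ lie within $\frac{\gamma}{4}r_m$ of $S$. I need these points to lie in $\Omega^{(2)}_m$, where $\sum_T\psi_T=1$ by \eqref{planeambientpartitionsum}; this is the one place where a geometric input is required.

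\begin{itemize}
\item If $c_S=0$ ($\alpha(S)>\eta_*$), then $\mathcal L_S=0$ and everything vanishes trivially.
\item Otherwise, the relevant points of $L_S$ are not automatically close to $E$. I would use \eqref{LQbetacomp}, or more simply the fact that $\theta_S$ is supported within $\frac{\gamma}{8}r_m$ of $S\subset E$. Thus $(X,t)$ is within $\frac{\gamma}{8}r_m$ of $E$, and $(Y,s)$ is within $\frac{\gamma}{8}r_m+2^{-i-\omega+2}<\frac{3\gamma}{8}r_m$ of $E$.
\end{itemize}

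The sum is therefore effectively over all $T\in\mathbb D_m$, and $\sum_T\psi_T=1$ holds on the integration region. It remains to restrict the sum to $T\in N(S)\cup\{S\}$. If $\psi_T(Y,s)\ne0$ for such a $(Y,s)$, then $\dist(S,T)\le \frac{\gamma}{4}r_m+\frac{\gamma}{2}r_m\le\gamma r_m$, so $T\in N(S)\cup\{S\}$. The finitely many terms can then be summed inside the integral.

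\emph{Vanishing in \eqref{B1firstambientcancellation}.} We have $\langle\theta_S,T_{(i)}\mathcal L_S\rangle_{\mathcal L_S}=\int\theta_S(X,t)\,T_{(i)}\mathcal L_S(X,t)\,\d\mathcal L_S$. For $(X,t)\in L_S$, the same reflection argument as in Lemma~\ref{TQintbd}(a) gives $T_{(i)}\mathcal L_S(X,t)=c_S\int K_{(i)}((X,t)-\cdot)\,\d\sigma_{L_S}=0$. This uses spatial oddness of $K$, evenness of $\varphi_i$, and invariance of $\sigma_{L_S}$ under the point reflection $Y\mapsto 2X-Y$, $s\mapsto 2t-s$. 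Time evenness of $K$ is what makes the time reflection harmless.

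\emph{Identity \eqref{B1secondambientcancellation}.} The same two-step argument applies.
\begin{itemize}
\item For the first equality, the points $(X,t)$ where the integrand $\psi_S(X,t)\,T_{(i)}(\theta_T\mathcal L_T)(X,t)$ is nonzero lie within $2^{-i-\omega+2}+\frac{\gamma}{8}r_m$ of $E$, since $\theta_T$ is supported near $T\subset E$. Hence they lie in $\Omega^{(2)}_m$, where $\sum_S\psi_S=1$, and only $S\in N(T)\cup\{T\}$ contribute.
\item For the vanishing, Fubini and \eqref{skewkernel} give
\[
\langle 1,T_{(i)}(\theta_T\mathcal L_T)\rangle_{\mathcal L_T}=-\langle\theta_T,T_{(i)}\mathcal L_T\rangle_{\mathcal L_T}.
\]
This is zero by the previous argument with $T$ in place of $S$. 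Fubini is legitimate because the truncated kernel is bounded and $\theta_T$ has compact support.
\end{itemize}

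The main obstacle is only the bookkeeping of supports. One must verify that the radii $\frac{\gamma}{8}r_m+2^{-i-\omega+2}$ fit inside $\Omega^{(2)}_m$ and inside the neighbor range; these choices were made precisely for this purpose. Everything else reduces to the flat-plane cancellation already used in Lemma~\ref{TQintbd}(a).
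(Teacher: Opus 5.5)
Your proposal is correct and follows the paper's proof: the same support bookkeeping (the relevant points lie in $\Omega^{(2)}_m$, and any contributing cube satisfies $\dist(S,T)<\gamma r_m$) gives the first equalities, and cancellation of the spatially odd kernel on the flat vertical plane gives the vanishing. The differences are cosmetic: the paper uses spatial oddness alone, reflecting only the spatial variable within the plane or integrating first in the other variable after Fubini. So neither the time evenness used by your space-time reflection and by \eqref{skewkernel}, nor your case split on $c_S=0$, is actually needed.
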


\begin{lemma}\label{neighborprojectionmeasure}
Let $S,T\in\mathbb D_m$ with $T\in N(S)$ and suppose that
$\alpha^\#(S,T)<\eta_0$, where $\eta_0$ is the constant from Lemma \ref{elemalphalem} (c). For $U\in\{S,T\}$, write
\[
L_U=(a_U+V_U)\times\mathbb R,
\]
where $a_U\in\mathbb R^n$ and $V_U\subset\mathbb R^n$ is a linear
subspace. Let $p:L_S\to L_T$ be the orthogonal projection. 
If $\pi_{V_T}$ denotes orthogonal
projection onto $V_T$, define
\[
\widetilde p
:=
\pi_{V_T}|_{V_S}:V_S\longrightarrow V_T.
\]
Define the pullback measure $p^\#\mathcal L_T$ on $L_S$ by
\[
    p^\#\mathcal L_T(A):=\mathcal L_T(p(A)),
    \qquad A\subset L_S.
\]
Then
\[
    p^\#\mathcal L_T=a_{S,T}\mathcal L_S
\]
for a constant $a_{S,T}$ satisfying
\begin{align}\label{projectionmeasurebounds}
    |a_{S,T}-1|
    +\|\widetilde p-\iota_S\|
    \lesssim
    \alpha^\#(S,T),
\end{align}
where $\iota_S:V_S\to\mathbb R^n$ is the inclusion map and
$\widetilde p$ is also viewed as taking values in $\mathbb R^n$.
\end{lemma}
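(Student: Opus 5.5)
The plan is to compute $p^\#\mathcal L_T$ explicitly. Since $L_S$ and $L_T$ are vertical planes, the orthogonal projection $p:L_S\to L_T$ acts only on the spatial variable and fixes time:
\[
p(X,t)=\bigl(a_T+\pi_{V_T}(X-a_T),\,t\bigr).
\]
Thus $p$ is affine, and its spatial linear part restricted to $V_S$ is $\widetilde p$. The proof then has three steps: the angle estimate, the Jacobian computation, and the comparison of the constants $c_S,c_T$.

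\emph{Angle estimate.} Set $r=r_m$ and $\delta=\alpha^\#(S,T)<\eta_0$. By Lemma~\ref{toalphalemma2},
\[
\dist_H\bigl(L_S\cap C_{6r}(Z_S,\tau_S),\,L_T\cap C_{6r}(Z_S,\tau_S)\bigr)\lesssim\delta r .
\]
By Lemma~\ref{elemalphalem}(c), both planes meet $C_{r/4}(Z_S,\tau_S)$. Choose an orthonormal basis $e_1,\dots,e_k$ of $V_S$ and a base point $W\in L_S\cap C_{r/4}(Z_S,\tau_S)$. The points $W$ and $W+re_l$ all lie in $C_{6r}(Z_S,\tau_S)$, so each is within $\lesssim\delta r$ of $L_T$. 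Subtracting gives $|\pi_{V_T^\perp}(re_l)|\lesssim\delta r$, that is, $|\widetilde p e_l-e_l|\lesssim\delta$. Hence $\|\widetilde p-\iota_S\|\lesssim\delta$.

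\emph{Jacobian.} Since $\eta_0\le\varepsilon_0$, Lemma~\ref{smallanglelemma} (or the previous step, for $\eta_0$ small) shows that $\widetilde p:V_S\to V_T$ is a linear isomorphism. Then $p$ is an affine bijection $L_S\to L_T$ that preserves each time slice. On each slice, $\mathcal H^k(p(A_t))=J\,\mathcal H^k(A_t)$, where $J=|\det\widetilde p|$ is computed in orthonormal bases. Integrating in $t$ gives $\sigma_{L_T}(p(A))=J\,\sigma_{L_S}(A)$. Therefore
\[
p^\#\mathcal L_T=c_TJ\,\sigma_{L_S}=a_{S,T}\mathcal L_S,\qquad a_{S,T}:=\frac{c_TJ}{c_S},
\]
which is well defined because $c_S\approx1$ by Lemma~\ref{elemalphalem}(c). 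The singular values of $\widetilde p$ are the cosines of the principal angles between $V_S$ and $V_T$. Each principal angle is $\lesssim\delta$ by the angle estimate, so $|J-1|\lesssim\delta$.

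\emph{Comparison of constants.} It remains to show $|c_S-c_T|\lesssim\delta$; I expect this to be the only step needing care. Take the cutoff $\varphi$ from the proof of Lemma~\ref{toalphalemma2}: it equals $1$ on $C_{2r}(Z_S,\tau_S)$, is supported in $C_{\frac52 r}(Z_S,\tau_S)\subset C_S\cap C_T$, and has $\|\varphi\|_{\operatorname{Lip}}\lesssim r^{-1}$. Testing \eqref{alphaflatapproximation} for $S$ and for $T$ with $r\varphi$ gives
\[
\Bigl|c_S\int\varphi\,\d\sigma_{L_S}-c_T\int\varphi\,\d\sigma_{L_T}\Bigr|\lesssim\delta r^{k+2}.
\]
Next compare $\int\varphi\,\d\sigma_{L_S}$ with $\int\varphi\,\d\sigma_{L_T}$. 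Change variables by $p$ in the second integral, so that it equals $J\int\varphi\circ p\,\d\sigma_{L_S}$. On the relevant region, $|\varphi\circ p-\varphi|\le\|\varphi\|_{\operatorname{Lip}}\sup|p(W)-W|\lesssim r^{-1}\cdot\delta r$. The mass involved is $\lesssim r^{k+2}$, and $|J-1|\lesssim\delta$. Together these give $\bigl|\int\varphi\,\d\sigma_{L_S}-\int\varphi\,\d\sigma_{L_T}\bigr|\lesssim\delta r^{k+2}$. Since $\int\varphi\,\d\sigma_{L_S}\approx r^{k+2}$ and $c_T\lesssim1$, it follows that $|c_S-c_T|\lesssim\delta$.

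Finally, $|a_{S,T}-1|\lesssim|c_T-c_S|+c_T|J-1|\lesssim\delta$, which together with the angle estimate proves \eqref{projectionmeasurebounds}.
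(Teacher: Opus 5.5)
Your proposal is correct and follows essentially the same route as the paper: the Hausdorff comparison from Lemma~\ref{toalphalemma2} gives $\|\widetilde p-\iota_S\|\lesssim\alpha^\#(S,T)$ and $|J-1|\lesssim\alpha^\#(S,T)$, the cutoff $\varphi$ supported in $C_{\frac52 r}(Z_S,\tau_S)$ together with the change of variables under $p$ gives $|c_S-c_T|\lesssim\alpha^\#(S,T)$, and then $a_{S,T}=c_TJ/c_S$. The one point you state loosely is the bound $d_p(W,p(W))\lesssim\alpha^\#(S,T)\,r$ on the set where $\varphi$ or $\varphi\circ p$ is nonzero. The paper handles it by two cases, according to whether $W$ or $p(W)$ lies in $C_{\frac52 r}(Z_S,\tau_S)$, using the affine identity for $p(W)-W$ and the inverse Lipschitz bound for $p$, all of which you already have in hand.
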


\begin{lemma}\label{neighborslem}
Let $S,T\in\mathbb D_m$ with $T\in N(S)$. Then
\begin{align}\label{neighbors}
&\left|
\langle\theta_S,T_{(i)}(\theta_T\mathcal L_T)\rangle_{\mathcal L_T}
-
\langle\theta_S,T_{(i)}(\theta_T\mathcal L_S)\rangle_{\mathcal L_S}
\right|
\nonumber\\
&\hspace{3cm}\lesssim
\frac{2^{-i}}{\ell(S)}
\alpha^\#(S,T)\mu(S)
\lesssim
2^{-(i-j)/2}\alpha^\#(S,T)\mu(S).
\end{align}
\end{lemma}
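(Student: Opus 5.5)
The plan is to reduce the comparison to a change of variables along the projection $p:L_S\to L_T$ from Lemma~\ref{neighborprojectionmeasure}, plus a perturbation estimate for the kernel. First I dispose of the case $\alpha^\#(S,T)\geq\eta_0$. There the trivial bounds suffice. Because $K_{(i)}$ is odd in space and even in $\varphi_i$, and $\theta_S$ is smooth at scale $r_m$, each pairing can be written as
\[
\tfrac12\iint\bigl(\theta_S(X,t)-\theta_S(Y,s)\bigr)K_{(i)}\bigl((X,t)-(Y,s)\bigr)\theta_T(Y,s)\,\d\mathcal L(Y,s)\,\d\mathcal L(X,t).
\]
This antisymmetrization is valid because we integrate against the same measure $\mathcal L\in\{\mathcal L_S,\mathcal L_T\}$ in both variables. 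The bracket is $\lesssim 2^{-i}/r_m$ on the support. Together with $|K_{(i)}|\lesssim 2^{i(k+2)}$, the annulus of mass $\approx 2^{-i(k+2)}$, and $c_U\lesssim1$ (Lemma~\ref{elemalphalem}(d)), each pairing is $\lesssim (2^{-i}/\ell(S))\mu(S)\lesssim \eta_0^{-1}(2^{-i}/\ell(S))\alpha^\#(S,T)\mu(S)$.

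In the main case $\alpha^\#(S,T)<\eta_0$, I rewrite the $\mathcal L_T$ pairing on $L_S$ using the pullback identity $p^\#\mathcal L_T=a_{S,T}\mathcal L_S$. Since $p$ is an affine bijection $L_S\to L_T$, the identity of measures gives
\[
\langle\theta_S,T_{(i)}(\theta_T\mathcal L_T)\rangle_{\mathcal L_T}=a_{S,T}^2\iint \theta_S(pX,t)\,K_{(i)}\bigl(p(X,t)-p(Y,s)\bigr)\,\theta_T(pY,s)\,\d\mathcal L_S\,\d\mathcal L_S.
\]
I restrict to the region near $S$ relevant to $\supp\theta_S\cap\supp\theta_T$, a cylinder of size $\lesssim r_m$. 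The difference from the $\mathcal L_S$ pairing then splits into three terms, and I estimate each, always keeping the antisymmetrized form to gain the factor $2^{-i}/r_m$.

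\textbf{Coefficient term.} The factor $a_{S,T}^2-1$ is $O(\alpha^\#)$ by \eqref{projectionmeasurebounds}. Multiplying the trivial bound above by this factor gives the required estimate.

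\textbf{Test-function term.} The difference $\theta_U\circ p-\theta_U$ on $L_S\cap C_{Cr_m}$ is controlled by $\|p-\mathrm{id}\|\lesssim\alpha^\# r_m$, which uses \eqref{lqlpaq} and Lemma~\ref{toalphalemma2}. With the derivative bounds \eqref{partitionhigherderivatives}, this difference is $O(\alpha^\#)$ in $C^0$. Its increments over separation $2^{-i}$ are $O(\alpha^\#2^{-i}/r_m)$, since $\nabla(\theta\circ p-\theta)=O(\alpha^\#/r_m)$ using second derivatives of $\theta$. Inserting this into the antisymmetrized form gives the bound.

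\textbf{Kernel term.} The remaining difference is $K_{(i)}(p(X,t)-p(Y,s))-K_{(i)}((X,t)-(Y,s))$. Since $p$ is affine with linear part $\widetilde p$, we have $p(X,t)-p(Y,s)=(\widetilde p(X-Y),t-s)$, and $\|\widetilde p-\iota_S\|\lesssim\alpha^\#$. The mean value theorem with $|\nabla K_{(i)}|\lesssim 2^{i(k+3)}$ gives a kernel difference $\lesssim\alpha^\# 2^{i(k+2)}$, supported in the same annulus. This alone yields only $\alpha^\#\mu(S)$, with no factor $2^{-i}/r_m$. To recover that factor, I note that the difference kernel $\widetilde K(Z,t):=K_{(i)}(\widetilde pZ,t)-K_{(i)}(Z,t)$ is still odd in $Z$. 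Hence the same antisymmetrization applies, and the $\theta_S$ increment supplies $2^{-i}/r_m$.

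Summing the three terms gives $\lesssim (2^{-i}/\ell(S))\alpha^\#(S,T)\mu(S)$. The last inequality in \eqref{neighbors} follows from $\ell(S)=2^{-m}$ and $m=\lfloor(i+j)/2\rfloor$, so $2^{-i}/2^{-m}\lesssim 2^{-(i-j)/2}$.

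I expect the main obstacle to be the bookkeeping in the kernel term. One must check that oddness survives composition with $\widetilde p$, that $\widetilde p$ is close to the identity uniformly on the relevant cylinders, and that the supports stay inside $C_Q$-type sets where Lemma~\ref{toalphalemma2} applies.
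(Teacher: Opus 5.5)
Your proposal is correct and follows essentially the paper's argument. When $\alpha^\#(S,T)\geq\eta_0$, both of you use the trivial $r_i/r_m$ bound coming from spatial oddness on a single plane. Otherwise, both of you change variables through the projection $p$ of Lemma~\ref{neighborprojectionmeasure} and estimate three errors separately, each gaining the factor $r_i/r_m$ from cancellation on $L_S$: the coefficient error $a_{S,T}-1$, the test-function errors $\theta_U\circ p-\theta_U$, and the kernel error $K_{(i)}\circ\mathbf P-K_{(i)}$, which is still spatially odd. The paper changes variables only in the outer integral and subtracts $\theta_T(X,t)$ inside the inner integral, which is equivalent to your antisymmetrization in the outer variable.

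There are two harmless slips. Your displayed identity holds without the factor $\tfrac12$. Also, the time increments of $\theta_U\circ p-\theta_U$ require the mixed bound $|\nabla\partial_t\theta_U|\lesssim r_m^{-3}$; alternatively, you can avoid them entirely by pairing the bound $|\theta_U\circ p-\theta_U|\lesssim\alpha^\#$ with increments of the other cutoff.
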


\begin{lemma}\label{coarsepointB}
Let $S,T\in\mathbb D_m$ with $T\in N(S)\cup\{S\}$, let $R\in\mathbb D_j$ be the
ancestor of $S$, and assume $m-j\geq\lfloor(N_0+1)/2\rfloor$. Then
\begin{align}\label{coarsepointBestimate}
|T_{(j)}\mu(X_T,\tau_T)|
\lesssim
\alpha^\#(S)\frac{\ell(S)}{\ell(R)}
+
\sum_{P:\,S\subseteq P\subseteq R}
\alpha(P)\frac{\ell(P)}{\ell(R)}.
\end{align} In particular, if $T=S$, then \begin{align}\label{coarsepointBselfestimate}|T_{(j)}\mu(X_S,\tau_S)|\lesssim\sum_{P:\,S\subseteq P\subseteq R}\alpha(P)\frac{\ell(P)}{\ell(R)}.\end{align}
\end{lemma}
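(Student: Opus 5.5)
The plan is to compare $T_{(j)}\mu$ at $(X_T,\tau_T)$ with its value at a nearby point of $L_R$, where Lemma~\ref{TQintbd}(a) (with $Q=R$) gives $|T_{(j)}\mu|\lesssim\alpha(R)$. The cost of moving between the two points will then be controlled by $\dist((X_T,\tau_T),L_R)$. That distance will be estimated by the same telescoping-of-planes argument used in Lemmas~\ref{distLQalphabd2} and~\ref{planealpha}.

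First, let $(Y,\tau_T)$ be the orthogonal projection of $(X_T,\tau_T)$ onto $L_R$; since $L_R$ is vertical, only the spatial coordinate moves. I will check that both $(X_T,\tau_T)$ and $(Y,\tau_T)$ lie in $C_{2\diam(R)}(Z_R,\tau_R)$, arguing as in the verification that $(X_S,\tau_S)\in C_{2\diam(R)}(Z_R,\tau_R)$ just before \eqref{bigAR1}. The inputs are: $T\in N(S)\cup\{S\}$ gives $\|(Z_T,\tau_T)-(Z_S,\tau_S)\|\lesssim\ell(S)$, and $\|(X_T,\tau_T)-(Z_T,\tau_T)\|\leq 3\ell(S)$. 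The condition $m-j\geq\lfloor(N_0+1)/2\rfloor$ is then used, as in \eqref{projectionradiusvsdiamR}, to absorb all $O(\ell(S))$ errors into a small multiple of $\diam(R)$. For the projected point $(Y,\tau_T)$, membership follows once we know $\dist((X_T,\tau_T),L_R)$ is small. If it is not small, $\dist((X_T,\tau_T),L_R)\gtrsim\ell(R)$, and the claimed bound is trivial because $|T_{(j)}\mu|\lesssim1$ everywhere.

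Second, along the spatial segment from $(Y,\tau_T)$ to $(X_T,\tau_T)$, the supports of $K_{(j)}(\cdot-\cdot)$ stay in $C_R$, so $|\nabla T_{(j)}\mu|\lesssim\ell(R)^{-1}$ there. One could alternatively invoke Lemma~\ref{bigest} with $Q=R$, where the displacement lies entirely in $L_R^\perp$. Combined with Lemma~\ref{TQintbd}(a) at $(Y,\tau_T)$, this gives
\[
|T_{(j)}\mu(X_T,\tau_T)|\lesssim\alpha(R)+\frac{\dist\big((X_T,\tau_T),L_R\big)}{\ell(R)}.
\]
The term $\alpha(R)$ is the $P=R$ summand on the right-hand side of \eqref{coarsepointBestimate}.

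Third, I bound $\dist((X_T,\tau_T),L_R)$. If $T=S$, Lemma~\ref{distLQalphabd2} gives directly $\lesssim\sum_{S\subsetneq P\subseteq R}\alpha(P)\ell(P)$, which yields \eqref{coarsepointBselfestimate}. If $T\in N(S)$, then $(X_T,\tau_T)\in L_T\cap C_{6\ell(S)}(Z_S,\tau_S)$. Lemma~\ref{toalphalemma2}, in its first form, which needs no smallness assumption, then gives a point $W\in L_S$ with $\|W-(X_T,\tau_T)\|\lesssim\alpha^\#(S)\ell(S)$.

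From $W$ I run the chain construction of Lemma~\ref{planealpha}. I will verify that $W\in L_S\cap C_{S_{M-1}}$, where $S_{M-1}$ is the parent of $S$; this is the same center bookkeeping as there. Then Lemma~\ref{toalphalemma} is applied successively with $\eta=\tfrac12$ to produce $a_m\in L_{S_m}\cap C_{S_m}$ with $\|a_m-a_{m+1}\|\lesssim\alpha(S_m)\ell(S_m)$. This yields $\dist(W,L_R)\lesssim\sum_{S\subsetneq P\subseteq R}\alpha(P)\ell(P)$. Adding the two pieces and dividing by $\ell(R)$ gives \eqref{coarsepointBestimate}.

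The main obstacle I expect is the geometric bookkeeping: checking that $W$, and the starting point of the chain, lie in the correct cylinders so that Lemma~\ref{toalphalemma} applies. The other delicate point is ensuring the projection onto $L_R$ stays inside $C_{2\diam(R)}(Z_R,\tau_R)$. Both should reduce to the choice of $\gamma$, $\omega$, and $N_0$ large, exactly as in the preceding containment arguments, with the large-$\alpha$ cases disposed of by the trivial bound $|T_{(j)}\mu|\lesssim1$.
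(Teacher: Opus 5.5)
Your argument works in outline, but it takes a different route from the paper in the key step, the bound on $\dist((X_T,\tau_T),L_R)$.

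The paper projects $(X_T,\tau_T)$ to $(Y_S,\tau_T)\in L_S$, at cost $\alpha^\#(S)\ell(S)$ by Lemma~\ref{toalphalemma2}. It then compares with $(X_S,\tau_S)$ through Lemma~\ref{distLQalphabd2}, and controls the in-plane displacement $v=(Y_S,\tau_T)-(X_S,\tau_S)$, $\|v\|\lesssim\ell(S)$, by telescoping the normal projections via \eqref{parentchildprojectionbound}. That route needs the plane-angle estimate, but it is uniform in the size of $\alpha^\#(S)$, since only $\|v\|\lesssim\ell(S)$ is used.

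You instead restart the point chain of Lemma~\ref{planealpha} at $W\in L_S$. This uses only the Hausdorff statements of Lemmas~\ref{toalphalemma} and~\ref{toalphalemma2}, with no angle comparison, but it needs $W\in C_{S_{M-1}}$. Your comparison point on $L_R$ also differs from the paper's: you project onto $L_R$ and use the global bound $|\nabla T_{(j)}\mu|\lesssim\ell(R)^{-1}$, whereas the paper takes a point of $L_R\cap C_{\diam(R)}(Z_R,\tau_R)$ and applies Lemma~\ref{bigest}. Yours has the small advantage of not requiring $\alpha(R)<\eta_0$.

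There is one slip. The containment $W\in C_{S_{M-1}}$ is only guaranteed when $\alpha^\#(S)\le c$ for a suitable constant $c$. Spatially, $|W-Z_{S_{M-1}}|\le(3+2^{-8}+\gamma+C\alpha^\#(S))\ell(S)$, and this must be $<6\ell(S)$. The complementary case is not disposed of by $|T_{(j)}\mu|\lesssim1$, as you claim. When $\alpha^\#(S)\approx1$, the right side of \eqref{coarsepointBestimate} can be of order $\ell(S)/\ell(R)\ll1$, so the trivial bound is useless there.

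The fix is one line. When $\alpha^\#(S)>c$, use $\dist((X_T,\tau_T),L_R)\le\|(X_T,\tau_T)-(X_S,\tau_S)\|+\dist((X_S,\tau_S),L_R)\lesssim\ell(S)+\sum_{S\subsetneq P\subseteq R}\alpha(P)\ell(P)$, by Lemma~\ref{distLQalphabd2} and $\|(X_T,\tau_T)-(X_S,\tau_S)\|\lesssim\ell(S)$. Then absorb $\ell(S)\le c^{-1}\alpha^\#(S)\ell(S)$. With this case added, your argument proves the lemma.
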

\endgroup

We now use these lemmas to estimate the sum of the $B^1_{j,i}$'s.

Set
\[
\delta_T:=\theta_T-\psi_T.
\]
Then $\delta_T=0$ on $E$,
\[
\supp(\delta_T)
\subset
U_{\frac{\gamma}{2}r_m}(T),
\]
and
\[
\|\delta_T\|_{\operatorname{Lip}}
\lesssim r_m^{-1}.
\]
Consequently,
\[
|\delta_T(X,t)|
\lesssim
r_m^{-1}\dist\big((X,t),E\big).
\]
If $T\in N(S)\cup\{S\}$, the support of $\delta_T$ on $L_S$ is contained
in $C_{2r_m}(Z_S,\tau_S)$. Hence \eqref{LQbetacomp} gives
\begin{equation}\label{B1partitiondefectL1}
\begin{aligned}
\int_{L_S}|\delta_T|\,d\mathcal L_S
&\lesssim
r_m^{-1}
\int_{L_S\cap C_{2r_m}(Z_S,\tau_S)}
\dist\big((X,t),E\big)\,d\mathcal L_S(X,t)
\\
&\lesssim
\alpha(S)\mu(S).
\end{aligned}
\end{equation}

Interchanging $S$ and $T$ in the preceding argument and using
$\mu(T)\approx\mu(S)$ gives
\[
\int_{L_T}|\delta_S|\,d\mathcal L_T
\lesssim
\alpha(T)\mu(T)
\lesssim
\alpha(T)\mu(S).
\]

Define
\begin{align*}
\mathcal R^1_{j,i}
&:=
\sum_{S\in\mathbb D_m}
T_{(j)}\mu(X_S,\tau_S)
\sum_{T\in N(S)\cup\{S\}}
\langle
\theta_S,T_{(i)}(\delta_T\mathcal L_S)
\rangle_{\mathcal L_S},
\\
\mathcal R^2_{j,i}
&:=
\sum_{T\in\mathbb D_m}
T_{(j)}\mu(X_T,\tau_T)
\sum_{S\in N(T)\cup\{T\}}
\langle
\delta_S,T_{(i)}(\theta_T\mathcal L_T)
\rangle_{\mathcal L_T},
\end{align*}
and set
\[
\mathcal R_{j,i}:=\mathcal R^1_{j,i}-\mathcal R^2_{j,i}.
\]
Since $\theta_T=\psi_T+\delta_T$, \eqref{B1firstambientcancellation}
gives
\[
\sum_{S\in\mathbb D_m}
T_{(j)}\mu(X_S,\tau_S)
\sum_{T\in N(S)\cup\{S\}}
\langle
\theta_S,T_{(i)}(\theta_T\mathcal L_S)
\rangle_{\mathcal L_S}
=
\mathcal R^1_{j,i}.
\]
Similarly, using the symmetry of the neighbor relation and
\eqref{B1secondambientcancellation},
\[
\sum_{S\in\mathbb D_m}
\sum_{T\in N(S)\cup\{S\}}
T_{(j)}\mu(X_T,\tau_T)
\langle
\theta_S,T_{(i)}(\theta_T\mathcal L_T)
\rangle_{\mathcal L_T}
=
\mathcal R^2_{j,i}.
\]
The diagonal terms $S=T$ in these two identities are identical. Therefore,
\begin{align*}
&\sum_{S\in\mathbb D_m}\sum_{T\in N(S)}
T_{(j)}\mu(X_S,\tau_S)
\langle
\theta_S,T_{(i)}(\theta_T\mathcal L_S)
\rangle_{\mathcal L_S}
\\
&\qquad=
\sum_{S\in\mathbb D_m}\sum_{T\in N(S)}
T_{(j)}\mu(X_T,\tau_T)
\langle
\theta_S,T_{(i)}(\theta_T\mathcal L_T)
\rangle_{\mathcal L_T}
+
\mathcal R_{j,i}.
\end{align*}

Consequently,
\begin{align}\label{bjk1d}
B^1_{j,i}
&=
\sum_{S\in\mathbb D_m}\sum_{T\in N(S)}
T_{(j)}\mu(X_T,\tau_T)
\bigg[
\langle
\theta_S,T_{(i)}(\theta_T\mathcal L_T)
\rangle_{\mathcal L_T}
\nonumber\\
&\hspace{5.5cm}
-
\langle
\theta_S,T_{(i)}(\theta_T\mathcal L_S)
\rangle_{\mathcal L_S}
\bigg]
+
\mathcal R_{j,i}.
\end{align}

To estimate $\mathcal R_{j,i}$, we first bound $T_{(i)}(\theta_U\mathcal L_U)(X,t)$ for $U\in\mathbb D_m$ and $(X,t)\in L_U$. Since $K_{(i)}$ is odd in the spatial variables, $T_{(i)}\mathcal L_U(X,t)=0$, and hence
\[
T_{(i)}(\theta_U\mathcal L_U)(X,t)
=
\int_{L_U}
K_{(i)}\big((X,t)-(Y,s)\big)
\big(
\theta_U(Y,s)
-
\theta_U(X,t)
\big)
\,d\mathcal L_U(Y,s).
\]
The estimates of the derivatives of $\theta_U$ in \eqref{partitionhigherderivatives} imply
\[
\big|
\theta_U(Y,s)
-
\theta_U(X,t)
\big|
\lesssim
\frac{d_p((X,t),(Y,s))}{r_m}
\lesssim
\frac{r_i}{r_m}
\]
on the support of the kernel. Moreover,
\[
\int_{L_U}
\left|
K_{(i)}\big((X,t)-(Y,s)\big)
\right|
\,d\mathcal L_U(Y,s)
\lesssim1,
\]
because $|K_{(i)}|\lesssim r_i^{-k-2}$ and the relevant portion of the vertical
plane has measure $\lesssim r_i^{k+2}$. Therefore
\[
\|T_{(i)}(\theta_T\mathcal L_T)\|_{L^\infty(L_T)}
\lesssim
\frac{r_i}{r_m}.
\]
For every $S\in\mathbb D_m$ and $T\in N(S)\cup\{S\}$, Fubini's theorem and \eqref{skewkernel} give
\[
\left\langle
\theta_S,
T_{(i)}(\delta_T\mathcal L_S)
\right\rangle_{\mathcal L_S}
=
-
\left\langle
\delta_T,
T_{(i)}(\theta_S\mathcal L_S)
\right\rangle_{\mathcal L_S}.
\]
Consequently, \eqref{B1partitiondefectL1} and the preceding
$L^\infty(L_S)$ estimate imply
\[
\left|
\left\langle
\theta_S,
T_{(i)}(\delta_T\mathcal L_S)
\right\rangle_{\mathcal L_S}
\right|
\lesssim
\frac{r_i}{r_m}\alpha(S)\mu(S).
\]
Similarly, the corresponding term in $\mathcal R^2_{j,i}$ satisfies
\[
\left|
\left\langle
\delta_S,
T_{(i)}(\theta_T\mathcal L_T)
\right\rangle_{\mathcal L_T}
\right|
\lesssim
\frac{r_i}{r_m}\alpha(T)\mu(S).
\]
Substituting these two estimates into the definitions of
$\mathcal R^1_{j,i}$ and $\mathcal R^2_{j,i}$ gives
\begin{align}\label{B1partitionremainder}
|\mathcal R_{j,i}|
\lesssim
\frac{r_i}{r_m}
\sum_{S\in\mathbb D_m}
\sum_{T\in N(S)\cup\{S\}}
&\Big(
|T_{(j)}\mu(X_S,\tau_S)|
+
|T_{(j)}\mu(X_T,\tau_T)|
\Big)
\nonumber\\
&\qquad\qquad\times
\big(\alpha(S)+\alpha(T)\big)\mu(S).
\end{align}
Since $\frac{r_i}{r_m}\approx2^{-(i-j)/2}$ and each cube has a uniformly bounded
number of neighbors, we sum \eqref{B1partitionremainder} in the same way
as the main term in \eqref{bjk1d} below.

\endgroup

Because every cube has a uniformly bounded number of neighbors,
\begin{align}\label{alphasharpsquare}
    \sum_{S\in\mathbb D}\alpha^\#(S)^2\mu(S)
    \lesssim
    \sum_{Q\in\mathbb D}\alpha(Q)^2\mu(Q).
\end{align}

We now estimate $$\sum_{\substack{i,j\in\mathcal J\\ i-j > N_0}} |B^1_{j,i}|.$$  
Fix $i,j$ with $i-j>N_0$, and put $m=\lfloor(i+j)/2\rfloor$. For each $S\in\mathbb D_m$, let $R_j(S)\in\mathbb D_j$ be the unique ancestor of $S$. Since $m=\lfloor(i+j)/2\rfloor$, \[ i-j\in\{2(m-j),\,2(m-j)+1\}, \qquad \frac{r_i}{r_m}\leq\frac{\ell(S)}{\ell(R_j(S))}. \] Define \[ F(S,R):=\frac{\ell(S)}{\ell(R)}\alpha^\#(S)\mu(S)\left[\alpha^\#(S)\frac{\ell(S)}{\ell(R)}+\sum_{P:\,S\subseteq P\subseteq R}\alpha(P)\frac{\ell(P)}{\ell(R)}\right]. \] By \eqref{bjk1d}, Lemmas~\ref{neighborslem} and~\ref{coarsepointB}, and the fact that each cube has a uniformly bounded number of neighbors, \[ |B^1_{j,i}|\lesssim\sum_{S\in\mathbb D_m}F(S,R_j(S))+|\mathcal R_{j,i}|. \]
We have 
\[ \sum_{S\in\mathbb D_m}F(S,R_j(S))=\sum_{R\in\mathbb D_j}\sum_{\substack{S\in\mathbb D_m\\S\subset R}}F(S,R). \] For $S\subset R$, set $r(S,R):=J(S)-J(R)$. Here $j=J(R)$ and $m=J(S)$, so each pair $S\subset R$ corresponds to at most two values of $i$, and $i-j>N_0$ implies $2r(S,R)+1>N_0$. Summing the preceding fixed-$(i,j)$ estimate therefore gives

\begin{align}\label{B1mastersum} &\sum_{\substack{i,j\in\mathcal J\\i-j>N_0}} |B^1_{j,i}| 
\nonumber\\ &\quad\lesssim \sum_{R\in\mathbb D} \sum_{\substack{S\subset R\\2r(S,R)+1>N_0}} \frac{\ell(S)}{\ell(R)} \alpha^\#(S)\mu(S) \left[ \alpha^\#(S)\frac{\ell(S)}{\ell(R)} + \sum_{P:\,S\subseteq P\subseteq R} \alpha(P)\frac{\ell(P)}{\ell(R)} 
\right] + \sum_{\substack{i,j\in\mathcal J\\i-j>N_0}}|\mathcal R_{j,i}| =: \mathcal{I} + \mathcal{II} + \mathcal{III}. \end{align}

We estimate term $\mathcal{I}$ as follows:
\begin{align*}
\mathcal I \leq \sum_{R\in\mathbb D}
\sum_{\substack{S\subset R\\2r(S,R)+1>N_0}}
\left(\frac{\ell(S)}{\ell(R)}\right)^2
\alpha^\#(S)^2\mu(S)
\lesssim
2^{-N_0}
\sum_{S\in\mathbb D}\alpha^\#(S)^2\mu(S).
\end{align*}
For term $\mathcal{II}$, since $\alpha(P)\leq\alpha^\#(P)$, we have
$2\alpha^\#(S)\alpha(P)\leq\alpha^\#(S)^2+\alpha^\#(P)^2$.
We estimate the contribution containing $\alpha^\#(S)^2$ in the same way as term $\mathcal I$, now obtaining a factor of $2^{-N_0/2}$,
since
$$\sum_{P:\,S\subseteq P\subseteq R}
\frac{\ell(P)}{\ell(R)}
\lesssim1.$$
To complete the estimate of term $\mathcal {II}$, it remains to bound
\begin{align*}
\Sigma
:=
\sum_{R\in\mathbb D}
\sum_{\substack{S\subset R\\2r(S,R)+1>N_0}}
\frac{\ell(S)}{\ell(R)}\mu(S)
\sum_{P:\,S\subseteq P\subseteq R}
\alpha^\#(P)^2\frac{\ell(P)}{\ell(R)}.
\end{align*}
Using Tonelli's theorem, we can interchange the inner two sums:
\begin{align*}
\Sigma
=
\sum_{R\in\mathbb D}
\sum_{P: P \subseteq R}
\alpha^\#(P)^2
\frac{\ell(P)}{\ell(R)}
\sum_{\substack{S:\,S\subseteq P \\ 2J(S) - 2J(R) > N_0-1}}
\frac{\ell(S)}{\ell(R)} \mu(S).
\end{align*}
Hence,
\begin{align*}
\Sigma=\sum_{R\in\mathbb D}\sum_{q\geq0}\sum_{\substack{P\subseteq R\\J(P)=J(R)+q}}\alpha^\#(P)^2 2^{-q}\sum_{\substack{r\geq q\\2r+1>N_0}}2^{-r}\sum_{\substack{S\subseteq P\\J(S)=J(R)+r}}\mu(S).
\end{align*}
Using H\"older, we have
$$\sum_{\substack{r\geq q\\2r+1>N_0}}2^{-r} \lesssim 2^{-q/2} 2^{-N_0/4}$$
and thus,
$$\Sigma \lesssim 2^{-N_0/4} \sum_{R \in \mathbb D} \sum_{q \geq 0} \sum_{\substack{P : P \subset R \\ J(P) = J(R) +q}} \alpha^{\#}(P)^2 2^{-\frac{3}{2}q} \mu(P)
\lesssim 2^{-N_0/4} \sum_{R \in \mathbb D} \alpha^{\#}(R)^2\mu(R) \lesssim 2^{-N_0/4} \sum_{R \in \mathbb D} \alpha(R)^2 \mu(R).$$
It remains to estimate $\mathcal{III}$. Fix $i,j$ and put $m=\lfloor(i+j)/2\rfloor$. For each $S\in\mathbb D_m$, let $R=R_j(S)\in\mathbb D_j$ be its unique ancestor. By \eqref{B1partitionremainder}, Lemma~\ref{coarsepointB}, and the fact that each cube has a uniformly bounded number of neighbors, grouping the cubes $S$ according to their ancestor $R_j(S)$ gives
\begin{align*}|\mathcal R_{j,i}|&\lesssim\sum_{R\in\mathbb D_j}\sum_{\substack{S\in\mathbb D_m\\S\subset R}}\frac{\ell(S)}{\ell(R)}\alpha^\#(S)\mu(S)\left[\alpha^\#(S)\frac{\ell(S)}{\ell(R)}+\sum_{P:\,S\subseteq P\subseteq R}\alpha(P)\frac{\ell(P)}{\ell(R)}\right].\end{align*} 
Here we used  
$$\frac{r_i}{r_m}\leq\frac{\ell(S)}{\ell(R)}, \qquad \alpha(S)+\alpha(T)\leq2\alpha^\#(S),$$ 
and Lemma~\ref{coarsepointB} to bound both $|T_{(j)}\mu(X_S,\tau_S)|$ and $|T_{(j)}\mu(X_T,\tau_T)|$. To sum over $i,j$, note that, for $S\in\mathbb D_m$ and $R\in\mathbb D_j$ with $S\subset R$, we have $r(S,R)=m-j$ and \[ i-j\in\{2r(S,R),\,2r(S,R)+1\}. \] Thus each pair $S\subset R$ occurs for at most two values of $i$, while $i-j>N_0$ implies $2r(S,R)+1>N_0$. Summing over $i,j$ gives \[ \mathcal{III}\lesssim\mathcal I+\mathcal{II}. \] Combining this estimate with \eqref{alphasharpsquare} and
\eqref{B1mastersum}, we conclude that
\begin{align*}
\sum_{\substack{i,j\in\mathcal J\\i-j>N_0}}
|B^1_{j,i}|
\lesssim
2^{-N_0/4}
\sum_{Q\in\mathbb D}\alpha(Q)^2\mu(Q).
\end{align*}

\begingroup
We now prove the four lemmas used above.

\begin{proof}[Proof of Lemma~\ref{B1ambientcancellationlemma}]
Fix $S\in\mathbb D_m$. If
$(X,t)\in\supp(\theta_S)$ and
\[
K_{(i)}\big((X,t)-(Y,s)\big)\neq0,
\]
then $i>m$ and $2^{-\omega}\leq\gamma/32$ imply
\[
\begin{aligned}
\dist\big((Y,s),E\big)
&\leq
\dist\big((X,t),S\big)
+
\|(X,t)-(Y,s)\|
\\
&\leq
\frac{\gamma}{4}r_m
+
2^{2-\omega}r_i
\\
&\leq
\left(
\frac{\gamma}{4}
+
\frac{\gamma}{16}
\right)r_m
=
\frac{5\gamma}{16}r_m
<
\frac{3\gamma}{8}r_m.
\end{aligned}
\]
Thus $(Y,s)\in\Omega_m^{(2)}$. If also $\psi_T(Y,s)\neq0$, then
\[
\begin{aligned}
\dist(S,T)
&\leq
\dist\big((X,t),S\big)
+
\|(X,t)-(Y,s)\|
+
\dist\big((Y,s),T\big)
\\
&\leq
\left(
\frac14+\frac1{16}+\frac12
\right)\gamma r_m
=
\frac{13\gamma}{16}r_m
<
\gamma r_m.
\end{aligned}
\]
Hence $T\in N(S)\cup\{S\}$. It follows that, on the support of the
integrand in \eqref{B1firstambientcancellation},
\[
\sum_{T\in N(S)\cup\{S\}}\psi_T(Y,s)
=
\sum_{T\in\mathbb D_m}\psi_T(Y,s)
=
1
\]
by \eqref{planeambientpartitionsum}. Consequently,
\[
\sum_{T\in N(S)\cup\{S\}}
\langle
\theta_S,T_{(i)}(\psi_T\mathcal L_S)
\rangle_{\mathcal L_S}
=
\langle
\theta_S,T_{(i)}\mathcal L_S
\rangle_{\mathcal L_S}.
\]
By the spatial oddness of $K_{(i)}$, the last
expression is zero. This proves \eqref{B1firstambientcancellation}.

Now fix $T\in\mathbb D_m$. Suppose that
\[
(Y,s)\in\supp(\theta_T),
\qquad
K_{(i)}\big((X,t)-(Y,s)\big)\neq0,
\qquad
\psi_S(X,t)\neq0.
\]
The preceding estimates, with $(S,X,t)$ and $(T,Y,s)$ interchanged, give
\[
(X,t)\in\Omega_m^{(2)}
\qquad\text{and}\qquad
S\in N(T)\cup\{T\}.
\]
Therefore, on the support of the integrand in
\eqref{B1secondambientcancellation},
\[
\sum_{S\in N(T)\cup\{T\}}\psi_S(X,t)
=
\sum_{S\in\mathbb D_m}\psi_S(X,t)
=
1.
\]
This proves the first equality in \eqref{B1secondambientcancellation}. For
the second, we apply Fubini's theorem and use the spatial oddness of $K_{(i)}$ to obtain
\[
\begin{aligned}
\langle
1,T_{(i)}(\theta_T\mathcal L_T)
\rangle_{\mathcal L_T}
&=
\int_{L_T}\theta_T(Y,s)
\left[
\int_{L_T}
K_{(i)}\big((X,t)-(Y,s)\big)
\,d\mathcal L_T(X,t)
\right]
d\mathcal L_T(Y,s)
\\
&=0.
\end{aligned}
\]
This proves \eqref{B1secondambientcancellation}.
\end{proof}

\begin{proof}[Proof of Lemma~\ref{neighborprojectionmeasure}]
Lemma~\ref{smallanglelemma} shows that the angle between $L_S$ and $L_T$ is
less than $\pi/4$, so $p$ is bi-Lipschitz.
 Since $p$ is affine in space and
leaves time unchanged, its Jacobian on the vertical plane is constant. Thus
$p^\#\mathcal L_T=a_{S,T}\mathcal L_S$.

Set $r:=\ell(S)=\ell(T)$ and $\delta:=\alpha^\#(S,T)$.
By Lemma~\ref{toalphalemma2},
\begin{equation}\label{neighborplanehausdorff}
\dist_H(L_S\cap C_S,L_T\cap C_S)
\lesssim
\delta r.
\end{equation}
It follows that $\|\widetilde p-\iota_S\| \lesssim \delta.$  In particular, with $J:=|\det\widetilde p|,$ we have
\begin{align}\label{neighborprojectionjacobian}
    |J-1|\lesssim\delta.
\end{align}

We next compare the constants $c_S$ and $c_T$. The proof of
Lemma~\ref{toalphalemma2} shows that
\[
    C_{\frac52r}(Z_S,\tau_S)
    \subset
    C_S\cap C_T
\]
and that both $L_S$ and $L_T$ intersect
$C_{r/4}(Z_S,\tau_S)$. Choose a parabolic Lipschitz function $\varphi$
such that $0\leq\varphi\leq1$, $\varphi=1$ on $C_{2r}(Z_S,\tau_S)$, $\supp\varphi\subset C_{\frac52r}(Z_S,\tau_S)$, and  $\|\varphi\|_{\operatorname{Lip}}
    \lesssim r^{-1}$.
For $U\in\{S,T\}$, set
\[
    I_U
    :=
    \int_{L_U}\varphi\,d\sigma_{L_U}.
\]
Because $L_S$ and $L_T$ intersect $C_{r/4}(Z_S,\tau_S)$, we have
\begin{align}\label{neighborplaneintegralsize}
    I_S\approx r^{k+2},
    \qquad
    I_T\approx r^{k+2}.
\end{align}
Since $r\varphi$ is, up to a fixed constant, an admissible test function
in the definitions of both $\alpha(S)$ and $\alpha(T)$,
\[
    \left|
        \int\varphi\,d\mu-c_UI_U
    \right|
    \lesssim
    \alpha(U)r^{k+2},
    \qquad U\in\{S,T\}.
\]
Thus,
\begin{align}\label{neighborweightedplaneintegrals}
    |c_SI_S-c_TI_T|
    \lesssim
    \delta r^{k+2}.
\end{align}

It remains to compare $I_S$ and $I_T$. Set
\[
    \Omega
    :=
    (\supp\varphi\cap L_S)
    \cup
    p^{-1}(\supp\varphi\cap L_T).
\]
Choose $W\in L_S\cap C_{r/4}(Z_S,\tau_S)$. By
\eqref{neighborplanehausdorff} and the fact that $p(W)$ is the nearest point
of $L_T$ to $W$,
\[
    d_p(W,p(W))
    \lesssim
    \delta r.
\]
If $(X,t)\in\Omega$, then either $(X,t)$ or $p(X,t)$ belongs to
$C_{\frac52r}(Z_S,\tau_S)$. In the first case,
$d_p((X,t),W)\lesssim r$ follows directly from the choice of $W$. In the
second case we have $\|p(X,t)-(Z_S,\tau_S)\|,\|p(W)-(Z_S,\tau_S)\| \lesssim r$, so the inverse Lipschitz bound for $p$ gives
$d_p((X,t),W)\lesssim r$.

Write $W=(Y,s)$. Since $p$ is affine in the spatial variables, preserves
time, and has linear part $\widetilde p$, we have
\[
    p(X,t)-(X,t)
    =
    p(W)-W
    +
    \big((\widetilde p-\iota_S)(X-Y),0\big).
\]
Hence, using $|X-Y|\leq d_p((X,t),W)$ and
$\|\widetilde p-\iota_S\|\lesssim\delta$,
\[
    d_p\big((X,t),p(X,t)\big)
    \leq
    d_p(W,p(W))
    +
    \|\widetilde p-\iota_S\|\,
    d_p((X,t),W)
    \lesssim
    \delta r,
    \qquad (X,t)\in\Omega.
\]
Changing variables by $p:L_S\to L_T$ therefore gives
\[
    I_T
    =
    J\int_{L_S}\varphi\circ p\,d\sigma_{L_S},
\]
and hence, by the Lipschitz bound for $\varphi$ and
\eqref{neighborprojectionjacobian},
\begin{align*}
    |I_S-I_T|
    &\leq
    \int_{L_S}|\varphi-\varphi\circ p|\,
        d\sigma_{L_S}
    +
    |J-1|
    \int_{L_S}\varphi\circ p\,
        d\sigma_{L_S}
    \\
    &\lesssim
    \delta r^{k+2}.
\end{align*}
Using this estimate, \eqref{neighborweightedplaneintegrals},
$c_T\approx1$, and \eqref{neighborplaneintegralsize}, we obtain
\[
    |c_S-c_T|I_S
    \leq
    |c_SI_S-c_TI_T|
    +
    c_T|I_T-I_S|
    \lesssim
    \delta r^{k+2}.
\]
Thus
\begin{align}\label{neighborconstantcomparison}
    |c_S-c_T|
    \lesssim
    \delta.
\end{align}

Finally, for every Borel set $A\subset L_S$,
\begin{align*}
p^\#\mathcal L_T(A)
&=
c_T\sigma_{L_T}(p(A))
\\
&=
c_TJ\sigma_{L_S}(A)
=
\frac{c_T}{c_S}J\mathcal L_S(A).
\end{align*}
Consequently,
\[
    a_{S,T}
    =
    \frac{c_T}{c_S}J.
\]
Since $c_S,c_T\approx1$, \eqref{neighborprojectionjacobian} and
\eqref{neighborconstantcomparison} imply
\[
    |a_{S,T}-1|
    \lesssim
    \delta.
\]
Together with the estimate for $\widetilde p-\iota_S$, this proves
\eqref{projectionmeasurebounds}.
\end{proof}

\begin{proof}[Proof of Lemma~\ref{neighborslem}]
Write $r_i:=2^{-i}$ and $r_m=2^{-m}$. Since
$m=\lfloor(i+j)/2\rfloor$, we have
\[
    \frac{r_i}{r_m}\approx2^{-(i-j)/2}.
\]
For $U\in\{S,T\}$, let $\phi$ be a cutoff satisfying
$\|\phi\|_\infty\lesssim1$, $\|\nabla\phi\|_\infty\lesssim r_m^{-1}$, and $\|\partial_t\phi\|_\infty\lesssim r_m^{-2}$. Then
\begin{align}\label{planecutoffbound}
    \|T_{(i)}(\phi\mathcal L_U)\|_{L^\infty(L_U)}
    \lesssim
    \frac{r_i}{r_m}.
\end{align}
Indeed, for $(X,t)\in L_U$ the identity $\int_{L_U}K_{(i)}((X,t)-(Y,s))\,d\mathcal L_U(Y,s)=0$ permits us to subtract $\phi(X,t)$ inside the
integral. If $K_{(i)}((X,t)-(Y,s))\neq0$, then $d_p((X,t),(Y,s))\lesssim r_i$; since $r_i\leq r_m$, the derivative bounds for $\phi$ imply $|\phi(Y,s)-\phi(X,t)|\lesssim r_i/r_m$. The size estimate for $K_{(i)}$ gives $\int_{L_U}|K_{(i)}((X,t)-(Y,s))|\,d\mathcal L_U(Y,s)\lesssim1$, which proves \eqref{planecutoffbound}.

Suppose first that $\alpha^\#(S,T)<\eta_0$. Let $p:L_S\to L_T$, $\widetilde p:V_S\to V_T$, and $a=a_{S,T}$ be as in Lemma~\ref{neighborprojectionmeasure}.
For $U\in\{S,T\}$, set \[\Omega_U:=(\supp\theta_U\cap L_S)\cup p^{-1}(\supp\theta_U\cap L_T).\] Since $\supp\theta_U\subset U_{\gamma r_m/4}(U)$ and $T\in N(S)$, either $W$ or $p(W)$ belongs to $C_{\frac52r_m}(Z_S,\tau_S)$ whenever $W\in\Omega_U$. The two-case argument in the proof of Lemma~\ref{neighborprojectionmeasure}, using the inverse Lipschitz bound for $p$ when $p(W)\in C_{\frac52r_m}(Z_S,\tau_S)$, therefore gives \begin{align}\label{cutoffprojectiondistance} d_p\big((X,t),p(X,t)\big)\lesssim\alpha^\#(S,T)r_m,\qquad (X,t)\in\Omega_U. \end{align} By \eqref{cutoffprojectiondistance},
\eqref{projectionmeasurebounds} and
\eqref{partitionhigherderivatives}, we have
\begin{align}\label{cutoffprojectionbounds}
    \|\theta_U\circ p-\theta_U\|_{L^\infty(L_S)}
    &\lesssim \alpha^\#(S,T),
    \\
    \|\theta_U\circ p-\theta_U\|_{\operatorname{Lip}(L_S)}
    &\lesssim \frac{\alpha^\#(S,T)}{r_m},
    \qquad U\in\{S,T\}.
\end{align}
Both $\theta_U\circ p$ and $\theta_U$ vanish on $L_S\setminus\Omega_U$, so the norms in \eqref{cutoffprojectionbounds} are taken over all of $L_S$. The first estimate follows from \eqref{cutoffprojectiondistance} and $\|\nabla\theta_U\|_\infty\lesssim r_m^{-1}$. For the second estimate, set $g_U:=\theta_U\circ p-\theta_U$. The chain rule, \eqref{cutoffprojectiondistance}, $\|\widetilde p-\iota_S\|\lesssim\alpha^\#(S,T)$, and \eqref{partitionhigherderivatives} give $\|
\nabla g_U\|_\infty\lesssim\alpha^\#(S,T)/r_m$ and $\|\partial_tg_U\|_\infty\lesssim\alpha^\#(S,T)/r_m^2$ on $\Omega_U$. Here the time estimate uses that $p$ leaves the time coordinate unchanged and $\|\nabla\partial_t\theta_U\|_\infty\lesssim r_m^{-3}$. If
$|t-s|^{1/2}\leq r_m$, the time derivative estimate gives $|g_U(X,t)-g_U(X,s)|\lesssim\alpha^\#(S,T)r_m^{-1}|t-s|^{1/2}$; for larger time increments the same inequality follows from the first line of \eqref{cutoffprojectionbounds}. We estimate the spatial increments in the same way using $
\nabla g_U$.

Changing variables in the outer integral gives
\[
\langle\theta_S,T_{(i)}(\theta_T\mathcal L_T)\rangle_{\mathcal L_T}
=
a\int_{L_S}(\theta_S\circ p)
\big(T_{(i)}(\theta_T\mathcal L_T)\circ p\big)
\,\d\mathcal L_S.
\]
Set $F:=T_{(i)}(\theta_T\mathcal L_T)\circ p$, $G:=aT_{(i)}((\theta_T\circ p)\mathcal L_S)$, $H:= aT_{(i)}(\theta_T\mathcal L_S)$, and $J : = T_{(i)}(\theta_T\mathcal L_S)$.  We have
\begin{equation}\label{fourtermidentity} \begin{aligned} &\langle\theta_S,T_{(i)}(\theta_T\mathcal L_T)\rangle_{\mathcal L_T} -\langle\theta_S,T_{(i)}(\theta_T\mathcal L_S)\rangle_{\mathcal L_S} \\ ={}&a\int_{L_S}(\theta_S\circ p)(F-G)\,d\mathcal L_S \\ &+a\int_{L_S}(\theta_S\circ p)(G-H)\,d\mathcal L_S \\ &+a\int_{L_S}(\theta_S\circ p)(H-J)\,d\mathcal L_S \\ &+\int_{L_S}\big(a(\theta_S\circ p)-\theta_S\big)J\,d\mathcal L_S. \end{aligned} \end{equation}
To estimate $G-H$, set $g_T:=\theta_T\circ p-\theta_T$. Then $G-H=aT_{(i)}(g_T\mathcal L_S)$. For $W\in L_S$, the spatial oddness of $K_{(i)}$ gives $T_{(i)}\mathcal L_S(W)=0$. Consequently, \[ (G-H)(W)=a\int_{L_S}K_{(i)}(W-Z)\big(g_T(Z)-g_T(W)\big)\,d\mathcal L_S(Z). \] By \eqref{cutoffprojectionbounds}, $\|g_T\|_{\operatorname{Lip}(L_S)}\lesssim\alpha^\#(S,T)/r_m$. If $K_{(i)}(W-Z)\neq0$, then $d_p(W,Z)\lesssim r_i$, and hence \[ |g_T(Z)-g_T(W)|\lesssim\alpha^\#(S,T)\frac{r_i}{r_m}. \] Since $\int_{L_S}|K_{(i)}(W-Z)|\,d\mathcal L_S(Z)\lesssim1$ and $a\approx1$, it follows that \[ \|G-H\|_{L^\infty(L_S)}\lesssim\alpha^\#(S,T)\frac{r_i}{r_m}. \] For $H-J$, \eqref{projectionmeasurebounds} and \eqref{planecutoffbound} give $\|H-J\|_{L^\infty(L_S)}\lesssim\alpha^\#(S,T)r_i/r_m$. Finally, $\|a(\theta_S\circ p)-\theta_S\|_{L^\infty(L_S)}\lesssim\alpha^\#(S,T)$, while \eqref{planecutoffbound} gives $\|J\|_{L^\infty(L_S)}\lesssim r_i/r_m$. Since $\mathcal L_S(\supp(\theta_S\circ p)\cup\supp\theta_S)\lesssim\mu(S)$,
the last three terms are bounded by
\[
    C\alpha^\#(S,T)\frac{r_i}{r_m}\mu(S).
\]
It remains to estimate the first term.
 
For that term, define $P:V_S\times\mathbb R\to V_T\times\mathbb R$ by $P(\xi,u):=(\widetilde p\xi,u)$. Since $p$ is affine in the spatial variables and fixes time, $p(X,t)-p(Y,s)=P(X-Y,t-s)$. If $Z\in V_S\times\mathbb R$ and either $K_{(i)}(Z)\neq0$ or $K_{(i)}(PZ)\neq0$, then $d_p(Z,0)\lesssim r_i$; hence \eqref{projectionmeasurebounds} gives $|PZ-Z|\lesssim\alpha^\#(S,T)r_i$. The mean value theorem and $\|
\nabla K_{(i)}\|_\infty\lesssim r_i^{-k-3}$ give
\[
\left|
K_{(i)}(PZ)-K_{(i)}(Z)
\right|
\lesssim
\alpha^\#(S,T)r_i^{-k-2}
\]
whenever either kernel is nonzero.

We claim that \[\int_{L_S}K_{(i)}\big(P((X,t)-(Y,s))\big)\,d\mathcal L_S(Y,s) =\int_{L_S}K_{(i)}\big((X,t)-(Y,s)\big)\,d\mathcal L_S(Y,s)=0.\] By the spatial oddness of $K_{(i)}$, the second integral is zero. After the linear change of variables $P:V_S\times\mathbb R\to V_T\times\mathbb R$, the first is a constant multiple of the integral of $K_{(i)}$ over $V_T\times\mathbb R$, and is therefore also zero by the spatial oddness of $K_{(i)}$.
It follows that
\[
\int_{L_S}\Big[
K_{(i)}\big(P((X,t)-(Y,s))\big)
-
K_{(i)}\big((X,t)-(Y,s)\big)
\Big]\,d\mathcal L_S(Y,s)
=
0.
\]
This gives
\begin{align*}
(F-G)(X,t)
={}&a\int_{L_S}\Big[
K_{(i)}\big(P((X,t)-(Y,s))\big)
-
K_{(i)}\big((X,t)-(Y,s)\big)
\Big]\\
&\quad\times\Big[
(\theta_T\circ p)(Y,s)
-
(\theta_T\circ p)(X,t)
\Big]\,d\mathcal L_S(Y,s).
\end{align*}

We now estimate $(\theta_T\circ p)(Y,s)-(\theta_T\circ p)(X,t)$. Put
$Z=(X,t)-(Y,s)$. If $K_{(i)}(Z)\neq0$, then the support of $K_{(i)}$
gives $d_p(Z,0)\lesssim r_i$, and the Lipschitz bound for $P$ gives
$d_p(PZ,0)\lesssim r_i$. If $K_{(i)}(PZ)\neq0$, the latter bound follows
directly from the support of $K_{(i)}$. Hence, whenever either kernel is
nonzero,
\[
d_p\big(p(X,t),p(Y,s)\big)
=
d_p(PZ,0)
\lesssim r_i.
\]
Write $p(X,t)=(X',t)$ and $p(Y,s)=(Y',s)$. The last display implies
$|X'-Y'|\lesssim r_i$ and $|t-s|\lesssim r_i^2$. Therefore the derivative
bounds
\[
|\nabla\theta_T|\lesssim r_m^{-1},
\qquad
|\partial_t\theta_T|\lesssim r_m^{-2}
\]
give
\begin{align*}
\big|
(\theta_T\circ p)(Y,s)
-
(\theta_T\circ p)(X,t)
\big|
&\lesssim
\frac{|Y'-X'|}{r_m}
+
\frac{|s-t|}{r_m^2}\\
&\lesssim
\frac{r_i}{r_m}
+
\left(\frac{r_i}{r_m}\right)^2
\lesssim
\frac{r_i}{r_m},
\end{align*}
where the last inequality uses $i>m$, and hence $r_i\leq r_m$.
Since the set of $(Y,s)\in L_S$ on which either kernel is nonzero has $\mathcal L_S$-measure $\lesssim r_i^{k+2}$, the last two estimates give $\|F-G\|_{L^\infty(L_S)}\lesssim\alpha^\#(S,T)r_i/r_m$. Finally, $\mathcal L_S(\supp(\theta_S\circ p))\lesssim\mu(S)$, so the first term in \eqref{fourtermidentity} is $\lesssim\alpha^\#(S,T)(r_i/r_m)\mu(S)$. This proves \eqref{neighbors} in
the case when $\alpha^{\#}(S,T) < \eta_0$.

If $\alpha^\#(S,T)\geq\eta_0$, the triangle inequality and H\"older's
inequality give
\begin{align*}
&\left|
\langle\theta_S,T_{(i)}(\theta_T\mathcal L_T)\rangle_{\mathcal L_T}
-
\langle\theta_S,T_{(i)}(\theta_T\mathcal L_S)\rangle_{\mathcal L_S}
\right|\\
&\quad\lesssim
\left|
\langle\theta_S,T_{(i)}(\theta_T\mathcal L_T)\rangle_{\mathcal L_T}
\right|
+
\left|
\langle\theta_S,T_{(i)}(\theta_T\mathcal L_S)\rangle_{\mathcal L_S}
\right|\\
&\quad\lesssim
\|T_{(i)}(\theta_T\mathcal L_T)\|_{L^\infty(L_T)}
\|\theta_S\|_{L^1(\mathcal L_T)}
+
\|T_{(i)}(\theta_T\mathcal L_S)\|_{L^\infty(L_S)}
\|\theta_S\|_{L^1(\mathcal L_S)}.
\end{align*}
By \eqref{planecutoffbound}, applied with
$\phi=\theta_T$ on $L_T$ and on $L_S$, each
$L^\infty$ norm in the last line is $\lesssim r_i/r_m$. Since
$0\leq\theta_S\leq1$ and
$\mathcal L_S(\supp\theta_S)
 +\mathcal L_T(\supp\theta_S)\lesssim\mu(S)$,
the right-hand side is bounded by
\[
\frac{r_i}{r_m}
\Big[
\mathcal L_T(\supp\theta_S)
+
\mathcal L_S(\supp\theta_S)
\Big]
\lesssim
\frac{r_i}{r_m}\mu(S)
\lesssim
\eta_0^{-1}\frac{r_i}{r_m}\alpha^\#(S,T)\mu(S).
\]
\end{proof}

\begin{proof}[Proof of Lemma~\ref{coarsepointB}]
Since $T\in N(S)$ and $\ell(T)=\ell(S)$,
\begin{align*}
\|(X_T,\tau_T)-(Z_R,\tau_R)\|
&\leq
\|(X_T,\tau_T)-(Z_T,\tau_T)\|
+
\|(Z_T,\tau_T)-(Z_S,\tau_S)\|
+
\|(Z_S,\tau_S)-(Z_R,\tau_R)\|
\\
&\leq
(3+\gamma+2^{-9})\ell(S)+\diam(R)
\leq
2\diam(R).
\end{align*}
Indeed, $3+\gamma+2^{-9}<4+\gamma$, while the hypothesis on $m-j$ and \eqref{projectionradiusvsdiamR} give $(4+\gamma)\ell(S)\leq\diam(R)$. Hence
$(X_T,\tau_T)\in C_{2\diam(R)}(Z_R,\tau_R)$.

If $\alpha(R)\geq\eta_0$, 
$|T_{(j)}\mu(X_T,\tau_T)|\lesssim1\lesssim\alpha(R)$, and
\eqref{coarsepointBestimate} follows. Otherwise,
$\alpha(R)<\eta_0$, and we choose
$(Y_R,\sigma_R)\in L_R\cap C_{\diam(R)}(Z_R,\tau_R)$. By
Lemma~\ref{TQintbd}(a),
\[
    |T_{(j)}\mu(Y_R,\sigma_R)|\lesssim\alpha(R).
\]
Lemma~\ref{bigest}, applied with $Q=R$, then gives
\begin{align}\label{coarsepointintermediate}
|T_{(j)}\mu(X_T,\tau_T)|
\lesssim
\alpha(R)
+
\frac{\dist((X_T,\tau_T),L_R)}{\ell(R)}.
\end{align}

By Lemma~\ref{toalphalemma2},
\[
    \dist((X_T,\tau_T),L_S)
    \lesssim
    \alpha^\#(S)\ell(S).
\]
Let $(Y_S,\tau_T)$ be the projection of $(X_T,\tau_T)$ onto $L_S$, and set
\[
v:=(Y_S,\tau_T)-(X_S,\tau_S).
\]
Then $\|v\|\lesssim\ell(S)$.
Since $(Y_S,\tau_T)$ and $(X_S,\tau_S)$ lie in $L_S$, we have
$\pi_{L_S^\perp}v=0$. Moreover, because the planes are vertical, their perpendicular projections depend only on the spatial component of $v$, whose Euclidean norm is bounded by $\|v\|$. Consequently,
\begin{align*}
\dist((Y_S,\tau_T),L_R)
&\leq
\dist((X_S,\tau_S),L_R)
+
\big|\pi_{L_R^\perp}v\big|\\
&=
\dist((X_S,\tau_S),L_R)
+
\big|(\pi_{L_R^\perp}-\pi_{L_S^\perp})v\big|.
\end{align*}
Lemma~\ref{distLQalphabd2} gives
\[
\dist((X_S,\tau_S),L_R)
\lesssim
\sum_{P:\,S\subsetneq P\subseteq R}\alpha(P)\ell(P).
\]
Fix a parent--child pair $P\subset P'$ in the chain from $S$ to $R$.
If
$\alpha(P')<\eta_0$, then \eqref{lqlpaq}, applied with $Q=P'$ and $\eta=1/2$, gives
\[ \dist_H\big(L_P\cap C_{P'},L_{P'}\cap C_{P'}\big) \lesssim \alpha(P')\ell(P'). \] For affine planes, a Hausdorff distance at most $\delta r$ in a cylinder of radius $r$ implies that their direction spaces make an angle $O(\delta)$; equivalently, the orthogonal projections onto their normal spaces differ by $O(\delta)$ in operator norm. Taking $r=\ell(P')$ and $\delta=\alpha(P')$ gives
\begin{align}\label{parentchildprojectionbound}
\|\pi_{L_P^\perp}-\pi_{L_{P'}^\perp}\|
\lesssim
\alpha(P').
\end{align}
If $\alpha(P')\geq\eta_0$, the estimate in \eqref{parentchildprojectionbound} follows from
\[
\|\pi_{L_P^\perp}-\pi_{L_{P'}^\perp}\|
\leq 2
\leq
2\eta_0^{-1}\alpha(P').
\]
Thus, \eqref{parentchildprojectionbound} holds in either case.
Telescoping these projection operators and using
$\|v\|\lesssim\ell(S)$, we obtain
\begin{align*}
\big|(\pi_{L_R^\perp}-\pi_{L_S^\perp})v\big|
&\lesssim
\ell(S)
\sum_{P:\,S\subsetneq P\subseteq R}\alpha(P)\\
&\lesssim
\sum_{P:\,S\subsetneq P\subseteq R}\alpha(P)\ell(P).
\end{align*}
Combining the last three displays yields
\[
\dist((Y_S,\tau_T),L_R)
\lesssim
\sum_{P:\,S\subseteq P\subseteq R}\alpha(P)\ell(P).
\]
Since $(Y_S,\tau_T)$ is the projection of $(X_T,\tau_T)$ onto $L_S$, \begin{align*} \dist((X_T,\tau_T),L_R) &\leq \|(X_T,\tau_T)-(Y_S,\tau_T)\| + \dist((Y_S,\tau_T),L_R)\\ &= \dist((X_T,\tau_T),L_S) + \dist((Y_S,\tau_T),L_R). \end{align*} Consequently,
\[
\dist((X_T,\tau_T),L_R)
\lesssim
\alpha^\#(S)\ell(S)
+
\sum_{P:\,S\subseteq P\subseteq R}\alpha(P)\ell(P).
\]
Substituting this into \eqref{coarsepointintermediate}, and observing that the
term $P=R$ already contains $\alpha(R)$, proves
\eqref{coarsepointBestimate}. It remains to prove \eqref{coarsepointBselfestimate}. If $\alpha(R)\geq\eta_0$, then $|T_{(j)}\mu(X_S,\tau_S)|\lesssim1\lesssim\alpha(R)$, and $\alpha(R)$ is the term $P=R$ on the right-hand side of \eqref{coarsepointBselfestimate}. Suppose now that $\alpha(R)<\eta_0$. We have \[ \|(X_S,\tau_S)-(Z_R,\tau_R)\|\leq\|(X_S,\tau_S)-(Z_S,\tau_S)\|+\|(Z_S,\tau_S)-(Z_R,\tau_R)\|\leq3\ell(S)+\diam(R)\leq2\diam(R). \] Choose $(Y_R,\sigma_R)\in L_R\cap C_{\diam(R)}(Z_R,\tau_R)$. Lemma~\ref{TQintbd}(a) gives $|T_{(j)}\mu(Y_R,\sigma_R)|\lesssim\alpha(R)$, and Lemma~\ref{bigest}, applied with $Q=R$, gives \[ |T_{(j)}\mu(X_S,\tau_S)|\lesssim\alpha(R)+\frac{\dist((X_S,\tau_S),L_R)}{\ell(R)}. \] By Lemma~\ref{distLQalphabd2}, \[ \dist((X_S,\tau_S),L_R)\lesssim\sum_{P:\,S\subsetneq P\subseteq R}\alpha(P)\ell(P). \] Therefore \[ |T_{(j)}\mu(X_S,\tau_S)|\lesssim\alpha(R)+\sum_{P:\,S\subsetneq P\subseteq R}\alpha(P)\frac{\ell(P)}{\ell(R)}\lesssim\sum_{P:\,S\subseteq P\subseteq R}\alpha(P)\frac{\ell(P)}{\ell(R)}, \] which is \eqref{coarsepointBselfestimate}.
\end{proof}
\endgroup

\subsubsection{Estimating the Sum of the $B^2_{j,i}$'s}

Recall that
\begin{align*}
B^2_{j,i}
&=
\sum_{S\in\mathbb D_m}\sum_{T\in N(S)}
\bigl(T_{(j)}\mu(X_S,\tau_S)-T_{(j)}\mu(X_T,\tau_T)\bigr)
\mathcal E_{S,T},
\end{align*}
where $m=\lfloor(i+j)/2\rfloor$ and
\begin{align*}
\mathcal E_{S,T}
&:=
\langle\theta_S,T_{(i)}(\theta_T\mu)\rangle
-
\langle\theta_S,T_{(i)}(\theta_T\mathcal L_S)\rangle_{\mathcal L_S}.
\end{align*}

We construct a partition of unity \(\{\eta_Q\}_{Q\in\mathbb D_i}\) by applying the construction of \(\{\psi_T\}\) at generation \(i\); see Appendix~\ref{smoothpartitionappendix}. Set \[\Omega_i^{(2)}:=\left\{(X,t):\dist((X,t),E)<\frac{3\gamma}{8}r_i\right\}.\] Then \[\sum_{Q\in\mathbb D_i}\eta_Q=1\qquad\text{on }\Omega_i^{(2)}.\] Moreover, \[\supp(\eta_Q)\subset U_{\frac{\gamma}{2}r_i}(Q),\qquad |\nabla\eta_Q|\lesssim r_i^{-1},\qquad |\partial_t\eta_Q|\lesssim r_i^{-2}.\]
For $Q\in\mathbb D_i$, set $\eta_{Q,S}:=\eta_Q\theta_S$, and let
\[
\mathcal Q_i(S)
:=
\{Q\in\mathbb D_i:\supp(\eta_Q)\cap\supp(\theta_S)\neq\emptyset\}.
\]
If $Q\in\mathcal Q_i(S)$ and $S_Q\in\mathbb D_m$ is the ancestor of $Q$,
then $S_Q\in N(S)\cup\{S\}$. Moreover,
\begin{align}
\sum_{Q\in\mathcal Q_i(S)}\mu(Q)\lesssim\mu(S),
\label{finepartitionmass}
\end{align}
and each $Q\in\mathbb D_i$ belongs to $\mathcal Q_i(S)$ for only boundedly
many $S\in\mathbb D_m$.

Set \[\mathcal R^\eta_{S,T}:=\left\langle \theta_S\left(1-\sum_{Q\in\mathbb D_i}\eta_Q\right),T_{(i)}(\theta_T\mathcal L_S)\right\rangle_{\mathcal L_S}.\] Then
\begin{align}
\mathcal E_{S,T}
&=
\sum_{Q\in\mathcal Q_i(S)}
\bigl[
\langle\eta_{Q,S},T_{(i)}(\theta_T\mu)\rangle
-
\langle\eta_{Q,S},T_{(i)}(\theta_T\mathcal L_Q)\rangle_{\mathcal L_Q}
\bigr]
\nonumber\\
&\quad+
\sum_{Q\in\mathcal Q_i(S)}
\bigl[
\langle\eta_{Q,S},T_{(i)}(\theta_T\mathcal L_Q)\rangle_{\mathcal L_Q}
-
\langle\eta_{Q,S},T_{(i)}(\theta_T\mathcal L_S)\rangle_{\mathcal L_S}
\bigr]-\mathcal R^\eta_{S,T}.
\label{B2finepartition}
\end{align}

We use the following five lemmas to estimate the sum of the $B^2_{j,i}$'s.
We prove them after completing that estimate.

\begin{lemma}\label{fineplanecutofflemma}
Let $i,m\in\mathcal J$ with $i>m$, let $T\in\mathbb D_m$, and let $U\in\mathbb D$. Then
\begin{align}
\|T_{(i)}(\theta_T\mathcal L_U)\|_{L^\infty(L_U)}
\lesssim\frac{r_i}{r_m}.
\label{fineplanecutoffbound}
\end{align}
\end{lemma}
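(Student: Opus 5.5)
The plan is to repeat the argument already used for \eqref{planecutoffbound} in the proof of Lemma~\ref{neighborslem}, now with an arbitrary plane $L_U$ in place of $L_S$ or $L_T$. Fix $(X,t)\in L_U$. First, $T_{(i)}\mathcal L_U(X,t)=0$: the function $\varphi_i$ is even in space, $K$ is odd in space, and $L_U$ is vertical, so the reflection $(Y,s)\mapsto(2X-Y,s)$ maps $L_U$ to itself. This reflection preserves $\sigma_{L_U}$ and changes the sign of $K_{(i)}((X,t)-\cdot)$. We may therefore subtract $\theta_T(X,t)$ inside the integral:
\[
T_{(i)}(\theta_T\mathcal L_U)(X,t)=\int_{L_U}K_{(i)}\big((X,t)-(Y,s)\big)\big(\theta_T(Y,s)-\theta_T(X,t)\big)\,d\mathcal L_U(Y,s).
\]

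Next we bound the integrand on the support of the kernel. There $d_p((X,t),(Y,s))\leq 2^{-i-\omega+2}\lesssim r_i$, so $|X-Y|\lesssim r_i$ and $|t-s|\lesssim r_i^2$. By \eqref{partitionhigherderivatives}, $|\nabla\theta_T|\lesssim r_m^{-1}$ and $|\partial_t\theta_T|\lesssim r_m^{-2}$. Applying the mean value theorem first in space and then in time gives
\[
|\theta_T(Y,s)-\theta_T(X,t)|\lesssim \frac{r_i}{r_m}+\frac{r_i^2}{r_m^2}\lesssim\frac{r_i}{r_m},
\]
where the last step uses $i>m$.

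It remains to bound $\int_{L_U}|K_{(i)}((X,t)-(Y,s))|\,d\mathcal L_U(Y,s)$ by a constant. The size estimate \eqref{kernelestimates} gives $|K_{(i)}|\lesssim r_i^{-(k+2)}$. The part of $L_U$ inside $C_{Cr_i}(X,t)$ has $\sigma_{L_U}$-measure $\approx r_i^{k+2}$, and $c_U\lesssim1$ by Lemma~\ref{elemalphalem}(d). Multiplying these three bounds gives \eqref{fineplanecutoffbound}.

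The only points needing care are that the estimate is uniform in $U$, including cubes $U$ with $c_U=0$, where the bound is trivial, and that no relation between $U$, $T$ and the scales is required. Both are handled by Lemma~\ref{elemalphalem}(d) and the fact that $L_U$ is always a vertical plane. I expect no real obstacle here.
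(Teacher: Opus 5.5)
Your proposal is correct and is essentially the paper's proof. Both arguments use the cancellation of $K_{(i)}$ on the vertical plane $L_U$ to subtract $\theta_T(X,t)$, then the mean value theorem with the derivative bounds for $\theta_T$ on the $O(r_i)$-support of the kernel, and finally the bound $\int_{L_U}|K_{(i)}|\,d\mathcal L_U\lesssim1$, which uses $c_U\lesssim1$ from Lemma~\ref{elemalphalem}(d).
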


\begingroup
\begin{lemma}\label{B2etaremainderlemma}
If $S,T\in\mathbb D_m$ and $T\in N(S)$, then
\begin{align}
|\mathcal R^\eta_{S,T}|
\lesssim
\alpha(S)\mu(S)
\leq
\alpha^\#(S)\mu(S).
\label{B2etaremainderbound}
\end{align}
\end{lemma}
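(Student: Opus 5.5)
The plan is to write $\mathcal R^\eta_{S,T}$ as a pairing against a function vanishing on $E$, and then to charge its size on $L_S$ to the bilateral $\beta_1$ control \eqref{LQbetacomp}. This is the same mechanism already used for $\delta_T$ in \eqref{B1partitiondefectL1}.

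First we localize. Set $g:=\theta_S\bigl(1-\sum_{Q\in\mathbb D_i}\eta_Q\bigr)$. Since $\sum_Q\eta_Q=1$ on $\Omega_i^{(2)}\supset E$, the function $g$ vanishes on $\Omega_i^{(2)}$, and in particular on $E$. Moreover, $\supp g\subset\supp\theta_S\subset U_{\frac\gamma8 r_m}(S)$. On $L_S$, this support lies in $C_{2r_m}(Z_S,\tau_S)$, as in the argument leading to \eqref{B1partitiondefectL1}, and $|g|\le 1+\sum_Q\eta_Q\lesssim1$ by bounded overlap.

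Second, $g$ is controlled by the distance to $E$: at any point with $g\neq0$ we have $\dist((X,t),E)\ge\frac{3\gamma}8r_i$. Hence
\[
|g(X,t)|\lesssim\min\Bigl\{1,\frac{\dist((X,t),E)}{r_i}\Bigr\}\le\frac{\dist((X,t),E)}{r_m}\cdot\frac{r_m}{r_i}.
\]
This bound loses the factor $r_m/r_i$, so using it directly is too crude. To avoid the loss, we will not estimate $g$ itself. Instead we move the operator onto $\theta_S$ by antisymmetry \eqref{skewkernel}, as was done for $\mathcal R^1_{j,i}$:
\[
\mathcal R^\eta_{S,T}=-\langle\theta_T,T_{(i)}(g\mathcal L_S)\rangle_{\mathcal L_S}.
\]
This alone does not help, since $T_{(i)}$ would now act on $g$. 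So instead we use Lemma~\ref{fineplanecutofflemma} directly in the form $\|T_{(i)}(\theta_T\mathcal L_S)\|_{L^\infty(L_S)}\lesssim r_i/r_m$, which gives
\[
|\mathcal R^\eta_{S,T}|\lesssim\frac{r_i}{r_m}\int_{L_S}|g|\,d\mathcal L_S\lesssim\frac{r_i}{r_m}\cdot\frac{r_m}{r_i}\cdot\frac1{r_m}\int_{L_S\cap C_{2r_m}(Z_S,\tau_S)}\dist((X,t),E)\,d\mathcal L_S.
\]
The factors $r_i/r_m$ and $r_m/r_i$ cancel exactly. The remaining integral is $\lesssim\alpha(S)\ell(S)^{k+2}\approx\alpha(S)\mu(S)$ by \eqref{LQbetacomp}, using $c_S\lesssim1$ from Lemma~\ref{elemalphalem}(d).

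The main care point is the support containment on $L_S$, namely checking that $\supp\theta_S\cap L_S\subset C_{2\ell(S)}(Z_S,\tau_S)$ so that \eqref{LQbetacomp} applies. This follows from $\supp\theta_S\subset U_{\frac\gamma8\ell(S)}(S)$ together with $\diam S\le2^{-10}\ell(S)$. We also need the trivial case where $c_S=0$, i.e.\ $\alpha(S)>\eta_*$; there $\mathcal L_S=0$ and $\mathcal R^\eta_{S,T}=0$. The final inequality $\alpha(S)\le\alpha^\#(S)$ holds by definition.
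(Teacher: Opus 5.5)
Your argument is correct and is essentially the paper's proof. The paper uses Chebyshev and \eqref{LQbetacomp} to bound the $\mathcal L_S$-measure of the part of $\supp\theta_S$ where $\dist(\cdot,E)\ge\frac{3\gamma}{8}r_i$ by $\frac{r_m}{r_i}\alpha(S)\mu(S)$, and then multiplies by the factor $r_i/r_m$ from \eqref{fineplanecutoffbound}; this is exactly your pointwise bound $|g|\lesssim\dist(\cdot,E)/r_i$ integrated against $\mathcal L_S$. The antisymmetry paragraph can be deleted, since that pointwise bound is not ``too crude'' once it is paired with Lemma~\ref{fineplanecutofflemma}.
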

\endgroup

\begin{lemma}\label{finealphacomparison}
If $S,T\in\mathbb D_m$, $T\in N(S)$, and $Q\in\mathcal Q_i(S)$, then
\begin{align}
\left|
\langle\eta_{Q,S},T_{(i)}(\theta_T\mu)\rangle
-
\langle\eta_{Q,S},T_{(i)}(\theta_T\mathcal L_Q)\rangle_{\mathcal L_Q}
\right|
\lesssim
\alpha(Q)\mu(Q).
\label{finealphacomparisonestimate}
\end{align}
\end{lemma}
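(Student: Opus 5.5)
The plan is to use the defining property of $\alpha(Q)$, testing the difference $\mu-\mathcal L_Q$ against a single Lipschitz function. The key identity is Fubini together with the antisymmetry \eqref{skewkernel}:
\[
\langle\eta_{Q,S},T_{(i)}(\theta_T\nu)\rangle_\nu
=\iint K_{(i)}\big((X,t)-(Y,s)\big)\eta_{Q,S}(X,t)\theta_T(Y,s)\,\d\nu(Y,s)\,\d\nu(X,t),
\]
for $\nu\in\{\mu,\mathcal L_Q\}$. This is bilinear in $\nu$, so it is not directly a test against $\mu-\mathcal L_Q$. I will therefore split the difference as
\[
\int \eta_{Q,S}\,T_{(i)}(\theta_T\mu)\,\d(\mu-\mathcal L_Q)
+\int \eta_{Q,S}\,T_{(i)}\big(\theta_T(\mu-\mathcal L_Q)\big)\,\d\mathcal L_Q .
\]
The second term, by Fubini and \eqref{skewkernel}, equals $-\int\theta_T\,T_{(i)}(\eta_{Q,S}\mathcal L_Q)\,\d(\mu-\mathcal L_Q)$. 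So both pieces have the form $\int g\,\d(\mu-\mathcal L_Q)$.

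For the first piece, take $g=\eta_{Q,S}\,T_{(i)}(\theta_T\mu)$.
\begin{itemize}
\item \emph{Support.} The function $\eta_{Q,S}$ is supported in $U_{\gamma r_i/2}(Q)\subset C_Q$, by the size bounds on $\diam Q$ and the choice of $\gamma$.
\item \emph{Size.} By the ADR property, $|T_{(i)}(\theta_T\mu)|\lesssim1$.
\item \emph{Lipschitz bound.} Differentiating under the integral with the derivative bounds on $K_{(i)}$ gives $\|T_{(i)}(\theta_T\mu)\|_{\operatorname{Lip}}\lesssim r_i^{-1}$. Combined with $\|\eta_{Q,S}\|_{\operatorname{Lip}}\lesssim r_i^{-1}$ (since $r_i\le r_m$), this yields $\|g\|_{\operatorname{Lip}}\lesssim r_i^{-1}$.
\end{itemize}
Applying \eqref{alphaflatapproximation} then bounds the first piece by $r_i^{-1}\alpha(Q)r_i^{k+3}\approx\alpha(Q)\mu(Q)$.

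For the second piece, take $g=\theta_T\,T_{(i)}(\eta_{Q,S}\mathcal L_Q)$.
\begin{itemize}
\item \emph{Support.} $g$ is supported within distance $\lesssim r_i$ of $\supp\eta_{Q,S}$. By the choice of $\omega$, this set still lies in $C_Q$.
\item \emph{Size.} $|T_{(i)}(\eta_{Q,S}\mathcal L_Q)|\lesssim c_Q\lesssim1$, by Lemma~\ref{elemalphalem}(d).
\item \emph{Lipschitz bound.} Differentiating gives $\|T_{(i)}(\eta_{Q,S}\mathcal L_Q)\|_{\operatorname{Lip}}\lesssim r_i^{-1}$. Multiplying by $\theta_T$, whose derivatives are $\lesssim r_m^{-1}\le r_i^{-1}$, keeps the Lipschitz norm $\lesssim r_i^{-1}$.
\end{itemize}
Applying \eqref{alphaflatapproximation} again bounds the second piece by $\alpha(Q)\mu(Q)$.

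The main obstacle is justifying the measure-theoretic manipulations. First, $T_{(i)}(\theta_T\mu)$ must be evaluated off $\supp\mu$, at points of $L_Q$. This is fine because the kernel $K_{(i)}$ is bounded and smooth, so $T_{(i)}(\theta_T\mu)$ is a smooth function on all of $\mathbb P^n$ and Fubini applies to the finite measures involved. Second, the supports must be carefully checked to lie in $C_Q=C_{3r_i}(Z_Q,\tau_Q)$. This uses
\[
\supp K_{(i)}\subset C_{2^{-i-\omega+2}}(0,0), \qquad 2^{-\omega}\le\gamma/32,
\]
and the fact that $\supp\eta_Q\subset U_{\gamma r_i/2}(Q)$ with $\diam Q\le 2^{-10}r_i$. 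If needed, the test function can be multiplied by a cutoff equal to $1$ on the relevant region and supported in $C_Q$; this does not change the integrals and preserves the Lipschitz bound.
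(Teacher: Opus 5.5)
Your proof is correct and is essentially the paper's argument: telescope through a mixed term, then control each of the two measure swaps by \eqref{alphaflatapproximation}, using Lipschitz test functions supported in $C_Q$, with the support check coming from $2^{-\omega}\leq\gamma/32$. The only difference is the choice of mixed term. The paper passes through $\langle\eta_{Q,S},T_{(i)}(\theta_T\mathcal L_Q)\rangle$ (outer measure $\mu$) and performs the inner swap pointwise, testing $K_{(i)}((X,t)-\cdot)\theta_T$ for each fixed $(X,t)\in\supp\eta_{Q,S}$. You instead pass through $\langle\eta_{Q,S},T_{(i)}(\theta_T\mu)\rangle_{\mathcal L_Q}$ and move the inner swap outside by Fubini; \eqref{skewkernel} is convenient there but not actually needed.
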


\begin{lemma}\label{multiscaleplanecomparison}
Let $S,T\in\mathbb D_m$ with $T\in N(S)$, and let
$Q\in\mathcal Q_i(S)$. Then
\begin{align}
&\left|
\langle\eta_{Q,S},T_{(i)}(\theta_T\mathcal L_Q)\rangle_{\mathcal L_Q}
-
\langle\eta_{Q,S},T_{(i)}(\theta_T\mathcal L_S)\rangle_{\mathcal L_S}
\right|
\lesssim
\left(\alpha^\#(S)+\sum_{P:\,Q\subseteq P\subseteq S_Q}
\alpha(P)\frac{\ell(P)}{\ell(S)}\right)\mu(Q).
\label{multiscaleplanecomparisonestimate}
\end{align}
\end{lemma}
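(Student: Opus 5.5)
The plan is to compare the two pairings by transporting everything from $L_Q$ to $L_S$. The key observation is that each individual pairing is already small: by Lemma~\ref{fineplanecutofflemma},
\[
\bigl|\langle\eta_{Q,S},T_{(i)}(\theta_T\mathcal L_U)\rangle_{\mathcal L_U}\bigr|\lesssim \frac{r_i}{r_m}\,\mathcal L_U(\supp\eta_{Q,S})\lesssim \frac{r_i}{r_m}\mu(Q),\qquad U\in\{Q,S\},
\]
provided both planes pass within $O(r_i)$ of $Q$ (otherwise the $\mathcal L_U$-mass of $\supp\eta_{Q,S}$ only becomes smaller). We also use that $r_i/r_m=\ell(Q)/\ell(S)\le \ell(P)/\ell(S)$ for every $P$ with $Q\subseteq P\subseteq S_Q$. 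This is what converts unweighted sums of $\alpha$'s, which is what angle and constant comparisons naturally produce, into the weighted sum $\sum\alpha(P)\ell(P)/\ell(S)$ on the right of \eqref{multiscaleplanecomparisonestimate}.

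\textbf{Step 1 (geometry of $L_Q$ versus $L_S$).} Along the chain $Q\subset\cdots\subset S_Q$, and then across the neighbor pair $S_Q\in N(S)\cup\{S\}$, I will establish three bounds near $Q$:
\begin{itemize}
\item[(i)] an offset bound: $d:=\sup_{W\in L_Q\cap C_Q}\dist(W,L_S)\lesssim \alpha^\#(S)\ell(S)+\sum_{Q\subseteq P\subseteq S_Q}\alpha(P)\ell(P)$, from Lemma~\ref{planealpha} (or the telescoping in Lemma~\ref{distLQalphabd2}) together with Lemma~\ref{toalphalemma2};
\item[(ii)] an angle bound: $\|\pi_{L_Q^\perp}-\pi_{L_S^\perp}\|\lesssim \alpha^\#(S)+\sum_P\alpha(P)$, by telescoping \eqref{parentchildprojectionbound} and then applying the neighbor version;
\item[(iii)] a constants bound: $|c_Q-c_S|\lesssim\alpha^\#(S)+\sum_P\alpha(P)$, from Lemma~\ref{toalphalemma} along the chain and \eqref{neighborconstantcomparison}.
\end{itemize}
If some $\alpha(P)$ in the chain, or $\alpha^\#(S)$, is $\ge\eta_0$, then the right-hand side of \eqref{multiscaleplanecomparisonestimate} is $\gtrsim r_i/r_m$. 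In that case the crude bound from the display above finishes the proof, so I assume all these $\alpha$'s are $<\eta_0$.

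\textbf{Step 2 (case split on the offset).} If $d\ge r_i$, the crude bound gives the estimate, because
\[
\frac{r_i}{r_m}\mu(Q)\le \frac{d}{\ell(S)}\mu(Q)\lesssim \Bigl(\alpha^\#(S)+\sum_P\alpha(P)\frac{\ell(P)}{\ell(S)}\Bigr)\mu(Q).
\]
If $d<r_i$, I take the orthogonal projection $p:L_Q\to L_S$. As in Lemma~\ref{neighborprojectionmeasure}, $p^\#\mathcal L_S=a\,\mathcal L_Q$ with $|a-1|\lesssim$ (angle) $+|c_Q-c_S|$. I then write the analogue of the four-term identity \eqref{fourtermidentity} with $(L_Q,L_S)$ in place of $(L_S,L_T)$ and bound each error term in $L^\infty$:
\begin{itemize}
\item the change $\theta_T\circ p-\theta_T$ costs $d/r_m$, since $\|\nabla\theta_T\|\lesssim r_m^{-1}$;
\item the kernel change $K_{(i)}(PZ)-K_{(i)}(Z)$, combined with the smoothness of $\theta_T$, costs (angle)$\cdot r_i/r_m$, as in the proof of Lemma~\ref{neighborslem};
\item the factor $a-1$ costs $|a-1|\cdot r_i/r_m$ via \eqref{fineplanecutoffbound};
\item the outer cutoff change $\eta_{Q,S}\circ p-\eta_{Q,S}$ is $O(d/r_i)$, because $|\nabla\eta_Q|\lesssim r_i^{-1}$ dominates $|\nabla\theta_S|$; multiplied by the $L^\infty$ bound $r_i/r_m$ this again gives $d/r_m$.
\end{itemize}
Integrating over a set of $\mathcal L_Q$-mass $\lesssim\mu(Q)$, every term is at most
\[
\Bigl(\frac{d}{r_m}+\frac{r_i}{r_m}\Bigl(\alpha^\#(S)+\sum_P\alpha(P)\Bigr)\Bigr)\mu(Q),
\]
and Step 1 together with $r_i/r_m\le\ell(P)/\ell(S)$ yields \eqref{multiscaleplanecomparisonestimate}.

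\textbf{Main obstacle.} The delicate point is the outer cutoff $\eta_{Q,S}$, which varies at the fine scale $r_i$ rather than $r_m$. A naive Lipschitz comparison of this cutoff loses a factor $r_m/r_i$. Avoiding that loss is why the argument must split on whether $d<r_i$, and must pair the cutoff error with the $r_i/r_m$ smallness of $T_{(i)}(\theta_T\mathcal L_S)$ coming from Lemma~\ref{fineplanecutofflemma}. I would check carefully that this smallness survives after composing with $p$, since the projection could in principle destroy the cancellation that produces it.
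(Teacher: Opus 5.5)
Your route is the paper's: the orthogonal projection $p:L_Q\to L_S$, the four-term identity \eqref{multiscalefourtermidentity}, the crude bound from Lemma~\ref{fineplanecutofflemma} in the degenerate cases, and the conversion $\Theta\,r_i/r_m\le\Delta$. Here $\Theta:=\alpha^\#(S)+\sum_{P}\alpha(P)$ and $\Delta$ is the bracket on the right of \eqref{multiscaleplanecomparisonestimate}, and your bounds for the four terms are the right ones.

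One step is not justified as written, namely ``as in Lemma~\ref{neighborprojectionmeasure}''. Your first reduction makes each $\alpha(P)$ and $\alpha^\#(S)$ smaller than $\eta_0$, but not their sum $\Theta$ over a chain of $i-m+1$ cubes. So the angle between $L_Q$ and $L_S$ need not be small, and Lemma~\ref{smallanglelemma} cannot be invoked to make $p$ a bi-Lipschitz bijection with $a\approx1$. The paper avoids this by splitting on $\Theta\ge c_0$, so that in its main case the angle is $\lesssim\Theta<c_0$.

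Your own split on $d$ repairs the gap. Since $\alpha(Q)<\eta_0$, $L_Q$ meets $C_{\diam Q}(Z_Q,\tau_Q)$, so $L_Q\cap C_Q$ contains a spatial disk of radius $\tfrac52 r_i$. Then $d<r_i$ forces $|\pi_{V_S^\perp}v|<\tfrac25$ for every unit $v\in V_Q$. Hence $\widetilde p$ is an isomorphism with $|\widetilde p v|\gtrsim|v|$, $J_p\approx1$, and $a\approx1$. Moreover, every point of $L_Q$ that enters the four terms, including those in $p^{-1}(\supp\eta_{Q,S})$, lies in $L_Q\cap C_Q$, where your displacement bound $\dist(\cdot,L_S)\le d$ applies. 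With this added the argument is complete.

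Contrary to your ``main obstacle'' remark, the split on $d$ is needed here rather than for the outer cutoff. The bound $|\eta_{Q,S}\circ p-\eta_{Q,S}|\lesssim\min\{1,r_m\Delta/r_i\}$, paired with $\|T_{(i)}(\theta_T\mathcal L_Q)\|_{L^\infty(L_Q)}\lesssim r_i/r_m$ from \eqref{fineplanecutoffbound}, is already $\lesssim\Delta$ without any split; this is exactly how the paper treats that term.
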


\begin{lemma}\label{neighborcoarsedifference}
If $S,T\in\mathbb D_m$, $T\in N(S)$, $R\in\mathbb D_j$ is the ancestor of
$S$, and $m-j\geq\lfloor(N_0+1)/2\rfloor$, then
\begin{align}
\left|
T_{(j)}\mu(X_S,\tau_S)-T_{(j)}\mu(X_T,\tau_T)
\right|
\lesssim
\frac{\ell(S)}{\ell(R)}\left(\alpha^\#(S)+\sum_{P:\,S\subseteq P\subseteq R}\alpha(P)\right).
\label{neighborcoarsedifferenceestimate}
\end{align}
\end{lemma}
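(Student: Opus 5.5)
The plan is to apply Lemma~\ref{bigest} with $Q=R$ to the two points $(X_S,\tau_S)$ and $(X_T,\tau_T)$, and to control each resulting term with the plane-comparison estimates already used in the proof of Lemma~\ref{coarsepointB}. Lemma~\ref{bigest} carries no smallness assumption on $\alpha(R)$, so no case split on $\alpha(R)$ should be needed.

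First I will check the hypothesis of Lemma~\ref{bigest}. The first display in the proof of Lemma~\ref{coarsepointB} uses $m-j\geq\lfloor(N_0+1)/2\rfloor$ and \eqref{projectionradiusvsdiamR}. The same argument shows that both $(X_S,\tau_S)$ and $(X_T,\tau_T)$ lie in $C_{2\diam(R)}(Z_R,\tau_R)$. Lemma~\ref{bigest} then bounds the difference by
\[
\frac{\alpha(R)\ell(R)+\dist((X_S,\tau_S),L_R)+\dist((X_T,\tau_T),L_R)}{\ell(R)^2}\,\|(X_S,\tau_S)-(X_T,\tau_T)\|
+\frac{|\pi_{L_R^\perp}((X_S,\tau_S)-(X_T,\tau_T))|}{\ell(R)}.
\]
Since $T\in N(S)$, the centers are $O(\ell(S))$ apart and each $(X_U,\tau_U)$ lies within $3\ell(U)$ of $(Z_U,\tau_U)$. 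Hence $\|(X_S,\tau_S)-(X_T,\tau_T)\|\lesssim\ell(S)$.

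For the first term, the proof of Lemma~\ref{coarsepointB} gives
\[
\dist((X_T,\tau_T),L_R)\lesssim\alpha^\#(S)\ell(S)+\sum_{S\subseteq P\subseteq R}\alpha(P)\ell(P).
\]
Lemma~\ref{distLQalphabd2} gives the corresponding bound for $(X_S,\tau_S)$. Using $\ell(P)\leq\ell(R)$, the bracket divided by $\ell(R)$ is $\lesssim\alpha^\#(S)+\sum_{S\subseteq P\subseteq R}\alpha(P)$. Multiplying by $\ell(S)/\ell(R)$ gives the claimed form for this term.

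For the projection term, let $(Y_S,\tau_T)$ be the orthogonal projection of $(X_T,\tau_T)$ onto $L_S$, and set $v:=(Y_S,\tau_T)-(X_S,\tau_S)$. Then $\pi_{L_S^\perp}v=0$ and $\|v\|\lesssim\ell(S)$. Therefore
\[
|\pi_{L_R^\perp}((X_T,\tau_T)-(X_S,\tau_S))|\leq\dist((X_T,\tau_T),L_S)+|(\pi_{L_R^\perp}-\pi_{L_S^\perp})v|.
\]
Lemma~\ref{toalphalemma2}, applied as in the proof of Lemma~\ref{coarsepointB}, gives $\dist((X_T,\tau_T),L_S)\lesssim\alpha^\#(S)\ell(S)$. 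I will then telescope the normal projections along the chain of ancestors from $S$ to $R$, using \eqref{parentchildprojectionbound}; this estimate holds whether $\alpha(P')$ is small or not. This gives $|(\pi_{L_R^\perp}-\pi_{L_S^\perp})v|\lesssim\ell(S)\sum_{S\subsetneq P\subseteq R}\alpha(P)$. Dividing by $\ell(R)$ yields exactly the right-hand side of \eqref{neighborcoarsedifferenceestimate}.

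The main point to check is that this second term gains the factor $\ell(S)/\ell(R)$ with undiscounted $\alpha(P)$, rather than $\alpha(P)\ell(P)/\ell(R)$. That is why the argument telescopes operator norms of projections, applied to a vector of length $O(\ell(S))$, instead of distances. If $\alpha^\#(S)\geq\eta_0$, the bound $\dist((X_T,\tau_T),L_S)\lesssim\ell(S)\lesssim\eta_0^{-1}\alpha^\#(S)\ell(S)$ holds trivially, so that case also causes no difficulty.
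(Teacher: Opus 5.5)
Your proposal is correct and follows the paper's proof. You apply Lemma~\ref{bigest} with $Q=R$, bound the two distances to $L_R$ via Lemmas~\ref{distLQalphabd2} and~\ref{toalphalemma2}, and control the normal-projection term by telescoping \eqref{parentchildprojectionbound} along the chain from $S$ to $R$ against a vector of length $O(\ell(S))$; this simply spells out, following the proof of Lemma~\ref{coarsepointB}, the chain-summation step that the paper compresses into one sentence.
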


We first control $\mathcal E_{S,T}$.

Define
\begin{align}\mathfrak e_i(S)&:=\sum_{Q\in\mathcal Q_i(S)}\alpha(Q)\mu(Q)+\alpha^\#(S)\mu(S)\\&\quad+\sum_{Q\in\mathcal Q_i(S)}\sum_{P:\,Q\subseteq P\subseteq S_Q}\alpha(P)\frac{\ell(P)}{\ell(S)}\mu(Q).\label{fineerrordef}\end{align}
By \eqref{B2finepartition} and
Lemmas~\ref{finealphacomparison}, \ref{multiscaleplanecomparison}, and
\ref{B2etaremainderlemma}, we have
\begin{align}
|\mathcal E_{S,T}|\lesssim\mathfrak e_i(S)
\qquad\text{for every }T\in N(S).
\label{fineerrorbound}
\end{align}

\begingroup
Set $a:=m-j$. For each $S\in\mathbb D_m$, let $R_j(S)\in\mathbb D_j$
be its ancestor. Since
\[
\frac{\ell(S)}{\ell(R_j(S))}=2^{-a},
\]
the definition of $B^2_{j,i}$, \eqref{fineerrorbound},
Lemma~\ref{neighborcoarsedifference}, and the bounded number of neighbors give
\begin{align}
|B^2_{j,i}|
&\leq
\sum_{S\in\mathbb D_m}
\sum_{T\in N(S)}
\left|
T_{(j)}\mu(X_S,\tau_S)
-
T_{(j)}\mu(X_T,\tau_T)
\right|
|\mathcal E_{S,T}|
\nonumber\\
&\lesssim
2^{-a}
\sum_{R\in\mathbb D_j}
\sum_{\substack{S\in\mathbb D_m\\S\subset R}}
\mathfrak e_i(S)
\left(
\alpha^\#(S)
+
\sum_{P:\,S\subseteq P\subseteq R}\alpha(P)
\right).
\label{B2finecoarsemaster}
\end{align}
Applying Cauchy--Schwarz to the right-hand side of
\eqref{B2finecoarsemaster}, we obtain
\begin{align}
|B^2_{j,i}|
&\lesssim
2^{-a}
\left(
\sum_{S\in\mathbb D_m}
\frac{\mathfrak e_i(S)^2}{\mu(S)}
\right)^{1/2}
\nonumber\\
&\qquad\times
\left(
\sum_{R\in\mathbb D_j}
\sum_{\substack{S\in\mathbb D_m\\S\subset R}}
\mu(S)
\left(
\alpha^\#(S)
+
\sum_{P:\,S\subseteq P\subseteq R}\alpha(P)
\right)^2
\right)^{1/2}.
\label{finecoarseCS}
\end{align}
We estimate these two factors separately.
\endgroup

\begingroup
For the first sum in \eqref{fineerrordef}, Cauchy--Schwarz and
\eqref{finepartitionmass} give
\begin{align*}
\frac{1}{\mu(S)}
\left(
\sum_{Q\in\mathcal Q_i(S)}\alpha(Q)\mu(Q)
\right)^2
&\leq
\frac{1}{\mu(S)}
\left(
\sum_{Q\in\mathcal Q_i(S)}\mu(Q)
\right)
\left(
\sum_{Q\in\mathcal Q_i(S)}\alpha(Q)^2\mu(Q)
\right)
\\
&\lesssim
\sum_{Q\in\mathcal Q_i(S)}\alpha(Q)^2\mu(Q).
\end{align*}
For the last sum in \eqref{fineerrordef}, weighted Cauchy--Schwarz gives
\begin{align*}
&\left(
\sum_{Q\in\mathcal Q_i(S)}
\sum_{P:\,Q\subseteq P\subseteq S_Q}
\alpha(P)\frac{\ell(P)}{\ell(S)}\mu(Q)
\right)^2
\\
&\qquad\leq
\left(
\sum_{Q\in\mathcal Q_i(S)}
\sum_{P:\,Q\subseteq P\subseteq S_Q}
\frac{\ell(P)}{\ell(S)}\mu(Q)
\right)
\left(
\sum_{Q\in\mathcal Q_i(S)}
\sum_{P:\,Q\subseteq P\subseteq S_Q}
\alpha(P)^2\frac{\ell(P)}{\ell(S)}\mu(Q)
\right).
\end{align*}
The first factor on the right satisfies
\begin{align*}
\sum_{Q\in\mathcal Q_i(S)}
\mu(Q)
\sum_{P:\,Q\subseteq P\subseteq S_Q}
\frac{\ell(P)}{\ell(S)}
\lesssim
\sum_{Q\in\mathcal Q_i(S)}\mu(Q)
\lesssim
\mu(S).
\end{align*}
Here the first inequality is the geometric sum along the chain from $Q$ to
$S_Q$, and the second is \eqref{finepartitionmass}. It follows that
\begin{align}
\frac{\mathfrak e_i(S)^2}{\mu(S)}
&\lesssim
\alpha^\#(S)^2\mu(S)
+
\sum_{Q\in\mathcal Q_i(S)}\alpha(Q)^2\mu(Q)
\nonumber\\
&\quad+
\sum_{Q\in\mathcal Q_i(S)}
\sum_{P:\,Q\subseteq P\subseteq S_Q}
\alpha(P)^2\frac{\ell(P)}{\ell(S)}\mu(Q).
\label{fineerrorsquare}
\end{align}

We now sum \eqref{fineerrorsquare} over $S\in\mathbb D_m$. We bound the first term using \eqref{alphasharpsquare}. Since each $Q\in\mathbb D_i$
belongs to $\mathcal Q_i(S)$ for only boundedly many $S\in\mathbb D_m$,
the second term satisfies
\[
\sum_{S\in\mathbb D_m}
\sum_{Q\in\mathcal Q_i(S)}
\alpha(Q)^2\mu(Q)
\lesssim
\sum_{Q\in\mathbb D_i}\alpha(Q)^2\mu(Q).
\]
For the third term, note that if $Q\subseteq P$ and $m\leq J(P)\leq i$, then $P\subseteq S_Q$, because $S_Q$ is the generation-$m$ ancestor of $Q$. Thus, using $\ell(S)=r_m$ and interchanging the nonnegative sums gives
\begin{align*}
&\sum_{S\in\mathbb D_m}
\sum_{Q\in\mathcal Q_i(S)}
\sum_{P:\,Q\subseteq P\subseteq S_Q}
\alpha(P)^2\frac{\ell(P)}{\ell(S)}\mu(Q)
\\
&\qquad=
\sum_{\substack{P\in\mathbb D\\m\leq J(P)\leq i}}
\alpha(P)^2\frac{\ell(P)}{r_m}
\sum_{S\in\mathbb D_m}
\sum_{\substack{Q\in\mathcal Q_i(S)\\Q\subseteq P}}
\mu(Q)
\\
&\qquad\lesssim
\sum_{\substack{P\in\mathbb D\\m\leq J(P)\leq i}}
\alpha(P)^2\frac{\ell(P)}{r_m}
\sum_{\substack{Q\in\mathbb D_i\\Q\subseteq P}}
\mu(Q)
\\
&\qquad=
\sum_{\substack{P\in\mathbb D\\m\leq J(P)\leq i}}
\alpha(P)^2\frac{\ell(P)}{r_m}\mu(P)
\leq
\sum_{\substack{P\in\mathbb D\\m\leq J(P)\leq i}}
\alpha(P)^2\mu(P).
\end{align*}
In the inequality we again used that each $Q\in\mathbb D_i$ belongs to
$\mathcal Q_i(S)$ for only boundedly many $S\in\mathbb D_m$.
The equality in the last line follows because the
generation-$i$ descendants of $P$ partition $P$, and the final inequality
uses $\ell(P)\leq r_m$. Combining the three estimates yields
\begin{align}
\sum_{S\in\mathbb D_m}
\frac{\mathfrak e_i(S)^2}{\mu(S)}
\lesssim
\sum_{\substack{P\in\mathbb D\\m\leq J(P)\leq i}}
\alpha(P)^2\mu(P).
\label{fineerrorsquaresum}
\end{align}
Equation~\eqref{fineerrorsquaresum} bounds the first factor in \eqref{finecoarseCS}.
\endgroup

We now estimate the second factor in \eqref{finecoarseCS}.

We apply Cauchy--Schwarz along the chain from $S$ to $R$, use the bounded
number of neighbors, and sum over the generation-$m$ descendants of each
intermediate cube to obtain
\begin{align}
&\sum_{R\in\mathbb D_j}
\sum_{\substack{S\in\mathbb D_m\\S\subset R}}
\mu(S)\left(\alpha^\#(S)+\sum_{P:\,S\subseteq P\subseteq R}\alpha(P)\right)^2
\lesssim
(a+1)
\sum_{P:\,j\leq J(P)\leq m}\alpha(P)^2\mu(P).
\label{coarsedifferencesquaresum}
\end{align}

Combining \eqref{finecoarseCS},
\eqref{fineerrorsquaresum}, and \eqref{coarsedifferencesquaresum}, we obtain
\begin{align*}
|B^2_{j,i}|
&\lesssim
2^{-a}(a+1)^{1/2}
\left(
\sum_{P:\,m\leq J(P)\leq i}\alpha(P)^2\mu(P)
\right)^{1/2}
\left(
\sum_{P:\,j\leq J(P)\leq m}\alpha(P)^2\mu(P)
\right)^{1/2}.
\end{align*}

For $\varepsilon\in\{0,1\}$, write $i=j+2a+\varepsilon$, so that
$m=j+a$. By Cauchy--Schwarz in $j$,
\begin{align*}
&\sum_{j\in\mathcal J}
\left(
\sum_{P:\,j+a\leq J(P)\leq j+2a+\varepsilon}
\alpha(P)^2\mu(P)
\right)^{1/2}
\left(
\sum_{P:\,j\leq J(P)\leq j+a}
\alpha(P)^2\mu(P)
\right)^{1/2}
\\
&\qquad\lesssim
(a+1)
\sum_{P\in\mathbb D}\alpha(P)^2\mu(P).
\end{align*}
Therefore
\begin{align*}
&\sum_{\substack{i,j\in\mathcal J\\i-j>N_0}}
|B^2_{j,i}|
\\
&\qquad\lesssim
\sum_{\substack{a\geq0\\2a+1>N_0}}
2^{-a}(a+1)^{3/2}
\sum_{P\in\mathbb D}\alpha(P)^2\mu(P)
\\
&\qquad\lesssim
2^{-cN_0}
\sum_{P\in\mathbb D}\alpha(P)^2\mu(P).
\end{align*}

We now prove the five lemmas used above.

\begin{proof}[Proof of Lemma~\ref{fineplanecutofflemma}]
Fix $W\in L_U$. Since $L_U$ is vertical, the spatial oddness of $K_{(i)}$ gives
\[
\int_{L_U}K_{(i)}(W-Z)\,\d\mathcal L_U(Z)=0.
\]
Consequently,
\[
T_{(i)}(\theta_T\mathcal L_U)(W)
=
\int_{L_U}K_{(i)}(W-Z)
\bigl(\theta_T(Z)-\theta_T(W)\bigr)
\,\d\mathcal L_U(Z).
\]
If $K_{(i)}(W-Z)\neq0$, then $\|W-Z\|\lesssim r_i$. By Lemma~\ref{elemalphalem}(d), we have $c_U\lesssim1$, so we obtain the lemma by applying the mean value theorem to $|\theta_T(Z) - \theta_T(W)|$.
\end{proof}

\begingroup
\begin{proof}[Proof of Lemma~\ref{B2etaremainderlemma}]
The functions \(\eta_Q\) have bounded overlap, so \(1-\sum_Q\eta_Q\) is uniformly bounded; it vanishes on \(\Omega_i^{(2)}\). Since \(\supp(\theta_S)\subset C_{2r_m}(Z_S,\tau_S)\), Chebyshev's inequality and \eqref{LQbetacomp}, applied to \(S\), give \begin{align}&\mathcal L_S\left(\supp(\theta_S)\cap\left\{(X,t):\dist((X,t),E)\ge\frac{3\gamma}{8}r_i\right\}\right)\nonumber\\&\qquad\lesssim\frac1{r_i}\int_{L_S\cap C_{2r_m}(Z_S,\tau_S)}\dist((X,t),E)\,d\mathcal L_S(X,t)\lesssim\frac{r_m}{r_i}\alpha(S)\mu(S).\label{B2etaremaindersupport}\end{align}
Combining \eqref{B2etaremaindersupport} with \eqref{fineplanecutoffbound} proves \eqref{B2etaremainderbound}.
\end{proof}
\endgroup

\begin{proof}[Proof of Lemma~\ref{finealphacomparison}]
\begingroup  Since $Q\in\mathbb D_i$, we have $\ell(Q)=r_i$. Since $\supp(\eta_{Q,S})\subset\supp(\eta_Q)\subset U_{\frac{\gamma}{2}r_i}(Q)$, for each $(X,t)\in\supp(\eta_{Q,S})$ there is $(Z,\tau)\in Q$ such that $\|(X,t)-(Z,\tau)\|<\gamma r_i/2$. Also, \[ \supp(K_{(i)})\subset \bigl\{(W,u):2^{-i-\omega-2}<\|(W,u)\|<2^{-i-\omega+2}\bigr\}. \] Thus, if $K_{(i)}\bigl((X,t)-(Y,s)\bigr)\neq0$, then \[ \|(X,t)-(Y,s)\|\leq2^{2-\omega}r_i\leq\frac{\gamma}{8}r_i. \] Since $\|(Z,\tau)-(Z_Q,\tau_Q)\|\leq\diam(Q)\leq2^{-10}r_i$, it follows that \[ \|(Y,s)-(Z_Q,\tau_Q)\| \leq\left(\frac{\gamma}{2}+\frac{\gamma}{8}+2^{-10}\right)r_i <3r_i. \] Here $\gamma\leq2^{-10}$ follows from $\gamma r_i\leq\diam(Q)\leq2^{-10}r_i$. Hence, for fixed $(X,t)\in\supp(\eta_{Q,S})$, the function \[ g_{X,t}(Y,s):= K_{(i)}\bigl((X,t)-(Y,s)\bigr)\theta_T(Y,s) \] is supported in $C_{3r_i}(Z_Q,\tau_Q)=C_Q$. We have \[ \|K_{(i)}\|_\infty\lesssim r_i^{-k-2},\qquad \|\nabla K_{(i)}\|_\infty\lesssim r_i^{-k-3},\qquad \|\partial_tK_{(i)}\|_\infty\lesssim r_i^{-k-4}. \] Since $i>m$, $r_i\leq r_m$, so we also have \[ \|\theta_T\|_\infty\leq1,\qquad \|\nabla\theta_T\|_\infty\lesssim r_m^{-1}\leq r_i^{-1},\qquad \|\partial_t\theta_T\|_\infty\lesssim r_m^{-2}\leq r_i^{-2}. \] Therefore \[ \|g_{X,t}\|_\infty\lesssim r_i^{-k-2},\qquad \|\nabla g_{X,t}\|_\infty\lesssim r_i^{-k-3},\qquad \|\partial_tg_{X,t}\|_\infty\lesssim r_i^{-k-4}. \] Thus, $\|g_{X,t}\|_{\operatorname{Lip}}\lesssim r_i^{-k-3}$. Hence, \eqref{alphaflatapproximation} gives \begin{align} \left| T_{(i)}(\theta_T\mu)(X,t) - T_{(i)}(\theta_T\mathcal L_Q)(X,t) \right| \lesssim\alpha(Q). \label{finepointwisealpha} \end{align} Integrating \eqref{finepointwisealpha} against $\eta_{Q,S}\,\d\mu$ gives \[ \big|\langle \eta_{Q,S},T_{(i)}(\theta_T\mu)\rangle - \langle \eta_{Q,S},T_{(i)}(\theta_T\mathcal L_Q)\rangle \big| \lesssim\alpha(Q)\mu(Q). \] Set \[ F_Q(X,t):= \eta_{Q,S}(X,t)T_{(i)}(\theta_T\mathcal L_Q)(X,t). \] The support of $F_Q$ is contained in $C_Q$, and the kernel estimates together with the derivative bounds for $\eta_Q$ and \(\theta_T\) give \[ \|F_Q\|_{\operatorname{Lip}}\lesssim r_i^{-1}. \] Therefore \begin{align}\label{finepointwisealpha2} \left| \int F_Q\,\d\mu- \int F_Q\,\d\mathcal L_Q \right| \lesssim r_i^{-1}\alpha(Q)r_i^{k+3} \lesssim \alpha(Q)\mu(Q). \end{align} Combining \eqref{finepointwisealpha} and \eqref{finepointwisealpha2} proves the lemma. \endgroup
\end{proof}

\begin{proof}[Proof of Lemma~\ref{multiscaleplanecomparison}]
\begingroup
Recall that $\ell(Q)=r_i$ and $\ell(S)=r_m$. Set
\[
\Delta:=
\alpha^\#(S)+
\sum_{P:\,Q\subseteq P\subseteq S_Q}
\alpha(P)\frac{\ell(P)}{r_m},
\qquad
\Theta:=
\alpha^\#(S)+
\sum_{P:\,Q\subseteq P\subseteq S_Q}\alpha(P).
\]
Since $\ell(P)\geq r_i$ for every cube in the sum,
\begin{align}
\Theta\frac{r_i}{r_m}\leq\Delta.
\label{weightedanglecomparison}
\end{align}

Let $c_0>0$ be a small constant, to be chosen below. We consider two cases.

\smallskip
\noindent Suppose first that $\Theta\geq c_0$.
By \eqref{weightedanglecomparison} and \eqref{fineplanecutoffbound},
\[
\begin{aligned}
&\left|
\langle\eta_{Q,S},T_{(i)}(\theta_T\mathcal L_Q)\rangle_{\mathcal L_Q}
-
\langle\eta_{Q,S},T_{(i)}(\theta_T\mathcal L_S)\rangle_{\mathcal L_S}
\right| \\
&\qquad\lesssim
\frac{r_i}{r_m}
\left[
\mathcal L_Q(\supp\eta_{Q,S})+
\mathcal L_S(\supp\eta_{Q,S})
\right]
\lesssim
\frac{r_i}{r_m}\mu(Q)
\lesssim\Delta\mu(Q).
\end{aligned}
\]
This proves the lemma in the first case.

\smallskip
\noindent Now suppose that $\Theta<c_0$.
Let
\[
Q=P_0\subset P_1\subset\cdots\subset P_N=S_Q,
\qquad
\ell(P_{h+1})=2\ell(P_h).
\]

After decreasing $c_0$ if necessary, $\Theta<c_0$ implies that $\alpha(S)<\eta_0$ and $\alpha(P_h)<\eta_0$ for $0\leq h\leq N$, where $\eta_0$ is the constant from Lemma~\ref{elemalphalem}(c). Write
\[
L_{P_h}=(a_h+V_h)\times\mathbb R,
\qquad 0\leq h\leq N.
\]
Fix $0\leq h<N$ and set $r_{h+1}:=\ell(P_{h+1})$. Lemma~\ref{toalphalemma},
applied to $P_h\subset P_{h+1}$ with $\eta=1/2$, gives
\begin{equation}
\dist_H\bigl(
L_{P_h}\cap C_{P_{h+1}},
L_{P_{h+1}}\cap C_{P_{h+1}}
\bigr)
\lesssim \alpha(P_{h+1})r_{h+1}.
\label{parentchildplanehausdorff}
\end{equation}
Thus,
\[
\angle(L_{P_h},L_{P_{h+1}})
\lesssim \alpha(P_{h+1}).
\]

For $h=N$, \eqref{projectionmeasurebounds} gives
\[
\angle(L_{S_Q},L_S)
\lesssim
\max\{\alpha(S_Q),\alpha(S)\}
\leq \alpha^\#(S).
\]
Hence
\begin{align}
\angle(L_Q,L_S)
&\leq
\sum_{h=0}^{N-1}\angle(L_{P_h},L_{P_{h+1}})
+\angle(L_{S_Q},L_S)
\nonumber\\
&\lesssim
\sum_{h=0}^{N-1}\alpha(P_{h+1})+\alpha^\#(S)
\leq\Theta.
\label{localplaneangle}
\end{align}

Let $p:L_Q\to L_S$ be orthogonal projection. By
\eqref{localplaneangle}, $p$ is one-to-one and bi-Lipschitz.
\begingroup Set $C_0:=2^{2-\omega}$. The construction of $K_{(i)}$ gives $K_{(i)}(W)=0$ whenever $\|W\|>C_0r_i$. Since $2^{-\omega}\leq\gamma/32$, we have $2C_0\leq\gamma/4$.\endgroup Set
\[
\Omega:=
\left\{
W\in L_Q:
\dist(W,\supp\eta_{Q,S})\leq 2C_0r_i
\text{ or }
\dist(p(W),\supp\eta_{Q,S})\leq 2C_0r_i
\right\}.
\]

We will show that, for $W \in \Omega$,
\[\|W-p(W)\| \lesssim r_m\Delta.\] Suppose first that
$W$ lies within $2C_0r_i$ of $\supp\eta_{Q,S}$, and set $W_0:=W$.
Put $R_0:=2C_0+1$. Since
$\supp\eta_{Q,S}\subset U_{\frac{\gamma}{2}r_i}(Q)$ and
$\diam(Q)\leq2^{-10}r_i$,
\[
d_p\bigl(W_0,(Z_Q,\tau_Q)\bigr)
\leq
2C_0r_i+\frac{\gamma}{2}r_i+\diam(Q)
<R_0r_i.
\]
We recursively choose $W_h\in L_{P_h}$ so that
\begin{align}
d_p\bigl(W_h,(Z_{P_h},\tau_{P_h})\bigr)
\leq R_0\ell(P_h).
\label{multiscalepointlocation}
\end{align}
Indeed, if $W_h$ has been chosen, then
\[
d_p\bigl(W_h,(Z_{P_{h+1}},\tau_{P_{h+1}})\bigr)
\leq
\left(\frac{R_0}{2}+2^{-10}\right)\ell(P_{h+1}).
\]
\begingroup Since $R_0\leq1+\gamma/4$, the preceding estimate shows that $W_h\in C_{P_{h+1}}$. Applying Lemma~\ref{toalphalemma} to $P_h\subset P_{h+1}$ with $\eta=\frac12$, we may choose $W_{h+1}\in L_{P_{h+1}}$, with the same time coordinate as $W_h$, so that
\[
d_p(W_h,W_{h+1})\lesssim\alpha(P_{h+1})\ell(P_{h+1}).
\]
After decreasing $c_0$ if necessary, this proves \eqref{multiscalepointlocation} for $h+1$.\endgroup

Since $S_Q\in N(S)\cup\{S\}$,
\eqref{multiscalepointlocation} gives
\[
d_p\bigl(W_N,(Z_S,\tau_S)\bigr)
\leq(R_0+1)r_m.
\]
Applying Lemma~\ref{toalphalemma2}, we may choose
$W_{N+1}\in L_S$, with the same time coordinate as $W_N$, such that
\[
d_p(W_N,W_{N+1})\lesssim_{R_0}\alpha^\#(S)r_m.
\]
Since $p(W)$ is the orthogonal projection of $W$ onto $L_S$, the
triangle inequality gives
\[
\begin{aligned}
d_p(W,p(W))
&\leq d_p(W,W_{N+1})\\
&\leq \sum_{h=0}^{N}d_p(W_h,W_{h+1})\\
&\lesssim
\alpha^\#(S)r_m+
\sum_{P:\,Q\subseteq P\subseteq S_Q}\alpha(P)\ell(P)
=r_m\Delta.
\end{aligned}
\]
Suppose instead that $p(W)$ lies within $2C_0r_i$ of
$\supp\eta_{Q,S}$. Let $U_Q:=(X_Q,t_Q)$. Applying
Lemma~\ref{distLQalphabd2} to $Q\subset S_Q$, we may choose
$U_{S_Q}\in L_{S_Q}$, with the same time coordinate as $U_Q$, such that
\[
d_p(U_Q,U_{S_Q})
\lesssim
\sum_{P:\,Q\subsetneq P\subseteq S_Q}\alpha(P)\ell(P).
\]
Since $U_Q\in C_Q=C_{3r_i}(Z_Q,\tau_Q)$, $Q\subset S_Q$,
$r_i\leq r_m/2$, and $\diam(S_Q)\leq2^{-10}r_m$,
\[
d_p\bigl(U_Q,(Z_{S_Q},\tau_{S_Q})\bigr)
\leq 6r_i+\diam(S_Q)
<4r_m.
\]
Thus, after decreasing $c_0$ if necessary,
\[
d_p\bigl(U_{S_Q},(Z_{S_Q},\tau_{S_Q})\bigr)
\leq (4+C\Theta)r_m
<5r_m.
\]
It follows that $U_{S_Q}$ lies in
$C_{6r_m}(Z_{S_Q},\tau_{S_Q})$. Since $S_Q\in N(S)\cup\{S\}$,
Lemma~\ref{toalphalemma2} allows us to choose $U_S\in L_S$, with the same time
coordinate as $U_{S_Q}$, such that
\[
d_p(U_{S_Q},U_S)\lesssim\alpha^\#(S)r_m.
\]
Therefore
\begin{align}
d_p(U_Q,U_S)
&\lesssim
\alpha^\#(S)r_m+
\sum_{P:\,Q\subsetneq P\subseteq S_Q}\alpha(P)\ell(P)
\lesssim r_m\Delta.
\label{reverseplanedisplacement}
\end{align}
Since $p(U_Q)$ is the orthogonal projection of $U_Q$ onto
$L_S$, and $U_S$ has the same time coordinate as $U_Q$,
\begin{align}
d_p(U_Q,p(U_Q))
\leq d_p(U_Q,U_S)
\lesssim r_m\Delta.
\label{basepointprojectionbound}
\end{align}

Choose $(Y,s)\in\supp\eta_{Q,S}$ such that
$d_p(p(W),(Y,s))\leq2C_0r_i$. Since
$\supp\eta_{Q,S}\subset U_{\frac{\gamma}{2}r_i}(Q)$ and
$U_Q\in L_Q\cap C_Q$, we have
\[
d_p(p(W),U_Q)\lesssim r_i.
\]
Together with \eqref{basepointprojectionbound}, this gives
\[
d_p(p(W),p(U_Q))\lesssim r_i+r_m\Delta.
\]
The inverse map $p^{-1}:L_S\to L_Q$ has Lipschitz constant $\lesssim1$, and hence
\begin{align}
d_p(W,U_Q)
&=d_p\bigl(p^{-1}(p(W)),p^{-1}(p(U_Q))\bigr)\\
&\lesssim r_i+r_m\Delta.
\label{inverseprojectionbound}
\end{align}
Write $W=(X,t)$ and $U_Q=(X_Q,t_Q)$. Since $p$ preserves the
time coordinate, $W-p(W)$ and $U_Q-p(U_Q)$ are spatial vectors. The
spatial component of
\[
(W-p(W))-(U_Q-p(U_Q))
\]
is the orthogonal projection of $X-X_Q\in V_Q$ onto $V_S^\perp$.
Consequently, by \eqref{localplaneangle},
\[
\big|(W-p(W))-(U_Q-p(U_Q))\big|
\leq
\sin\angle(L_Q,L_S)|X-X_Q|
\lesssim
\Theta d_p(W,U_Q).
\]
It follows that
\[
\begin{aligned}
d_p(W,p(W))
&=|W-p(W)|\\
&\leq |U_Q-p(U_Q)|
+\big|(W-p(W))-(U_Q-p(U_Q))\big|\\
&\lesssim
d_p(U_Q,p(U_Q))
+\Theta d_p(W,U_Q).
\end{aligned}
\]
It follows from \eqref{basepointprojectionbound},
\eqref{inverseprojectionbound}, $\Theta r_i\leq r_m\Delta$, and
$\Theta<c_0$ that
\[
d_p(W,p(W))
\lesssim
r_m\Delta+\Theta(r_i+r_m\Delta)
\lesssim r_m\Delta.
\]
Thus
\begin{align}
d_p(W,p(W))\lesssim r_m\Delta,
\qquad W\in\Omega.
\label{localplanedisplacement}
\end{align}

For $0\leq h<N$, the estimate for the constants in
Lemma~\ref{toalphalemma}, applied to $P_h\subset P_{h+1}$, gives
\[
|c_{P_h}-c_{P_{h+1}}|
\lesssim
\alpha(P_{h+1}).
\]
If $S_Q\neq S$, then $S_Q\in N(S)$ and $\alpha^\#(S,S_Q)\leq\alpha^\#(S)\leq\Theta<c_0\leq\eta_0$. We may therefore apply Lemma~\ref{neighborprojectionmeasure}, and \eqref{neighborconstantcomparison} gives $|c_{S_Q}-c_S|\lesssim\alpha^\#(S,S_Q)\leq\alpha^\#(S)$. The same estimate is trivial when $S_Q=S$. Since $P_0=Q$ and $P_N=S_Q$, it follows that
\begin{align}
|c_Q-c_S|
&\leq
\sum_{h=0}^{N-1}|c_{P_h}-c_{P_{h+1}}|
+|c_{S_Q}-c_S|
\lesssim\Theta.
\label{multiscaleconstantcomparison}
\end{align}

Let $\widetilde p:V_Q\to V_S$ be the linear part of the spatial
orthogonal projection $p$, and let $\iota_Q:V_Q\to\mathbb R^n$ be the
inclusion map. By \eqref{localplaneangle},
\[
\|\widetilde p-\iota_Q\|\lesssim\Theta.
\]
If $J_p$ denotes the Jacobian of $\widetilde p$, then
\[
|J_p-1|\lesssim\Theta.
\]
Recall that $\mathcal L_U=c_U\sigma_{L_U}$. For every Borel set
$A\subset L_Q$,
\[
\begin{aligned}
p^\#\mathcal L_S(A)
&=\mathcal L_S(p(A))
=c_S\sigma_{L_S}(p(A))\\
&=c_SJ_p\sigma_{L_Q}(A)
=\frac{c_S}{c_Q}J_p\mathcal L_Q(A).
\end{aligned}
\]
Thus
\[
p^\#\mathcal L_S=a\mathcal L_Q,
\qquad
a:=\frac{c_S}{c_Q}J_p.
\]
Since $c_Q,c_S\approx1$, \eqref{multiscaleconstantcomparison} and the
Jacobian estimate imply
\begin{align}
|a-1|+\|\widetilde p-\iota_Q\|\lesssim\Theta.
\label{multiscaleprojectionbounds}
\end{align}

For brevity, write
\[
\eta:=\eta_{Q,S},
\qquad
\theta:=\theta_T.
\]
The derivative bounds for $\eta$ and $\theta$, together with
\eqref{localplanedisplacement}, give
\begin{align}
|\theta(p(W))-\theta(W)|
&\lesssim\Delta,
&&W\in\Omega,
\label{innercutoffcomparison}\\
|\eta(p(W))-\eta(W)|
&\lesssim
\min\left\{1,\frac{r_m}{r_i}\Delta\right\},
&&W\in L_Q.
\label{outercutoffcomparison}
\end{align}
\begingroup
If $W \in \Omega$, \eqref{outercutoffcomparison} holds by \eqref{localplanedisplacement}. If $W\in L_Q\setminus\Omega$, then, by the definition of $\Omega$,
\[
\dist(W,\supp\eta)>2C_0r_i,
\qquad
\dist(p(W),\supp\eta)>2C_0r_i.
\]
Neither $W$ nor $p(W)$ belongs to $\supp\eta$, so $\eta(W)=\eta(p(W))=0$. We have therefore proved \eqref{outercutoffcomparison} for every $W\in L_Q$. Moreover,
\[
\supp(\eta\circ p-\eta)\subset\supp\eta\cup p^{-1}(\supp\eta).
\]
Since $\supp\eta\subset U_{\frac{\gamma}{2}r_i}(Q)$ and $c_Q,c_S\approx1$,
\[
\mathcal L_Q(\supp\eta)+\mathcal L_S(\supp\eta)\lesssim r_i^{k+2}\approx\mu(Q).
\]
Since $p^\#\mathcal L_S=a\mathcal L_Q$ and $a\approx1$, we also have $\mathcal L_Q(p^{-1}(\supp\eta))\lesssim\mathcal L_S(\supp\eta)$. Therefore
\begin{align}
\mathcal L_Q\bigl(\supp\eta\cup p^{-1}(\supp\eta)\bigr)\lesssim\mu(Q).
\label{outercutoffsupport}
\end{align}
\endgroup

We now perform the change of variables. Define on $L_Q$
\[
\begin{aligned}
F&:=T_{(i)}(\theta\mathcal L_S)\circ p,
&
G&:=aT_{(i)}((\theta\circ p)\mathcal L_Q),\\
H&:=aT_{(i)}(\theta\mathcal L_Q),
&
J&:=T_{(i)}(\theta\mathcal L_Q).
\end{aligned}
\]
\begingroup Set\[\begin{aligned}I_Q&:=\langle\eta,T_{(i)}(\theta\mathcal L_Q)\rangle_{\mathcal L_Q},\\I_S&:=\langle\eta,T_{(i)}(\theta\mathcal L_S)\rangle_{\mathcal L_S}.\end{aligned}\]\endgroup
\begingroup Since $p^\#\mathcal L_S=a\mathcal L_Q$, the change of variables under $p$ gives\[I_S=a\int_{L_Q}(\eta\circ p)F\,d\mathcal L_Q,\qquad I_Q=\int_{L_Q}\eta J\,d\mathcal L_Q.\]Hence\[I_S-I_Q=a\int_{L_Q}(\eta\circ p)F\,d\mathcal L_Q-\int_{L_Q}\eta J\,d\mathcal L_Q.\]Now write\[F=(F-G)+(G-H)+(H-J)+J.\]Then\endgroup\begin{align}I_S-I_Q
={}&
a\int_{L_Q}(\eta\circ p)(F-G)\,d\mathcal L_Q
+a\int_{L_Q}(\eta\circ p)(G-H)\,d\mathcal L_Q
\nonumber\\
&+
a\int_{L_Q}(\eta\circ p)(H-J)\,d\mathcal L_Q
+\int_{L_Q}\bigl(a(\eta\circ p)-\eta\bigr)J\,d\mathcal L_Q.
\label{multiscalefourtermidentity}
\end{align}

We estimate the four terms in order. For $v\in V_Q$ and $t\in\mathbb R$, define\[\mathbf P(v,t):=(\widetilde p(v),t).\]Then, for $W,Z\in L_Q$,\[p(W)-p(Z)=\mathbf P(W-Z),\qquad\|\mathbf P(v,t)-(v,t)\|\lesssim\Theta|v|.\] If either
$K_{(i)}(U)$ or $K_{(i)}(\mathbf P U)$ is nonzero, then
$\|U\|\lesssim r_i$, and the kernel derivative estimate gives
\[
|K_{(i)}(\mathbf P U)-K_{(i)}(U)|
\lesssim\Theta r_i^{-k-2}.
\]
Cancellation on $L_Q$ and $L_S$ permits us to subtract
$\theta(p(W))$, and hence
\[
\begin{aligned}
(F-G)(W)
={}&a\int_{L_Q}
\bigl[
K_{(i)}(\mathbf P(W-Z))-K_{(i)}(W-Z)
\bigr]\\
&\qquad\times
\bigl[
\theta(p(Z))-\theta(p(W))
\bigr]\,d\mathcal L_Q(Z).
\end{aligned}
\]
Whenever the integrand is nonzero,
$d_p(p(W),p(Z))\lesssim r_i$, so the estimates of the derivatives of $\theta$ in \eqref{partitionhigherderivatives} give
$|\theta(p(Z))-\theta(p(W))|\lesssim r_i/r_m$. Therefore
\begin{align}
\|F-G\|_{L^\infty(p^{-1}(\supp\eta))}
\lesssim
\Theta\frac{r_i}{r_m}
\leq\Delta.
\label{multiscalekernelterm}
\end{align}

Next set $g:=\theta\circ p-\theta$. If
$W\in p^{-1}(\supp\eta)$ and $K_{(i)}(W-Z)\neq0$, then
$Z\in\Omega$. Thus \eqref{innercutoffcomparison} and the $L^1$ kernel
bound give
\begin{align}
|(G-H)(W)|
&\leq
a\int_{L_Q}|K_{(i)}(W-Z)|\,|g(Z)|\,d\mathcal L_Q(Z)
\lesssim\Delta.
\label{multiscaleinnerterm}
\end{align}
Moreover, $H-J=(a-1)J$, so
\eqref{fineplanecutoffbound}, \eqref{multiscaleprojectionbounds}, and
\eqref{weightedanglecomparison} give
\begin{align}
\|H-J\|_{L^\infty(L_Q)}
\lesssim
\Theta\frac{r_i}{r_m}
\leq\Delta.
\label{multiscalejacobianterm}
\end{align}
Finally,
\[
|a(\eta\circ p)-\eta|
\leq
|a-1|(\eta\circ p)+|\eta\circ p-\eta|.
\]
Using \eqref{fineplanecutoffbound},
\eqref{weightedanglecomparison}, and
\eqref{outercutoffcomparison}, we obtain
\begin{align}
\bigl\|
\bigl(a(\eta\circ p)-\eta\bigr)J
\bigr\|_{L^\infty(L_Q)}
\lesssim\Delta.
\label{multiscaleouterterm}
\end{align}
Combining
\eqref{multiscalefourtermidentity}--\eqref{multiscaleouterterm} with
\eqref{outercutoffsupport} proves
$|I_Q-I_S|\lesssim\Delta\mu(Q)$, which is
\eqref{multiscaleplanecomparisonestimate}.
\endgroup
\end{proof}

\begin{proof}[Proof of Lemma~\ref{neighborcoarsedifference}]
The hypothesis on $m-j$ and \eqref{projectionradiusvsdiamR} give $(4+\gamma)\ell(S)\leq\diam(R)$. Since $T\in N(S)$, for $U\in\{S,T\}$, \[ \|(X_U,\tau_U)-(Z_R,\tau_R)\| \leq (3+\gamma+2^{-9})\ell(S)+\diam(R) \leq 2\diam(R). \] Thus both $(X_S,\tau_S)$ and
$(X_T,\tau_T)$ lie in $C_{2\diam(R)}(Z_R,\tau_R)$, so
Lemma~\ref{bigest} applies with $Q=R$. We have
\[
\|(X_S,\tau_S)-(X_T,\tau_T)\|\lesssim\ell(S).
\]
We apply Lemmas~\ref{distLQalphabd2} and~\ref{toalphalemma2} and sum the estimates along the chain from $S$ to $R$ to obtain
\begin{align*}
\dist((X_S,\tau_S),L_R)
+
\dist((X_T,\tau_T),L_R)
&\lesssim
\alpha^\#(S)\ell(S)
+
\sum_{P:\,S\subseteq P\subseteq R}\alpha(P)\ell(P),
\\
\left|
\pi_{L_R^\perp}\bigl((X_S,\tau_S)-(X_T,\tau_T)\bigr)
\right|
&\lesssim
\ell(S)\left(\alpha^\#(S)+\sum_{P:\,S\subseteq P\subseteq R}\alpha(P)\right).
\end{align*}
Substituting these estimates into Lemma~\ref{bigest} and using
$\ell(P)\leq\ell(R)$ proves \eqref{neighborcoarsedifferenceestimate}.
\end{proof}

We now let $\delta\downarrow0$. For fixed $i,j$, \eqref{bjk1d} and \eqref{B1partitionremainder}, together with the estimates used in \eqref{B1mastersum}, give a summable bound independent of $\delta$ for the terms in the cancellation formula for $B^{1,\delta}_{j,i}$. Dominated convergence therefore gives its limit $B^1_{j,i}$. Likewise, \eqref{fineerrorbound} and \eqref{B2finecoarsemaster} give a summable bound independent of $\delta$ for $B^{2,\delta}_{j,i}$, so its limit $B^2_{j,i}$ exists. Since $B_{j,i}^\delta\to B_{j,i}$, the finite-sum identity $B_{j,i}^\delta=\tfrac12(B^{1,\delta}_{j,i}+B^{2,\delta}_{j,i})$ passes to the limit and proves \eqref{mv}. Fatou's lemma preserves the estimates after summation over $i,j$.

Combining the estimates for $B^1_{j,i}$ and $B^2_{j,i}$ with
\eqref{mv}, we conclude that
\[
\sum_{\substack{i,j\in\mathcal J\\
i-j>N_0}}
|B_{j,i}|
\lesssim
2^{-cN_0}
\sum_{P\in\mathbb D}\alpha(P)^2\mu(P).
\]

Combining the estimates for $A_{j,i}$ and $B_{j,i}$,
and using symmetry in $i$ and $j$, proves Proposition~\ref{offdiagprop}.

\appendix \section{Construction of the smooth partitions} \label{smoothpartitionappendix} We construct the two smooth families used in Section~3. \subsection{Construction of \texorpdfstring{\(\theta_S\)}{theta S}} Fix \(m\in\mathcal J\). For each \(S\in\mathbb D_m\), choose \(b_S\in C_c^\infty(\mathbb P^n)\) such that \begin{equation}\label{rawpartitioncutoffs} \begin{gathered} 0\leq b_S\leq1,\qquad b_S=1\quad\text{on } U_{\frac{\gamma}{32}\ell(S)}(S),\qquad \supp(b_S)\subset U_{\frac{\gamma}{8}\ell(S)}(S), \end{gathered} \end{equation} and \[ \big|\nabla_X^a\partial_t^q b_S\big| \lesssim_{a,q}\ell(S)^{-|a|-2q},\qquad |a|+q\leq2. \] We obtain $b_S$ by smoothing the indicator function of $U_{\gamma\ell(S)/16}(S)$. The bounded-overlap and separation properties of the generation-\(m\) dyadic cubes imply that the supports of the functions \(b_S\) have uniformly bounded overlap. Hence \[ B_m:=\sum_{T\in\mathbb D_m}b_T \] is locally finite, \(B_m\lesssim1\), and \[ \big|\nabla_X^a\partial_t^q B_m\big| \lesssim_{a,q}2^{m(|a|+2q)},\qquad |a|+q\leq2. \] If \((X,t)\in\Omega_m^{(1)}\), choose \((Y,s)\in E\) with \(\|(X,t)-(Y,s)\|<\gamma 2^{-m}/64\) and let \(S\in\mathbb D_m\) contain \((Y,s)\). Then \[ \dist\big((X,t),S\big)<\frac{\gamma}{64}2^{-m}<\frac{\gamma}{32}\ell(S), \] and therefore \(b_S(X,t)=1\). Thus \(B_m\geq1\) on \(\Omega_m^{(1)}\). Choose \(\chi_0\in C^\infty([0,\infty))\) with \(0\leq\chi_0\leq1\), \(\chi_0=0\) on \([0,1/4]\), and \(\chi_0=1\) on \([1/2,\infty)\), and set \[ H_0(s):=\begin{cases} \chi_0(s)/s,&s>0,\\[2pt] 0,&s=0. \end{cases} \] Since \(\chi_0\) vanishes near zero, \(H_0\) is smooth. Define \[ \theta_S:=b_SH_0(B_m). \] Then \(0\leq\theta_S\leq1\), \eqref{partitionneighborchoice} follows from the support of \(b_S\), and \[ \sum_{S\in\mathbb D_m}\theta_S=B_mH_0(B_m)=\chi_0(B_m)=1 \qquad\text{on }\Omega_m^{(1)}. \] This proves \eqref{ambientpartitionidentity}. The bounded overlap, the derivative estimates for \(b_S\), and the chain rule give \eqref{partitionhigherderivatives}. \subsection{Construction of \texorpdfstring{\(\psi_T\)}{psi T}} Fix \(m\in\mathcal J\), put \(r_m=2^{-m}\), and choose \(\widetilde\rho_T\in C_c^\infty(\mathbb P^n)\), \(T\in\mathbb D_m\), such that \[ 0\leq\widetilde\rho_T\leq1,\qquad \widetilde\rho_T=1\quad\text{on }U_{\frac{7\gamma}{16}r_m}(T),\qquad \supp(\widetilde\rho_T)\subset U_{\frac{\gamma}{2}r_m}(T), \] and \[ \big|\nabla_X^a\partial_t^q\widetilde\rho_T\big| \lesssim_{a,q}r_m^{-(|a|+2q)},\qquad |a|+q\leq2. \] Again, the supports have uniformly bounded overlap. Consequently \[ G_m:=\sum_{T\in\mathbb D_m}\widetilde\rho_T \] is locally finite, and the estimates of the derivatives of $\widetilde\rho_T$ give the same bounds for $G_m$. If \((X,t)\in\Omega_m^{(2)}\), choose \((Y,s)\in E\) with \(\|(X,t)-(Y,s)\|<3\gamma r_m/8\) and let \(T\in\mathbb D_m\) contain \((Y,s)\). Then \[ \dist\big((X,t),T\big)<\frac{3\gamma}{8}r_m<\frac{7\gamma}{16}r_m, \] so \(\widetilde\rho_T(X,t)=1\). Hence \(G_m\geq1\) on \(\Omega_m^{(2)}\). Choose \(\chi_1\) with the same properties as \(\chi_0\), set \(H_1(s)=\chi_1(s)/s\) for \(s>0\) and \(H_1(0)=0\), and define \[ \rho_T:=\widetilde\rho_TH_1(G_m). \] Then \[ \sum_{T\in\mathbb D_m}\rho_T=1\qquad\text{on }\Omega_m^{(2)}, \] while \(\supp(\rho_T)\subset U_{\gamma r_m/2}(T)\) and \(\big|\nabla_X^a\partial_t^q\rho_T\big|\lesssim_{a,q}r_m^{-(|a|+2q)}\) for \(|a|+q\leq2\). Finally, let \[ F_m:=\sum_{T\in\mathbb D_m}\theta_T \qquad\text{and}\qquad \psi_T:=\theta_T+\rho_T(1-F_m). \] By \eqref{ambientpartitionidentity}, \(F_m=1\) on \(E\), which gives \eqref{planeambientpartitionidentity}. On \(\Omega_m^{(2)}\), \[ \sum_{T\in\mathbb D_m}\psi_T=F_m+(1-F_m)\sum_{T\in\mathbb D_m}\rho_T=1, \] which is \eqref{planeambientpartitionsum}. The support inclusions for \(\theta_T\) and \(\rho_T\) give \(\supp(\psi_T)\subset U_{\gamma r_m/2}(T)\). They also imply uniformly bounded overlap. Finally, the estimates of the derivatives of $\theta_T$ in \eqref{partitionhigherderivatives} and bounded overlap give the corresponding estimates for $F_m$. Applying the product rule to $\psi_T=\theta_T+\rho_T(1-F_m)$ gives \[ |\psi_T|\lesssim1\qquad\text{and}\qquad \big|\nabla_X^a\partial_t^q\psi_T\big| \lesssim_{a,q}r_m^{-(|a|+2q)},\qquad |a|+q\leq2. \] 

\begingroup

\section{Singular integrals on regular parabolic Lipschitz graphs}\label{bhhlnappendix}

We prove Theorem~\ref{highcodimgeneralSIO} and then deduce Theorem~\ref{highcodimBHHLN}. We use the following codimension-one result.

\begin{theorem}[{\cite[Theorem~5]{BHHLN3}}]
\label{BHHLNcalderon}
Let $k\geq1$. There is an integer $N_*=N_*(k)$ for which the following holds. Let $G:\mathbb P^k\to\mathbb R$ be regular Lip-$(1,1/2)$, and let $K\in C^{N_*}(\mathbb P^{k+1}\setminus\{0\};\mathbb R)$ satisfy $K(-X,t)=-K(X,t)$ for $(X,t)\in\mathbb P^{k+1}\setminus\{0\}$. Suppose that
\[
\mathfrak C_{N_*}(K)
:=
\max_{|\alpha|+q\leq N_*}
\sup_{(X,t)\neq0}
\|(X,t)\|^{k+2+|\alpha|+2q}
|\partial_X^\alpha\partial_t^qK(X,t)|
<\infty,
\]
where $\alpha$ ranges over spatial multiindices and $q$ over nonnegative
integers. Define
\[
\mathcal T_\varepsilon^{G,K}f(x,t)
:=
\int_{\|(x-y,t-s)\|>\varepsilon}
K\bigl(x-y,G(x,t)-G(y,s),t-s\bigr)f(y,s)\,\d y\,\d s,
\qquad (x,t)\in\mathbb P^k.
\]
Then
\[
\sup_{\varepsilon>0}
\|\mathcal T_\varepsilon^{G,K}f\|_2
\leq C(G)\mathfrak C_{N_*}(K)\|f\|_2,
\]
where $C(G)$ depends only on $k$ and the regular Lip-$(1,1/2)$ constants
of $G$. All function norms are taken on $\mathbb P^k$.
\end{theorem}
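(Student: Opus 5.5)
The plan is to follow the Coifman--David--Meyer strategy, adapted to the parabolic setting as in Hofmann's work \cite{H2} and in \cite{BHHLN3}. The idea is to reduce the operator $\mathcal T_\varepsilon^{G,K}$ to a rapidly convergent superposition of ``exponential commutator'' operators and to bound each one by a parabolic $T1$ theorem.

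\textbf{Step 1: expansion of the kernel.} Put $\rho=\rho(x-y,t-s)$, the smooth homogeneous norm of Subsection~\ref{graphnotation}, and put $u=(G(x,t)-G(y,s))/\rho$. Since $G$ is Lip-$(1,1/2)$, we have $|u|\leq M:=C\|G\|_{lip}$. Rescaling, we write
\[
K\bigl(x-y,G(x,t)-G(y,s),t-s\bigr)=\rho^{-(k+2)}\,\kappa(\omega,u),
\]
where $\omega=\delta_{\rho^{-1}}(x-y,t-s)$ lies on the unit parabolic sphere and $\kappa$ is $C^{N_*}$ in both variables. Multiply $\kappa$ in $u$ by a smooth cutoff equal to $1$ on $[-M,M]$ and expand it in a Fourier series on $[-2M,2M]$:
\[
\kappa(\omega,u)=\sum_{n\in\mathbb Z}a_n(\omega)e^{i\pi n u/(2M)}.
\]
Integrating by parts $N_*$ times gives
\[
\|a_n\|_{C^{N_*/2}}\lesssim(1+|n|)^{-N_*/2}\,\mathfrak C_{N_*}(K).
\]
The oddness $K(-X,t)=-K(X,t)$ gives $a_n(-\omega_x,\omega_t)=-a_{-n}(\omega)$, which is a usable odd-type symmetry of each pair of terms.

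\textbf{Step 2: each piece is a parabolic Calder\'on--Zygmund operator with a controlled constant.} The kernel of the $n$-th piece is
\[
\rho^{-(k+2)}a_n(\omega)\exp\bigl(i\pi n(G(x,t)-G(y,s))/(2M\rho)\bigr).
\]
It satisfies the size and smoothness estimates of Definition~\ref{standardparabolicCZ} with constant $\lesssim(1+|n|)\|a_n\|_{C^1}$, since each derivative hitting the exponential produces a factor of $n$ times the Lipschitz constant. The truncations are handled uniformly by Cotlar's inequality.

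\textbf{Step 3: $T1$ and $T^*1$ in $BMO$, and weak boundedness.} We apply the parabolic $T1$ theorem to each piece. Weak boundedness follows from the approximate antisymmetry of the kernel. The key input, and the point where regularity of $G$ enters, is
\[
\|T_n1\|_{BMO}+\|T_n^*1\|_{BMO}\lesssim(1+|n|)^{C}\bigl(1+\|D_t^{1/2}G\|_{BMO}\bigr).
\]
To prove it, expand the exponential in its Taylor series. The linear term is a parabolic Calder\'on commutator, whose action on $1$ is controlled by $D_t^{1/2}G\in BMO$ together with $\nabla_xG\in L^\infty$, as in \cite{H2}. The higher-order terms are treated by the Christ--Journ\'e / Hofmann multilinear commutator estimates, whose bounds grow at most geometrically. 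A cleaner alternative, which avoids the Taylor series, is a $T(b)$ argument with the accretive function $b=e^{i\lambda G}$ adapted to the exponential.

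\textbf{Step 4: summation.} We sum the pieces:
\[
\sum_n(1+|n|)^{C}(1+|n|)^{-N_*/2}<\infty
\]
once $N_*=N_*(k)$ is large enough. This yields
\[
\sup_{\varepsilon>0}\|\mathcal T_\varepsilon^{G,K}f\|_2\leq C(G)\,\mathfrak C_{N_*}(K)\,\|f\|_2.
\]

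\textbf{Main obstacle.} The hard step is Step 3, namely obtaining $BMO$ bounds for $T_n1$ with only polynomial growth in $n$. The half-order time regularity must be encoded through the parabolic commutator theory rather than through Euclidean Lipschitz estimates, and the nonlinear dependence on $G$ has to be kept under polynomial control. I would first try the $T(b)$ route with $b=e^{i\lambda G}$, which is the approach used by Hofmann for parabolic Lipschitz graphs.
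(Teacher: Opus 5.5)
The paper does not prove this statement. It imports \cite[Theorem~5]{BHHLN3} and adds only two things: the rescaling $K\mapsto K/\mathfrak C_{N_*}(K)$, which produces the linear factor $\mathfrak C_{N_*}(K)$, and the identification, via \cite{BHMN}, of the commutator norm used there with $\|\nabla_xG\|_\infty+\|D_t^{1/2}G\|_{BMO}$. You instead try to reprove the cited theorem, and the argument has a genuine gap at exactly the step you flag. Everything rests on $\|T_n1\|_{BMO}+\|T_n^*1\|_{BMO}\lesssim(1+|n|)^C$, and the mechanism you propose cannot give polynomial growth. Set $\lambda_n=\pi n/(2M)$, so $|u|\leq M$. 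Expanding $e^{i\lambda_nu}=\sum_j(i\lambda_nu)^j/j!$ and bounding the $j$-th parabolic Calder\'on commutator (kernel $\rho^{-(k+2)}a_n(\omega)u^j$) by $C^jM^j$, or even by $C(1+j)^NM^j$, yields at best $\sum_j(C|\lambda_n|M)^j/j!\approx e^{c|n|}$. That bound is exponential in $n$, and the decay $(1+|n|)^{-N_*/2}$ you get from finitely many derivatives of $K$ cannot absorb it. Taylor expansion works only when the kernel depends analytically on the transversal variable, as the Gaussian factor of the heat kernel does, not for general $C^{N_*}$ kernels. The $T(b)$ alternative with $b=e^{i\lambda G}$ does not repair this either. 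A phase $\lambda(G(x,t)-G(y,s))$ would amount to conjugation by unimodular multipliers, but your phase is $\lambda(G(x,t)-G(y,s))/\rho(x-y,t-s)$, which does not factor. Bounds polynomial in the frequency for such exponential-phase operators are the deep content of Coifman--David--Meyer-type theorems. In one dimension they come from the polynomial dependence of the Cauchy integral bound on the Lipschitz constant, a complex-variable tool with no parabolic analogue. In the parabolic setting, this is what the citation of \cite{BHHLN3} supplies. You identify this estimate but give no valid route to it.

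Two further steps also need repair.
\begin{itemize}
\item In Step~1 you write $K=\rho^{-(k+2)}\kappa(\omega,u)$ with $\omega$ on the unit parabolic sphere, which presupposes that $K$ is parabolically homogeneous. The theorem does not assume this. The coefficients must therefore be kept as functions $a_n(x-y,t-s)$ with symbol-type bounds $|\partial_x^\alpha\partial_t^qa_n|\lesssim(1+|n|)^{-N_*/2}\rho^{-|\alpha|-2q}$, much as the paper does with $H_\zeta$ in Appendix~\ref{bhhlnappendix}.
\item In Step~3, weak boundedness cannot be deduced from antisymmetry. Spatial oddness gives $K(y-x,G(y,s)-G(x,t),s-t)=-K(x-y,G(x,t)-G(y,s),s-t)$. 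So the kernel on the graph is antisymmetric only if $K$ is also even in $t$, which is not assumed; the paper applies the theorem to kernels that need not be even in time. Your relation $a_n(-\omega_x,\omega_t)=-a_{-n}(\omega)$ involves only spatial reflection and gives no antisymmetry of the pieces $T_n$. You would need an independent argument, for example a local $T1$ condition $\int_Q|T_n\chi_Q|\lesssim(1+|n|)^C|Q|$, and that again requires the missing polynomial control.
\end{itemize}
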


The proof in
\cite{BHHLN3} gives a uniform bound when the constants in the size and
derivative estimates for $K$ are at most $1$. If
$M:=\mathfrak C_{N_*}(K)>0$, then $K/M$ satisfies these normalized
estimates. Applying the bound to $K/M$ and multiplying the resulting
operator by $M$ gives the factor $\mathfrak C_{N_*}(K)$ in our statement.

In \cite[Definition~3]{BHHLN3}, the regularity of $G$ is measured using the commutator norm.
By \cite[Section~2.6, equation~(2.21)]{BHMN}, this norm is quantitatively
equivalent to $\|\nabla_xG\|_\infty+\|D_t^{1/2}G\|_{BMO}$.
Thus $C(G)$ depends only on the regular Lip-$(1,1/2)$ constants
of $G$.

\subsection{General kernels for \texorpdfstring{$k\geq1$}{k greater than or equal to 1}}

\begin{proof}[Proof of Theorem~\ref{highcodimgeneralSIO} for $k\geq1$]
Let $N_*$ be the integer in Theorem~\ref{BHHLNcalderon}, and choose
$$
M=N_*+n-k+1,\qquad N=N_*+M=2N_*+n-k+1.
$$
In this proof, $C$ may depend on $n$, $k$, the regular Lip-$(1,1/2)$ constants of $F$, the constants $C_{j,q}$ with $j+q\leq N$, and the functions $\psi$ and $\chi$ chosen below and the function $\psi_0$ from~\eqref{psi0}. It is independent of $\zeta$ and $\ell$.

Choose a real, even function $\psi\in C_c^\infty(\mathbb R^{n-k})$ equal to $1$ on the unit ball and supported in the ball of radius $2$, and set $L=1+2\|F\|_{lip}$. For $(x,t)\neq0$, write $r=\rho(x,t)$ and define
\begin{equation}\label{generalfourierkernel}
H_\zeta(x,t)=\int_{\mathbb R^{n-k}}\psi(w/L)K(x,rw,t)e^{-2\pi i\zeta\cdot w}\,\d w,\qquad \zeta\in\mathbb R^{n-k}.
\end{equation}
We first estimate the derivatives of $H_\zeta$. On the support of $\psi(w/L)$, the norms $\|(x,rw,t)\|$ and $r$ are comparable, with constants depending only on $L$. The derivative estimates for $K$, the product rule, and the bounds $|\partial_x^\alpha\partial_t^q\rho|\leq C_{\alpha,q}r^{1-|\alpha|-2q}$ give
$$
|\partial_x^\alpha\partial_t^q\partial_w^\beta[\psi(w/L)K(x,rw,t)]|\leq Cr^{-k-2-|\alpha|-2q},\qquad |\alpha|+q\leq N_*,\quad |\beta|\leq M.
$$
Only derivatives of $K$ of total order at most $N_*+M=N$ occur here. The integrand is supported in $|w|\leq2L$. For $|\zeta|\geq1$, integrate by parts $M$ times in the direction $\zeta/|\zeta|$ in~\eqref{generalfourierkernel}; for $|\zeta|<1$, integrate its absolute value. This proves
\begin{equation}\label{generalfourierderivatives}
|\partial_x^\alpha\partial_t^qH_\zeta(x,t)|\leq C(1+|\zeta|)^{-M}\rho(x,t)^{-k-2-|\alpha|-2q},\qquad |\alpha|+q\leq N_*.
\end{equation}
Since $K$ is real and spatially odd, changing $w$ to $-w$ in~\eqref{generalfourierkernel} gives $H_\zeta(-x,t)=-\overline{H_\zeta(x,t)}$. Thus $\operatorname{Re}H_\zeta$ is odd in $x$ and $\operatorname{Im}H_\zeta$ is even in $x$. Fourier inversion gives
\begin{equation}\label{generalfourierinversion}
\psi(z/(Lr))K(x,z,t)=\int_{\mathbb R^{n-k}}[\cos(2\pi\zeta\cdot z/r)\operatorname{Re}H_\zeta(x,t)-\sin(2\pi\zeta\cdot z/r)\operatorname{Im}H_\zeta(x,t)]\,\d\zeta.
\end{equation}
The integral is absolutely convergent by~\eqref{generalfourierderivatives}, since $M>n-k$.

For $\zeta\neq0$, set $F_\zeta=(\zeta/|\zeta|)\cdot F$. Its regular Lip-$(1,1/2)$ constants are bounded by those of $F$, independently of $\zeta$. Choose a real, even function $\chi\in C_c^\infty(\mathbb R)$ equal to $1$ on $[-1,1]$ and supported in $[-2,2]$. With $z$ now the scalar transversal coordinate, define
\begin{equation}\label{generalauxiliarykernel}
K_\zeta(X,t)=\chi(z/(Lr))\operatorname{Re}[e^{2\pi i|\zeta|z/r}H_\zeta(x,t)].
\end{equation}
The formula applies when $(x,t)\neq0$; set $K_\zeta(0,z,0)=0$ for $z\neq0$. The factor $\chi(z/(Lr))$ makes $K_\zeta$ vanish in a neighborhood of each such point, so $K_\zeta\in C^{N_*}(\mathbb P^{k+1}\setminus\{0\})$. Since $H_\zeta(-x,t)=-\overline{H_\zeta(x,t)}$ and $\chi$ is even, $K_\zeta(-X,t)=-K_\zeta(X,t)$. On its support, $|z|\leq2Lr$, so $\|(X,t)\|$ and $r$ are comparable. Differentiating~\eqref{generalauxiliarykernel} at most $N_*$ times introduces at most $N_*$ powers of $|\zeta|$. Therefore~\eqref{generalfourierderivatives} gives
\begin{equation}\label{generalauxiliaryseminorm}
\mathfrak C_{N_*}(K_\zeta)\leq C(1+|\zeta|)^{-M+N_*}=C(1+|\zeta|)^{-(n-k)-1}.
\end{equation}
Theorem~\ref{BHHLNcalderon}, applied to $F_\zeta$ and $K_\zeta$, gives $\|\mathcal T_\varepsilon^{F_\zeta,K_\zeta}f\|_2\leq C(1+|\zeta|)^{-(n-k)-1}\|f\|_2$, uniformly in $\varepsilon>0$. Define the smooth truncations by
$$
T_{\zeta,\ell}f(x,t)=\int_{\mathbb P^k}K_\zeta(x-y,F_\zeta(x,t)-F_\zeta(y,s),t-s)\Psi_\ell(x-y,t-s)f(y,s)\,\d y\,\d s.
$$
For $\varepsilon_\ell=2^{-\ell-5}$, the sharp and smooth cutoffs differ only where $\varepsilon_\ell<\|(x-y,t-s)\|<8\varepsilon_\ell$. By~\eqref{generalauxiliaryseminorm}, the difference between the sharp and smooth truncations applied to $f$ is bounded in absolute value by
$$
C(1+|\zeta|)^{-(n-k)-1}\int_{\varepsilon_\ell<\|(x-y,t-s)\|<8\varepsilon_\ell}\frac{|f(y,s)|}{\|(x-y,t-s)\|^{k+2}}\,\d y\,\d s\leq C(1+|\zeta|)^{-(n-k)-1}\mathcal M f(x,t).
$$
Here $\mathcal M$ is the parabolic Hardy--Littlewood maximal operator. Its $L^2$ boundedness and the sharp truncation estimate give
\begin{equation}\label{generalsmoothbound}
\|T_{\zeta,\ell}f\|_2\leq C(1+|\zeta|)^{-(n-k)-1}\|f\|_2.
\end{equation}

For $(x,t)\neq(y,s)$, the Lipschitz bound and the definition of $L$ give
$$
|F_\zeta(x,t)-F_\zeta(y,s)|\leq|F(x,t)-F(y,s)|\leq L\rho(x-y,t-s).
$$
Consequently,
$$
\psi(\frac{F(x,t)-F(y,s)}{L\rho(x-y,t-s)})=1,\qquad \chi(\frac{F_\zeta(x,t)-F_\zeta(y,s)}{L\rho(x-y,t-s)})=1.
$$
Substituting~\eqref{generalfourierinversion} into the definition of $T_\ell^F$ gives
$$
T_\ell^F f=\int_{\mathbb R^{n-k}}T_{\zeta,\ell}f\,\d\zeta.
$$
For $f\in C_c^\infty(\mathbb P^k)$ and fixed $\ell$, $\Psi_\ell(x-y,t-s)$ vanishes near $(x,t)=(y,s)$, and~\eqref{generalfourierderivatives} gives an integrable bound in $\zeta$. Fubini's theorem therefore justifies the displayed identity after integrating both sides against a compactly supported smooth function. Minkowski's inequality and~\eqref{generalsmoothbound} then give
$$
\|T_\ell^Ff\|_2\leq C\|f\|_2\int_{\mathbb R^{n-k}}(1+|\zeta|)^{-(n-k)-1}\,\d\zeta\leq C\|f\|_2.
$$
Density proves the theorem for every $f\in L^2(\mathbb P^k)$, uniformly in $\ell$.
\end{proof}

\subsection{General kernels for \texorpdfstring{$k=0$}{k=0}}

\begin{proof}[Proof of Theorem~\ref{highcodimgeneralSIO} for $k=0$]
Here $\mathbb P^0=\mathbb R^1$ with distance $d_p(t,s)=|t-s|^{1/2}$. Write $A:\mathbb P^0\to\mathbb R^n$ for the function in the theorem and set $F(x,t)=A(t)$ on $\mathbb P^1$. Then $F$ has the same Lip-$(1,1/2)$ constant as $A$, and $D_t^{1/2}F(x,t)=D_t^{1/2}A(t)$. Averages over parabolic cylinders give
$$
\|D_t^{1/2}F\|_{BMO(\mathbb P^1)}=\|D_t^{1/2}A\|_{BMO(\mathbb P^0)}.
$$
Thus $F$ is regular Lip-$(1,1/2)$. We will apply the case $k=1$, in ambient space $\mathbb P^{n+1}$, with a kernel whose integral in the additional spatial variable equals $K$.

Choose a nonnegative even function $\eta\in C_c^\infty((-1,1))$ with $\int_{\mathbb R}\eta=1$. For $(z,t)\neq0$, define
\begin{equation}\label{generalextendedkernel}
\widetilde K(x,z,t)=\rho(z,t)^{-1}\eta(x/\rho(z,t))K(z,t),\qquad (x,z,t)\in\mathbb P^{n+1}.
\end{equation}
Set $\widetilde K(x,0,0)=0$ for $x\neq0$. The factor $\eta(x/\rho(z,t))$ vanishes in a neighborhood of each such point, so $\widetilde K$ is $C^N$ away from the origin. Since $\eta$ is even and $K(-z,t)=-K(z,t)$, we have $\widetilde K(-x,-z,t)=-\widetilde K(x,z,t)$. On the support of the kernel and of its derivatives, $|x|\leq\rho(z,t)$, and $\|(x,z,t)\|$ is comparable to $\rho(z,t)$. The function $\rho(z,t)^{-1}\eta(x/\rho(z,t))$, extended by zero when $(z,t)=0$ and $x\neq0$, is homogeneous of degree $-1$ under parabolic dilations and smooth away from the origin. Consequently, the product rule gives
\begin{equation}\label{generalextendedderivatives}
|\partial_{x,z}^\alpha\partial_t^q\widetilde K(x,z,t)|\leq C\|(x,z,t)\|^{-3-|\alpha|-2q},\qquad |\alpha|+q\leq N,
\end{equation}
where $C$ depends only on $n$, $N$, $\eta$, and the derivative constants for $K$ through order $N$. No derivatives of higher order occur. We may therefore take $N(n,0)=N(n+1,1)$. Applying the already proved $k=1$ case to $F$ and $\widetilde K$ gives uniform $L^2(\mathbb P^1)$ bounds with cutoff $\Psi_\ell(x-y,t-s)$.

We next replace that cutoff by $\Psi_\ell(t-s)$. Put $r=2^{-\ell-4}$, $L=\|A\|_{lip}$, and $D=(L^4+2)^{1/4}$. On the support of $\widetilde K(x-y,A(t)-A(s),t-s)$,
$$
|t-s|^{1/2}\leq\rho(x-y,t-s)\leq D|t-s|^{1/2}.
$$
Indeed, $|x-y|\leq\rho(A(t)-A(s),t-s)$ and $|A(t)-A(s)|\leq L|t-s|^{1/2}$. Since $\Psi_\ell=1-\psi_0(\rho/r)$, the two cutoffs can differ only when
$$
\frac{r}{2D}<|t-s|^{1/2}<2r.
$$
The nonnegativity and normalization of $\eta$ give
$$
\int_{\mathbb R}|\widetilde K(x-y,A(t)-A(s),t-s)|\,\d y=|K(A(t)-A(s),t-s)|\leq\frac{C_{0,0}}{|t-s|}.
$$
Integrating $C_{0,0}/|t-s|$ over $r/(2D)<|t-s|^{1/2}<2r$ shows that the integral in $(y,s)$ of the absolute difference between the two truncated kernels is at most $CC_{0,0}\log(4D)$. The same bound holds when integrating in $(x,t)$ instead of $(y,s)$. Schur's test therefore proves that the operators
$$
\widetilde T_\ell f(x,t)=\int_{\mathbb P^1}\widetilde K(x-y,A(t)-A(s),t-s)\Psi_\ell(t-s)f(y,s)\,\d y\,\d s
$$
satisfy
\begin{equation}\label{generalextendedoperatorbound}
\|\widetilde T_\ell f\|_{L^2(\mathbb P^1)}\leq C\|f\|_{L^2(\mathbb P^1)},
\end{equation}
where $C$ depends only on $n$, the regular Lip-$(1,1/2)$ constants of $A$, and the constants $C_{j,q}$ with $j+q\leq N$, and is independent of $\ell$.

Choose a real function $\chi\in C_c^\infty(\mathbb R)$ with $\|\chi\|_2=1$ and put $\chi_R(x)=R^{-1/2}\chi(x/R)$. For fixed $t\neq s$, a change of variables rewrites
$$
\int_{\mathbb R}\int_{\mathbb R}\widetilde K(x-y,A(t)-A(s),t-s)\chi_R(y)\chi_R(x)\,\d y\,\d x
$$
as
$$
\int_{\mathbb R}\widetilde K(y,A(t)-A(s),t-s)\int_{\mathbb R}\chi(x-y/R)\chi(x)\,\d x\,\d y.
$$
The inner integral tends to $1$ as $R\to\infty$ and has absolute value at most $1$. Also,~\eqref{generalextendedkernel} gives
$$
\int_{\mathbb R}\widetilde K(y,A(t)-A(s),t-s)\,\d y=K(A(t)-A(s),t-s).
$$
Dominated convergence therefore shows that the double integral tends to $K(A(t)-A(s),t-s)$, and its absolute value is bounded by $|K(A(t)-A(s),t-s)|$. For real $f,g\in C_c^\infty(\mathbb R)$ and fixed $\ell$, $\Psi_\ell(t-s)$ vanishes near $t=s$. We can thus use dominated convergence once more to obtain
$$
\langle T_\ell^A f,g\rangle=\lim_{R\to\infty}\langle\widetilde T_\ell(\chi_R(x)f(t)),\chi_R(x)g(t)\rangle.
$$
Since $\|\chi_R(x)f(t)\|_{L^2(\mathbb P^1)}=\|f\|_{L^2(\mathbb R)}$, estimate~\eqref{generalextendedoperatorbound} implies
$$
|\langle T_\ell^A f,g\rangle|\leq C\|f\|_{L^2(\mathbb R)}\|g\|_{L^2(\mathbb R)}.
$$
Density and duality prove the required $L^2$ bound, uniformly in $\ell$.
\end{proof}

To obtain Theorem~\ref{highcodimBHHLN}, we apply Theorem~\ref{highcodimgeneralSIO} to the kernels $J^m$ in~\eqref{rieszkernel}, for $0\leq k\leq n-1$. Each $J^m$ is odd in $X$ and smooth away from the origin: at $t=0$ and $X\neq0$, the Gaussian and all its derivatives tend to zero faster than any power of $|t|$. Moreover,
$$
J^m(\lambda X,\lambda^2t)=\lambda^{-k-2}J^m(X,t),\qquad \lambda>0.
$$
Differentiating this identity and bounding the derivatives on $\|(X,t)\|=1$ gives
$$
|\partial_X^\alpha\partial_t^qJ^m(X,t)|\leq C_{n,k,\alpha,q}\|(X,t)\|^{-k-2-|\alpha|-2q}
$$
for every spatial multiindex $\alpha$ and integer $q\geq0$. Thus each $J^m$ satisfies the hypotheses of Theorem~\ref{highcodimgeneralSIO}. Applying that theorem for $m=1,\ldots,n-k$ and summing the operator bounds proves Theorem~\ref{highcodimBHHLN}.

\endgroup

\bibliographystyle{alpha}
\bibliography{parabolic_UR}

\end{document}